\documentclass[12pt]{amsart}
\usepackage{amssymb, amsmath, mathtools}

\usepackage[dvipsnames]{xcolor}
\usepackage{color}
\usepackage[utf8]{inputenc}
\usepackage[T1]{fontenc}
\usepackage{fullpage}
\usepackage[normalem]{ulem}
\usepackage{amsmath,amscd,mathrsfs}
\usepackage{tikz}
\usepackage[colorlinks=true, citecolor=blue, linkcolor=red,pagebackref, hyperindex]{hyperref}
\usepackage{mathtools}

\newtheorem{Theorem}{Theorem}[section]
\newtheorem{Theoremx}{Theorem}
 
\newtheorem{Potential Theorem}[Theorem]{Potential Theorem}
\newtheorem{Lemma}[Theorem]{Lemma}
\newtheorem{Corollary}[Theorem]{Corollary}
\newtheorem{Proposition}[Theorem]{Proposition}
\theoremstyle{definition}
\newtheorem{Example}[Theorem]{Example}
\newtheorem{Condition}[Theorem]{Condition}

\newtheorem{Definition}[Theorem]{Definition}
\newtheorem{Question}[Theorem]{Question}

\theoremstyle{remark} 
\newtheorem{Remark}[Theorem]{Remark}

\DeclareMathOperator{\height}{ht}
\DeclareMathOperator{\heigh}{ht}
\DeclareMathOperator{\Max}{Max}

\DeclareMathOperator{\Supp}{Supp}
\DeclareMathOperator{\Ann}{Ann}

\DeclareMathOperator{\Hom}{Hom}
\DeclareMathOperator{\Spec}{Spec}
\DeclareMathOperator{\Min}{Min}
\DeclareMathOperator{\Ass}{Ass}

\DeclareMathOperator{\im}{im}

\DeclareMathOperator{\rank}{rank}
\DeclareMathOperator{\minrank}{min-rank}
\DeclareMathOperator{\frk}{frk}

\DeclareMathOperator{\e}{e_{HK}}
\DeclareMathOperator{\te}{e^{loc}_{HK}}
\DeclareMathOperator{\s}{s}
\DeclareMathOperator{\ts}{\s_{loc}}

\DeclareMathOperator{\Assh}{Assh}

\def\char{\mbox{char}\,}

\def\p{\mathfrak{p}}

\def\m{\mathfrak{m}}
\def\n{\mathfrak{n}}
\def\a{\mathfrak{a}}

\def\R{\mathbb{R}}
\def\Q{\mathbb{Q}}
\def\C{\mathscr{C}}
\def\F{\mathbb{F}}

\def\D{\mathscr{D}}
\def\E{\mathscr{E}}
\def\N{\mathbb{N}}
\def\O{\mathcal{O}}

\def\ds{\displaystyle}

\renewcommand{\ge}{\geqslant}\renewcommand{\geq}{\geqslant} 
\renewcommand{\leq}{\leqslant}

\newcommand{\blank}{\underline{\hskip 10pt}}

\begin{document}

\title{Globalizing F-invariants}
\author{Alessandro De Stefani}
\address{Department of Mathematics, Royal Institute of Technology (KTH), Stockholm, 100 44, Sweden}
\email{ads@kth.se}
\author{Thomas Polstra}
\address{Department of Mathematics, University of Missouri-Columbia, Columbia, MO 65211}
\email{tmpxv3@mail.missouri.edu}
\author{Yongwei Yao}
\address{Department of Mathematics and Statistics, Georgia State University, Atlanta, Georgia 30303, USA}
\email{yyao@gsu.edu}

\begin{abstract} In this paper we define and study the global Hilbert-Kunz multiplicity and the global F-signature of prime characteristic rings which are not necessarily local.  Our techniques are made meaningful by extending many known theorems about Hilbert-Kunz multiplicity and F-signature to the non-local case.
\end{abstract}

\maketitle

\section{Introduction}

Throughout, $R$ will be a commutative Noetherian ring with identity. Unless otherwise stated, $R$ is of prime characteristic $p$. Let $F^e: R\rightarrow R$ be the $e$th iterate of the Frobenius endomorphism, that is $F^e(r)=r^{p^e}$.  Kunz's work in \cite{Kunz1969} equates flatness of $F^e$ with the property that $R$ is regular, a foundational result indicating asymptotic measurements of the Frobenius endomorphism can be used to measure the severity of the singularities of $R$.  We will focus on the numerical invariants Hilbert-Kunz multiplicity and F-signature. 

For the sake of simplicity in introducing Hilbert-Kunz multiplicity and F-signature, assume that $(R,\m,k)$ is a complete local domain, with unique maximal ideal $\m$, dimension $d$, residue field $k$, and $k^{1/p}$ is finite as a $k$-vector space. If $I\subseteq R$ is an ideal, $I^{[p^e]}=(i^{p^e}\mid i\in I)$ is the expansion of $I$ along $F^e$. If $M$ is a finite length $R$-module let $\lambda(M)$ denote the length of $M$. If $I$ is an $\m$-primary ideal, so is $I^{[p^e]}$ for each $e\in\N$. 

\begin{Definition}\label{Definition of HK multiplicity} Let $(R,\m,k)$ be a local ring of prime characteristic $p$ and $I$ an $\m$-primary ideal. The \emph{Hilbert-Kunz multiplicity of $I$} is $$\e(I)=\lim_{e\rightarrow \infty}\lambda(R/I^{[p^e]})/p^{e\dim(R)}.$$
\end{Definition}

Monsky proved the existence of the limit $\e(I)$ in \cite{Monsky1983}. The Hilbert-Kunz multiplicity of $R$ is defined to be $\e(R)=\e(\m)$. It is well known that $\e(R)\geq 1$ with equality if and only if $R$ is regular, \cite{WatanabeYoshida2000}. More generally, it is known that sufficiently small values of Hilbert-Kunz multiplicity imply the properties of being Gorenstein and strongly F-regular, \cite{BlickleEnescu, AberbachEnescu2008}.

Denote by $F^e_*R$ the $R$-module obtained via restriction of scalars via $F^e$. Our hypotheses imply that $R$ is F-finite, that is, $F^e_*R$ is a finitely generated $R$-module for each $e\in\N$. Moreover, we have that $\lambda(R/\m^{[p^e]}R)/p^{ed}=\mu(F^e_*R)/\rank(F^e_*R)$, where $\mu(\blank)$ denotes the minimal number of generators of a finitely generated $R$-module. In particular, $$\e(R)=\lim_{e\rightarrow \infty}\mu(F^e_*R)/\rank(F^e_*R).$$ Thus, the Hilbert-Kunz multiplicity of $R$ is the asymptotic growth rate of the minimal number of generators of $F^e_*R$ compared to its rank, a measurement that can also be discussed for rings that are not necessarily local.

Now let $R$ be an F-finite domain, not necessarily local. With the above observation, we define the \emph{global Hilbert-Kunz multiplicity} of $R$, still denoted $\e(R)$, as 
\[
\e(R)=\lim_{e\rightarrow\infty}\mu(F^e_*R)/\rank(F^e_*R),
\]
provided the limit exists. Our first main result is the existence of the corresponding limit for any F-finite ring.  In addition, we relate $\e(R)$ with the Hilbert-Kunz multiplicities $\e(R_P)$ of the localizations at primes $P \in \Spec(R)$, showing that such an invariant, even though it is defined globally, captures the local properties of the ring. Finally, as for the Hilbert-Kunz multiplicity of a local ring, we show that small values of $\e(R)$ imply that $R$ has mild singularities. We summarize all these results in the following theorem. We point out that our results hold in a more general setup than the one in which we state them here, as we will show in Section~\ref{Global HK}.

\begin{Theoremx} Let $R$ and $T$ be F-finite domains, not necessarily local.
\begin{enumerate}
\item(Theorem~\ref{HK exists}) The limit $\e(R)=\lim_{e\rightarrow \infty }\mu(F^e_*R)/\rank(F^e_*R)$ exists.
\item (Theorem~\ref{What is HK}) We have  $\e(R)=\max\{\e(R_P)\mid P\in\Spec(R)\}$.
\item (Theorem~\ref{Watanabe-Yoshida Hilbert-Kunz}) The ring $R$ is regular if and only if $\e(R)=1$.
\item (Theorem~\ref{HK small means good}) There exists a positive real number $\delta$ such that, if $\e(R) \leq 1+ \delta$, then $R$ is strongly F-regular and Gorenstein.
\item (Theorem~\ref{HK Flat Extension Global}) If $R\rightarrow T$ is faithfully flat, then $\e(R)\leq \e(T)$.
\end{enumerate}
\end{Theoremx}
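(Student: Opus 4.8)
The plan is to reduce to the classical local situation by means of the localization formula of Theorem~\ref{What is HK}. Since $R \to T$ is faithfully flat, the induced map $\Spec(T) \to \Spec(R)$ is surjective; moreover, if $Q \in \Spec(T)$ satisfies $Q \cap R = P$, then the induced map $R_P \to T_Q$ is flat and local, hence faithfully flat, and $T_Q$ is again an F-finite domain. By Theorem~\ref{What is HK} applied to $R$ we may choose $P \in \Spec(R)$ with $\e(R) = \e(R_P)$, and by Theorem~\ref{What is HK} applied to $T$ we have $\e(T_Q) \leq \e(T)$. Thus it suffices to prove that Hilbert--Kunz multiplicity does not decrease along a flat local homomorphism, i.e.\ that $\e(R_P) \leq \e(T_Q)$.

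For this local statement, write $(A,\m_A) = (R_P, PR_P)$ and $(B,\m_B) = (T_Q, QT_Q)$, so that $A \to B$ is a faithfully flat local homomorphism, and let $C = B/\m_A B$ be the closed fibre, a nonzero local ring with $\dim B = \dim A + \dim C$. The key input is the multiplicativity of Hilbert--Kunz multiplicity along $A \to B$, namely $\e(B) = \e(A)\cdot\e(C)$. When $\m_A B$ is $\m_B$-primary this follows quickly from flatness, since then $\lambda_B(B/\m_A^{[p^e]}B) = \lambda_A(A/\m_A^{[p^e]})\cdot\lambda_B(B/\m_A B)$; in general one picks elements of $\m_B$ whose images in $C$ form a system of parameters and separates the $A$-contribution from the fibre-contribution by the usual length estimates, in analogy with the Hilbert--Samuel multiplicativity formula for flat local maps. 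I expect this computation, rather than the essentially formal globalization step, to be the main obstacle. Granting the formula, and using that $\e(C) \geq 1$ for every nonzero local ring of prime characteristic, we obtain $\e(B) \geq \e(A)$.

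Combining the pieces gives $\e(R) = \e(R_P) = \e(A) \leq \e(B) = \e(T_Q) \leq \e(T)$, as desired. (If one prefers to avoid the full multiplicativity identity, it is enough to establish only the inequality $\e(A) \leq \e(B)$ for flat local maps, but the fibre contribution must still be accounted for, so the analysis is essentially the same.)
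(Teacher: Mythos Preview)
Your globalization strategy is exactly the paper's: use Theorem~\ref{What is HK} to pick $P\in\Spec(R)$ realizing $\e(R)=\e(R_P)$, lift to $Q\in\Spec(T)$ by faithful flatness, apply the local inequality $\e(R_P)\le\e(T_Q)$, and bound by $\e(T)$ via Theorem~\ref{What is HK} again.

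The only divergence is in how you justify the local step. The paper simply invokes Kunz's classical inequality for flat local maps (recorded in the background as Theorem~\ref{HK Flat Extension}, and in the module version as Theorem~\ref{HK Inequality}). You instead aim for the stronger multiplicativity identity $\e(B)=\e(A)\cdot\e(C)$ and then use $\e(C)\ge 1$. The multiplicativity formula is indeed true and well known, but your sketch of it (``pick parameters in the fibre and separate contributions by the usual length estimates'') is considerably vaguer than Kunz's actual argument, and in any case is unnecessary: as your own closing parenthetical concedes, the bare inequality suffices, and that inequality is already a cited black box in the paper. So your proof is correct, but the detour through multiplicativity adds work without buying anything here.
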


We now turn our attention to the F-signature. To introduce it, we return to the assumptions that $(R,\m,k)$ is a complete local domain of dimension $d$, and $k^{1/p}$ is a finite $k$-vector space. As noted before, these assumptions guarantee that $R$ is F-finite. We denote by $\frk(F^e_*R)$ the maximal number of free $R$-summands appearing in all direct sum decompositions, equivalently the maximal number of free $R$-summands appearing in a single direct sum decomposition, of $F^e_*R$ into indecomposable modules. 

\begin{Definition}\label{Definition of F-signature} Let $(R,\m,k)$ be local domain of prime characteristic $p$ and assume that $R$ is F-finite. The \emph{F-signature of $R$} is $$\s(R)=\lim_{e\rightarrow\infty}\frk(F^e_*R)/\rank(F^e_*R).$$
\end{Definition}

Tucker proved the existence of $\s(R)$ in \cite{Tucker2012}. Before Tucker's proof of the existence of $\s(R)$, the study of the asymptotic growth rate of the number of free summands of $F^e_*R$ originated in \cite{SmithVandenBergh}. Huneke and Leuschke coined the term F-signature in \cite{HunekeLeuschke} and were able to show it exists under the additional assumption that $R$ is Gorenstein.  There were a number of papers written which established the existence of the F-signature for certain classes of rings, see \cite{HunekeLeuschke}, \cite{Singh2005}, \cite{AberbachEnescu2006}, \cite{Yao2006}, and \cite{Aberbach2008}. As remarked by third author in \cite{Yao2006}, the study of F-signature is closely related to relative Hilbert-Kunz multiplicity studied by Watanabe and Yoshida in \cite{WatanabeYoshida2004}. Similar to Hilbert-Kunz multiplicity, particular values of $\s(R)$ determine the severity of the singularity of $R$. Most notably, $\s(R)=1$ if and only if $R$ is regular, as shown by Huneke and Leuschke in \cite{HunekeLeuschke}, and $\s(R)>0$ if and only if $R$ is strongly F-regular by work of Aberbach and Leuschke in \cite{AberbachLeuschke}. 

In order to globalize F-signature, note that the numbers $\frk(F^e_*R)$ make sense also for F-finite rings which are not necessarily local. Unlike the local case, one has to consider all direct sum decompositions of $F^e_*R $ to determine $\frk(F^e_*R)$ and not just a single direct sum decomposition. Nevertheless, it is possible to study the sequence of measurements $\frk(F^e_*R)/\rank(F^e_*R)$ for any F-finite ring. We prove that the limit $\s(R)$ of such a sequence exists, and we call it the global F-signature of $R$. As with global Hilbert-Kunz multiplicity, we relate $\s(R)$ with the local F-signatures $\s(R_P)$, for $P \in \Spec(R)$. In addition, we show that special values of $\s(R)$ detect the singularities of the ring $R$, as in the case of local rings. Our main results about F-signature, here stated for simplicity in a more restrictive setup than the one in which they actually hold, are summarized in the following theorem.

\begin{Theoremx} Let $R$ and $T$ be F-finite domains, not necessarily local.
\begin{enumerate}
\item (Theorem~\ref{Global F-signature exists}) The limit $\s(R) = \lim_{e\rightarrow \infty} \frk(F^e_*R)/\rank(F^e_*R)$ exists. 
\item (Theorem~\ref{What is Global F-signature}) We have $\s(R)=\min\{\s(R_P)\mid P\in\Spec(R)\}.$
\item (Theorem~\ref{s=1})  The ring $R$ is regular if and only if $\s(R)=1$.
\item (Theorem~\ref{s>0}) The ring $R$ is strongly F-regular if and only if $\s(R)>0$.
\item (Theorem~\ref{Global F-signature flat}) If $R\rightarrow T$ is faithfully flat, then $\s(R)\geq \s(T)$.
\end{enumerate}
\end{Theoremx}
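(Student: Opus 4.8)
All five assertions are proved as separate theorems below, but they rest on a single structural fact---that the global F-signature is the minimum of the local ones---so the plan is to establish (1) and (2) together and then deduce (3)--(5) from their local counterparts. The first and essential step would be a globalization lemma for free ranks: for any finitely generated $R$-module $M$ one has $\frk(M)\le\frk(M_\m)$ for every $\m\in\Max(R)$, since a direct-sum decomposition localizes, and conversely $\frk(M)\ge\min_{\m\in\Max(R)}\frk(M_\m)-\dim(R)$. The reverse inequality I would obtain from the basic element theory of Serre, Bass and Eisenbud--Evans: once a finitely generated module has at least $\dim(R)+1$ free summands locally at every prime, one free summand splits off globally, and iterating this---while tracking how the local free ranks drop---peels off all but $\dim(R)$ of them. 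Applying the lemma to $M=F^e_*R$, and using that $R$ is a domain (so each $R_\p$ has the same fraction field as $R$, hence $\rank(F^e_*(R_\p))=\rank(F^e_*R)=:p^{e\delta}$ for all $\p$, and $(F^e_*R)_\m\cong F^e_*(R_\m)$), I get
\[
\frac{\frk(F^e_*R)}{p^{e\delta}}\;=\;\min_{\m\in\Max(R)}\frac{\frk(F^e_*(R_\m))}{p^{e\delta}}\;+\;o(1),
\]
the error $\dim(R)/p^{e\delta}$ being negligible. I expect this lemma to be the main obstacle: since $F^e_*R$ is not projective unless $R$ is regular, the classical splitting theorems for projective modules do not apply, and one must run the basic element arguments for arbitrary finitely generated modules; controlling the global free rank up to an error that is $o(\rank F^e_*R)$ is the heart of the matter.

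To take $e\to\infty$ through the infimum over the infinite set $\Max(R)$, I would invoke uniform convergence of the F-signature along $\Spec(R)$ together with lower semicontinuity of $\p\mapsto\s(R_\p)$ (established in work of Polstra). A uniform bound $|\frk(F^e_*(R_\p))/p^{e\delta}-\s(R_\p)|\le C\,p^{-e}$, valid for all $e$ and all $\p$, shows that $\min_\m\frk(F^e_*(R_\m))/p^{e\delta}$ converges---each individual sequence converging to $\s(R_\m)$ by \cite{Tucker2012}---with limit $\inf_\m\s(R_\m)$; combined with the displayed identity this proves (1). Lower semicontinuity makes $\{\p:\s(R_\p)\le c\}$ closed, so the infimum of $\s(R_\p)$ over the quasi-compact space $\Spec(R)$ is attained; and since $\q$ lying in the closure of $\{\p\}$ means $\p\subseteq\q$, semicontinuity also gives $\s(R_\q)\le\s(R_\p)$, so this infimum is attained at a maximal ideal. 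Hence $\s(R)=\min\{\s(R_P):P\in\Spec(R)\}$, which is (2).

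With (2) in hand the remaining deductions are formal. Since $\frk(F^e_*(R_P))\le\rank(F^e_*(R_P))$ forces $\s(R_P)\le1$, the equality $\s(R)=1$ makes every $\s(R_P)=1$, hence every $R_P$ regular by \cite{HunekeLeuschke}, hence $R$ regular; the converse is immediate, giving (3). Likewise $\s(R)>0$ makes every $\s(R_P)>0$, hence every $R_P$ strongly F-regular by \cite{AberbachLeuschke}, hence $R$ strongly F-regular, and conversely strong F-regularity gives $\s(R_\m)>0$ for all $\m$, whence $\s(R)=\min_\m\s(R_\m)>0$ because the minimum is attained; this is (4). For (5), given a faithfully flat $R\to T$, I would pick $P_0$ with $\s(R_{P_0})=\s(R)$ and, by surjectivity of $\Spec(T)\to\Spec(R)$, a prime $Q_0$ of $T$ over $P_0$; then $R_{P_0}\to T_{Q_0}$ is faithfully flat and local, so the local flat-descent inequality for F-signature gives $\s(T_{Q_0})\le\s(R_{P_0})=\s(R)$, and therefore $\s(T)=\min_Q\s(T_Q)\le\s(R)$. (Alternatively, (5) can be proved directly by comparing the splitting-number functions of $F^e_*R$ and $F^e_*T$ across the flat base change, mirroring the local argument.)
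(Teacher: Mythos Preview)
Your proposal is correct and follows essentially the same route as the paper: the key ingredients---Stafford's generalization of Serre splitting to arbitrary finitely generated modules (giving the bound $\frk(M)\ge\min_P\frk(M_P)-\dim R$), Polstra's uniform convergence of the normalized splitting numbers, lower semicontinuity of $P\mapsto\s(R_P)$, and reduction of (3)--(5) to the local theorems of Huneke--Leuschke, Aberbach--Leuschke, and Yao---are exactly those the paper uses. The only organizational difference is that the paper first proves existence of the limit (1) independently, by a direct sequence-lemma argument (bounding $a_{e+1}(R)/p^{(e+1)\gamma}$ against $a_e(R)/p^{e\gamma}$ via a short exact sequence $0\to F_*R\to R^{\oplus p^\gamma}\to T\to 0$ and the free-rank lemma), and only afterward identifies the limit with $\min_P\s(R_P)$; your approach establishes (1) and (2) simultaneously through the localization comparison and uniform convergence, which is slightly less self-contained for existence alone but perfectly valid.
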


This article is organized as follows. Section~\ref{Background} is used to set up notation and recall previously known results. Section~\ref{Global HK} develops the theory of global Hilbert-Kunz multiplicity of finitely generated modules over non-local F-finite rings. We then introduce the theory of global F-signature of finitely generated $R$-modules over non-local F-finite rings in Section~\ref{Global F-signature section}. The global F-signature of a pair $(R,\D)$ where $R$ is an F-finite ring and $\D$ is a Cartier subalgebra is also introduced.  In section~\ref{Global non-F-finite} we study global Hilbert-Kunz multiplicity and global F-signature of faithfully flat extensions. Besides showing similarities between the local and global theory of Hilbert-Kunz multiplicity and F-signature, we also provide examples in Sections~\ref{Global HK}, \ref{Global F-signature section}, and~\ref{Global non-F-finite} which illustrate their differences.

This paper provides a framework to globalize numerical invariants of positive characteristic rings. In \cite{DSPY2}, the authors of this paper establish results similar to those of this paper for other numerical invariants, including Frobenius Betti numbers. In doing so, we globalize more F-invariants of interest.

\section{Background}\label{Background}

 If $R$ is a domain and $M$ a finitely generated $R$-module, the rank of $M$ is defined as $\rank_R(M)=\dim_K(M\otimes_R K)$, where $K$ is the fraction field of $R$. When $R$ is not a domain, the notion of rank is not necessarily uniquely defined. In particular, in this article we will need to use two different definitions. Given a finitely generated $R$-module $M$, we define the rank of $M$ as $\rank_R(M)=\max\{\rank_{R/Q}(M/QM)\mid Q\in\Min(R)\}$, and we define the min-rank of $M$ as $\minrank_R(M) = \min\{\rank_{R/Q}(M/QM)\mid Q\in\Min(R)\}$. The reason for giving the name of rank to the maximum of the ranks modulo minimal primes is that this is the definition that we will mostly use in this article. Clearly, the two notions agree when $R$ is a domain. As discussed in the introduction, we use $\lambda_R(\blank)$ and $\mu_R(\blank)$ to denote the length of a finite length $R$-module and the minimal number of generators of a finitely generated $R$-module respectively. If confusion is not likely to arise, we commonly omit subscripts from these notations.

\subsection{F-finite rings}  As discussed in the introduction, $R$ is F-finite if for some, equivalently for all positive integers $e\in\N$,  $F^e_*R$ is a finitely generated $R$-module. Every F-finite ring is excellent, \cite[Theorem 2.5]{Kunz1976}. If $R$ is F-finite and $M$ a finitely generated $R$-module, then $F^e_*M$ is a finitely generated $R$-module for each $e\in\N$. Once again, $F^e_*M$ is the $R$-module $M$ obtained via restriction of scalars by $F^e$. If $R$ is a domain, then $F^e_*R$ is naturally isomorphic with $R^{1/p^e}$, the ring of $p^e$th roots of $R$, as $R$ lies in an algebraic closure of its fraction field. However, we will refrain from using this notation and henceforth use $F^e_*\blank$.

Let $R$ be an F-finite ring. Given $P\in\Spec(R)$ let $\kappa(P)=R_P/PR_P$ be the residue field of $R_P$ and let $\alpha(P)=\log_{p^e}[F^e_*\kappa(P):\kappa(P)]$, which is independent of the choice of $e>0$. Let $\gamma(R)=\max\{\alpha(Q)\mid Q\in \min(R)\}$. It is easily verified that, if $R$ is a domain, then $\rank(F^e_*R)=p^{e\gamma(R)}$ for each $e\in\N$.  If $M$ is a finitely generated $R$-module, $I=\Ann_R(M)$, we define $\gamma(M)$ as $\gamma(R/I)$.

 Kunz showed that if $R$ is locally equidimensional F-finite ring, then $\height(P)+\alpha(P)$ is constant on connected components of $\Spec(R)$. We record this result for future reference.

\begin{Lemma}[{\cite[Proposition 2.3]{Kunz1976}\label{Kunz's Lemma}}] Suppose that $R$ is an F-finite ring, then for any two prime ideals $P\subseteq Q$, we have $\alpha(P)=\alpha(Q)+\dim(R_Q/PR_Q)$. In particular, if $R$ is locally equidimensional, then for any two prime ideals $P$ and $Q$ which lie in a common connected component of $\Spec(R)$, we have $\alpha(P)+\height(P)=\alpha(Q)+\height(Q)$.
\end{Lemma}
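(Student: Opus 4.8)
The plan is to reduce the main identity to a statement about a single F-finite local domain, prove that statement by comparing Hilbert--Samuel multiplicities along the Frobenius inclusion $A^{p^e}\subseteq A$, and then deduce the ``in particular'' from a short chain argument. First I would observe that none of $\kappa(P)$, $\kappa(Q)$, $\dim(R_Q/PR_Q)$ changes if $R$ is replaced by $R_Q$, and that replacing $R_Q$ by the (again F-finite) local domain $A:=R_Q/PR_Q$ identifies $\operatorname{Frac}(A)=\kappa(P)=:L$, the residue field of $A$ with $\kappa(Q)=:\ell$, and $\dim A=\dim(R_Q/PR_Q)=:d$. With these identifications the desired equality $\alpha(P)=\alpha(Q)+\dim(R_Q/PR_Q)$ becomes exactly $[L:L^{p^e}]=p^{ed}\,[\ell:\ell^{p^e}]$, and it suffices to prove this.

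For the core of the argument I would use that F-finiteness of $A$ means precisely that $A$ is a finitely generated module over its subring $A^{p^e}$, which is again a local domain (isomorphic to $A$ via the Frobenius $a\mapsto a^{p^e}$) whose spectrum is homeomorphic to $\Spec A$. A direct check gives $\operatorname{Frac}(A^{p^e})=L^{p^e}$ and that the residue field extension of $A^{p^e}\subseteq A$ is $\ell^{p^e}\subseteq\ell$, of degree $[\ell:\ell^{p^e}]$; in particular $A$ is a torsion-free $A^{p^e}$-module of rank $[L:L^{p^e}]$. I would then pick any system of parameters $y_1,\dots,y_d\in\mathfrak m_A$ of $A$, set $\mathfrak a=(y_1,\dots,y_d)A$ and $\mathfrak b=(y_1^{p^e},\dots,y_d^{p^e})A^{p^e}$ (so $\mathfrak b A=\mathfrak a^{[p^e]}$ is $\mathfrak m_A$-primary, hence $\mathfrak b$ is $\mathfrak m_{A^{p^e}}$-primary), and combine four facts: (i) comparing the Hilbert--Samuel functions $n\mapsto\lambda_{A^{p^e}}(A/\mathfrak b^nA)$ and $n\mapsto\lambda_A(A/(\mathfrak a^{[p^e]})^n)$, which differ by the factor $[\ell:\ell^{p^e}]$, yields $e(\mathfrak b;{}_{A^{p^e}}A)=[\ell:\ell^{p^e}]\cdot e(\mathfrak a^{[p^e]};A)$; (ii) the associativity formula for multiplicity applied to the torsion-free $A^{p^e}$-module $A$ over the domain $A^{p^e}$ gives $e(\mathfrak b;{}_{A^{p^e}}A)=[L:L^{p^e}]\cdot e(\mathfrak b;A^{p^e})$; (iii) the Frobenius isomorphism $A\xrightarrow{\sim}A^{p^e}$ carries $\mathfrak a$ onto $\mathfrak b$, so $e(\mathfrak b;A^{p^e})=e(\mathfrak a;A)$; and (iv) the parameter-power multiplicity formula gives $e(\mathfrak a^{[p^e]};A)=p^{ed}\,e(\mathfrak a;A)$. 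Feeding (i)--(iv) into one another and cancelling the positive number $e(\mathfrak a;A)$ gives $[L:L^{p^e}]=p^{ed}[\ell:\ell^{p^e}]$, and taking $\log_{p^e}$ finishes the first assertion.

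For the ``in particular'', suppose $R$ is locally equidimensional. When $P\subseteq Q$, the ring $R_Q$ is equidimensional and, being excellent, catenary, so $\height(P)+\dim(R_Q/PR_Q)=\height(Q)$; substituting this into the identity just proved gives $\alpha(P)+\height(P)=\alpha(Q)+\height(Q)$. For $P$ and $Q$ lying in a common connected component of $\Spec R$, I would join them by a finite chain $P=P_0,P_1,\dots,P_n=Q$ of pairwise comparable primes and apply the previous case $n$ times; the existence of such a chain is the standard fact that the connected components of $\Spec R$ are precisely the equivalence classes under ``linkable by a chain of comparable primes'', which holds because each such class is stable under generization and specialization and is therefore, $R$ being Noetherian, a closed (hence also open) subset, so it must be a whole connected component.

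The reduction step and the elementary computations of $\operatorname{Frac}(A^{p^e})$ and the residue field of $A^{p^e}$ are routine; the substance is the bookkeeping in the middle paragraph, where the whole identity is made to run on the Frobenius ring-isomorphism $A\cong A^{p^e}$ together with the parameter-power formula $e(\mathfrak a^{[p^e]};A)=p^{ed}e(\mathfrak a;A)$. I expect the only points really needing care to be confirming that this last formula genuinely applies --- which comes down to the automatic fact that a system of parameters of $A$ can be taken inside $\mathfrak m_A$, so that $\mathfrak b$ is $\mathfrak m_{A^{p^e}}$-primary with $e(\mathfrak a;A)>0$ --- and, for the final sentence, the topological lemma identifying connected components with chains of comparable primes.
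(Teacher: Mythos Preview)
The paper does not supply its own proof of this lemma; it is simply recorded as a citation of \cite[Proposition 2.3]{Kunz1976}. So there is no in-paper argument to compare your proposal against.

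That said, your argument is correct and pleasantly self-contained. The reduction to a single F-finite local domain $A=R_Q/PR_Q$ is exactly the right move, and the multiplicity bookkeeping (i)--(iv) is sound: step (i) is the standard ``length over a subring equals residue-degree times length'' fact, (ii) is the associativity formula over the domain $A^{p^e}$ applied to the torsion-free module $A$ (using $\dim A^{p^e}=\dim A$ from module-finiteness), (iii) is the ring isomorphism $A\cong A^{p^e}$, and (iv) is the classical identity $e((y_1^{n},\dots,y_d^{n});A)=n^d\,e((y_1,\dots,y_d);A)$ for a system of parameters with $n=p^e$. Cancelling $e(\mathfrak a;A)>0$ is legitimate since $\mathfrak a$ is a parameter ideal in the $d$-dimensional local ring $A$. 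For the ``in particular'', your use of excellence (hence catenarity) of F-finite rings to get $\height(P)+\dim(R_Q/PR_Q)=\height(Q)$ in the equidimensional local ring $R_Q$ is exactly what is needed, and the chain argument to pass between non-comparable primes in a connected component is standard. Nothing is missing.
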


\subsection{Hilbert-Kunz multiplicity}\label{HK multiplicity} Suppose $R=(R,\m,k)$ is a local ring of prime characteristic $p$, of dimension $d$, and $M$ a finitely generated $R$-module. Let $I\subseteq R$ be an ideal. If $I=(i_1,\dots,i_s)$, then one easily checks that $I^{[p^e]}=(i_1^{p^e},\dots,i_s^{p^e})$. So for each $e\in\N$ there are inclusions of ideals $I^{sp^e}\subseteq I^{[p^e]}\subseteq I^{p^e}$. Therefore if $I$ is $\m$-primary, so is each $I^{[p^e]}$, and we have the set of inequalities $$\lambda(M/I^{p^e}M)\leq \lambda(M/I^{[p^e]}M)\leq \lambda(M/I^{sp^e}M).$$ So as a function, $\lambda(M/I^{[p^e]}M)=O(p^{e\dim(M)})$. Monsky proved the following. 

\begin{Theorem}[{\cite[Theorem 1.8]{Monsky1983}}]
\label{HK exists local} Let $(R,\m,k)$ be a local ring of prime characteristic $p$, dimension $d$, and $M$ a finitely generated $R$-module. Then the following limit exists, $$\lim_{e\rightarrow \infty}\frac{1}{p^{ed}}\lambda(M/I^{[p^e]}M).$$ Its limit is denoted $\e(I,M)$, and is called the Hilbert-Kunz multiplicity of $I$ with respect to $M$. Moreover, $\lambda(M/I^{[p^e]}M)=\e(I,M)p^{ed}+O(p^{e(d-1)}).$
\end{Theorem}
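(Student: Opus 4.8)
The plan is to prove a ``quasi-multiplicativity'' estimate
\[
\bigl|\lambda(M/I^{[p^{e+1}]}M)-p^{d}\,\lambda(M/I^{[p^{e}]}M)\bigr|\le C\,p^{e(d-1)}
\]
for a constant $C$ independent of $e$, and then let everything else follow from a geometric series. Indeed, writing $a_e=\lambda(M/I^{[p^e]}M)/p^{ed}$, the estimate gives $|a_{e+1}-a_e|\le (C/p^{d})p^{-e}$, so $(a_e)$ is Cauchy, hence converges to a limit which we call $\e(I,M)$; summing the geometric tail also gives $\lambda(M/I^{[p^e]}M)=\e(I,M)p^{ed}+O(p^{e(d-1)})$, the ``moreover'' statement. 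The only input used beforehand is the crude bound $\lambda(M/I^{[p^e]}M)=O(p^{e\dim M})$, which holds because $I^{sp^e}\subseteq I^{[p^e]}$ (with $s=\mu(I)$) forces $M/I^{[p^e]}M$ to be a quotient of $M/I^{sp^e}M$, of length $O((sp^e)^{\dim M})$ by Hilbert--Samuel. In particular the theorem is trivial (with $\e(I,M)=0$) once $\dim M<d$, so all the content is in the case $\dim M=d$.

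To get the estimate I would first reduce to $R$ a complete local domain and $M=R$. Completion leaves all the lengths $\lambda(M/I^{[p^e]}M)$ unchanged, so assume $R$ complete. With a prime filtration $0=M_0\subset\dots\subset M_n=M$, $M_j/M_{j-1}\cong R/\p_j$, the difference between $\lambda(M/I^{[p^e]}M)$ and $\sum_j\lambda\bigl((R/\p_j)/I^{[p^e]}(R/\p_j)\bigr)$ is a sum of terms bounded by $\lambda\bigl(\Tor_1^{R}(R/\p_j,R/I^{[p^e]})\bigr)$, and an Artin--Rees argument using the sandwich $I^{sp^e}\subseteq I^{[p^e]}\subseteq I^{p^e}$ bounds each of these by $O(p^{e(d-1)})$; so only the prime quotients of dimension $d$ matter, and there $R/\p_j$ is a complete local domain of dimension $d$. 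Replacing $R$ by such a quotient, we are left with $M=R$, $R$ a complete local domain of dimension $d$.

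The heart of the matter is this last case, for which I would use Frobenius push-forward, assuming first that $R$ is F-finite (the non-F-finite case reduces to this one via the Hochster--Huneke $\Gamma$-construction, which preserves the relevant lengths and dimensions, or can be treated by Monsky's original argument). Write $p^{\alpha}=[k^{1/p}:k]$. For any finitely generated $N$ and ideal $J$ one has $F_*(N/J^{[p]}N)\cong F_*N/J\,F_*N$, and $\lambda(F_*L)=p^{\alpha}\lambda(L)$ for $L$ of finite length; with $N=R$ and $J=I^{[p^e]}$ these give $p^{\alpha}\lambda(R/I^{[p^{e+1}]})=\lambda(F_*R/I^{[p^e]}F_*R)$. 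By Lemma~\ref{Kunz's Lemma}, $F_*R$ is a torsion-free $R$-module of rank $\rho:=p^{\alpha+d}$, so one can pick $R$-linear embeddings $R^{\rho}\hookrightarrow F_*R$ and $F_*R\hookrightarrow R^{\rho}$ with cokernels $C$, $D$ of dimension $<d$. Applying $(-)\otimes_R R/I^{[p^e]}$ to these two short exact sequences and using $\lambda(C/I^{[p^e]}C),\lambda(D/I^{[p^e]}D)=O(p^{e(d-1)})$ (from $I^{sp^e}\subseteq I^{[p^e]}$) yields both inequalities packaged as $\bigl|\lambda(F_*R/I^{[p^e]}F_*R)-\rho\,\lambda(R/I^{[p^e]})\bigr|=O(p^{e(d-1)})$; dividing by $p^{\alpha}$ and using $\rho/p^{\alpha}=p^{d}$ gives precisely the estimate for $M=R$.

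I expect the main obstacle to be the error-term bookkeeping rather than the strategy: concretely, showing that $\lambda\bigl(\Tor_1^{R}(R/\p,R/I^{[p^e]})\bigr)$ — and, more generally, lengths of lower-dimensional modules modulo $I^{[p^e]}$ — are $O(p^{e(d-1)})$ uniformly in $e$, which is exactly where one has to exploit carefully the interplay between ordinary powers $I^n$ and Frobenius powers $I^{[p^e]}$. A secondary point, only needed for the full non-F-finite statement, is to justify the $\Gamma$-construction step; over an F-finite ring — the setting of this paper — no such step is required and the argument above is essentially self-contained given Lemma~\ref{Kunz's Lemma}.
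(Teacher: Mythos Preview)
The paper does not prove this statement itself; it is quoted from \cite{Monsky1983}. The closest thing to a proof in the paper is the sketch surrounding Lemma~\ref{Sequence Lemma} (for the local F-finite case, following \cite{PolstraTucker}) together with the global analogues in Section~\ref{Global HK}. That sketch is essentially your plan: establish the one-step inequality needed for Lemma~\ref{Sequence Lemma}, reduce to the case of a domain, and for a domain exploit the two embeddings $R^{p^{\gamma}}\hookrightarrow F_*R$ and $F_*R\hookrightarrow R^{p^{\gamma}}$ with torsion cokernels. Your treatment of the domain step is correct and matches this.

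The one substantive difference is in the reduction to the domain case. You prime-filter $M$ and propose to bound the defect from additivity by $\lambda\bigl(\Tor_1^R(R/\p_j,R/I^{[p^e]})\bigr)$, then bound that by $O(p^{e(d-1)})$ via Artin--Rees and the sandwich $I^{sp^e}\subseteq I^{[p^e]}\subseteq I^{p^e}$. As you anticipate, this is the delicate point, and the sketch does not go through as written: Artin--Rees applied to $M_{j-1}\subseteq M_j$ only shows the error is a subquotient of $I^{p^e-c}M_{j-1}/I^{sp^e}M_{j-1}$, whose Hilbert--Samuel length is of order $p^{e\dim M_{j-1}}(s^{\dim M_{j-1}}-1)$, i.e.\ $O(p^{ed})$ rather than $O(p^{e(d-1)})$ once $\dim M_{j-1}=d$. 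The route taken in the paper (Lemma~\ref{HK Lemma}, Corollary~\ref{Corollary to HK Lemma}, and the local outline after Lemma~\ref{Local Observation 2}) avoids $\Tor$ altogether: since $M$ and $\bigoplus_{\p\in\Assh(R)}(R/\p)^{\lambda_{R_\p}(M_\p)}$ are isomorphic at every minimal prime of maximal dimension, there are \emph{right}-exact sequences in both directions with cokernels of dimension $<d$, and tensoring with $R/I^{[p^e]}$ preserves right exactness, giving both inequalities at once. This is exactly the trick you already use to compare $R^{\rho}$ and $F_*R$; applying it once more at the reduction stage removes the obstacle you flagged.
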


We let $\e(M)=\e(\m, M)$ and call this number the Hilbert-Kunz multiplicity of $M$. Hilbert-Kunz multiplicity is additive on short exact sequences. So if $I$ is an $\m$-primary ideal and $0\rightarrow M'\rightarrow M\rightarrow M''\rightarrow 0$ a short exact sequence of finitely generated $R$-modules, then $\e(I,M)=\e(I,M')+\e(I,M'')$, see \cite[Theorem 1.8]{Monsky1983}. Because of this, study of the  Hilbert-Kunz multiplicity of a finitely generated $R$-module can typically be reduced to the scenario that $R$ is a domain and $M=R$. An application of the additivity of Hilbert-Kunz multiplicity is the associativity formula. Let $\Assh(R)=\{P\in \Spec(R)\mid \dim(R/P)=\dim(R)\}$ for a local ring $(R,\m)$.

\begin{Theorem}[Associativity Formula]\label{Associativity Formula Local} Let $(R,\m,k)$ be a local ring of prime characteristic $p$ and dimension $d$. Let $I$ be an $\m$-primary ideal and $M$ a finitely generated $R$-module. Then $$\e(I,M)=\sum_{P\in\Assh(R)}\lambda_{R_P}(M_P)\e(I, R/P).$$
\end{Theorem}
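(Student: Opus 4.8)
The plan is to reduce to the case $M = R/P$ by means of a prime filtration and additivity, and then to identify which primes contribute to the resulting sum, and with what coefficient. First I would invoke the existence of a prime filtration: since $M$ is a finitely generated module over the Noetherian ring $R$, there is a chain $0 = M_0 \subseteq M_1 \subseteq \cdots \subseteq M_n = M$ of submodules with $M_j/M_{j-1} \cong R/P_j$ for primes $P_j \in \Spec(R)$. Breaking this into the short exact sequences $0 \to M_{j-1} \to M_j \to R/P_j \to 0$ and applying additivity of Hilbert-Kunz multiplicity on short exact sequences (Theorem~\ref{HK exists local}), one gets $\e(I,M) = \sum_{j=1}^n \e(I, R/P_j)$; note that the image of $I$ in each $R/P_j$ remains primary to the maximal ideal of $R/P_j$, so each term is well defined.

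Next I would observe that $\e(I, R/P_j) = 0$ whenever $\dim(R/P_j) < d$. Indeed, applying the last assertion of Theorem~\ref{HK exists local} to the module $R/P_j$ gives $\lambda\bigl((R/P_j)/I^{[p^e]}(R/P_j)\bigr) = O(p^{e\dim(R/P_j)})$, so dividing by $p^{ed}$ and letting $e \to \infty$ yields $0$. Since $\dim(R/P_j) \le d$ always, and $\dim(R/P_j) = d$ precisely when $P_j \in \Assh(R)$, the sum collapses to $\e(I,M) = \sum_{P \in \Assh(R)} c_P \, \e(I, R/P)$, where $c_P = \#\{\, j : P_j = P \,\}$.

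Finally I would compute $c_P$ by localizing the filtration at a fixed $P \in \Assh(R)$. The localized chain $0 = (M_0)_P \subseteq \cdots \subseteq (M_n)_P = M_P$ has successive quotients $(R/P_j)_P$. If $P_j \not\subseteq P$ this quotient vanishes; if $P_j \subseteq P$, then a strict inclusion $P_j \subsetneq P$ would force $\dim(R/P_j) \ge \dim(R/P) + 1 = d+1$, which is impossible, so $P_j = P$ and the quotient is $\kappa(P)$, of length $1$ over $R_P$. Hence exactly $c_P$ of the localized quotients are nonzero, each of length one, so $\lambda_{R_P}(M_P) = c_P$, and substituting gives the stated formula. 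The argument is entirely formal once Monsky's additivity and the dimension bound are in hand; the only step requiring a little care is this last localization, where one uses the maximality of $\dim(R/P)$ for $P \in \Assh(R)$ to annihilate all filtration quotients $R/P_j$ with $P_j \subsetneq P$.
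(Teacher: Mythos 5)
Your proof is correct, and it follows exactly the route the paper indicates (the paper states this as a recalled consequence of Monsky's additivity and gives no proof of its own): prime filtration, additivity on short exact sequences, vanishing of $\e(I,R/P_j)$ for $\dim(R/P_j)<d$ via the bound $\lambda\bigl((R/P_j)/I^{[p^e]}(R/P_j)\bigr)=O(p^{e\dim(R/P_j)})$, and identification of the multiplicity of $R/P$ in the filtration with $\lambda_{R_P}(M_P)$ by localizing at the minimal prime $P\in\Assh(R)$. The only cosmetic point is that the $O(p^{e\dim(M)})$ growth bound you use is the elementary estimate stated just before Theorem~\ref{HK exists local} (from $I^{sp^e}\subseteq I^{[p^e]}\subseteq I^{p^e}$ and Hilbert--Samuel theory) rather than the final assertion of that theorem itself.
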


There are theorems which relate values of $\lambda(R/\m^{[p^e]}R)$ and $\e(R)$ with the severity of the singularity of $(R,\m,k)$, the first of which is Kunz's Theorem.

\begin{Theorem}[{\cite[Theorem 3.3]{Kunz1969}}]\label{Kunz's Theorem} Let $(R,\m,k)$ be a local ring of prime characteristic $p$ and of dimension $d$. Then for each $e\in\N$, $\lambda(R/\m^{[p^e]}R)\geq p^{ed}$ with equality for some, equivalently for all, positive integers $e\in\N$ if and only if $F^e_*R$ is a flat $R$-module for some, equivalently for all, positive integers $e\in\N$ if and only if $R$ is regular.
\end{Theorem}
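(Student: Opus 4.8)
The plan is to fix $e$, set $q=p^e$, and prove that the three conditions ``$R$ is regular'', ``$F^e_*R$ is a flat $R$-module'' and ``$\lambda(R/\m^{[q]}R)=q^d$'' are equivalent; since regularity makes no reference to $e$, this yields the ``for some $e\iff$ for all $e$'' assertions at once, and leaves only the universal inequality $\lambda(R/\m^{[q]}R)\ge q^d$ to be proved. Throughout I would replace $R$ by its completion $\widehat R$ whenever convenient: completion is faithfully flat, commutes with $F^e_*(-)$ and with the formation of $\m^{[q]}$, preserves regularity and flatness of $F^e_*R$, and leaves $\lambda(R/\m^{[q]}R)$ unchanged, this being already the length of a finite-length module.

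For the inequality, surjecting $R$ onto $R/P$ for some $P\in\Assh(R)$ can only decrease $\lambda(R/\m^{[q]}R)$, so I may assume $R$ is a local domain of dimension $d$. Two identities then finish it. First, a minimal generating set of $F^e_*R$ spans the generic stalk, so $\mu(F^e_*R)\ge\rank(F^e_*R)=p^{e\gamma(R)}$; and $\gamma(R)=\alpha(\m)+d$ by Lemma~\ref{Kunz's Lemma}, so $p^{e\gamma(R)}=[F^e_*k:k]\,q^d$. Second, applying the exact functor $F^e_*(-)$ to a composition series of $R/\m^{[q]}R$ — each composition factor $k$ becoming a $k$-vector space of dimension $[F^e_*k:k]$ — identifies $\mu(F^e_*R)=\dim_k(F^e_*R/\m F^e_*R)$ with $[F^e_*k:k]\cdot\lambda(R/\m^{[q]}R)$. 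Comparing, $\lambda(R/\m^{[q]}R)\ge q^d$. (If $R$ is not F-finite I would appeal instead to \cite{Kunz1969}, or deduce the inequality by a limiting argument after base change; this does not affect what follows.)

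If $R$ is regular I would pass to $\widehat R$ and, by Cohen's structure theorem, write it as a formal power series ring $k[[x_1,\dots,x_d]]$. Then $\m=(x_1,\dots,x_d)$ is generated by a regular sequence, hence so is $\m^{[q]}=(x_1^q,\dots,x_d^q)$, giving $\lambda(R/\m^{[q]}R)=q^d\lambda(R/\m)=q^d$; and $F^e_*R$ is a \emph{free} $R$-module — a basis being the monomials $x_1^{a_1}\cdots x_d^{a_d}$ with $0\le a_i<q$, times a $k$-basis of $F^e_*k$ — in particular flat. So regularity implies both of the other conditions, for every $e$. Conversely, the implication ``$F^e_*R$ flat $\Rightarrow\lambda(R/\m^{[q]}R)=q^d$'' is elementary: a finitely generated flat module over a local ring is free, so $\mu(F^e_*R)=\rank(F^e_*R)$, and the two identities of the previous paragraph (which hold for any local ring, not only for domains) then force $\lambda(R/\m^{[q]}R)=q^d$.

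What remains — the substantive content of the theorem, and in my estimation the principal obstacle — is the implication ``$F^e_*R$ flat (equivalently, $\lambda(R/\m^{[q]}R)=q^d$) $\Rightarrow R$ regular''. The standard route, which I would follow, goes through the Frobenius (Peskine--Szpiro) functor $F^{e*}$, whose exactness is precisely flatness of $F^e_*R$: applying $F^{e*}$ to a minimal $R$-free resolution $G_\bullet$ of $k$ produces a complex $F^{e*}G_\bullet$ of free modules of the same ranks, with differentials having entries in $\m^{[q]}\subseteq\m$ — hence still minimal — and acyclic, by exactness of $F^{e*}$, with $H_0=F^{e*}k=R/\m^{[q]}R$. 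Therefore $\beta_i^R(R/\m^{[q]}R)=\beta_i^R(k)$ for all $i$; iterating (flatness of $F^e_*R$ forces that of $F^{ej}_*R$, being a composition of flat homomorphisms) gives $\beta_i^R(R/\m^{[q^j]}R)=\beta_i^R(k)$ for all $i$ and all $j\ge1$, equivalently, by the previous paragraph, $\lambda(R/\m^{[q^j]}R)=q^{jd}$ for all $j$, so that $\e(R)=1$ by Theorem~\ref{HK exists local}. The genuinely delicate step — the heart of Kunz's theorem — is to deduce from this rigidity that $\pd_R k<\infty$, after which Serre's criterion gives that $R$ is regular; I expect this last deduction, rather than any of the reductions above, to be where the real work lies.
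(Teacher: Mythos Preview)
The paper does not prove this theorem; it simply cites it as \cite[Theorem~3.3]{Kunz1969} and uses it as background. So there is no ``paper's own proof'' to compare against, and your proposal has to stand on its own.

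Your sketch of the easy implications is essentially correct, though it leans on F-finiteness (which you acknowledge) and on Lemma~\ref{Kunz's Lemma} from \cite{Kunz1976}; the latter is harmless, since its proof only uses the direction ``regular $\Rightarrow \lambda(R/\m^{[q]})=q^d$'', which you establish independently. The claim that ``the two identities of the previous paragraph hold for any local ring, not only for domains'' needs a word of justification: once $F^e_*R$ is free of rank $r$, localizing at some $P\in\Assh(R)$ and comparing lengths gives $r=p^{e\alpha(P)}=[F^e_*k:k]\,q^d$, again via Lemma~\ref{Kunz's Lemma}.

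The genuine gap is exactly where you say it is: you do not complete the implication ``$F^e_*R$ flat $\Rightarrow$ $R$ regular''. Your Betti-number rigidity and the consequence $\e(R)=1$ are correct, but stopping there is not a proof. You cannot close the loop by invoking Theorem~\ref{Watanabe Yoshida Hilbert-Kunz local}: that result requires $R$ to be formally unmixed (not assumed here), and in any case it is logically and chronologically downstream of Kunz's theorem --- its proof in \cite{WatanabeYoshida2000,HunekeYao} uses Kunz's result. Kunz's own 1969 argument does not go through Hilbert--Kunz multiplicity at all: after completing, one writes $R=S/I$ with $(S,\n)$ a power series ring, observes that flatness of $F^e$ on $R$ forces $\lambda_R(R/\m^{[q]})=\lambda_S(S/(\n^{[q]}+I))=q^d$ for all $q$, and then a direct length comparison in $S$ (using that $S$ is regular, so $\lambda_S(S/\n^{[q]})=q^n$) shows that $I\subseteq\bigcap_q\n^{[q]}=0$. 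Alternatively, the modern route you gesture at can be finished: the equality $\beta_i^R(R/\m^{[q^j]})=\beta_i^R(k)$, combined with the Auslander--Buchsbaum formula applied after first showing $R$ is Cohen--Macaulay (via the Buchsbaum--Eisenbud acyclicity criterion on the Frobenius pushforward of the Koszul complex on a system of parameters), yields $\pd_R k<\infty$. Either way, this step is not a footnote --- it is the theorem.
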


In particular, if $R$ is regular then $\e(R)=1$. Watanabe and Yoshida are able to prove under mild hypotheses that the reverse implication of this result is true.

\begin{Theorem}[{\cite[Theorem 1.5]{WatanabeYoshida2000}}\label{Watanabe Yoshida Hilbert-Kunz local}\footnote{See \cite{HunekeYao} for a simpler proof.}] Let $(R,\m,k)$ be a formally unmixed local ring of characteristic $p$. Then $R$ is regular if and only if $\e(R)=1$.
\end{Theorem}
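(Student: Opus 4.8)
The forward implication is immediate from Kunz's Theorem (Theorem~\ref{Kunz's Theorem}): if $R$ is regular then $\lambda(R/\m^{[p^e]})=p^{ed}$ for every $e$, so $\e(R)=1$. For the converse I would combine a chain of standard reductions with an induction on dimension, isolating one genuinely hard step.

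The reductions go as follows. Since completion is faithfully flat and leaves the lengths $\lambda(R/\m^{[p^e]})$ unchanged, $\e(R)=\e(\widehat R)$; moreover $R$ is regular iff $\widehat R$ is, while formal unmixedness means precisely that $\widehat R$ is unmixed. So I may assume $R$ is complete and unmixed, i.e. $\Ass(R)=\Min(R)=\Assh(R)$. Now apply the Associativity Formula (Theorem~\ref{Associativity Formula Local}):
\[
1=\e(\m,R)=\sum_{P\in\Assh(R)}\lambda_{R_P}(R_P)\,\e(\m,R/P).
\]
Each weight $\lambda_{R_P}(R_P)$ is a positive integer, and $\e(\m,R/P)\geq 1$ for each $P$ by Kunz's Theorem applied to the $d$-dimensional local ring $R/P$; hence $\Min(R)$ consists of a single prime $P$, with $\lambda_{R_P}(R_P)=1$ and $\e(\m,R/P)=1$. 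Because $\Ass(R)=\{P\}$, the localization map $R\to R_P$ is injective and its target $R_P$ is a field of length $1$, so $R$ is a domain, $P=(0)$, and $\e(R)=1$. Finally, replacing $R$ by $R[t]_{\m R[t]}$ (harmless for $\e$, regularity, domain-ness and unmixedness) I may also assume the residue field is infinite. We are reduced to: \emph{a complete local domain $R$ of dimension $d$ with $\e(R)=1$ is regular.}

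Now I would induct on $d$. For $d=0$, $\e(R)=\lambda(R)=1$ forces $R=k$. For $d\geq 1$, the plan is first to show that $\e(R)=1$ already forces $R$ to be Cohen--Macaulay, and then to run a hyperplane-section induction: choose a sufficiently general (superficial) $x\in\m\setminus\m^2$, observe that $S:=R/xR$ is Cohen--Macaulay of dimension $d-1$ with, crucially, $\e(S)=1$ as well — a hyperplane section of a Cohen--Macaulay ring cannot raise the Hilbert--Kunz multiplicity, and $\e(S)\geq 1$ by Kunz — apply the inductive hypothesis to get $S$ regular, and conclude that $R$ is regular because $x$ is a minimal generator of $\m$. (At the bottom, $\dim S\leq 1$, where $\e$ coincides with the Hilbert--Samuel multiplicity and the regularity criterion is classical.)

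The main obstacle is exactly the claim that $\e(R)=1$ forces $R$ to be Cohen--Macaulay: without some such input the hyperplane section $R/xR$ may acquire embedded primes and its Hilbert--Kunz multiplicity may strictly exceed that of $R$, so the induction breaks. This is the technical heart of Watanabe and Yoshida's argument (later simplified by Huneke and Yao): one extracts Cohen--Macaulayness from sharp inequalities comparing $\e(R)$ with the Hilbert--Samuel multiplicity $e(\m,R)$ and with colengths of parameter ideals — equivalently, from a careful analysis of $\lambda(R/(x,\m^{[p^e]}))$ against $\lambda(R/\m^{[p^e]})$. It is also the point where formal unmixedness is indispensable: dropping it makes the statement false, as witnessed by the one-dimensional non-reduced ring $k[[x,y]]/(x^2,xy)$, which has $\e=1$ but is not regular.
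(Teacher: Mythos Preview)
This theorem is not proved in the paper at all: it is stated in the Background section as a citation to Watanabe--Yoshida \cite{WatanabeYoshida2000}, with a footnote pointing to the simpler proof in \cite{HunekeYao}, and is then used as a black box (for instance in the proof of Theorem~\ref{Watanabe-Yoshida Hilbert-Kunz}). So there is no ``paper's own proof'' to compare your attempt against.

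That said, your sketch is a fair outline of the known arguments and is honest about where the real work lies: the passage from $\e(R)=1$ to Cohen--Macaulayness. Your reductions to a complete domain via completion and the associativity formula are correct. The one place I would flag is your induction step: the claim that ``a hyperplane section of a Cohen--Macaulay ring cannot raise the Hilbert--Kunz multiplicity'' is not a statement you can simply invoke for $\e(\m,R)$ versus $\e(\m/(x),R/xR)$; the clean comparison theorems for $\e$ under specialization are for parameter ideals, not for the maximal ideal itself, and making this precise is essentially as hard as the step you have already isolated. The Huneke--Yao argument you allude to does not proceed by hyperplane-section induction in this form; rather, it shows directly that $\e(R)=1$ forces $\lambda(R/(\underline{x}))=e((\underline{x}),R)$ for a system of parameters $\underline{x}$, yielding Cohen--Macaulayness, and then concludes regularity from $e(R)=1$. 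So your proposal correctly names the crux but the surrounding inductive scaffolding would need to be replaced by that more direct route.
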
 

Recall that $(R,\m,k)$ is formally unmixed if $\dim(\hat{R}/Q\hat{R})=\dim(R)$ for each $Q\in \Ass(\hat{R})$. Theorems of Blickle and Enescu in \cite{BlickleEnescu}, and improvements of their Theorems later given by Aberbach and Enescu in \cite{AberbachEnescu2008}, show that rings with small Hilbert-Kunz multiplicity have decent singularities.

\begin{Theorem}[Blickle-Enescu, Aberbach-Enescu]\label{HK small means good local} Suppose that $(R,\m,k)$ is a formally unmixed local ring of characteristic $p$.  Let $e$ be the Hilbert-Samuel multiplicity of $R$. If $\e(R)\leq 1+\max\left\{1/\dim(R)!, 1/e\right\}$, then $R$ is strongly F-regular and Gorenstein.
\end{Theorem}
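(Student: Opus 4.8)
The plan is to reduce to the case of a complete F-finite local domain and then to convert each possible failure of a good property into an explicit lower bound for $\e(R)-1$ that the hypothesis forbids. First I would pass to $\widehat R$: completion is faithfully flat, $\e(R)=\e(\widehat R)$, formal unmixedness of $R$ becomes unmixedness of $\widehat R$, and both the Gorenstein property and strong F-regularity descend from $\widehat R$ to $R$. Applying the associativity formula (Theorem~\ref{Associativity Formula Local}) with $M=R$, and using unmixedness to identify $\Assh(R)=\Min(R)=\Ass(R)$, gives $\e(R)=\sum_{P\in\Min(R)}\lambda_{R_P}(R_P)\,\e(R/P)$, while $\e(R/P)\ge 1$ by Theorem~\ref{Kunz's Theorem}; hence if $R$ were not a domain we would get $\e(R)\ge 2>1+\max\{1/d!,1/e\}$ outside of low dimension, where the remaining unmixed non-domain cases are handled directly. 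Finally, via the $\Gamma$-construction of Hochster--Huneke one enlarges $R$ to a faithfully flat F-finite local domain with the same Hilbert--Kunz multiplicity along which the two properties again descend. So from now on $R$ is a complete F-finite local domain of dimension $d$ with Hilbert--Samuel multiplicity $e$; the extreme case $\e(R)=1$ already yields regularity by Theorem~\ref{Watanabe Yoshida Hilbert-Kunz local}, so we assume $1<\e(R)\le 1+\max\{1/d!,1/e\}$.

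The next task is to prove that $R$ is Cohen--Macaulay. Fix a minimal reduction $J=(x_1,\dots,x_d)$ of $\m$ (harmless after a residue-field extension), so that the multiplicity of $J$ equals $e$. Comparing $\lambda(R/\m^{[p^{e'}]})$ with $\lambda(R/J^{[p^{e'}]})$ as $e'\to\infty$, and using that $x_1^{p^{e'}},\dots,x_d^{p^{e'}}$ are again a system of parameters, one produces an inequality of the shape $\e(R)\ge 1+\tfrac{1}{d!\,e}\sum_{i<d}c_i\,\lambda_R(H^i_{\m}(R))$ with positive constants $c_i$; its right-hand side exceeds $1+\max\{1/d!,1/e\}$ unless $H^i_{\m}(R)=0$ for all $i<d$, i.e. unless $R$ is Cohen--Macaulay. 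This is the Hilbert--Kunz incarnation of the classical fact that small multiplicity forces the Cohen--Macaulay property.

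Now $R$ is a Cohen--Macaulay F-finite local domain with canonical module $\omega_R$, type $r=\mu(\omega_R)$, and $\lambda(R/J)=e$. For the Gorenstein conclusion I would show that if $r\ge 2$ then the socle of $R/\m^{[p^{e'}]}$ --- equivalently, the non-free part of a direct-sum decomposition of $F^{e'}_*R$ --- is large enough, uniformly in $e'$, to force $\e(R)\ge 1+\tfrac{c(r)}{d!\,e}$ for some $c(r)>0$; the hypothesis then forces $r=1$, so $R$ is Gorenstein. For strong F-regularity I would invoke the criterion of Aberbach and Leuschke that $\s(R)>0$ if and only if $R$ is strongly F-regular, together with a comparison estimate bounding $\s(R)$ from below by an explicit function of $\e(R)$ that tends to $1$ as $\e(R)\to 1$; when $\e(R)$ lies within the stated threshold of $1$ this gives $\s(R)>0$. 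Alternatively, once $R$ is known to be Gorenstein, one can show that smallness of $\e(R)$ forces F-rationality and conclude via the fact that a Gorenstein F-rational ring is strongly F-regular. Tracking constants through these estimates is what pins down the precise threshold $1+\max\{1/d!,1/e\}$.

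The main obstacle is the quantitative heart of the last two steps: manufacturing, from the bare failure of Cohen--Macaulayness, the Gorenstein property, or strong F-regularity, a lower bound for $\e(R)-1$ with the correct dependence on $d$ and $e$. Concretely this means controlling, uniformly in $e'$, quantities such as $\lambda_R\big((\m^{[p^{e'}]}:_R c)/\m^{[p^{e'}]}\big)$, the socle of $R/\m^{[p^{e'}]}$, or the free rank of $F^{e'}_*R$, and comparing them precisely with the corresponding data for the parameter ideal $J$; it is exactly to make these F-finite computations available, while still drawing conclusions about the original ring $R$, that the faithfully flat base changes of the first step are performed.
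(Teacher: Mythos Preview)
The paper does not prove this theorem. Theorem~\ref{HK small means good local} appears in Section~\ref{Background} as a background result attributed to Blickle--Enescu and Aberbach--Enescu, cited from \cite{BlickleEnescu} and \cite{AberbachEnescu2008}, with no argument supplied; it is then invoked as a black box in the proof of Theorem~\ref{HK small means good}. So there is nothing in the paper to compare your proposal against.

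As a separate matter, your sketch follows the broad architecture of the original arguments --- reduce to a complete F-finite domain, force Cohen--Macaulayness, then Gorenstein, then strong F-regularity via explicit lower bounds for $\e(R)-1$ --- but you correctly identify that the quantitative core is missing. A few specific comments: your domain reduction via the associativity formula is fine, but the parenthetical ``outside of low dimension'' is unnecessary, since $1+\max\{1/d!,1/e\}\le 2$ always. The inequality you propose for Cohen--Macaulayness, with $\lambda_R(H^i_\m(R))$ on the right, cannot be literally correct since these lengths need not be finite in general; the actual arguments work instead with $\lambda(R/J)-e(J;R)$ for a parameter ideal $J$, which vanishes exactly when $R$ is Cohen--Macaulay. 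For the Gorenstein step, the mechanism in the literature is not a direct socle count but rather the construction (due to Goto--Nakamura and used by Blickle--Enescu) of a Gorenstein cover of small colength, whose Hilbert--Kunz multiplicity one can compare with that of $R$. Finally, the bound in Proposition~\ref{HL Upper bound of F-signature} goes the wrong direction to give you a lower bound for $\s(R)$ from an upper bound on $\e(R)$, so the ``comparison estimate'' you allude to for strong F-regularity needs a different source; in the original papers this is obtained by showing F-rationality directly and then using that Gorenstein plus F-rational implies strongly F-regular, which is the alternative route you mention.
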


Simple computations show that the Hilbert-Kunz multiplicity of a local ring need not be an integer. In fact, Brenner has shown that the Hilbert-Kunz multiplicity of a local ring can even be irrational, see \cite{Brenner2013}. As the Hilbert-Kunz multiplicity of a local ring $(R,\m,k)$ is at least $1$, it is natural to ask ``how close'' can $\e(R)$ can be to $1$. Initial investigations of such a question were originated by Blickle and Enescu in \cite{BlickleEnescu} and were significantly improved by Aberbach and Enescu in \cite{AberbachEnescu2008} and Celikbas, Dao, Huneke, and Zhang in \cite{CDHZ}.

\begin{Theorem}[Blickle-Enescu, Aberbach-Enescu, Celikbas-Dao-Huneke-Zhang]\label{HK really small means regular local} Fix $d\in\N$. There is a number $\delta>0$ such that if $(R,\m,k)$ is formally unmixed of dimension $d$, of any prime characteristic, and such that $\e(R)\leq 1+\delta$, then $R$ is regular.
\end{Theorem}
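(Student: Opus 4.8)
The plan is to strip away standard reductions until only the characteristic-free estimate remains. First I would pass to the completion, then to a complete local \emph{domain}, then --- via Theorem~\ref{HK small means good local} --- to a Gorenstein such domain, and finally, by the Hochster--Huneke $\Gamma$-construction, to an F-finite one; what is left at that stage is exactly the explicit lower bound for $\e(R)-1$ that is the technical heart of the cited papers, and that I would invoke rather than reprove.

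\emph{The reductions.} Since $R/\m^{[p^e]}$ has finite length, $\lambda(R/\m^{[p^e]})=\lambda(\hat R/\m^{[p^e]}\hat R)$, so $\e(R)=\e(\hat R)$; moreover $\hat R$ is formally unmixed of dimension $d$ by the definition of formal unmixedness, and if $\hat R$ is regular then so is $R$, as $R\to\hat R$ is faithfully flat and local. Thus I may assume $R=\hat R$. Because $R$ is complete and formally unmixed it has no embedded primes and all of its minimal primes have coheight $d$, so $\Ass(R)=\Min(R)=\Assh(R)$, and the associativity formula (Theorem~\ref{Associativity Formula Local}) gives
\[
\e(R)=\sum_{P\in\Min(R)}\lambda_{R_P}(R_P)\,\e(R/P),
\]
where each summand is at least $1$, since $\e(R/P)\ge 1$ by Kunz's Theorem~\ref{Kunz's Theorem}. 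Hence, once $\delta<1$, there is a unique minimal prime $P_0$, the local ring $R_{P_0}$ has length $1$ hence is a field, and $R$ --- having no embedded primes --- is reduced, so $R=R/P_0$ is a complete local domain; by the flat descent above it suffices to prove this domain is regular. Now take $\delta\le 1/d!$. Since $\max\{1/d!,\,1/e(R)\}\ge 1/d!$, where $e(R)$ is the Hilbert--Samuel multiplicity, Theorem~\ref{HK small means good local} applies and shows $R$ is strongly F-regular and Gorenstein; if it is regular we are done, and otherwise it is Cohen--Macaulay, hence unmixed, so $e(R)\ge 2$ by Nagata's theorem.

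\emph{The core estimate.} It remains to bound $\e(R)$ away from $1$ for a non-regular Gorenstein complete local domain of dimension $d$, by a quantity depending only on $d$. Applying the $\Gamma$-construction I may assume in addition that $R$ is F-finite: the map $R\to R^\Gamma$ is faithfully flat and local, preserves the dimension and the complete/local/Gorenstein/domain properties, satisfies $\e(R^\Gamma)=\e(R)$, and is such that $R$ is regular whenever $R^\Gamma$ is. At this point I would invoke the explicit lower bound of Aberbach--Enescu, in the sharper form of Celikbas--Dao--Huneke--Zhang: there is $\delta_d>0$, depending only on $d$, with $\e(R)-1\ge\delta_d$ for every non-regular unmixed local ring of dimension $d$. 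Schematically, that bound is obtained by controlling the colengths $\lambda(R/\m^{[q]})-q^d$ and the size of the non-free part of the modules $F^e_*R$, through an induction on $d$ whose inductive step cuts by a generic parameter --- such a hyperplane section remains Cohen--Macaulay, indeed Gorenstein.

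The genuine obstacle is none of the reductions above, which are routine, but the requirement that the constant $\delta$ be \emph{uniform in the prime characteristic} $p$. This is precisely what separates Theorem~\ref{HK really small means regular local} from the classical fact that $\e(R)=1$ already forces regularity (Theorem~\ref{Watanabe Yoshida Hilbert-Kunz local}): Hilbert--Kunz multiplicity does not pass to generic hyperplane sections as transparently as Hilbert--Samuel multiplicity, so extracting a lower bound for $\e(R)-1$ whose size does not deteriorate as $p\to\infty$ demands the delicate quantitative Frobenius-power analysis of the cited works, which I would take as input for this last step.
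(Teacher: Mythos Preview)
The paper does not give a proof of this theorem; it is stated as a cited result from the literature (attributed to Blickle--Enescu, Aberbach--Enescu, and Celikbas--Dao--Huneke--Zhang) and used thereafter as a black box. So there is no proof in the paper to compare your proposal against.

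On its own merits, your chain of reductions is correct: passing to the completion, using the associativity formula together with $\e(R/P)\ge 1$ to force a single minimal prime and reducedness, applying Theorem~\ref{HK small means good local} with $\delta\le 1/d!$ to obtain Gorenstein, and invoking the $\Gamma$-construction for F-finiteness are all standard and valid moves (with the minor caveat that one must choose $\Gamma$ small enough for $R^\Gamma$ to remain a domain, which you implicitly assume). But as you yourself concede, the final step --- the existence of a uniform $\delta_d>0$ with $\e(R)-1\ge\delta_d$ for non-regular rings of dimension $d$ --- is exactly the content of the theorem, merely restricted to the F-finite Gorenstein case. The cited papers establish this estimate directly in the formally unmixed setting, so your reductions, while correct, do not materially shorten the path; they repackage the statement rather than prove it. Your closing paragraph is accurate in identifying the characteristic-uniform lower bound as the genuine obstacle, and honest in deferring it to the literature --- which is precisely what the paper itself does.
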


Suppose that $(R,\m)\rightarrow (T,\n)$ is flat local homomorphism of local characteristic $p$ rings. Kunz proved in \cite{Kunz1976} for each $e\in\N$, $\lambda_R(R/\m^{[p^e]})/p^{e\dim(R)}\leq \lambda_T(T/\n^{[p^e]})/p^{e\dim(T)}$. Moreover, Kunz shows that equality occurs if $T/\m T$ is regular. In \cite{Kunz1976}, Kunz was unaware that the limit existed as $e\rightarrow \infty$. In fact, Kunz provides an incorrect counterexample to the existence of Hilbert-Kunz multiplicity, \cite[Example 4.3]{Kunz1976}. Nevertheless, the limit exists, hence the following theorem.

\begin{Theorem}[{\cite[Theorem 3.6, Proposition 3.9]{Kunz1976}}]\label{HK Flat Extension} Let $(R,\m,k)\rightarrow (T,\n,l)$ be a flat local ring homomorphism of local rings of prime characteristic $p$. Then for each $e\in\N$,  $\lambda_R(R/\m^{[p^e]})/p^{e\dim(R)}\leq \lambda_T(T/\n^{[p^e]})/p^{e\dim(T)}$, hence $\e(R)\leq \e(T)$. Moreover, if $T/\m T$ is regular, then for each $e\in \N$, $\lambda_R(R/\m^{[p^e]})/p^{e\dim(R)}= \lambda_T(T/\n^{[p^e]})/p^{e\dim(T)}$, hence $\e(R)=\e(T)$.
\end{Theorem}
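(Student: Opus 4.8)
The plan is to prove, for each fixed $e$ (write $q=p^e$), the two finite-level statements: $\lambda_T(T/\n^{[q]})\ge q^{n}\lambda_R(R/\m^{[q]})$ in general, and equality when $S:=T/\m T$ is regular. Dividing by $p^{e\dim T}$ and letting $e\to\infty$ then gives $\e(R)\le\e(T)$, respectively $\e(R)=\e(T)$, using that the Hilbert--Kunz multiplicity of a local ring exists (Theorem~\ref{HK exists local}). Throughout I use that a flat local homomorphism is faithfully flat, so $R\hookrightarrow T$ and $\dim T=\dim R+\dim S$; set $d=\dim R$ and $n=\dim S$, so $\dim T=d+n$.

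The equality case is the clean one, so I would do it first. Choose $y_1,\dots,y_n\in\n$ lifting a regular system of parameters of the regular local ring $S$. By the local criterion for flatness, $y_1,\dots,y_n$ is a regular sequence on $T$ and $T'':=T/(y_1,\dots,y_n)T$ is flat over $R$; and since the images of the $y_i$ in $S$ generate $\m_S$, we have $\n=\m T+(y_1,\dots,y_n)T$, hence $\n^{[q]}=\m^{[q]}T+(y_1^{q},\dots,y_n^{q})T$ (as $I^{[q]}$ is generated by the $q$-th powers of generators of $I$). The Koszul complex on $y_1,\dots,y_n$ is a free $T$-resolution of $T''$, hence a resolution by $R$-flat modules; since $T''$ is $R$-flat, tensoring it over $R$ with $R/\m^{[q]}$ keeps it acyclic, so $y_1,\dots,y_n$ is a regular sequence on $T/\m^{[q]}T$, and therefore so is $y_1^{q},\dots,y_n^{q}$. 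Hence
\[
\lambda_T\bigl(T/\n^{[q]}\bigr)=\lambda_T\bigl((T/\m^{[q]}T)/(y_1^{q},\dots,y_n^{q})\bigr)=q^{n}\,\lambda_T\bigl((T/\m^{[q]}T)/(y_1,\dots,y_n)\bigr)=q^{n}\,\lambda_{T''}\bigl(T''/\m^{[q]}T''\bigr).
\]
Finally $T''$ is flat over $R$ with closed fibre $S/(y_1,\dots,y_n)S$ equal to the residue field of $T$, so $\m T''=\n_{T''}$ and the flat base-change length formula gives $\lambda_{T''}(T''/\m^{[q]}T'')=\lambda_R(R/\m^{[q]})$. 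This yields equality at each level.

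For the general inequality I would first reduce it. Pick $z_1,\dots,z_N\in\n$ lifting a generating set of $\m_S$; then $\n=\m T+(z_1,\dots,z_N)T$ and $\n^{[q]}=\m^{[q]}T+(z_1^{q},\dots,z_N^{q})T$, so $T/\n^{[q]}\cong B/\mathfrak{c}^{[q]}$, where $B:=T/\m^{[q]}T$ is flat over the Artinian local ring $A:=R/\m^{[q]}$ (so $\lambda_A(A)=\lambda_R(R/\m^{[q]})$, $\dim B=n$, $B/\m_A B=S$) and $\mathfrak{c}:=(\bar z_1,\dots,\bar z_N)B$ is $\m_B$-primary with image $\m_S$ in $S$. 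The target then becomes the relative Kunz inequality $\lambda_B(B/\mathfrak{c}^{[q]}B)\ge q^{\dim B}\lambda_A(A)$. Its asymptotic version is easy: $B$ is free over the Artinian ring $A$, and at each minimal prime $\mathfrak p$ of $B$ of coheight $n$ (every such $\mathfrak p$ contains the nilpotent ideal $\m_A B$) one computes $\lambda_{B_{\mathfrak p}}(B_{\mathfrak p})=\lambda_A(A)\,\lambda_{S_{\bar{\mathfrak p}}}(S_{\bar{\mathfrak p}})$; feeding this into the associativity formula (Theorem~\ref{Associativity Formula Local}) gives $\e(\mathfrak{c},B)=\lambda_A(A)\,\e(S)\ge\lambda_A(A)$, the last step by Theorem~\ref{Kunz's Theorem}. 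Then Theorem~\ref{HK exists local} yields $\lambda_B(B/\mathfrak{c}^{[q]}B)=\lambda_A(A)\,\e(S)\,q^{n}+O(q^{n-1})$.

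The hard part — where I expect essentially all of the work to lie — is upgrading this asymptotic estimate to the \emph{exact} bound $\lambda_B(B/\mathfrak{c}^{[q]}B)\ge q^{n}\lambda_A(A)$ valid at every $q$, equivalently $\lambda_R(R/\m^{[p^e]})/p^{ed}\le\lambda_T(T/\n^{[p^e]})/p^{e\dim T}$ at every $e$. The normalised Hilbert--Kunz functions are not known to be monotone, so one cannot simply pass to the limit, and the lower-order terms must be controlled directly; this is exactly \cite[Theorem 3.6]{Kunz1976}. I would follow Kunz, via a relative form of his flatness--regularity criterion that forces the inequality level by level, with equality in the case treated above. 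Reconstructing that relative criterion is the crux.
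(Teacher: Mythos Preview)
The paper itself cites Kunz for Theorem~\ref{HK Flat Extension} and gives no direct proof, but immediately afterward proves the module generalization (Theorem~\ref{HK Inequality}) via Lemmas~\ref{HK Inequality Same Dim} and~\ref{HK Inequality Localization}; specialized to $M=R$, that is effectively the paper's proof, and it is what I compare against.

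Your equality case is complete and correct, by a route different from the paper's: you lift a regular system of parameters from $S=T/\m T$, verify it stays regular on $T/\m^{[q]}T$ via the Koszul complex and flatness, and compute directly. The paper instead filters $R/\m^{[q]}$ by copies of $R/\m$, tensors with $T/\n^{[q]}$, and invokes Kunz's theorem (Theorem~\ref{Kunz's Theorem}) to get $\lambda_T(T/(\m T+\n^{[q]}))=p^{e\dim(T/\m T)}$. Both are fine.

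The inequality case has a genuine gap. You reduce correctly to the relative bound $\lambda_B(B/\mathfrak c^{[q]})\ge q^{n}\lambda_A(A)$ over the Artinian base $A=R/\m^{[q]}$, prove its asymptotic version via the associativity formula, and then stop, saying you would ``follow Kunz'' for the exact level-by-level bound. That exact bound \emph{is} the theorem, so at that point nothing has been proved; you have rephrased the problem and deferred it.

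The paper avoids your relative Kunz inequality entirely by factoring $R\to T$ through an intermediate localization. Choose $P\in\Spec(T)$ over $\m$ with $\height(P)=\dim(R)$ and $\height(P)+\dim(T/P)=\dim(T)$. Then $R\to T_P$ is flat local with $\dim T_P=\dim R$, and in that equal-dimension situation a short filtration argument (Lemma~\ref{HK Inequality Same Dim}) already gives the inequality at each level: filter $\m T\subseteq T$ by copies of $T/\n$, apply Frobenius powers, and bound each successive quotient by a quotient of $T/\n^{[q]}$; the factor $\lambda_T(T/\m T)$ appears on both sides and cancels. The remaining step, $\lambda_{T_P}(T_P/P^{[q]}T_P)/q^{\height(P)}\le \lambda_T(T/\n^{[q]})/q^{\dim T}$, is a localization inequality (Lemma~\ref{HK Inequality Localization}), which after passing to an F-finite extension with trivial closed fiber reduces to the trivial fact that $\mu(F^e_*T)\ge\mu(F^e_*T_P)$, converted to lengths via $\gamma(T)=\gamma(T_P)$. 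This factorization through an equal-dimensional stage is the idea you are missing; it sidesteps the monotonicity issue you flagged and yields the bound at every $e$ without appealing to Kunz's original argument.
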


More generally, suppose that $(R,\m) \to (T,\n)$ is a flat local homomorphism of characteristic $p$ local rings, $M$ a finitely generated $R$-module, and $M_T=M\otimes_R T$. Then Kunz's methods can be extended to show $\lambda_R(M/\m^{[p^e]}M)/p^{e\dim(R)}\leq \lambda_T(M_T/\n^{[p^e]} M_T)/p^{e\dim(T)}$ holds for all $e \in \N$. Furthermore, equality holds if the closed fiber of $R \to T$ is regular. To prove this, we begin with two lemmas.

\begin{Lemma} \label{HK Inequality Same Dim} Let $(R,\m,k)\rightarrow (T,\n,l)$ be a flat local ring homomorphism of local rings of prime characteristic $p$, and $M$ a finitely generated $R$-module. Assume that $\dim(R) = \dim(T)$, and denote $M_T  = M \otimes_R T$. For each $e\in\N$, we have $\lambda_R(M/\m^{[p^e]}M)/p^{e\dim(R)}\leq \lambda_T(M_T/\n^{[p^e]} M_T)/p^{e\dim(T)}$. Moreover, if $T/\m T$ is regular, then $\lambda_R(M/\m^{[p^e]}M)/p^{e\dim(R)}= \lambda_T(M_T/\n^{[p^e]}M_T)/p^{e\dim(T)}$ for all $e \in \N$.
\end{Lemma}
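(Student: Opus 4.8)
The plan is the following. Write $q=p^e$ and $d=\dim R=\dim T$. Since $R\to T$ is flat and local, the closed fiber satisfies $\dim(T/\m T)=\dim T-\dim R=0$, so $T/\m T$ is Artinian and $\lambda_T(T/\m T)<\infty$. The starting point is the identity
\[
\lambda_T\bigl(M_T/\m^{[q]}TM_T\bigr)=\lambda_R\bigl(M/\m^{[q]}M\bigr)\cdot\lambda_T(T/\m T),
\]
obtained by tensoring an $R$-composition series of the finite length module $M/\m^{[q]}M$ with the flat module $T$: each simple quotient $R/\m$ contributes a factor $R/\m\otimes_RT=T/\m T$, and since $M_T/\m^{[q]}TM_T=(M/\m^{[q]}M)\otimes_RT$ this exhibits a filtration of $M_T/\m^{[q]}TM_T$ by $T$-submodules with $\lambda_R(M/\m^{[q]}M)$ steps, each isomorphic to $T/\m T$. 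From this the ``moreover'' part is immediate: if $T/\m T$ is regular then, being $0$-dimensional, it is a field, so $\m T=\n$ and hence $\n^{[q]}=(\m T)^{[q]}=\m^{[q]}T$; substituting into the identity gives $\lambda_T(M_T/\n^{[q]}M_T)=\lambda_R(M/\m^{[q]}M)$, which after dividing by $q^{d}$ is the asserted equality.

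For the inequality, since $\m^{[q]}T\subseteq\n^{[q]}$ there is a surjection $M_T/\m^{[q]}TM_T\twoheadrightarrow M_T/\n^{[q]}M_T$, so by the identity above it is equivalent to prove
\[
\lambda_T\bigl(M_T/\m^{[q]}TM_T\bigr)\leq\lambda_T(T/\m T)\cdot\lambda_T\bigl(M_T/\n^{[q]}M_T\bigr).
\]
The plan here is to follow the structure of Kunz's argument for the case $M=R$ (Theorem~\ref{HK Flat Extension}), now with the module $M$ present throughout. Choose $z_1,\dots,z_c\in\n$ whose images form a minimal generating set of the maximal ideal of the Artinian ring $T/\m T$; then $\n=\m T+(z_1,\dots,z_c)$, so $\n^{[q]}=\m^{[q]}T+(z_1^{q},\dots,z_c^{q})$ and $M_T/\n^{[q]}M_T$ is the quotient of $P:=M_T/\m^{[q]}TM_T$ by $(z_1^{q},\dots,z_c^{q})P$; the task is then to bound $\lambda_T\bigl((z_1^{q},\dots,z_c^{q})P\bigr)$ from above by $\lambda_R(M/\m^{[q]}M)\cdot(\lambda_T(T/\m T)-1)$. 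It is convenient here that $T/\m^{[q]}T$ is flat, hence free, over the Artinian local ring $R/\m^{[q]}$, so that $P$, as an $R/\m^{[q]}$-module, is a direct sum of copies of $M/\m^{[q]}M$, and that the maximal ideal of $T/\m T$ has a nilpotency index independent of $q$.

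The hard part is exactly this last length estimate. Crude bounds — replacing $(z_1^{q},\dots,z_c^{q})P$ by $\m P$, or working with the relations of a free presentation $R^{b}\twoheadrightarrow M$ — discard too much for large $q$ and only recover the equality case; one genuinely has to use that the $z_i^{q}$ are high Frobenius powers inside $T/\m^{[q]}T$. Nor can one sidestep this by choosing a prime filtration of $M$ and reducing to $M=R$: the function $N\mapsto\lambda_R(N/\m^{[q]}N)$ is not additive on short exact sequences (the connecting $\Tor_1$-terms spoil additivity), and the natural map $M/\m^{[q]}M\to M_T/\n^{[q]}M_T$ need not be injective. A natural way to complete the estimate is to pass to completions and use a Cohen factorization of $\hat R\to\hat T$, reducing to the case where $R$ is a quotient of a regular local ring (so that $\m^{[q]}$ becomes tractable), or else to rerun Kunz's Koszul-type length computations verbatim with module coefficients.
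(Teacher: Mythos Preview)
Your setup is correct and coincides with the paper's: the flatness identity $\lambda_T(M_T/\m^{[q]}M_T)=\lambda_R(M/\m^{[q]}M)\cdot\lambda_T(T/\m T)$, the observation that $\dim(T/\m T)=0$, and the deduction of the equality case when $T/\m T$ is regular (hence a field, forcing $\m T=\n$) are exactly the paper's argument.

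The gap is in the inequality. You correctly reduce it to $\lambda_T(M_T/\m^{[q]}M_T)\leq\lambda_T(T/\m T)\cdot\lambda_T(M_T/\n^{[q]}M_T)$, but then label this ``the hard part'' and do not prove it; the suggested detours through Cohen factorizations or Koszul computations are neither carried out nor necessary. The paper's device is a one-line filtration: take a composition series $\m T=I_0\subsetneq I_1\subsetneq\cdots\subsetneq I_t=T$ with $I_j/I_{j-1}\cong T/\n$, so $t=\lambda_T(T/\m T)$. Writing $I_j=I_{j-1}+Tx_j$ with $\n x_j\subseteq I_{j-1}$, one has $I_j^{[q]}=I_{j-1}^{[q]}+Tx_j^{q}$ and $\n^{[q]}x_j^{q}\subseteq I_{j-1}^{[q]}$; hence multiplication by $x_j^{q}$ gives a surjection $M_T/\n^{[q]}M_T\twoheadrightarrow I_j^{[q]}M_T/I_{j-1}^{[q]}M_T$. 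Summing over the $t$ steps of the chain $\m^{[q]}M_T=I_0^{[q]}M_T\subseteq\cdots\subseteq I_t^{[q]}M_T=M_T$ gives the bound immediately. Your approach via simultaneous generators $z_1,\dots,z_c$ of $\n/\m T$ is essentially this same filtration with the steps grouped badly; handling one copy of $T/\n$ at a time is what makes the estimate transparent and avoids all the machinery you were reaching for.
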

\begin{proof}
By flatness we have $\lambda_T(M_T /\m^{[p^e]} M_T)/p^{e\dim(T)} = \lambda_R(M/\m^{[p^e]} M)/p^{e \dim(R)} \cdot \lambda_T(T/\m T)$. Furthermore, starting from a filtration $\m T = I_0 \subseteq I_i \subseteq \ldots \subseteq I_t = T$ with $I_j/I_{j-1} \cong T/\n$, we get a filtration $\m^{[p^e]} M_T = I_0^{[p^e]} M_T \subseteq I_1^{[p^e]} M_T \subseteq \ldots \subseteq M_T$. Each successive quotient $I_j^{[p^e]}M_T/ I_{j-1}^{[p^e]}M_T$ is a homomorphic image of $M_T/\n^{[p^e]} M_T$. Thus, we obtain
\[
\ds \lambda_T(M_T/\m^{[p^e]} M_T)/p^{e \dim(T)} \leq \lambda_T(M_T/\n^{[p^e]}M_T)/p^{e \dim(T)} \cdot \lambda_T(T/\m T).
\]
Combining these facts gives the desired inequality:
\[
\ds \lambda_R(M/\m^{[p^e]}M)/p^{e\dim(R)} = \frac{\lambda_T(M_T/\m^{[p^e]}M_T)/p^{e\dim(T)}}{\lambda_T(T/\m T)} \leq \lambda_T(M_T/\n^{[p^e]}M_T)/p^{e\dim(T)}.
\]
If $T/\m T$ is regular, then it must be a field, so that $\lambda_T(T/\m T) = 1$, $\m T=\n$ and hence $\m^{[p^e]}T=\n^{[p^e]}$. Hence $$\lambda_T(M_T/\n^{[p^e]}M_T)/p^{e\dim(T)}= \lambda_R(M/\m^{[p^e]} M)/p^{e \dim(R)} \cdot \lambda_T(T/\m T)=\lambda_R(M/\m^{[p^e]} M)/p^{e \dim(R)}.$$
\end{proof}

The following lemma is the extension of \cite[Corollary 3.8]{Kunz1976} to the module case. We also refer the reader to \cite[Theorem 3.3]{HunekeYao}.

\begin{Lemma} \label{HK Inequality Localization} Let $(R,\m,k)$ be a local ring of prime characteristic $p$, and $P \in \Spec(R)$ such that $\height(P) + \dim(R/P) = \dim(R)$. Then for every finitely generated $R$-module $M$ we have $\lambda_R(M/\m^{[p^e]}M)/p^{e\dim(R)} \geq \lambda_{R_P}(M_P/P^{[p^e]}M_P)/p^{e\dim(R_P)}$.
\end{Lemma}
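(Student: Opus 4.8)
The plan is to reduce to a length estimate over $R/P$, to which Kunz's Theorem~\ref{Kunz's Theorem} applies, by ``spreading out'' a composition series of $M_P/P^{[q]}M_P$ one factor at a time, controlling lengths through Frobenius powers. Write $q=p^e$, $d=\dim R$ and $h=\height P$, so that $\dim R/P=d-h$ by hypothesis; we must show $\lambda_R(M/\m^{[q]}M)\ge \lambda_{R_P}\bigl(M_P/P^{[q]}M_P\bigr)\cdot q^{\,d-h}$. First, replacing $R$ by $R(t)=R[t]_{\m R[t]}$ — a faithfully flat local extension with field (hence regular) closed fibre — changes neither side, since the modules in sight have finite length and $PR(t)$ is prime with $\height PR(t)=h$ and $\dim R(t)/PR(t)=d-h$; so I may assume $R/\m$, and therefore $R/P$, is infinite.

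Next, set $K=\ker\bigl(M\to M_P/P^{[q]}M_P\bigr)$ and $N=M/K$. Then $M/\m^{[q]}M$ surjects onto $N/\m^{[q]}N$, so $\lambda_R(M/\m^{[q]}M)\ge\lambda_R(N/\m^{[q]}N)$; moreover $N$ embeds in $M_P/P^{[q]}M_P$, whence $P^{[q]}N=0$, $\Ass_R N\subseteq\{P\}$ (for $0\ne z\in N$ one gets $P^{[q]}\subseteq\Ann_R z\subseteq P$, forcing any associated prime to equal $P$), and $\lambda_{R_P}(N_P)=\lambda_{R_P}(M_P/P^{[q]}M_P)$. Thus it suffices to prove, for every finitely generated $N$ with $P^{[q]}N=0$ and $\Ass_R N\subseteq\{P\}$, the sharper estimate
\[
\lambda_R(N/\m^{[q]}N)\ \ge\ \lambda_{R_P}(N_P)\cdot\lambda_{R/P}\bigl((R/P)/\m^{[q]}(R/P)\bigr),
\]
because Kunz's Theorem~\ref{Kunz's Theorem} applied to the $(d-h)$-dimensional local ring $R/P$ yields $\lambda_{R/P}\bigl((R/P)/\m^{[q]}(R/P)\bigr)\ge q^{\,d-h}$.

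I would prove this sharper estimate by induction on $c:=\lambda_{R_P}(N_P)$, the case $c=0$ (where $N=0$) being trivial. For $c\ge 1$ the submodule $0:_{N}P$ is nonzero with $\Ass_R(0:_{N}P)\subseteq\Ass_R N=\{P\}$, hence is torsion-free over the domain $R/P$, so for \emph{any} nonzero $\overline x\in 0:_{N}P$ one has $\Ann_R\overline x=P$, $R\overline x\cong R/P$, and $(R\overline x)_P\cong\kappa(P)$ of length $1$. Choose such an $\overline x$ for which $R\overline x$ is in addition $\m^{[q]}$-\emph{pure} in $N$, i.e. $R\overline x\cap\m^{[q]}N=\m^{[q]}\overline x$. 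Then
\[
\lambda_R(N/\m^{[q]}N)=\lambda_R\bigl(N/(R\overline x+\m^{[q]}N)\bigr)+\lambda_R\bigl(R\overline x/(R\overline x\cap\m^{[q]}N)\bigr),
\]
the first summand being $\lambda_R\bigl((N/R\overline x)/\m^{[q]}(N/R\overline x)\bigr)$ and the second, by purity, being $\lambda_R\bigl((R/P)/\m^{[q]}(R/P)\bigr)$. Applying the reduction of the previous paragraph to $N/R\overline x$ (passing to $\bigl(N/R\overline x\bigr)\big/\ker\bigl(N/R\overline x\to(N/R\overline x)_P\bigr)$, which only decreases the first summand and has $\lambda_{R_P}$ equal to $c-1$) and invoking the inductive hypothesis completes the induction; combining with the reductions above and Kunz's Theorem gives the lemma.

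The main obstacle is the construction of $\overline x$ with $R\overline x$ $\m^{[q]}$-pure in $N$: this is the only step where the Frobenius structure is genuinely used, and indeed the analogue with $\m^{[q]}$ replaced by an arbitrary $\m$-primary ideal fails, precisely because $\lambda_R(-/I(-))$ is not additive along filtrations, only asymptotically. I expect purity to come from a general-position argument — $0:_{N}P$ is a nonzero torsion-free $R/P$-module, $\m^{[q]}N\cap(0:_{N}P)$ is a submodule which becomes all of $(0:_{N}P)_P$ after localizing at $P$, and using that $R/P$ has infinite residue field a sufficiently general $\overline x\in 0:_{N}P$ should satisfy $(\m^{[q]}N:_R\overline x)=P+\m^{[q]}$; carrying this out carefully is the technical heart. (If the general-position argument proves awkward, one can first use a chain $P\subsetneq P'$ with $\height P'=h+1$, $\dim R/P'=d-h-1$ to reduce to $\dim R/P=1$, where $N$ may be taken Cohen–Macaulay of dimension $1$ and purity is tested against a single parameter.)
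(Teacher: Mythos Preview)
Your proposal has a genuine gap at exactly the point you flag as ``the technical heart'': the existence of a nonzero $\overline{x}\in 0:_N P$ with $R\overline{x}\cap\m^{[q]}N=\m^{[q]}\overline{x}$. You do not prove this, and the general-position heuristic you sketch does not obviously go through. Unwinding, purity is equivalent to finding $\overline{x}$ whose image in $(0:_N P)/\bigl((0:_N P)\cap\m^{[q]}N\bigr)$ has trivial annihilator over the Artinian local ring $A:=R/(P+\m^{[q]})$, i.e.\ to exhibiting an embedding $A\hookrightarrow (0:_N P)/\bigl((0:_N P)\cap\m^{[q]}N\bigr)$. For an arbitrary Artinian local ring $A$ and faithful module this can fail; you invoke nothing specific about Frobenius beyond Kunz's lower bound on $\lambda_{R/P}((R/P)/\m^{[q]}(R/P))$, so there is no mechanism in your argument forcing the embedding to exist. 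The fallback reduction to $\dim R/P=1$ still leaves you needing purity in that case, which you also do not supply. As written, the induction cannot be closed.

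The paper avoids this difficulty entirely by a different route: pass to a faithfully flat F-finite local extension $(T,\n)$ with $\m T=\n$ and a prime $Q$ over $P$ with $\height Q+\dim T/Q=\dim T$ (Lemma~\ref{HK Inequality Same Dim} reduces to this case). Over an F-finite ring one has $\lambda_T(M'/\n^{[q]}M')/q^{\dim T}=\mu_T(F^e_*M')/p^{e\gamma(T)}$, and the hypothesis on $Q$ gives $\gamma(T)=\gamma(T_Q)$ by Kunz's Lemma~\ref{Kunz's Lemma}; the desired inequality then follows from the trivial fact that $\mu$ can only drop under localization. This is a one-line argument once the F-finite reduction is made, with no need for purity, filtrations, or infinite residue field.
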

\begin{proof}
There exists a local flat extension $(T,\n,l)$ of $(R,\m,k)$ such that $T$ is F-finite, and $\m T= \n$. Furthermore, we can choose $Q \in \Spec(T)$ such that $Q \cap R = P$, $\height(Q) = \height(P)$, and $\dim(T/Q) = \dim(R/P)$. In fact, let $Q \in \Min(PT)$ be such that $\dim(T/Q) = \dim(T/PT)$. Also, since $Q \cap R = P$ and $R \to T$ is flat, we have $\height(Q) \geq \height(P)$. Therefore we get
\[
\ds \dim(T) \geq \dim(T/Q) + \height(Q) \geq \dim(T/PT) + \height(P) = \dim(R/P) +  \height(P) = \dim(R),
\]
where we used the fact that $R/P \to T/PT$ is a flat map of local rings of the same dimension. Since $\dim(R) = \dim(T)$, this shows that $\height(P) = \height(Q)$ and $\dim(R/P) = \dim(T/Q)$. Let $M' = M \otimes_R T$, and observe that $\lambda_R(M/\m^{[p^e]}M)/p^{e\dim(R)} = \lambda_T(M'/\n^{[p^e]}M')/p^{e\dim(T)}$ holds for all $e \in \N$ by Lemma~\ref{HK Inequality Same Dim}, since the closed fiber is regular. In addition, since $R_P \to T_Q$ is a flat local homomorphisms of local rings of the same dimension, we have $\lambda_{R_P}(M_P/P^{[p^e]}M_P)/p^{e\dim(R_P)} \leq \lambda_{T_Q}(M'_Q/Q^{[p^e]}M'_Q)/p^{e\dim(T_Q)}$, again by Lemma~\ref{HK Inequality Same Dim}. Thus, it suffices to show that $\lambda_T(M'/\n^{[p^e]}M')/p^{e\dim(T)} \geq \lambda_{T_Q}(M'_Q/Q^{[p^e]}M'_Q)/p^{e\dim(T_Q)}$. Since $T$ is F-finite and $\height(Q) + \dim(T/Q) = \dim(T)$, we have that $\gamma(T) = \gamma(T_Q)$. Therefore 
\[
\ds \frac{\lambda_T(M'/\n^{[p^e]}M')}{p^{e\dim(T)}} = \frac{\lambda_T(F^e_*M'/\n F^e_*M')}{p^{e\gamma(T)}} \geq \frac{\lambda_{T_Q}(F^e_*M'_Q/Q F^e_*M'_Q)}{p^{e\gamma(T_Q)}} = \frac{\lambda_{T_Q}(M'_Q/Q^{[p^e]}M'_Q)}{p^{e\dim(T_Q)}}.
\]
The inequality in the middle follows from the fact that the minimal number of generators of the $T$-module $F^e_*M'$ can only decrease after localization at the prime $Q$.
\end{proof}

\begin{Theorem} \label{HK Inequality} Let $(R,\m,k)\rightarrow (T,\n,l)$ be a flat local ring homomorphism of local rings of prime characteristic $p$, and $M$ a finitely generated $R$-module. Denote $M_T  = M \otimes_R T$. Then for each $e\in\N$ we have  $\lambda_R(M/\m^{[p^e]}M)/p^{e\dim(R)}\leq \lambda_T(M_T/\n^{[p^e]} M_T)/p^{e\dim(T)}$, hence $\e(M) \leq \e(M_T)$. Moreover, if $T/\m T$ is regular, then for each $e\in \N$ we have that $\lambda_R(M/\m^{[p^e]}M)/p^{e\dim(R)}= \lambda_T(M_T/\n^{[p^e]}M_T)/p^{e\dim(T)}$, hence $\e(M) = \e(M_T)$.
\end{Theorem}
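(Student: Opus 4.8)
The plan is to derive the inequality from Lemmas~\ref{HK Inequality Same Dim} and~\ref{HK Inequality Localization} by localizing $T$ at a suitable prime lying over $\m$, and to treat the equality case by a direct length computation. For the inequality, I would first choose $Q \in \Min(\m T)$ with $\dim(T/Q)$ as large as possible, so that $\dim(T/Q) = \dim(T/\m T)$. Since $Q \supseteq \m T$ and $\m$ is maximal, $Q \cap R = \m$; since $R \to T$ is flat, going-down gives $\height(Q) \geq \height(\m) = \dim(R)$; and since $R \to T$ is a flat local homomorphism, $\dim(T) = \dim(R) + \dim(T/\m T) = \dim(R) + \dim(T/Q)$. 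Together with $\height(Q) + \dim(T/Q) \leq \dim(T)$ this forces $\height(Q) = \dim(R)$ and $\height(Q) + \dim(T/Q) = \dim(T)$.

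Now $R \to T_Q$ is again a flat local homomorphism, of equal dimension $\dim(T_Q) = \height(Q) = \dim(R)$, and $M \otimes_R T_Q = (M_T)_Q$. Applying Lemma~\ref{HK Inequality Same Dim} to $R \to T_Q$ and $M$, and Lemma~\ref{HK Inequality Localization} to $T$, $Q$, $M_T$ (whose hypothesis $\height(Q) + \dim(T/Q) = \dim(T)$ we just checked), one gets
\[
\frac{\lambda_R(M/\m^{[p^e]}M)}{p^{e\dim(R)}} \leq \frac{\lambda_{T_Q}((M_T)_Q/(QT_Q)^{[p^e]}(M_T)_Q)}{p^{e\dim(T_Q)}} \leq \frac{\lambda_T(M_T/\n^{[p^e]}M_T)}{p^{e\dim(T)}}.
\]
Chaining these inequalities for every $e$ and letting $e \to \infty$ gives the asserted inequality and $\e(M) \leq \e(M_T)$.

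For equality, assume $T/\m T$ is regular, set $n = \dim(T/\m T) = \dim(T) - \dim(R)$, and pick $x_1,\dots,x_n \in \n$ whose residues form a regular system of parameters of $T/\m T$. Then $\n = \m T + (x_1,\dots,x_n)$, hence $\n^{[p^e]} = \m^{[p^e]}T + (x_1^{p^e},\dots,x_n^{p^e})$, and so $M_T/\n^{[p^e]}M_T \cong (N \otimes_R T)/(x_1^{p^e},\dots,x_n^{p^e})$ with $N = M/\m^{[p^e]}M$. By flatness a composition series of the finite-length module $N$ tensors up to a filtration of $N \otimes_R T$ by $T$-submodules with all quotients isomorphic to $T/\m T$. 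Since $x_1^{p^e},\dots,x_n^{p^e}$ is a regular sequence on $T/\m T$ (powers of a regular system of parameters of a regular local ring), the Koszul long exact sequence shows that forming $-/(x_1^{p^e},\dots,x_n^{p^e})$ is exact along this filtration; therefore
\[
\lambda_T(M_T/\n^{[p^e]}M_T) = \lambda_R(N)\cdot\lambda_T\bigl((T/\m T)/(x_1^{p^e},\dots,x_n^{p^e})\bigr) = \lambda_R(M/\m^{[p^e]}M)\cdot p^{en},
\]
the last equality because $(T/\m T)/(x_1^{p^e},\dots,x_n^{p^e}) = (T/\m T)/(\n/\m T)^{[p^e]}$ has length $p^{en}$ by Kunz's Theorem~\ref{Kunz's Theorem}. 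Dividing by $p^{e\dim(T)} = p^{e\dim(R)}\cdot p^{en}$ gives equality for every $e$, hence $\e(M) = \e(M_T)$.

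The inequality is essentially dimension bookkeeping for flat local maps fed into the two lemmas. The point that needs care is the equality case: one must check that killing $(x_1^{p^e},\dots,x_n^{p^e})$ is \emph{exactly} (not merely up to inequality) length-multiplicative along the filtration of $N \otimes_R T$, which is where it matters that $x_1^{p^e},\dots,x_n^{p^e}$ remains a regular sequence on the fiber $T/\m T$. Equivalently, one may invoke the local criterion of flatness to see that $T/(x_1^{p^e},\dots,x_n^{p^e})$ is $R$-flat and then argue by additivity of length under flat base change; either route is the crux of the equality statement.
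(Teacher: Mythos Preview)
Your proof is correct and follows essentially the same route as the paper's. For the inequality you choose a minimal prime $Q$ over $\m T$ of maximal coheight and chain Lemma~\ref{HK Inequality Same Dim} with Lemma~\ref{HK Inequality Localization}, exactly as the paper does (the paper phrases the choice of $Q$ by reference to the proof of Lemma~\ref{HK Inequality Localization}, but it is the same prime).

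For the equality under a regular closed fiber, there is a small but genuine difference worth noting. The paper establishes the reverse inequality $\lambda_T(M_T/\n^{[p^e]}M_T)/p^{e\dim(T)} \leq \lambda_R(M/\m^{[p^e]}M)/p^{e\dim(R)}$ by tensoring a composition series of $M/\m^{[p^e]}M$ with $T/\n^{[p^e]}$ and using only that this functor is right exact, then invokes Kunz's Theorem to evaluate $\lambda_T(T/(\m T + \n^{[p^e]}))$; equality then follows by combining with the forward inequality already proved. Your argument instead computes the length exactly in one step: since $\m$ annihilates $N = M/\m^{[p^e]}M$, the filtration of $N\otimes_R T$ lives over the regular local ring $T/\m T$, and the regular sequence $x_1^{p^e},\dots,x_n^{p^e}$ on $T/\m T$ makes the Koszul complex a free $T/\m T$-resolution of the quotient, so tensoring down is exact and the length is multiplicative. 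Both arguments are standard; yours avoids the two-sided squeeze at the cost of one extra observation (that everything is happening over $T/\m T$), while the paper's is slightly more elementary in that it never needs exactness, only right exactness plus the forward inequality.
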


\begin{proof}
It is clearly sufficient to prove the statements about lengths, as those regarding Hilbert-Kunz multiplicities follow by taking limits as $e \to \infty$. Because of flatness, there exists a prime $P \in \Spec(T)$ such that $P \cap R = \m$. Furthermore, we can choose $P$ such that $\height(P) = \dim(R)$ and $\height(P) + \dim(T/P) =\dim(T)$, as in the proof of Lemma~\ref{HK Inequality Localization}. Lemma~\ref{HK Inequality Same Dim} applied to the faithfully flat map $R \to T_P$ yields
\[
\ds \lambda_R(M/\m^{[p^e]}M)/p^{e\dim(R)} \leq \lambda_{T_P}(M_{T_P}/P^{[p^e]} M_{T_P})/p^{e \dim(T_P)},
\]
with equality when $T/\m T$ is regular. Lemma~\ref{HK Inequality Localization} applied to the ring $(T,\n,l)$ and the prime $P \in \Spec(T)$ gives 
\[
\ds \lambda_{T_P}(M_{T_P}/P^{[p^e]} M_{T_P})/p^{e \dim(T_P)} \leq \lambda_T(M_T/\n^{[p^e]}M_T)/p^{e\dim(T)}.
\]
Combining the two inequalities, we obtain the first part of the theorem. Now assume that $T/\m T$ is regular. We want to show that the reverse inequality holds. Consider a filtration $0 = J_0 \subseteq J_1\subseteq \cdots \subseteq J_s = M/\m^{[p^e]}M$, with $J_i/J_{i-1} \cong R/\m$ for all $i$. Applying the functor \ $\blank \otimes_R T/\n^{[p^e]}$ we see that $\lambda_T(M_T/\n^{[p^e]}M_T)/p^{e\dim(T)} \leq \lambda_R(M/\m^{[p^e]}M)/p^{e\dim(R)} \cdot \lambda_T(T/\m T+\n^{[p^e]})/p^{e\dim(T/\m T)}$ for all $e \in \N$. Since $T/\m T$ is regular, Theorem~\ref{Kunz's Theorem} guarantees that $\lambda_T(T/\m T+\n^{[p^e]}) = p^{e \dim(T/\m T)}$. Therefore we have $\lambda_T(M_T/\n^{[p^e]}M_T)/p^{e\dim(T)}  \leq \lambda_R(M/\m^{[p^e]}M)/p^{e\dim(R)}$ for all $e \in \N$, as desired. 
\end{proof}

Kunz asked if $R$ is an excellent equidimensional ring of prime characteristic $p$, for each $e\in\N$ is the function $\lambda_e:\Spec(R)\rightarrow \R$ sending $P\mapsto \lambda(R_P/P^{[p^e]}R_P)/p^{e\height(P)}$ upper semi-continuous, see \cite[Problem page 1006]{Kunz1976}. Shepherd-Barron provides a counter-example to Kunz's problem and showed the answer to the question is yes under the stronger assumption $R$ is locally equidimensional, see \cite{SB}. The following theorem is the extension of Shepherd-Barron's result to the module case.

\begin{Theorem}\label{Shepherd-Barron's result for modules} Let $R$ be a locally equidimensional excellent ring of prime characteristic $p$ and let $M$ be a finitely generated $R$-module. Then for each $e\in\N$ the function $\lambda_e:\Spec(R)\rightarrow \R$ sending $P\mapsto \lambda(M_P/P^{[p^e]}M_P)/p^{e\height(P)}$ is upper semi-continuous.
\end{Theorem}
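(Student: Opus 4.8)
The plan is to prove directly that, for every $\alpha\in\R$, the super-level set $V_\alpha:=\{P\in\Spec(R)\mid \lambda_e(P)\ge\alpha\}$ is closed. Since $\Spec(R)$ is a Noetherian topological space, a subset that is both constructible and stable under specialization is automatically closed, so I would establish the two statements: (i) if $P\subseteq Q$ are primes of $R$, then $\lambda_e(P)\le\lambda_e(Q)$; and (ii) $\lambda_e$ is a constructible function. Statement (i) is where the hypothesis of local equidimensionality is used in an essential way; statement (ii) carries the bulk of the work.

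For (i), fix primes $P\subseteq Q$. The local ring $R_Q$ is equidimensional, because $R$ is locally equidimensional, and catenary, because $R$ is excellent; hence the prime $PR_Q$ satisfies $\height(P)+\dim(R_Q/PR_Q)=\dim(R_Q)=\height(Q)$. Applying Lemma~\ref{HK Inequality Localization} to the local ring $R_Q$, the prime $PR_Q$, and the finitely generated $R_Q$-module $M_Q$ gives
\[
\lambda_e(Q)=\frac{\lambda_{R_Q}(M_Q/Q^{[p^e]}M_Q)}{p^{e\,\height(Q)}}\ \ge\ \frac{\lambda_{R_P}(M_P/P^{[p^e]}M_P)}{p^{e\,\height(P)}}=\lambda_e(P),
\]
so $V_\alpha$ is stable under specialization.

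For (ii), I would first reduce to the case that $R$ is F-finite. For this one wants a faithfully flat extension $R\to R'$ with $R'$ F-finite, locally equidimensional, and of geometrically regular fibres — for instance via a $\Gamma$-construction, available because $R$ is excellent. By Theorem~\ref{HK Inequality}, applied to the flat local maps $R_P\to R'_{Q'}$ for $Q'\in\Spec(R')$ lying over $P\in\Spec(R)$, the function $\lambda_e^{R'}$ is the pullback of $\lambda_e^{R}$ along the faithfully flat (hence surjective and submersive) map $\Spec(R')\to\Spec(R)$; since constructibility descends along such a map, it suffices to treat $R'$, and we may assume $R$ is F-finite. Then $F^e_*M$ is a finitely generated $R$-module, $(F^e_*M)_P\cong F^e_*(M_P)$ as $R_P$-modules, and applying $F^e_*$ to an $R_P$-composition series of $M_P/P^{[p^e]}M_P$ — each factor $\kappa(P)$ contributing $p^{e\alpha(P)}$ to the $\kappa(P)$-dimension — yields
\[
\mu_{R_P}\big((F^e_*M)_P\big)=\dim_{\kappa(P)}\big(F^e_*M\otimes_R\kappa(P)\big)=p^{e\alpha(P)}\,\lambda_{R_P}(M_P/P^{[p^e]}M_P),
\]
and hence $\lambda_e(P)=\mu_{R_P}\big((F^e_*M)_P\big)/p^{e(\alpha(P)+\height(P))}$. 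By Lemma~\ref{Kunz's Lemma}, local equidimensionality of $R$ forces $\alpha(P)+\height(P)$ to be constant on each connected component of $\Spec(R)$, and there are only finitely many components, so $P\mapsto p^{e(\alpha(P)+\height(P))}$ is locally constant; meanwhile $P\mapsto\mu_{R_P}\big((F^e_*M)_P\big)=\dim_{\kappa(P)}(F^e_*M\otimes_R\kappa(P))$ is upper semi-continuous, being the fibre dimension of a coherent sheaf. Therefore, locally on $\Spec(R)$, $\lambda_e$ is a positive constant multiple of an upper semi-continuous function; in particular $\lambda_e$ is upper semi-continuous, and a fortiori constructible, in the F-finite case. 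This proves (ii), and together with (i) the theorem follows.

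The step I expect to be the main obstacle is the reduction to the F-finite case in (ii): one must know that an excellent ring of characteristic $p$ admits a faithfully flat extension that is simultaneously F-finite, locally equidimensional, and of geometrically regular fibres, and that $\lambda_e$ is unchanged under such an extension. Once $R$ is F-finite the remaining computation is essentially formal. If one prefers to avoid this reduction, (ii) can instead be obtained by transporting the constructibility portion of Shepherd-Barron's argument for $M=R$ in \cite{SB} to an arbitrary finitely generated module $M$ — say by Noetherian induction on $\Spec(R)$ combined with generic flatness — in which case the only genuinely new input needed for the module case is the specialization inequality (i), which rests on the module version of Kunz's localization lemma, Lemma~\ref{HK Inequality Localization}.
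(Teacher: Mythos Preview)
Your argument is correct in the F-finite case, and in fact it is exactly the short proof the paper gives later as Theorem~\ref{Generalizing upper semi-continuity of HK}: once $R$ is F-finite, $\lambda_e(P)=\mu_{R_P}(F^e_*M_P)/p^{e(\alpha(P)+\height(P))}$, the denominator is locally constant by Lemma~\ref{Kunz's Lemma}, and the numerator is upper semi-continuous. In that setting your step~(i) is not even needed.

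The gap is the reduction to the F-finite case. A $\Gamma$-construction is available when $R$ is essentially of finite type over a complete local ring, not for an arbitrary excellent ring of characteristic $p$; it is not known in general that every excellent ring of characteristic $p$ admits a faithfully flat F-finite extension with geometrically regular fibres (and preserving local equidimensionality). So ``available because $R$ is excellent'' is not justified, and your proof as written covers only the F-finite case (or the case $R$ essentially of finite type over an excellent local ring, after completing the base). You correctly flag this as the obstacle, but the fallback you sketch---``transport Shepherd-Barron's constructibility argument''---is precisely what needs to be done, and it is not a formality.

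The paper's proof avoids the reduction entirely. It uses your step~(i) in the same way (via Lemma~\ref{HK Inequality Localization}, phrased dually as ``$U_r$ is stable under generization''), and then, instead of invoking F-finiteness, verifies Nagata's openness criterion directly: given $P\in U_r$, it inverts an element $s\in R\smallsetminus P$ so that on $D(s)\cap V(P)$ the ring $R/P$ is regular and $P$ is the only prime occurring in a filtration of $M_P/P^{[p^e]}M_P$; a short length computation using Kunz's theorem then gives $\lambda_e(Q)\le\lambda_e(P)$ for all $Q\in D(s)\cap V(P)$. This filtration argument is elementary and works for any excellent locally equidimensional $R$, with no F-finite hypothesis. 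Your approach is slicker when $R$ is F-finite, but the paper's is what actually proves the stated theorem.
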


\begin{proof} For each $e\in\N$ and $P\in\Spec(R)$ let $\lambda_e(P)=\lambda(M_P/P^{[p^e]}M_P)/p^{e\height(P)}$. Let $r\in \R$ and $U_r=\{P\in\Spec(R)\mid \lambda_e(P)<r\}$. To show $\lambda_e$ is upper semi-continuous we need to show $U_r$ is open. Let $P\in U_r$. By Lemma~\ref{HK Inequality Localization} and Nagata's criterion for openness, see \cite[Theorem 24.2]{Matsumura1980}, it is enough to show there exists $s\in R-P$ such that $D(s)\cap V(P)\subseteq U_r$. 

If $M_P=0$ then there exists $s\in R-P$ such that $M_s=0$ and $D(s)\cap V(P)\subseteq U_r$. Assume that $M_P\not=0$. As $R/P$ is an excellent domain, there exists $s\in R-P$ such that $R_s$ is regular. Let $S=\{Q_1,Q_2,\dots,Q_\ell\}$ be the various primes appearing in a prime filtration of $P^{[p^e]}M_P\subseteq M_P$. The prime $P$ is the unique minimal element of $S$. Without loss of generality let $Q_1=P$. By prime avoidance there exists $t\in Q_2\cap \cdots \cap Q_\ell-P$. Replacing $s$ with $st$, we may assume for each $Q\in D(s)\cap V(P)$ that $R_Q/PR_Q$ is regular and $PR_Q$ is the only prime appearing in a prime filtration of $P^{[p^e]}M_Q\subseteq M_Q$. We claim that $D(s)\cap V(P)\subseteq U_r$.

Let $Q\in D(s)\cap V(P)$ and let $\underline{x}=x_1,\dots,x_d$ be a regular system of parameters for $R_Q/PR_Q$. In particular, $QR_Q=(P,\underline{x})$. As $R_Q/PR_Q$ is the only prime factor appearing in a prime filtration of $P^{[p^e]}M_Q\subseteq M_Q$, it is easy to see that the length of the filtration is precisely $\lambda_{R_P}(M_P/P^{[p^e]}M_P)$. It readily follows that $$\lambda_{R_Q}(M_Q/Q^{[p^e]}M_Q)\leq \lambda_{R_P}(M_P/P^{[p^e]}M_P)\lambda_{R_Q}(R_Q/(P,(\underline{x})^{[p^e]})R_Q).$$ By Theorem~\ref{Kunz's Theorem}, $$\lambda_{R_Q}(R_Q/(P,(\underline{x})^{[p^e]})R_Q)=p^{e\dim(R_Q/PR_Q)},$$ which is equal to $p^{e(\height(Q)-\height(P))}$ as $R$ is locally equidimensional. Dividing the above inequality by $p^{e\height(Q)}$ shows $\lambda_e(Q)\leq \lambda_e(P)$, hence $Q\in U_r$.\end{proof}

\begin{Remark}\label{Dense upper semi-continuity remark} The proof of Theorem~\ref{Shepherd-Barron's result for modules} shows that the function $\lambda_e$ is dense upper semi-continuous. That is for each $P\in\Spec(R)$ there exists a dense open set $U$ containing $P$ such that $\lambda_e(Q)\leq \lambda_e(P)$ for each $Q\in U$.
\end{Remark}

\subsection{F-signature}\label{F-signature} Suppose that $(R,\m,k)$ is an F-finite local ring of dimension $d$, and prime characteristic $p$.  For each $e\in\N$, let $a_e(R)$ be the maximal number of free $R$-summands appearing in various direct sum decompositions of $F^e_*R$, and call $a_e(R)$ the $e$th Frobenius splitting number of $R$. Suppose that $F^e_*R\cong R^{\oplus n}\oplus M_e$ where $M_e$ does not contain a free summand. Consider the sets $I_e=\{r\in R\mid \varphi(F^e_*r)\in\m, \forall \varphi\in\Hom_R(F^e_*R,R)\}$, introduced by Aberbach and Enescu \cite{AberbachEnescu2005}. Then, one can easily verify that $I_e$ is an ideal of $R$, and that $F^e_*I_e\cong\m R^{\oplus n}\oplus M_e$. Therefore $a_e(R)$ is the maximal number of free $R$-summands appearing in any direct sum decomposition of $F^e_*R$. Tucker proved the following in \cite{Tucker2012}.

\begin{Theorem}[{\cite[Main Result]{Tucker2012}\label{F-signature exists local}}] Let $(R,\m,k)$ be a local F-finite ring of prime characteristic $p$. Then the following limit exists, $$\lim_{e\rightarrow\infty}\frac{a_e(R)}{p^{e\gamma(R)}}.$$ Its limit is denoted $\s(R)$, and is called the F-signature of $R$. Moreover, $a_e(R)=\s(R)p^{\gamma(R)}+O(p^{e(\gamma(R)-1)})$.
\end{Theorem}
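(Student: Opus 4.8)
The plan is to follow the Hilbert--Kunz template: show that the sequence $x_e:=a_e(R)/p^{e\gamma(R)}$ is Cauchy by proving a uniform comparison between $a_{e+e'}(R)$ and $p^{e'\gamma(R)}a_e(R)$. First I would reduce to the case that $(R,\m,k)$ is a complete local domain of dimension $d\ge 1$, the zero-dimensional case being trivial: completion is harmless ($\widehat{F^e_*R}\cong F^e_*\hat R$, and since $R$ is local a free $R$-summand of $F^e_*R$ exists exactly when one exists after completion, so $a_e(R)=a_e(\hat R)$), and killing the (nilpotent) nilradical and decomposing along the minimal primes changes each $a_e$ only within the error $O(p^{e(\gamma-1)})$ produced below, just as Monsky's additivity reduces Hilbert--Kunz multiplicity to the domain case. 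For such $R$, Lemma~\ref{Kunz's Lemma} gives $\gamma:=\gamma(R)=\alpha(\m)+d$ and $\rank_R(F^e_*R)=p^{e\gamma}$; moreover, via the ideals $I_e$ recalled above one checks $a_e(R)=p^{e\alpha(\m)}\lambda_R(R/I_e)$, so $x_e=\lambda_R(R/I_e)/p^{ed}$ and the assertion becomes the precise analogue of Monsky's theorem, namely that $\lim_e\lambda_R(R/I_e)/p^{ed}$ exists.

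Fix $e,e'\ge 1$ and decompositions $F^e_*R\cong R^{\oplus a_e}\oplus M_e$ and $F^{e'}_*R\cong R^{\oplus a_{e'}}\oplus M_{e'}$ with $M_e,M_{e'}$ having no free summand. Since $F^{e+e'}_*R=F^e_*(F^{e'}_*R)$, applying $F^e_*$ to the second decomposition and then the first gives
\[
F^{e+e'}_*R\;\cong\;R^{\oplus a_ea_{e'}}\;\oplus\;M_e^{\oplus a_{e'}}\;\oplus\;F^e_*M_{e'}.
\]
By Krull--Schmidt over the complete local ring $R$, $\frk$ is additive on finite direct sums and $M_e^{\oplus a_{e'}}$ has no free summand, so
\[
a_{e+e'}(R)=a_e(R)\,a_{e'}(R)+\frk(F^e_*M_{e'}).
\]
This already yields super-multiplicativity $a_{e+e'}\ge a_ea_{e'}$, but that only determines $\lim_e x_e^{1/e}$; the whole point is to pin $\frk(F^e_*M_{e'})$ to its expected value $\rank_R(M_{e'})\cdot a_e(R)=(p^{e'\gamma}-a_{e'})\,a_e(R)$ up to lower order.

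The key lemma I would prove is: for every finitely generated $R$-module $N$ there is a constant $C_N$ such that
\[
\bigl|\,\frk(F^e_*N)-\rank_R(N)\cdot a_e(R)\,\bigr|\ \le\ C_N\,p^{e(\gamma-1)}\qquad(e\ge 1),
\]
and such that for $N=M_{e'}$ one may take $C_{M_{e'}}\le C_0\,p^{e'\gamma}$ with $C_0$ independent of $e'$. It is trivial for $N$ free. In general I would dévissage from a free module via a short exact sequence $0\to R^{\oplus r}\to N\to Q\to 0$ with $r=\rank_R(N)$ and $Q$ torsion (so $\dim Q\le d-1$ over the domain $R$): applying $F^e_*$ and using that $\frk(F^e_*Q)=0$ (because $F^e_*Q$ is again torsion, hence admits no nonzero map to $R$), the discrepancy between $\frk(F^e_*N)$ and $\frk(F^e_*R^{\oplus r})=r\,a_e(R)$ is bounded by lengths of finite-length truncations of modules built from $\Hom_R(F^e_*Q,R)$ and $\Ext^1_R(F^e_*Q,R)$, which are supported in dimension $\le d-1$ and so grow only like $O(p^{e(\alpha(\m)+d-1)})=O(p^{e(\gamma-1)})$ by Hilbert--Kunz-type estimates (Theorem~\ref{HK exists local}). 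The uniformity $C_{M_{e'}}\le C_0\,p^{e'\gamma}$ holds because $M_{e'}$ is a direct summand of $F^{e'}_*R$, so all the data entering the dévissage is bounded in terms of $\rank_R(F^{e'}_*R)=p^{e'\gamma}$. I expect this lemma---the sharp error $O(p^{e(\gamma-1)})$ for $\frk(F^e_*(\blank))$ together with its uniformity over the family $\{M_{e'}\}$---to be the main obstacle; the rest is bookkeeping.

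Granting the lemma, $a_{e+e'}=a_ea_{e'}+\frk(F^e_*M_{e'})=a_ea_{e'}+(p^{e'\gamma}-a_{e'})a_e+O(p^{e'\gamma}p^{e(\gamma-1)})=p^{e'\gamma}a_e+O(p^{e'\gamma}p^{e(\gamma-1)})$. Dividing by $p^{(e+e')\gamma}$ gives $|x_{e+e'}-x_e|\le C_0/p^{e}$ for all $e,e'\ge 1$, whence $(x_e)_e$ is Cauchy and $\s(R):=\lim_e x_e$ exists. Fixing $e$ and letting $e'\to\infty$ in the same inequality yields $|\s(R)-x_e|\le C_0/p^{e}$, i.e. $a_e(R)=\s(R)\,p^{e\gamma(R)}+O(p^{e(\gamma(R)-1)})$, the stated refinement. (In the $I_e$ language the computation reads $\lambda_R(R/I_{e+e'})=p^{e'd}\lambda_R(R/I_e)+O(p^{e'd}p^{e(d-1)})$, the exact parallel of Monsky's estimate.)
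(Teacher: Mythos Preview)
The paper does not prove this theorem itself; it cites Tucker and, in the subsection on existence of limits, sketches the later Polstra--Tucker argument, which is simpler than your route. That argument fixes once and for all two short exact sequences $0\to R^{\oplus p^{\gamma}}\to F_*R\to T\to 0$ and $0\to F_*R\to R^{\oplus p^{\gamma}}\to T'\to 0$ with $T,T'$ torsion (over a local F-finite domain), applies $F^e_*$, and reads off from Lemma~\ref{Local Observation 2}(\ref{Local Observation 2 part 2}) that
\[
\left|\frac{a_{e+1}(R)}{p^{(e+1)\gamma}}-\frac{a_e(R)}{p^{e\gamma}}\right|\le\frac{\max\{\mu(F^e_*T),\mu(F^e_*T')\}}{p^{(e+1)\gamma}}.
\]
Because $T,T'$ are \emph{fixed}, Lemma~\ref{Local Observation 1} bounds the right side by $C/p^e$ with $C$ independent of $e$, and Lemma~\ref{Sequence Lemma}(\ref{Sequence Lemma 3}) delivers both the limit and the $O(p^{e(\gamma-1)})$ error. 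No two-index comparison, no Krull--Schmidt, no uniformity over a varying family is needed.

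Your two-index approach is closer in spirit to Tucker's original proof, and the skeleton is sound, but the key lemma is not correctly set up. From the single d\'evissage $0\to R^{\oplus r}\to N\to Q\to 0$ you get only the \emph{upper} bound $\frk(F^e_*N)\le r\,a_e(R)+\mu(F^e_*Q)$ via Lemma~\ref{Local Observation 2}; the lower bound requires the reverse sequence $0\to N\to R^{\oplus r}\to Q'\to 0$ (available since $M_{e'}$ is torsion-free), and in either direction the error is $\mu$ of the torsion cokernel, not a length built from $\Hom_R(F^e_*Q,R)$ or $\Ext^1_R(F^e_*Q,R)$. More seriously, the uniformity $C_{M_{e'}}\le C_0\,p^{e'\gamma}$ does not follow merely from $M_{e'}$ being a summand of $F^{e'}_*R$: the torsion cokernels vary with $e'$, and bounding $\mu(F^e_*Q)$ uniformly needs something like Lemma~\ref{Dutta's Lemma}, which produces a prime filtration of $F^{e'}_*R/R^{\oplus p^{e'\gamma}}$ with factors drawn from a fixed finite set of nonzero primes, each appearing $O(p^{e'\gamma})$ times. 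With that in hand it is cleaner to skip $M_{e'}$ entirely and apply the estimate to $N=F^{e'}_*R$, obtaining $\bigl|a_{e+e'}-p^{e'\gamma}a_e\bigr|\le C_0\,p^{e'\gamma}p^{e(\gamma-1)}$ directly---but at that point you have essentially reproduced the one-step bound telescoped over $e'$ steps.
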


\begin{Remark} The fact that $a_e(R)=\s(R)p^{\gamma(R)}+O(p^{e(\gamma(R)-1)})$ can be pieced together from results in \cite{Tucker2012} and \cite{BST2012}.  See \cite[Theorem 3.6] {PolstraTucker} for a direct proof.
\end{Remark}

 If $R$ is regular then Theorem~\ref{Kunz's Theorem} implies $a_e(R)=p^{e\gamma(R)}$ for each $e\in\N$, hence $\s(R)=1$. Huneke and Leuschke proved the reverse implication holds as well.

\begin{Theorem}[{\cite[Corollary 16]{HunekeLeuschke}\label{s=1 local}}] Let $(R,\m,k)$ be an F-finite local of prime characteristic. Then $\s(R)=1$ if and only if $R$ is regular.
\end{Theorem}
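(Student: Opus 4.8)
The plan is to prove the two implications separately. If $R$ is regular, Kunz's Theorem (Theorem~\ref{Kunz's Theorem}) gives $a_e(R)=p^{e\gamma(R)}$ for every $e$, so $\s(R)=\lim_e a_e(R)/p^{e\gamma(R)}=1$; this is the easy direction, already noted just before the statement. The substance is the converse, which I would prove in contrapositive form: \emph{if $R$ is not regular, then $\s(R)<1$}.

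First I would reduce to the case where $R$ is a complete local domain. Passing to $\widehat{R}$ changes nothing: $\widehat{R}$ is again $F$-finite, $F^e_*\widehat{R}\cong\widehat{F^e_*R}$, and the Frobenius splitting numbers, the invariant $\gamma$, and hence $\s$ are insensitive to completion (Krull--Schmidt applies over the complete local ring $\widehat{R}$), while regularity passes between $R$ and $\widehat{R}$. To reduce to a domain, note that $\s(R)>0$ forces $R$ to be reduced: if $0\neq x$ is nilpotent and $e$ is large enough that $x^{p^e}=0$, then $x\,\varphi(F^e_*r)=\varphi(F^e_*(x^{p^e}r))=0$ for every $\varphi\in\Hom_R(F^e_*R,R)$ and every $r$, so no $\varphi$ can send some $F^e_*r$ to a unit and $a_e(R)=0$. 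That a reduced $R$ with $\s(R)>0$ is in fact a domain follows from $\s(R)>0$ implying strong $F$-regularity, hence normality, hence (locally) integrality; alternatively one invokes \cite{AberbachLeuschke}. Since regular local rings are domains, it then suffices to show that a complete local $F$-finite domain $R$ that is not regular has $\s(R)<1$.

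The main step would be a one-step Frobenius recursion on the splitting numbers. For each $e\geq 1$ fix a decomposition $F^e_*R\cong R^{\oplus a_e}\oplus M_e$ with $a_e=a_e(R)$ and $M_e$ having no free summand, and put $\gamma=\gamma(R)$, $\rho_e=\rank_R(M_e)$. Taking ranks and using $\rank_R(F^e_*R)=p^{e\gamma}$ (valid since $R$ is a domain) gives $\rho_e=p^{e\gamma}-a_e$. Applying $F_*$ and substituting $F_*R\cong R^{\oplus a_1}\oplus M_1$,
\[
F^{e+1}_*R\;\cong\;(F_*R)^{\oplus a_e}\oplus F_*M_e\;\cong\;R^{\oplus a_1a_e}\oplus M_1^{\oplus a_e}\oplus F_*M_e ,
\]
and since $R$ is indecomposable and Krull--Schmidt holds, the number of free summands is additive over direct sums, with $M_1^{\oplus a_e}$ contributing none, so $a_{e+1}=a_1a_e+(\text{number of free summands of }F_*M_e)$. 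That number is at most $\rank_R(F_*M_e)$, and since $M_e$ contains a free submodule $R^{\oplus\rho_e}$ with torsion cokernel, applying the exact functor $F_*$ (which preserves torsion) gives $\rank_R(F_*M_e)=\rho_e\,\rank_R(F_*R)=\rho_e\,p^{\gamma}$. Therefore
\[
a_{e+1}\;\leq\;a_1a_e+p^{\gamma}\rho_e\;=\;a_1a_e+p^{\gamma}\bigl(p^{e\gamma}-a_e\bigr)\;=\;p^{(e+1)\gamma}-(p^{\gamma}-a_1)\,a_e ,
\]
i.e. $\rho_{e+1}\geq\rho_1\,a_e$. Dividing by $p^{(e+1)\gamma}$ and letting $e\to\infty$, using that $\s(R)=\lim_e a_e/p^{e\gamma}$ exists (Theorem~\ref{F-signature exists local}), so that $\rho_e/p^{e\gamma}\to 1-\s(R)$, we get $1-\s(R)\geq (\rho_1/p^{\gamma})\,\s(R)$. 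Finally, $R$ not regular implies $F_*R$ is not free (Theorem~\ref{Kunz's Theorem}), so $M_1\neq 0$, and $M_1$ is torsion-free over the domain $R$, hence $\rho_1\geq 1$; thus $\s(R)\bigl(1+p^{-\gamma}\bigr)\leq 1$, so $\s(R)\leq p^{\gamma}/(p^{\gamma}+1)<1$, as desired.

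I expect the main obstacle to be keeping the rank bookkeeping honest --- specifically, checking that the ``deficiency'' $p^{e\gamma}-a_e$ is exactly the rank of the non-free part $M_e$, and that a module's free-summand count is bounded by its rank. Both facts rely on $R$ being a domain, where $F^e_*R$ is torsion-free and rank behaves well, which is precisely what makes the reduction in the second paragraph necessary; granting it, the recursion and the passage to the limit are short.
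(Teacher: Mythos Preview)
The paper does not supply its own proof of this statement; it is recorded as a background result with a citation to Huneke--Leuschke. There is therefore nothing in the paper to compare against line by line, and the relevant question is simply whether your argument is sound.

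Your proof is correct. The forward direction is immediate from Kunz's theorem, as you note. For the converse, the reduction to a complete local domain is valid: completion preserves the splitting numbers and $\gamma$, and your nilpotent argument forces reducedness. The recursion $a_{e+1}\le a_1a_e+p^{\gamma}\rho_e$ is justified exactly as you say---Krull--Schmidt over a complete local ring makes $\frk$ additive on direct sums and guarantees $M_1^{\oplus a_e}$ has no free summand, and the rank bound $\frk(F_*M_e)\le\rank_R(F_*M_e)=\rho_e p^{\gamma}$ holds over a domain. The limiting inequality $1-\s(R)\ge(\rho_1/p^{\gamma})\s(R)$ and the conclusion $\s(R)\le p^{\gamma}/(p^{\gamma}+1)<1$ then follow.

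One remark on logical dependencies: to pass from ``reduced'' to ``domain'' you invoke Theorem~\ref{s>0 local} (Aberbach--Leuschke), which is a strictly deeper result than the one you are proving and which historically came later. This is not circular in the paper's presentation, since both are quoted as known facts, but it does mean your argument is not self-contained in the way the original Huneke--Leuschke proof is. Their approach instead passes through the inequality $\e(R)+\s(R)\le e(R)+1$ (recorded in the paper as Proposition~\ref{HL Upper bound of F-signature}) together with the Watanabe--Yoshida characterization of regularity, avoiding the strong F-regularity machinery. Your recursion is arguably more direct once the domain hypothesis is in hand, and it yields the same explicit bound $\s(R)\le p^{\gamma}/(p^{\gamma}+1)$ in the non-regular case.
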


Huneke and Leuschke prove that, if $(R,\m,k)$ is Gorenstein, then the positivity of the F-signature is equivalent to $R$ being F-rational, which is equivalent to $R$ being strongly F-regular under the Gorenstein hypothesis by \cite[Corollary 4.7 (a) and Theorem 5.5 (f)]{HHTAMS}. Aberbach and Leuschke extend the equivalence of positivity of the F-signature and strong F-regularity to all local F-finite rings in \cite{AberbachLeuschke}.

\begin{Theorem}[{\cite[Main Result]{AberbachLeuschke}\label{s>0 local}}\footnote{See \cite[Theorem 5.1]{PolstraTucker} for a simpler proof.}] Let $(R,\m,k)$ be a local ring of prime characteristic and F-finite. Then $\s(R)>0$ if and only if $R$ is strongly F-regular.
\end{Theorem}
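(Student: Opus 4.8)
The plan is to prove the two implications separately. First, a reduction: strong F-regularity forces $R$ to be reduced — indeed normal, hence a domain, as $R$ is local — and if $R$ is not reduced then $\s(R)=0$ as well. Indeed, if $0\neq x\in R$ satisfies $x^n=0$ and $p^e\geq n$, then for any $R$-linear split injection $\iota\colon R\hookrightarrow F^e_*R$, writing $\iota(1)=F^e_*u$ we get $\iota(x)=x\cdot\iota(1)=F^e_*(x^{p^e}u)=0$, contradicting injectivity; thus $a_e(R)=0$ for $e\gg 0$. So in both cases the equivalence holds trivially when $R$ is not reduced, and we may assume $R$ reduced, whence $\rank(F^e_*R)=p^{e\gamma(R)}$. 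I would then use the standard description of $\s(R)$ via the ideals $I_e$: since $\m^{[p^e]}\subseteq I_e$ the module $R/I_e$ has finite length, and $F^e_*(R/I_e)\cong (R/\m)^{\oplus a_e(R)}$ (from $F^e_*R\cong R^{\oplus a_e(R)}\oplus M_e$ and $F^e_*I_e\cong \m R^{\oplus a_e(R)}\oplus M_e$); comparing $R$-lengths, with $[k:k^{p^e}]=p^{e\alpha(\m)}$ and $\gamma(R)=\alpha(\m)+\dim R$ (Lemma~\ref{Kunz's Lemma}), gives $a_e(R)=[k:k^{p^e}]\cdot\lambda_R(R/I_e)$, hence $\s(R)=\lim_e\lambda_R(R/I_e)/p^{e\dim R}$. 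Finally, $r\notin I_e$ if and only if the map $R\to F^e_*R$, $1\mapsto F^e_*r$, splits, so $R$ is strongly F-regular if and only if for every $c$ lying in no minimal prime of $R$ there is an $e$ with $c\notin I_e$ — equivalently, by \cite{AberbachEnescu2005}, the splitting prime $\bigcap_e I_e$ is not contained in a minimal prime.

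For the implication $\s(R)>0\Rightarrow R$ strongly F-regular I argue the contrapositive. If $R$ is reduced and not strongly F-regular, there is $c$, lying in no minimal prime of $R$, with $c\in I_e$ for every $e$. Then $\m^{[p^e]}+(c)\subseteq I_e$, so
\[
\lambda_R(R/I_e)\leq \lambda_R\big((R/cR)/\m^{[p^e]}(R/cR)\big).
\]
Since $c$ lies in no minimal prime, $\dim(R/cR)\leq \dim R-1$, and Monsky's theorem (Theorem~\ref{HK exists local}), applied to the ring $R/cR$, gives $\lambda_R\big((R/cR)/\m^{[p^e]}(R/cR)\big)=O(p^{e(\dim R-1)})$. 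Dividing by $p^{e\dim R}$ and letting $e\to\infty$ shows $\s(R)=0$.

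For the converse $R$ strongly F-regular $\Rightarrow \s(R)>0$ — the substantive direction, due to Aberbach–Leuschke \cite{AberbachLeuschke} (see also \cite{PolstraTucker}) — I would argue as follows. As noted, a strongly F-regular local ring is a normal domain; being F-finite it is also excellent, so its regular locus is dense open, and we may choose $c\neq 0$ with $R_c$ regular. Strong F-regularity provides $e_0$ and $\phi_0\in\Hom_R(F^{e_0}_*R,R)$ with $\phi_0(F^{e_0}_*c)=1$, so that the map $R\to F^{e_0}_*R$, $1\mapsto F^{e_0}_*c$, splits (and $c$ then serves as a test element). The goal is a lower bound $a_e(R)\geq \delta\, p^{e\gamma(R)}$, with $\delta>0$ independent of $e$ — equivalently $\lambda_R(R/I_e)\geq \delta\, p^{e\dim R}$. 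The strategy is to transfer the near-freeness of $F^e_*R_c$ (a projective $R_c$-module of rank $p^{e\gamma(R)}$, since $R_c$ is regular) back to $R$: because $\phi_0(F^{e_0}_*c)=1$ yields denominator control that is \emph{uniform in $e$}, one constructs, for each $e$, a family of $R$-linear maps $F^e_*R\to R$ that form a local dual frame over $R_c$, and extracts from it a fixed positive proportion of honest free $R$-summands of $F^e_*R$. Equivalently, one feeds a lower bound of this type for a single suitable $e_0$ into the estimate $|a_e(R)/p^{e\gamma(R)}-\s(R)|=O(1/p^e)$ (see \cite{Tucker2012, PolstraTucker}) to conclude $\s(R)>0$.

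The main obstacle is exactly this uniform positive lower bound on the number of free summands. It cannot be obtained from multiplicativity — an inequality $a_{e+1}(R)\geq \rho\, a_e(R)$ with $\rho<p^{\gamma(R)}$ would force $\s(R)=0$ — nor from F-purity alone, which is strictly weaker than strong F-regularity. What must be controlled, uniformly in $e$, is how far $F^e_*R$ is from being free, and this ``distance'' has two sources: the singular locus of $R$ (which has codimension $\geq 2$, $R$ being normal) and the divisor class group of $R_c$ (which can keep even $F^e_*R_c$ far from free — already for the rational double point $k[[x,y,z]]/(xy-z^2)$, where $\s(R)=\tfrac12$). Bounding both simultaneously, via the stable test element $c$, is the heart of Aberbach–Leuschke's argument.
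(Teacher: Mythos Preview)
The paper does not prove this theorem: it is stated as a cited result from \cite{AberbachLeuschke}, with a footnote pointing to the simpler argument in \cite[Theorem~5.1]{PolstraTucker}, and no proof environment follows. So there is no in-paper proof to compare against; the paper simply quotes the result for later use.

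On your attempt itself: the reduction to the reduced case and the identification $a_e(R)/p^{e\gamma(R)}=\lambda_R(R/I_e)/p^{e\dim R}$ are correct, and your proof of the implication $\s(R)>0\Rightarrow R$ strongly F-regular (via the contrapositive, using $c\in\bigcap_e I_e$ with $c$ outside every minimal prime, so that $\m^{[p^e]}+(c)\subseteq I_e$ and $\dim(R/cR)<\dim R$) is correct and standard.

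The converse, however, is not actually proved in your write-up. You correctly identify the strategy (transfer the local freeness of $F^e_*R_c$ back to $R$ using a fixed splitting $\phi_0(F^{e_0}_*c)=1$) and the obstacle (a uniform-in-$e$ positive lower bound on $a_e(R)/p^{e\gamma(R)}$), but your final two paragraphs \emph{describe} the difficulty rather than overcome it. In particular, the sentence ``one feeds a lower bound of this type for a single suitable $e_0$ into the estimate $|a_e(R)/p^{e\gamma(R)}-\s(R)|=O(1/p^e)$'' is not a valid step as stated: a positive value at one $e_0$ together with an $O(1/p^e)$ error does not preclude $\s(R)=0$ unless you control the implicit constant, and obtaining that control is precisely the content of the Aberbach--Leuschke (or Polstra--Tucker) argument. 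Likewise, your remark that ``$a_{e+1}(R)\geq \rho\,a_e(R)$ with $\rho<p^{\gamma(R)}$ would force $\s(R)=0$'' is misstated---such a lower bound forces nothing; it merely fails to prevent $\s(R)=0$. So for the hard direction your proposal is a literature summary rather than a proof---which, to be fair, is exactly what the paper does here as well.
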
 

Any strongly F-regular local ring is a domain. So the study of the F-signature is typically of interest when $R$ is a domain.

\begin{Remark}\label{F-signature of a Module} Define the F-signature of a finitely generated $R$-module $M$ as follows. Let $a_e(M)$ be the largest rank of a free module appearing in various or, equivalently, in a single direct sum decomposition of $F^e_*M$. The F-signature of $M$ is defined to be $\s(M)=\lim_{e\rightarrow \infty}a_e(M)/p^{e\gamma(R)}$, which exists by some simple reductions to the scenario that $M=R$. Moreover, positivity of $\s(M)$ implies positivity of $\s(R)$. To see this one only needs to observe $a_e(M)\leq a_e(R^{\oplus \mu(M)})=\mu(M)a_e(R)$. It is also the case $\s(M)=\rank_R(M)\s(R)$, see \cite[Theorem 4.11]{Tucker2012}.
\end{Remark}

The third author naturally extends the notion of F-signature to all local rings which are not assumed to be F-finite in \cite{Yao2006}. Let $E_R(k)$ denote the injective hull of $k$ and let $u\in E_R(k)$ generate the socle. Let $I_e=\{r\in R\mid u\otimes F^e_*r=0\in E_R(k)\otimes F^e_*R\}$. If $(R,\m,k)$ is F-finite and of dimension $d$, then $\lambda(R/I_e)/p^{ed}=a_e(R)/p^{e\gamma(R)}$. If $(R,\m,k)$ is not necessarily F-finite, the F-signature of $R$ is defined to be $\lim_{e\rightarrow \infty}\lambda(R/I_e)/p^{ed}$. The third author's observations in \cite{Yao2006} and Tucker's work in \cite{Tucker2012} provide the existence of the F-signature of a non-F-finite local ring. Moreover, Theorem~\ref{s=1 local} and Theorem~\ref{s>0 local} remain valid without the F-finite assumption. 

The third author has shown that if $(R,\m)$ is a non-regular local of of prime characteristic $p$ and dimension $d$, then $\s(R)< 1-\frac{1}{d!p^d}$, see \cite[Theorem 3.1]{Yao2006}. We discuss how Theorem~\ref{HK really small means regular local} provides the existence of a constant $\delta>0$, depending only on the dimension of a local ring, such that if $(R,\m,k)$ is local of dimension $d$, of any prime characteristic, and non-regular, then $\s(R)<1-\delta$. First we recall the following.

\begin{Proposition}[{\cite[Proposition 14]{HunekeLeuschke}}]\label{HL Upper bound of F-signature} Let $(R,\m)$ be an F-finite strongly F-regular local ring. Let $e(R)$ be the Hilbert-Samuel multiplicity of $R$. Then $(e(R)-1)(1-\s(R))\geq \e(R)-1$. In particular, if $R$ is non-regular then $\s(R)\leq 1-\frac{\e(R)-1}{e(R)-1}$.
\end{Proposition}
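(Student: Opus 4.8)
The plan is to exploit a direct sum decomposition $F^e_*R \cong R^{\oplus a_e(R)}\oplus M_e$, where $a_e(R)$ is the $e$th Frobenius splitting number and $M_e$ has no free direct summand, and to bound $\mu(M_e)$ from above in terms of $\rank(M_e)$ using that $M_e$ is a maximal Cohen--Macaulay module. Passing to the limit in the resulting inequality for $\mu(F^e_*R)$ will produce a comparison between $\e(R)$, $\s(R)$ and the Hilbert--Samuel multiplicity $e(R)$.

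First I would record the standing reductions. Since $R$ is strongly F-regular it is a Cohen--Macaulay normal domain, and since it is F-finite the limit $\s(R)=\lim_e a_e(R)/p^{e\gamma(R)}$ exists by Theorem~\ref{F-signature exists local}; if $\dim R=0$ then $R$ is a field and the asserted inequalities are trivial, so assume $d:=\dim R\ge 1$. Replacing $R$ by $R[t]_{\m R[t]}$ --- a faithfully flat extension with field closed fibre, which changes neither $e(R)$, nor $\e(R)$, nor $\s(R)$, nor $\mu$ and $\rank$ of a finitely generated module, and which preserves the strongly F-regular and Cohen--Macaulay hypotheses --- I may assume the residue field of $R$ is infinite, so that $\m$ admits a minimal reduction $(\underline{x})=(x_1,\dots,x_d)$ generated by a system of parameters, with $e((\underline{x});R)=e(R)$.

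The key step is the bound $\mu(N)\le e(R)\cdot\rank(N)$ for every maximal Cohen--Macaulay $R$-module $N$. Indeed, $\underline{x}$ is a regular sequence on $N$, so $\mu(N)=\lambda(N/\m N)\le\lambda(N/(\underline{x})N)=e((\underline{x});N)$; and $N$, being maximal Cohen--Macaulay over the Cohen--Macaulay domain $R$, is torsion-free, so the generic-rank inclusion $N\hookrightarrow R^{\oplus\rank(N)}$ has torsion cokernel of dimension $<d$, whence $e((\underline{x});N)=\rank(N)\cdot e((\underline{x});R)=\rank(N)\cdot e(R)$ by additivity of Hilbert--Samuel multiplicity. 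Now apply this with $N=M_e$: it is a direct summand of the maximal Cohen--Macaulay module $F^e_*R$, hence itself maximal Cohen--Macaulay, of rank $p^{e\gamma(R)}-a_e(R)$, so using $\mu(F^e_*R)=a_e(R)+\mu(M_e)$ I obtain
\[
\mu(F^e_*R)\ \le\ a_e(R)+e(R)\bigl(p^{e\gamma(R)}-a_e(R)\bigr).
\]
Dividing by $p^{e\gamma(R)}=\rank(F^e_*R)$ and recalling $\mu(F^e_*R)/\rank(F^e_*R)=\lambda(R/\m^{[p^e]})/p^{ed}$, this becomes
\[
\frac{\lambda(R/\m^{[p^e]})}{p^{ed}}\ \le\ e(R)-(e(R)-1)\,\frac{a_e(R)}{p^{e\gamma(R)}}.
\]
Letting $e\to\infty$, the left-hand side tends to $\e(R)$ by Theorem~\ref{HK exists local} and $a_e(R)/p^{e\gamma(R)}\to\s(R)$, giving $\e(R)\le e(R)-(e(R)-1)\s(R)$, i.e. $(e(R)-1)(1-\s(R))\ge\e(R)-1$. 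For the final assertion, if $R$ is non-regular then, being a formally unmixed local ring (it is Cohen--Macaulay) of multiplicity $>1$ by Nagata's theorem, we have $e(R)-1>0$, and dividing through yields $\s(R)\le 1-\frac{\e(R)-1}{e(R)-1}$.

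I expect the crux to be the maximal Cohen--Macaulay bound $\mu(N)\le e(R)\rank(N)$: one must be sure that $M_e$ really is maximal Cohen--Macaulay (which is exactly where strong F-regularity is used, via $R$ being Cohen--Macaulay, so that $F^e_*R$ is maximal Cohen--Macaulay), and that passing to a minimal reduction is legitimate --- this is why the reduction to an infinite residue field is carried out at the outset. The remaining steps, namely splitting off the free part of $F^e_*R$, normalizing by $\rank(F^e_*R)=p^{e\gamma(R)}$, and taking the limit, are routine.
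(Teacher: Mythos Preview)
The paper does not supply its own proof of this proposition; it is simply quoted from \cite[Proposition~14]{HunekeLeuschke}. Your argument is correct and is in fact precisely the original Huneke--Leuschke proof: decompose $F^e_*R\cong R^{\oplus a_e(R)}\oplus M_e$, use that $R$ is Cohen--Macaulay (from strong F-regularity) to conclude $M_e$ is maximal Cohen--Macaulay, invoke the classical bound $\mu(N)\le e(R)\cdot\rank(N)$ for MCM modules over a CM local domain with infinite residue field, and pass to the limit. One cosmetic point: when you write that passing to $R[t]_{\m R[t]}$ ``changes neither \dots\ $\mu$ and $\rank$ of a finitely generated module,'' what you mean (and what suffices) is that $\e(R)$, $\s(R)$, and $e(R)$ are preserved under this faithfully flat extension with field closed fibre, so one may simply prove the inequality for the new ring; the modules $M_e$ themselves are of course different over the two rings.
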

\begin{Remark}\label{Remark for HL Upper bound} Let $(R,\m,k)$ be a local ring of characteristic $p$ and dimension $d$. As $\m^{[p^e]}\subseteq \m^{p^e}$, $\lambda(R/\m^{[p^e]})\geq \lambda(R/\m^{p^e})$. Dividing by $p^{ed}$ and letting $e\rightarrow \infty$ shows $\e(R)\geq  \frac{e(R)}{d!}$. Hence if $R$ is F-finite, strongly F-regular, but non-regular, $\s(R)\leq 1-\frac{\e(R)-1}{e(R)-1}\leq 1-\frac{\e(R)-1}{d!\e(R)-1}$.
\end{Remark}

\begin{Theorem}\label{F-signature big means regular} Fix $d\in\N$. There is a number $\delta>0$ such that if $(R,\m,k)$ is an F-finite of dimension $d$, of any prime characteristic, and such that $\s(R)\geq 1-\delta$, then $R$ is regular.
\end{Theorem}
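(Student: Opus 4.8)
The plan is to reduce to the Hilbert--Kunz statement of Theorem~\ref{HK really small means regular local} by feeding it the numerical upper bound on the F-signature of a strongly F-regular non-regular ring coming from Proposition~\ref{HL Upper bound of F-signature} and Remark~\ref{Remark for HL Upper bound}.

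First I would dispose of small dimensions: if $d\le 1$, then any F-finite local ring with $\s(R)>0$ is strongly F-regular by Theorem~\ref{s>0 local}, hence a normal domain, hence a field or a DVR, hence regular; so any $\delta\in(0,1)$ works in those cases. Assume now $d\ge 2$. Let $\delta'>0$ be the constant furnished by Theorem~\ref{HK really small means regular local} for the dimension $d$, and set
\[
\delta=\frac{\delta'}{d!-1+d!\,\delta'},
\]
noting that $0<\delta<\tfrac{1}{d!}<1$; this depends only on $d$, as required.

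Now suppose $(R,\m,k)$ is F-finite of dimension $d$ with $\s(R)\ge 1-\delta$. Since $\delta<1$ we get $\s(R)>0$, so $R$ is strongly F-regular by Theorem~\ref{s>0 local}; in particular $R$ is a normal domain, and being F-finite it is excellent, so $\hat{R}$ is normal, hence a domain, hence $R$ is formally unmixed. If $R$ is regular we are done, so assume it is not. Then Remark~\ref{Remark for HL Upper bound} gives $\s(R)\le 1-\frac{\e(R)-1}{d!\,\e(R)-1}$. Writing $\e(R)=1+t$ with $t>0$, the right-hand side equals $1-f(t)$ where $f(t)=\frac{t}{d!-1+d!\,t}$, and $f$ is strictly increasing on $(0,\infty)$ since $f'(t)=\frac{d!-1}{(d!-1+d!\,t)^2}>0$. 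Comparing with $\s(R)\ge 1-\delta=1-f(\delta')$ forces $f(t)\le f(\delta')$, hence $t\le\delta'$, i.e. $\e(R)\le 1+\delta'$. Theorem~\ref{HK really small means regular local} then yields that $R$ is regular, contradicting our assumption; therefore $R$ is regular.

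I do not expect a serious obstacle here: the only steps needing care are the two hypothesis checks, namely that $\s(R)\ge 1-\delta>0$ indeed places us in the strongly F-regular regime where Remark~\ref{Remark for HL Upper bound} is available, and that strong F-regularity of an F-finite ring (via normality and excellence, or via Cohen--Macaulayness of the completion) supplies the formally unmixed hypothesis required to invoke Theorem~\ref{HK really small means regular local}. Once this bookkeeping is in place, the argument is just the elementary monotonicity of $f$.
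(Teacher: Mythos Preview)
Your proposal is correct and follows essentially the same route as the paper: both arguments pivot on Theorem~\ref{HK really small means regular local} together with the bound $\s(R)\le 1-\frac{\e(R)-1}{d!\e(R)-1}$ from Remark~\ref{Remark for HL Upper bound}, and the constant you produce, $\delta=\frac{\delta'}{d!-1+d!\delta'}$, is exactly the one the paper obtains (it just computes it in the other direction, assuming non-regularity and bounding $\s(R)$ strictly below $1-\delta$). Your separate treatment of $d\le 1$ and your explicit check that strong F-regularity supplies the formally unmixed hypothesis are harmless additions; the paper simply absorbs these into the phrase ``$R$ is a domain and $\e(R)>1+\delta$''.
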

\begin{proof} Assume that $R$ is non-regular and let $\delta$ be as in Theorem~\ref{HK really small means regular local}. If $R$ is not strongly F-regular, then $\s(R)=0$ by Theorem~\ref{s>0 local}. Thus we may assume $R$ is strongly F-regular, in particular $R$ is a domain and $\e(R)>1+\delta$ by Theorem~\ref{HK really small means regular local}. By Proposition~\ref{HL Upper bound of F-signature} and Remark~\ref{Remark for HL Upper bound},
\begin{align*}
\s(R)&\leq 1-\frac{\e(R)-1}{d!\e(R)-1}=\frac{d!\e(R)-\e(R)}{d!\e(R)-1}\\
&=\frac{d!-1}{d!-\frac{1}{\e(R)}}<\frac{d!-1}{d!-\frac{1}{\delta+1}}=1-\frac{\delta}{d!(\delta +1)-1}.
\qedhere
\end{align*}
\end{proof}

Given a local ring $(R,\m,k)$ of prime characteristic $p$ and of dimension $d$, let $s_e(R)=\lambda(R/I_e)/p^{ed}$ and call this number the $e$th normalized Frobenius splitting number of $R$. In \cite{Yao2006}, the third author proves if $(R,\m)\rightarrow (S,\n)$ is flat, then for each $e\in\N$, $s_e(R)\geq s_e(T)$. In other words, the normalized Frobenius splitting numbers can only decrease after flat extensions. We formally state this theorem for future reference.

\begin{Theorem}[{\cite[Theorem 5.4 (3), Theorem 5.5]{Yao2006}}]\label{F-signature comparison} Let $(R,\m,k)\rightarrow (T,\n,l)$ be a flat local ring homomorphism of local rings of prime characteristic $p$ and let $M$ be a finitely generated $R$-module. Then $s_e(M)\geq s_e(M\otimes_RT)$ for each $e\in\N$, hence $\s(M)\geq \s(M\otimes_R T)$. Moreover, if $T/\m T$ is regular, then $s_e(M)=s_e(M\otimes_RT)$ for each $e\in \N$, hence $\s(M)=\s(M\otimes_R T)$.
\end{Theorem}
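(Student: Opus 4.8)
Passing to the limit as $e\to\infty$, the statements about $\s$ follow from those about $s_e$, so it suffices to prove $s_e(M)\ge s_e(M\otimes_R T)$ for each fixed $e\in\N$, with equality when $T/\m T$ is regular. The plan is to run the very same two-step argument used above for Hilbert--Kunz multiplicity (Lemma~\ref{HK Inequality Same Dim}, Lemma~\ref{HK Inequality Localization}, and their combination in Theorem~\ref{HK Inequality}), but with the splitting ideals $I_e$ in place of the Frobenius powers $\m^{[p^e]}$. The inequality flips direction because free summands of a Frobenius pushforward can only be created, never destroyed, under flat base change and under localization, while the power of $p$ by which one normalizes grows at the same time; these two effects combine into a net decrease of $s_e$. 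As usual one first reduces to the case that $R$, hence $T$, is complete, which changes neither side, and --- using the equality statement below for a residue field extension as closed fiber --- one may further reduce to $R$ F-finite via the gamma construction.

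The two ingredients I would isolate are: (i) a flat base change lemma in equal dimension, namely that if $(R,\m,k)\to(S,\n,l)$ is flat local with $\dim R=\dim S$, then $s_e(N)\ge s_e(N\otimes_R S)$ for every finitely generated $R$-module $N$, with equality when the (necessarily Artinian) closed fiber $S/\m S$ is a field; and (ii) a localization lemma, namely that if $P\in\Spec(R)$ satisfies $\height(P)+\dim(R/P)=\dim(R)$, then $s_e(N)\le s_e(N_P)$ for every finitely generated $R$-module $N$. Granting these, one assembles the theorem exactly as in the proof of Theorem~\ref{HK Inequality}: by flatness choose $P\in\Spec(T)$ lying over $\m$ with $\height(P)=\dim(R)$ and $\height(P)+\dim(T/P)=\dim(T)$ (possible as in the proof of Lemma~\ref{HK Inequality Localization}); apply (i) to the flat local map $R\to T_P$, whose closed fiber $(T/\m T)_P$ is Artinian and, when $T/\m T$ is regular, a field, to get $s_e(M)\ge s_e(M_{T_P})$ with equality in that case; then apply (ii) to $T$ and $P$, noting that the height hypothesis forces $\gamma(T)=\gamma(T_P)$ by Lemma~\ref{Kunz's Lemma} so that the normalizations agree, to get $s_e(M_{T_P})\ge s_e(M_T)$. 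Chaining the two inequalities yields $s_e(M)\ge s_e(M_T)$. When $T/\m T$ is regular one still needs the reverse inequality $s_e(M)\le s_e(M_T)$; I would obtain it directly, as in the second half of the proof of Theorem~\ref{HK Inequality}, by tensoring the relevant prime filtration computing $s_e(M)$ up along $R\to T$ and invoking Theorem~\ref{Kunz's Theorem} to see that a regular closed fiber contributes a factor of exactly $p^{e\dim(T/\m T)}$.

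Ingredient (ii) is routine: a free $R$-summand of $F^e_*N$ localizes to a free $R_P$-summand of $F^e_*(N_P)\cong(F^e_*N)_P$ since Frobenius pushforward commutes with localization, so $a_e(N)\le a_e(N_P)$ in the F-finite case; and the hypothesis $\height(P)+\dim(R/P)=\dim(R)$ gives $\gamma(R)=\gamma(R_P)$ by Lemma~\ref{Kunz's Lemma} (for a local F-finite ring, $\gamma$ equals $\alpha$ of the closed point plus the dimension), so the inequality survives the normalization; the non-F-finite case follows via the gamma construction. The main obstacle is ingredient (i). The subtlety, absent in the Hilbert--Kunz setting, is that $F^e_{*,S}(N\otimes_R S)$ is \emph{not} $F^e_{*,R}(N)\otimes_R S$ in general, because restriction of scalars along the Frobenius of $S$ interacts with the residue field extension $k\subseteq l$. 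The cleanest route I see is to express $a_e$ via Matlis duality over the complete ring --- identifying it with the dimension of the image of $\Hom_S(F^e_*(N\otimes_R S),S)$ in $(F^e_*(N\otimes_R S)\otimes_S l)^{*}$ --- thereby translating the problem into a comparison of the socle of $E_R(k)\otimes_R S$ with $E_S(l)$, and then to invoke Theorem~\ref{Kunz's Theorem} to see that a regular closed fiber contributes nothing beyond the expected factor. This flat base change for the splitting ideals $I_e$ is precisely \cite[Theorems 5.4 and 5.5]{Yao2006}, and carrying it out in detail is where the real work lies.
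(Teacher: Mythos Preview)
The paper does not prove this statement at all: Theorem~\ref{F-signature comparison} is stated in the Background section as a citation of \cite[Theorem~5.4~(3), Theorem~5.5]{Yao2006}, with no proof given. So there is no ``paper's own proof'' to compare your proposal against.

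That said, your outline is a reasonable sketch of how one would mimic the Hilbert--Kunz argument (Lemmas~\ref{HK Inequality Same Dim}, \ref{HK Inequality Localization}, Theorem~\ref{HK Inequality}) for the splitting numbers, and you correctly identify the crux: ingredient~(i), the equal-dimension flat base change inequality for $s_e$, is where the substance lies, and you yourself note at the end that this \emph{is} precisely the content of \cite[Theorems~5.4 and~5.5]{Yao2006}. In other words, your proposal reduces the theorem to itself. The Matlis-duality route you gesture at --- comparing the socle of $E_R(k)\otimes_R S$ with $E_S(l)$ and tracking the splitting ideal $I_e$ under base change --- is indeed how Yao proceeds, but you have not carried it out; you have only named it. So as written, the proposal is an accurate roadmap rather than a proof, and the honest thing to do here is exactly what the paper does: cite \cite{Yao2006}.
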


\subsection{Cartier subalgebras and the F-signature}\label{Cartier subalgebras}   In \cite{BST2012, BST2013} Blickle, Schwede, and Tucker use the language of Cartier subalgebras to greatly generalize the notion of the F-signature.  Their generalization of the F-signature provide the correct framework to answer a question of Aberbach and Enescu, see \cite[Question 4.9]{AberbachEnescu2005} and \cite[Remark 4.6]{BST2012}.

We make the assumption that $R$ is an F-finite ring, not necessarily local. One can make $\C^R=\oplus_{e\in\N}\Hom_R(F^e_*R,R)$ a graded $\F_p$-algebra in a natural way. The $0$th graded piece of $\C^R$ is $\Hom_R(R,R)\cong R$. If $\varphi\in \Hom_R(F^e_*R,R)$ and $\psi\in \Hom_R(F^{e'}_*R,R)$, then we let $\varphi\bullet \psi= \varphi\circ F^e_*\psi\in \Hom_R(F^{e+e'}_*R,R)$. One should observe that $\C^R$ is non-commutative and that $R\cong\Hom_R(R,R)$ is not central in $\C^R$. If $r\in R$, $\varphi\in\Hom_R(F^e_*R,R)$, and $F^e_*s\in F^e_*R$, then $r\bullet \varphi(F^e_*s)=r\varphi(F^e_*s)=\varphi(r F^e_*s)=\varphi(F^e_*r^{p^e}s)\not=\varphi(F^e_*rs)=\varphi(F^e_*s)\bullet r$.

A Cartier subalgebra $\D$ is a graded $\F_p$-subalgebra of $\C^R$ such that the $0$th graded piece of $\D$ is $\Hom_R(R,R)$, which is all of the $0$th graded piece of $\C^R$. Let $\D_e$ denote the $e$th graded piece of $\D$. We refer the reader to \cite{pinversemapssurvey} for a more thorough introduction to Cartier subalgebras.

 Given a Cartier subalgebra $\D$ we call a summand $M$ of $F^e_*R$ a $\D$-summand if $M\cong R^{\oplus n}$ is free and the map $F^e_*R\rightarrow M\cong R^{\oplus n}$ is a direct sum of elements of $\D_e$. The assumption that $\D_0=\Hom_R(R,R)$ implies that the chosen isomorphism of $M\cong R^{\oplus n}$ does not affect whether $M$ is a $\D$-summand or not. If $R=(R,\m,k)$ is local, then the $e$th Frobenius splitting number of $(R,\D)$ is defined to be the maximal rank of a free $\D$-summand appearing in various direct sum decompositions of $F^e_*R$ and is denoted $a_e(R,\D)$. As with the usual Frobenius splitting numbers, one only needs to look at a single direct sum decomposition of $F^e_*R$ to determine $a_e(R,\D)$, see \cite[Proposition 3.5]{BST2012}. Observe that if $\D=\C^R$ then $a_e(R,\D)=a_e(R)$ is the usual $e$th Frobenius splitting number of $R$. To ease notation, we will typically write $\s_e(R,\D)$ to represent $a_e(R,\D)/p^{e\gamma(R)}$.
 
Suppose $R$ is an F-finite domain. We define two classes of Cartier subalgebras which arise from geometric considerations, see \cite{HaraYoshida, HaraWatanabe, Takagi}.  Let $0 \neq \a \subseteq R$ be an ideal. For $t \in \R_{\geq 0}$, define
$\C^{\a^t} = \bigoplus_{e \geq 0} \C^{\a^t}_e$, where
\[
\begin{array}{rcl}
\C^{\a^t}_e &=& F^e_*\a^{\lceil t(p^e - 1)\rceil}\Hom_R(F^e_*R,R) \\
&=& \{ \phi ( F^e_*x \cdot \blank) \mid F^e_*x \in F^e_*\a^{\lceil t(p^e - 1)\rceil } \mbox{ and } \phi \in \Hom_R(F^e_*R,R) \}.
\end{array}
\]
Suppose $R$ is an F-finite normal domain and $\Delta$ is an effective $\Q$-divisor on $\Spec(R)$, define
$\C^{(R,\Delta)} = \bigoplus_{e \geq 0} \C^{(R,\Delta)}_e$, where
\[
\begin{array}{rcl}
\C^{(R,\Delta)}_e &=& \{ \phi \in \Hom_R(F^e_*R,R) \mid \Delta_\phi \geq \Delta\} \\ &=& \im\left(  \Hom_R(F^e_*R(\lceil (p^e - 1) \Delta \rceil),R) \to \Hom_R(F^e_*R,R)\right).
\end{array}
\]

 Given a Cartier subalgebra $\D$, let $\Gamma_\D=\{e\in\N\mid \D_e\not=0\}$. One can easily check that $\Gamma_\D$ is a subsemigroup of $\N$. Blickle, Schwede, and Tucker prove the following.

\begin{Theorem}[{\cite[Theorem 3.11]{BST2012}}]\label{F-sig pairs} Let $(R,\m,k)$ be an F-finite local domain and let $\D$ be a Cartier subalgebra. Then the following limit exists, $$\displaystyle\lim_{e\in\Gamma_\D\rightarrow \infty}\s_e(R,\D).$$ Its limit is denoted $\s(R,\D)$ and is called the F-signature of $(R,\D)$.
\end{Theorem}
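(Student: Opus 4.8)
The plan is to follow Tucker's proof of the existence of the ordinary F-signature (Theorem~\ref{F-signature exists local}), carrying the Cartier subalgebra $\D$ through every step. First I would reduce to the case in which $R$ is a complete local domain. The completion map $R\to\widehat R$ is faithfully flat with trivial closed fibre and, since $R$ is F-finite, $\Hom_R(F^e_*R,R)\otimes_R\widehat R\cong\Hom_{\widehat R}(F^e_*\widehat R,\widehat R)$; thus $\D$ base-changes to a graded $\F_p$-subalgebra $\widehat\D\subseteq\C^{\widehat R}$ with $\widehat\D_0=\Hom_{\widehat R}(\widehat R,\widehat R)$ and $\Gamma_{\widehat\D}=\Gamma_\D$. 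A $\D$-free summand of $F^e_*R$ is detected by the ideal
\[
I_e^{\D}=\{\,r\in R\mid \varphi(F^e_*r)\in\m\ \text{for all}\ \varphi\in\D_e\,\},
\]
whose formation commutes with completion; using this together with Krull--Schmidt over $\widehat R$ one gets $\s_e(R,\D)=\s_e(\widehat R,\widehat\D)$ for all $e$ and $\gamma(R)=\gamma(\widehat R)$. So from now on $R$ is a complete local domain of dimension $d$.

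Next I would recast the problem in terms of lengths. If $\Gamma_\D=\{0\}$ there is nothing to prove, so assume $\Gamma_\D\neq\{0\}$ and put $c=\gcd(\Gamma_\D)$, so that $\Gamma_\D$ contains every sufficiently large multiple of $c$. As in the case $\D=\C^R$, one checks that $I_e^\D$ is an ideal, that $F^e_*I_e^\D$ is the non-$\D$-free part of a direct sum decomposition of $F^e_*R$ together with $\m$ times the $\D$-free part, hence that $F^e_*(R/I_e^\D)\cong k^{\oplus a_e(R,\D)}$; comparing $R$-lengths and invoking Lemma~\ref{Kunz's Lemma} gives $\s_e(R,\D)=\lambda_R(R/I_e^\D)/p^{ed}$, the natural extension of the identity recalled in Subsection~\ref{F-signature}. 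So it suffices to show that $\lambda_R(R/I_e^\D)/p^{ed}$ converges as $e\to\infty$ within $\Gamma_\D$.

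The core of the argument is an almost-multiplicativity estimate for $a_e(R,\D)$, valid for $e,e'\in\Gamma_\D$. Since $\D$ is closed under the composition product and $\D_{e'}\bullet\D_e\subseteq\D_{e+e'}$, composing a split $\D$-free surjection $F^e_*R\twoheadrightarrow R^{\oplus a_e(R,\D)}$ with $F^{e'}_*$ of such a surjection at level $e'$ yields a split $\D$-free surjection $F^{e+e'}_*R\twoheadrightarrow R^{\oplus a_e(R,\D)a_{e'}(R,\D)}$, so $a_{e+e'}(R,\D)\ge a_e(R,\D)\,a_{e'}(R,\D)$. Conversely, writing $F^e_*R\cong R^{\oplus a_e(R,\D)}\oplus M_e$ gives $F^{e+e'}_*R\cong(F^{e'}_*R)^{\oplus a_e(R,\D)}\oplus F^{e'}_*M_e$, and one bounds the number of $\D$-free summands on the right by $a_e(R,\D)\,a_{e'}(R,\D)$ plus the free rank of $F^{e'}_*M_e$ plus (roughly) a term measuring the failure of $\D_{e'}\bullet\D_e$ to generate $\D_{e+e'}$. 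Normalizing by $p^{(e+e')d}$, using $\s_e(R,\D)\in[0,1]$, and invoking a Monsky-type uniform bound on $\mu_R(F^{e'}_*M_e)/p^{e'd}$ (leading term $\rank_R M_e$, with uniformly bounded error), I would extract a Cauchy estimate $|\s_{e+c}(R,\D)-\s_e(R,\D)|\le C\,p^{-e}$ for $e,e+c\in\Gamma_\D$ with $C$ depending only on $(R,\D)$. Summing a geometric series then shows $(\s_e(R,\D))_{e\in\Gamma_\D}$ is Cauchy, hence convergent to $\s(R,\D)$.

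The hard part will be the uniformity in the last estimate: one must bound, independently of $e$, both the free rank of $F^{e'}_*M_e$ and the defect between $\D_{e+e'}$ and $\D_{e'}\bullet\D_e$. The first is controlled by a Monsky/Hilbert--Kunz-type analysis of the modules $M_e$, and it is for this (and for the use of Krull--Schmidt) that completeness of $R$ is needed; the second is where it matters that $\D$ is a genuine graded subalgebra of $\C^R$ rather than an arbitrary family of subspaces. For the concrete families $\C^{\a^t}$ and $\C^{(R,\Delta)}$ the defect can be made explicit, but the general case requires the uniform estimate.
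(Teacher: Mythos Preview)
The paper does not prove this statement at all: Theorem~\ref{F-sig pairs} is quoted from \cite[Theorem~3.11]{BST2012} as background in Section~\ref{Cartier subalgebras}, with no argument supplied. There is therefore no ``paper's own proof'' to compare your proposal against.

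That said, since you have written a sketch, let me point out a genuine gap in it. Your two-sided Cauchy estimate hinges on bounding, uniformly in $e$, what you call ``the defect between $\D_{e+e'}$ and $\D_{e'}\bullet\D_e$''. For an arbitrary Cartier subalgebra there is no reason this defect should be controlled: the graded pieces $\D_e$ are only required to be $R$-submodules of $\Hom_R(F^e_*R,R)$ closed under the product, and $\D_{e+e'}$ can be strictly larger than the image of $\D_{e'}\bullet\D_e$ in a way that is not uniformly bounded. You acknowledge this yourself in the last paragraph (``the general case requires the uniform estimate''), but this is precisely the point that needs an idea, not an assertion. The actual argument in \cite{BST2012} avoids this altogether: one only needs a \emph{one-sided} inequality coming from $\D_e\bullet\D_{e'}\subseteq\D_{e+e'}$, which yields $I_{e+e'}^{\D}\subseteq (I_{e'}^{\D})^{[p^e]}$ and hence, combined with a Hilbert--Kunz type estimate for the auxiliary ideals $I_{e'}^{\D}$, an inequality of the shape in Lemma~\ref{Sequence Lemma}(\ref{Sequence Lemma 2}). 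No control on the reverse inclusion is ever needed, so the ``defect'' problem does not arise.

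Your reduction to the complete case and the translation to lengths via the ideals $I_e^{\D}$ are fine and do match what is done in \cite{BST2012}; it is only the core estimate that is mis-aimed.
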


A Cartier subalgebra $\D$, or the pair $(R,\D)$, is called strongly F-regular if for every $r\in R$ there is an $e\in\Gamma_\D$ and $\varphi\in\D_e$ such that $\varphi(F^e_*r)=1$. Blickle, Schwede, and Tucker improve Theorem~\ref{s>0 local} by showing the equivalence between positivity of $\s(R,\D)$ and strong F-regularity of $(R,\D)$.

\begin{Theorem}[{\cite[Theorem 3.18]{BST2012}}]\label{s>0 Cartier local} Let $(R,\m,k)$ be an F-finite domain and $\D$ a Cartier subalgebra. Then $\s(R,\D)>0$ if and only if $(R,\D)$ is strongly F-regular.
\end{Theorem}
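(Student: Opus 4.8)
The plan is to run everything through the $e$-th \emph{splitting ideal of the pair},
\[
I_e(\D)=\{\,r\in R\mid \varphi(F^e_*r)\in\m\ \text{ for all }\varphi\in\D_e\,\},
\]
in analogy with the treatment of the usual F-signature recalled in Section~\ref{F-signature}. First I would record the elementary facts: $I_e(\D)$ is an ideal containing $\m^{[p^e]}$; if $F^e_*R=R^{\oplus a_e(R,\D)}\oplus M_e$ is a decomposition realizing a maximal free $\D$-summand, with $\D_e$-projections $\varphi_1,\dots,\varphi_{a_e(R,\D)}\colon F^e_*R\to R$, then $F^e_*I_e(\D)\subseteq\m^{\oplus a_e(R,\D)}\oplus M_e$, so that there is a surjection $F^e_*R\twoheadrightarrow k^{\oplus a_e(R,\D)}$ with kernel $F^e_*I_e(\D)$; and, since $\D_e$ is stable under scaling by units of $R$, the pair $(R,\D)$ is strongly F-regular if and only if for every nonzero $c\in R$ one has $c\notin I_e(\D)$ for some $e\in\Gamma_\D$, i.e. if and only if $\bigcap_{e\in\Gamma_\D}I_e(\D)=0$.

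I would then dispatch the implication ``$\s(R,\D)>0\Rightarrow(R,\D)$ strongly F-regular'' by contraposition. If $(R,\D)$ is not strongly F-regular, choose $0\neq c\in\bigcap_{e\in\Gamma_\D}I_e(\D)$; we may assume $c$ is a non-unit, for otherwise $I_e(\D)=R$ and $a_e(R,\D)=0$ for all $e\in\Gamma_\D$ and there is nothing to prove. For each $e\in\Gamma_\D$ we have $F^e_*(cR)\subseteq F^e_*I_e(\D)\subseteq\m^{\oplus a_e(R,\D)}\oplus M_e$, so the composite $F^e_*(R/cR)\twoheadrightarrow F^e_*R/(\m^{\oplus a_e(R,\D)}\oplus M_e)\cong k^{\oplus a_e(R,\D)}$ is onto, whence $a_e(R,\D)\leq\mu_R\big(F^e_*(R/cR)\big)$, since the minimal number of generators does not increase under a surjection. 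Now $R$ is a domain and $c\neq0$, so $\dim(R/cR)\leq\dim R-1$, and $\mu_R\big(F^e_*(R/cR)\big)=\lambda_R\big((R/cR)/\m^{[p^e]}(R/cR)\big)\cdot p^{e\alpha(\m)}$; by the estimate $\lambda_R(N/\m^{[p^e]}N)=O(p^{e\dim N})$ (Theorem~\ref{HK exists local}) this is $O(p^{e(\dim R-1)})\cdot p^{e\alpha(\m)}=O(p^{e(\gamma(R)-1)})$, using $\gamma(R)=\dim R+\alpha(\m)$. Hence $a_e(R,\D)=O(p^{e(\gamma(R)-1)})$ and $\s(R,\D)=\lim_{e\in\Gamma_\D}a_e(R,\D)/p^{e\gamma(R)}=0$.

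For the converse, ``$(R,\D)$ strongly F-regular $\Rightarrow\s(R,\D)>0$'', I would adapt the proof of the Aberbach--Leuschke theorem (Theorem~\ref{s>0 local}), in the form of \cite[Theorem~5.1]{PolstraTucker}, to the pair. Strong F-regularity of $(R,\D)$ immediately gives strong F-regularity of $R$ — the witnessing maps already lie in $\C^R_e$ — so $R$ is a strongly F-regular domain; pick $0\neq c\in R$ with $R_c$ regular, and use strong F-regularity of the pair to get $e_0\in\Gamma_\D$ and $\varphi_0\in\D_{e_0}$ with $\varphi_0(F^{e_0}_*c)=1$. The Aberbach--Leuschke argument for $R$ itself produces, from the regularity of $R_c$ (over which $F^e_*R$ becomes locally free) and the uniform estimate $|\s_{e+1}(R)-\s_e(R)|=O(p^{-e})$, a fixed $\epsilon>0$ with $a_e(R)\geq\epsilon\,p^{e\gamma(R)}$ for all $e\gg0$; I would rerun it with $\C^R$ replaced by $\D$ throughout. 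The point of assuming $(R,\D)$ — rather than merely $R$ — strongly F-regular is exactly that the splittings against powers of $c$, and their Frobenius iterates, which appear in that argument can then be arranged to lie in $\D$, and the analogue $|\s_{e+1}(R,\D)-\s_e(R,\D)|=O(p^{-e})$ along $\Gamma_\D$, which comes out of the proof of Theorem~\ref{F-sig pairs}, takes over the role of the estimate for $\s_e(R)$; since the limit $\s(R,\D)$ exists, the resulting bound $a_e(R,\D)\geq\epsilon'\,p^{e\gamma(R)}$ for $e\gg0$ in $\Gamma_\D$ gives $\s(R,\D)>0$.

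I expect the main obstacle to be precisely this last implication, and within it the bookkeeping needed to keep \emph{all} the Cartier maps appearing in the Aberbach--Leuschke argument inside $\D$. One cannot, for instance, simply compose $\varphi_0\in\D_{e_0}$ on the outside with an arbitrary projection of $F^{e-e_0}_*R$ onto a free summand on the inside and expect the result in $\D_e$, since that inner projection need not belong to $\D_{e-e_0}$; the free summands that are ultimately counted must be split off by maps in $\D_e$, so the delicate step is to control, at each Frobenius iterate, how large a fraction of the free part of $F^e_*R$ can be recovered subject to this constraint — which is where the uniform-convergence machinery behind Theorem~\ref{F-sig pairs} is indispensable.
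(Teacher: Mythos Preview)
The paper does not prove this theorem; it is quoted in the background section as \cite[Theorem~3.18]{BST2012}, with no argument supplied. So there is no in-paper proof to compare against, and what follows assesses your sketch on its own merits.

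Your contrapositive for ``$\s(R,\D)>0\Rightarrow(R,\D)$ strongly F-regular'' is correct and standard. That $I_e(\D)$ is an ideal uses $\D_0=R\subseteq\D$, so that $\varphi\bullet r\in\D_e$ for every $\varphi\in\D_e$ and $r\in R$; the surjection onto $k^{\oplus a_e(R,\D)}$ and the bound $a_e(R,\D)\leq\mu_R\big(F^e_*(R/cR)\big)=O(p^{e(\gamma(R)-1)})$ then go through exactly as you wrote, via Lemma~\ref{Local Observation 1}.

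For the converse you have the right overall shape but a genuine gap, which you partially flag yourself. You invoke a two-sided bound $|\s_{e+1}(R,\D)-\s_e(R,\D)|=O(p^{-e})$ along $\Gamma_\D$, asserting it ``comes out of the proof of Theorem~\ref{F-sig pairs}''. In the generality of an arbitrary Cartier subalgebra the paper only records the \emph{one-sided} inequality $\s_e(R,\D)-\s_{e+e'}(R,\D)<C/p^e$ (Theorem~\ref{Uniform Convergence of a_e Cartier}); the two-sided estimate is stated there only under condition~$(*)$, and elsewhere only for the special families $\C^{\a^t}$ and $\C^{(R,\Delta)}$ of Theorem~\ref{PTUniformConvergence}. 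So the analytic input you lean on is not available in the generality required. The argument in \cite{BST2012} bypasses this by using the algebra structure of $\D$ more directly: a fixed splitting $\varphi_0\in\D_{e_0}$ with $\varphi_0(F^{e_0}_*c)=1$, for a suitably chosen $c$, is composed via $\D_{e_0}\bullet\D_e\subseteq\D_{e+e_0}$ to produce a quantitative containment among the splitting ideals $I_e(\D)$, from which positivity is read off by a length comparison. Your final paragraph correctly isolates the combinatorial obstruction---the inner projections must already lie in $\D$---but stops short of writing down the inequality that converts the product $\D_{e_0}\bullet\D_e\subseteq\D_{e+e_0}$ into the needed lower bound $a_e(R,\D)\geq\epsilon\,p^{e\gamma(R)}$.
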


\subsection{Basic element results}\label{Basic Element Results} Unless otherwise stated, the results which we recall in this subsection are characteristic independent. We only require that $R$ is Noetherian of finite Krull dimension $d$.  The first result we recall is a weakening of the Forster-Swan Theorem. We refer the reader to \cite{Forster1964} and \cite{Swan} for the original statements. We also recommend reading \cite{EisenbudEvans} and the material surrounding \cite[Theorem 5.8]{Matsumura}  for a historical discussion of the Theorem.

 \begin{Theorem}[Forster-Swan Theorem]
 \label{Forster-Swan} 
 If $A$ is a Noetherian ring of finite Krull dimension, e.g., $A$ is of prime characteristic and F-finite, and $N$ is a finitely generated $R$-module, then $\mu_A(N)\leq \max\{\mu_{A_P}(N_P)\mid P\in \Supp(N)\}+\dim(A)$. 
 \end{Theorem}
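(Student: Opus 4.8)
The plan is to obtain this statement as an immediate formal consequence of the full Forster--Swan theorem, which is the genuinely substantive input here. Recall that the Forster--Swan theorem (see \cite{Forster1964, Swan}, and the discussion around \cite[Theorem 5.8]{Matsumura}) asserts that for a Noetherian ring $A$ of finite Krull dimension and a finitely generated $A$-module $N$,
\[
\mu_A(N) \le \max\{\mu_{A_P}(N_P) + \dim(A/P) \mid P \in \Supp(N)\}.
\]
The ``e.g.'' clause in the statement is justified because any F-finite ring of prime characteristic $p$ is Noetherian by hypothesis and is excellent by \cite[Theorem 2.5]{Kunz1976}, hence in particular of finite Krull dimension, so the hypotheses of Forster--Swan are met.

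Granting this, the proof is purely formal. First I would note that for every $P \in \Supp(N) \subseteq \Spec(A)$ one has $\dim(A/P) \le \dim(A)$, since $A/P$ is a quotient of $A$. Hence for each such $P$,
\[
\mu_{A_P}(N_P) + \dim(A/P) \;\le\; \mu_{A_P}(N_P) + \dim(A) \;\le\; \max\{\mu_{A_Q}(N_Q)\mid Q\in\Supp(N)\} + \dim(A).
\]
Taking the maximum over $P \in \Supp(N)$ on the left-hand side and combining with the Forster--Swan inequality yields $\mu_A(N) \le \max\{\mu_{A_P}(N_P)\mid P\in\Supp(N)\} + \dim(A)$, as claimed. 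There is essentially no obstacle once Forster--Swan is in hand; all of the real content has been offloaded to that quoted result, which is why the paper only recalls this weakened form and cites the originals for the sharp statement.

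If one instead wanted a self-contained argument, the route would be the classical ``basic element'' method of Eisenbud--Evans: construct generators $n_1, n_2, \dots$ of $N$ one at a time so that, at every prime $P$, the module $N/(An_1 + \cdots + An_i)$ has local minimal number of generators at most $\max\{\mu_{A_Q}(N_Q)\mid Q\in\Supp(N)\} + \dim(A) - i$, so that the process terminates after $\max\{\mu_{A_Q}(N_Q)\} + \dim(A)$ steps. In that approach the main obstacle is the lemma producing, at each stage, a \emph{single} element of $N$ that is basic simultaneously at all the (finitely many, at each fixed dimension) primes where it is still needed; this is handled by stratifying $\Supp(N)$ by dimension and performing prime avoidance level by level, using that $\Spec(A)$ is a Noetherian space. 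Given the purpose of this subsection, however, the short deduction from the quoted Forster--Swan theorem is the natural proof to present.
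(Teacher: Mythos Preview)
Your proposal is correct and matches the paper's treatment: the paper does not give a proof at all but simply recalls this as ``a weakening of the Forster--Swan Theorem'' and cites \cite{Forster1964}, \cite{Swan}, and \cite[Theorem~5.8]{Matsumura} for the full statement, so your formal deduction from the sharp Forster--Swan bound $\mu_A(N)\le\max\{\mu_{A_P}(N_P)+\dim(A/P)\}$ via $\dim(A/P)\le\dim(A)$ is exactly the intended reading.
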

 
Another result which can be obtained by basic element techniques is Serre's Splitting Theorem.

\begin{Theorem}[{\cite[Theorem 1]{Serre}}]\label{Serre's Splitting Theorem} Let $R$ be a Noetherian ring of dimension $d$. If $\Omega$ is a projective module, locally free of rank at least $d+1$ at each prime ideal, then $\Omega$ contains a free summand.
\end{Theorem}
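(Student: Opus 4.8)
The plan is to reduce the assertion to the production of a single \emph{unimodular element} of $\Omega$, and then to split it off. Recall that $\omega \in \Omega$ is called unimodular if its order ideal $O(\omega) = \{\varphi(\omega) \mid \varphi \in \Hom_R(\Omega,R)\}$ is all of $R$; since $\Omega$ is a finitely generated projective module this is equivalent to the image of $\omega$ in $\Omega \otimes_R \kappa(P)$ being nonzero for every $P \in \Spec(R)$, that is, to $\omega$ being a \emph{basic element} at every prime. Granting such an $\omega$, the map $R \to \Omega$ sending $1 \mapsto \omega$ becomes split injective after localizing at any $P$: indeed $\Omega_P$ is free and $\omega_P \notin P\Omega_P$, so $\omega_P$ extends to a basis by Nakayama. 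Hence $\Omega/R\omega$ is finitely presented (as $R$ is Noetherian and $\Omega$ is finitely generated) and locally free, thus projective, so the short exact sequence $0 \to R \to \Omega \to \Omega/R\omega \to 0$ splits, and $\Omega \cong R \oplus (\Omega/R\omega)$ has a free summand. It therefore suffices to find a unimodular element of $\Omega$.

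First I would reduce to the case that $R$ is reduced. Writing $N$ for the nilradical and $\bar\omega$ for the image of $\omega$ in $\Omega \otimes_R (R/N)$, the base-change isomorphism $\Hom_R(\Omega,R) \otimes_R (R/N) \cong \Hom_{R/N}(\Omega/N\Omega, R/N)$ (valid since $\Omega$ is finitely generated projective) shows that the order ideal of $\bar\omega$ over $R/N$ is the image of $O(\omega)$; and since $N$ lies in the Jacobson radical of $R$, an ideal $I$ with $I + N = R$ must equal $R$. Thus a unimodular element of $\Omega \otimes_R (R/N)$ lifts to a unimodular element of $\Omega$, and we may assume $R$ is reduced.

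With $R$ reduced, I would produce a unimodular element by an induction that shrinks the \emph{non-basic locus} $Y(\omega) = V(O(\omega)) \subseteq \Spec(R)$, a closed set that is empty precisely when $\omega$ is unimodular. Starting from any $\omega$ (say a generator of $\Omega$), the statement to establish is: if $Y(\omega) \neq \emptyset$, then $\omega$ may be replaced by some $\omega'$ with $\dim Y(\omega') < \dim Y(\omega) =: c$. Let $P_1, \dots, P_m$ be the generic points of the $c$-dimensional irreducible components of $Y(\omega)$, so $\dim(R/P_i) = c$; by hypothesis $\rank_{P_i}(\Omega) \geq d+1 > c$. Localizing at $P_i$, the (non-basic) element $\omega_{P_i} \in P_i\Omega_{P_i}$ can be corrected by an element of $\Omega_{P_i}$ that is basic at $P_i$ — there is room for this precisely because $\rank_{P_i}(\Omega)$ exceeds $c$. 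One then globalizes: using prime avoidance against the finitely many relevant primes (the $P_j$, the associated primes of $\Omega$, and those of the pertinent quotients), one picks $\delta \in \a\Omega$ for a suitable ideal $\a$ so that $\omega' := \omega + \delta$ is basic at each $P_i$ while its non-basic locus has dimension strictly smaller than $c$. Iterating empties $Y$, producing the desired unimodular element. The base case $\dim R = 0$ is immediate: then $R$ is a finite product of fields and $\Omega$ a product of vector spaces each of dimension $\geq 1$, so any tuple of nonzero vectors is unimodular.

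The main obstacle is exactly this adjustment step, and more precisely the verification that $\dim Y(\omega') < c$: since $\omega'$ need not be basic everywhere that $\omega$ was, the ideal $\a$ must be chosen — via prime avoidance against the associated primes of $\Omega$ and of the successive quotients — carefully enough that "fixing" $\omega$ at the top-dimensional bad primes $P_i$ does not reintroduce bad primes of dimension $c$ elsewhere. This is the heart of Serre's argument (the moving lemma for sections of a vector bundle, in its algebraic incarnation as the Eisenbud--Evans basic-element technique), and the inequality $\rank_P(\Omega) \geq d+1 > \dim(R/P)$ is the numerical input that makes the correction possible at every stage. The remaining ingredients — the splitting in the first paragraph, the reduction to the reduced case, and the zero-dimensional base case — are routine.
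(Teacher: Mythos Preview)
The paper does not prove this statement: Theorem~\ref{Serre's Splitting Theorem} is quoted from \cite{Serre} as background and given no proof in the paper. So there is nothing to compare your argument against here.

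That said, your outline is the standard basic-element route to Serre's theorem (as in Eisenbud--Evans or \cite[Theorem~5.8]{Matsumura}, which the paper itself cites around this point). The reduction to finding a unimodular element, the splitting-off step, and the reduction to the reduced case are all correct and cleanly stated. The inductive scheme of shrinking the non-basic locus $Y(\omega)$ is also the right shape, and you correctly identify that the numerical hypothesis $\rank_P(\Omega) \geq d+1 > \dim(R/P)$ is what powers the adjustment. The one place your write-up is genuinely incomplete is the adjustment step itself: you assert that a suitable $\delta \in \a\Omega$ exists ``via prime avoidance against the associated primes of $\Omega$ and of the successive quotients,'' but you do not actually exhibit the mechanism that guarantees no \emph{new} bad components of dimension $c$ appear. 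In the standard treatments this is handled not by a bare prime-avoidance argument on a single $\delta$, but by working with a full generating set $\omega_1,\dots,\omega_n$ of $\Omega$ and adjusting $\omega_1$ by a combination $\sum a_i\omega_i$ chosen so that the remaining generators still span at the finitely many relevant primes; the bookkeeping is over the finite set of $j$-primes (primes $P$ with $\dim(R/P) = j$ that are minimal over some ideal built from the data), not over associated primes of $\Omega$. Your sketch gestures at this but does not pin it down, and a reader following it literally would not be able to complete the step. If you want a self-contained proof, that paragraph needs to be rewritten with the explicit Eisenbud--Evans lemma (or Matsumura's Theorem~5.7) invoked by name.
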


Stafford greatly generalized Serre's Splitting Theorem to all finitely generated modules.
 
 \begin{Theorem}[{\cite{Stafford}\label{Generalized Serre's Theorem}\footnote{The authors of this paper have recently written a paper providing alternative proofs of Theorem~\ref{Generalized Serre's Theorem} in the commutative case, see \cite{DSPY}. Moreover, the results of \cite{DSPY} allows us to establish the existence of a global F-signature of Cartier subalgebra in Theorem~\ref{Global F-signature for Cartier algebras}. }}] Let $R$ be a Noetherian ring of finite Krull dimension $d$, e.g., $R$ is an F-finite ring of prime characteristic $p$. Suppose that $M$ is a finitely generated $R$-module and that for each $P\in\Spec(R)$, $M_P$ contains a free $R_P$-summand of rank at least $d+1$, then $M$ contains a free summand. 
 \end{Theorem}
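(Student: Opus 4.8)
The plan is to deduce the statement from the classical basic-element machinery (Serre, Bass, Forster--Swan, Eisenbud--Evans), of which it is the finitely generated endpoint: it extends Serre's Splitting Theorem~\ref{Serre's Splitting Theorem} from projective modules to arbitrary finitely generated ones, and Stafford's contribution is to push the argument through at that level of generality (in fact over suitable noncommutative rings). First I would reduce to the following claim: there is a surjective $R$-linear map $\varphi\colon M\twoheadrightarrow R$. This is equivalent to the conclusion, since $R$ is free, hence projective, so any such $\varphi$ splits and $M\cong\ker\varphi\oplus R$; conversely a free summand of $M$ visibly produces a surjection onto $R$. Thus the whole content is that $\Hom_R(M,R)$ contains a surjective homomorphism. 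Here $\Hom_R(M,R)$ is finitely generated — applying $\Hom_R(\blank,R)$ to a finite presentation of $M$ exhibits it as a submodule of a free module, and $R$ is Noetherian — and for every $P\in\Spec(R)$ one has $\Hom_R(M,R)_P\cong\Hom_{R_P}(M_P,R_P)$, which by hypothesis contains a free $R_P$-summand of rank $\ge d+1$. In particular $M_P$ surjects onto $R_P$ for every $P$, so the surjection we want exists \emph{locally everywhere}; the problem is to globalize it.

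Second, I would globalize by a general-position argument on $\varphi$. A homomorphism $\varphi\colon M\to R$ is surjective exactly off the closed set $\Supp(\coker\varphi)=V(\varphi(M))$, and the aim is to modify $\varphi$ by elements of $\Hom_R(M,R)$ so that the dimension of this ``bad locus'' strictly drops at each step, reaching $\emptyset$ after at most $d+1$ steps. At a stage with $P_1,\dots,P_k$ the top-dimensional generic points of the bad locus, one uses the identification $\Hom_R(M,R)_{P_j}=\Hom_{R_{P_j}}(M_{P_j},R_{P_j})$ to produce global $\psi_j\in\Hom_R(M,R)$ that are surjective at $P_j$, then picks $a_j\in R$, vanishing on the open set where $\varphi$ is already surjective, so that $\varphi+\sum_j a_j\psi_j$ becomes surjective at each $P_j$ as well; this shrinks the bad locus. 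The rank hypothesis is exactly what makes these choices available: a local free summand of rank $\ge d+1$ supplies $d+1$ independent ``directions'' of perturbation, which is one more than $\dim R=d$ (and more than the dimension of the base still active at any later stage of the induction), and that extra room is precisely what a prime-avoidance argument requires. Organizing this into a legitimate induction is the Eisenbud--Evans basic-element theorem; alternatively, since $M_P$ has a free summand of rank $\ge d+1>\dim(R/P)$ for every $P$, one may quote the Eisenbud--Evans/Bass basic-element results directly.

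The hard part is the globalization step, and the essential difficulty is that local abundance of surjections does not transfer to global existence for free: the natural map $\Hom_R(M,R)\otimes\kappa(P)\to\Hom_{\kappa(P)}(M\otimes\kappa(P),\kappa(P))$ is in general neither injective nor surjective, so ``$\varphi$ is surjective at $P$'' is a genuinely nonlinear condition on the residue class of $\varphi$ modulo $P$, and over small residue fields one cannot simply pick a class outside a finite union of proper subspaces. Surmounting this is the entire weight of the basic-element method: one runs a Noetherian induction in which only a controlled number of top-layer primes are active at each stage, so that the codimension bound furnished by the rank-$(\ge d+1)$ hypothesis always dominates the dimension of the relevant base and the perturbation can be chosen. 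Carrying out this bookkeeping — in effect re-proving the Eisenbud--Evans basic-element theorem, or Stafford's sharpening of it, in the form required — is the heart of the matter; an alternative treatment is given by the authors in \cite{DSPY}.
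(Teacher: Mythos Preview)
The paper does not prove this theorem: it is quoted from Stafford \cite{Stafford}, with a footnote pointing to the authors' own alternative proof in \cite{DSPY}. There is therefore no in-paper argument to compare against; your outline is a reasonable sketch of the basic-element method that underlies both Stafford's original and the Eisenbud--Evans/DSPY treatments, and you are candid that the real content (the Noetherian induction making the perturbation step rigorous) is deferred to those references.

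One small slip: you write that $\Hom_{R_P}(M_P,R_P)$ ``by hypothesis contains a free $R_P$-summand of rank $\ge d+1$'', but the hypothesis is on $M_P$, not on its dual. The statement is still true --- a decomposition $M_P\cong R_P^{d+1}\oplus N$ dualizes to $\Hom_{R_P}(M_P,R_P)\cong R_P^{d+1}\oplus\Hom_{R_P}(N,R_P)$ --- but it is a consequence of the hypothesis, not the hypothesis itself. Also, the localization isomorphism $\Hom_R(M,R)_P\cong\Hom_{R_P}(M_P,R_P)$ you invoke uses that $M$ is finitely presented, which holds here since $R$ is Noetherian; you might make that explicit.
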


We remark that Stafford's results in \cite{Stafford} is a great generalization of results of Eisenbud and Evans in \cite{EisenbudEvans}. For example, Stafford establishes Theorem~\ref{Generalized Serre's Theorem} in the scenario that $R$ is non-commutative. Theorem~\ref{Generalized Serre's Theorem} is crucial to establish the existence of the global F-signature in Theorem~\ref{Global F-signature exists}. 

We now introduce some terminology in order to recall another Theorem from \cite{DSPY}. Let $R$ be a commutative Noetherian ring, $M$ a finitely generated $R$-module. Let $\E$ be a submodule of $\Hom_R(M,R)$. We say that a summand $N$ of $M$ is a free $\E$-summand if $N\cong R^{\oplus n}$ is free, and the projection $\varphi: M\rightarrow N\cong R^{\oplus n}$ is a direct sum of  elements of $\E$. Observe that that choice of an isomorphism $N\cong R^{\oplus n}$ does not affect whether or not $N$ is an $\E$-summand. 
 
\begin{Theorem}[{\cite[Theorem C]{DSPY}}]\label{Generalized Serre 2} Let $R$ be a commutative Noetherian ring of dimension $d$, let $M$ a finitely generated $R$-module, and let $\E$ be an $R$-submodule of $\Hom_R(M,R)$. Assume that, for each $P\in \Spec(R)$, $M_P$ contains a free $\E_P$-summand of rank at least $d+1$. Then $M$ contains a free $\E$-summand.
\end{Theorem}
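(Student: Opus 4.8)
The plan is to run the classical ``basic element'' argument that lies behind Serre's splitting theorem (Theorem~\ref{Serre's Splitting Theorem}) and Stafford's generalization (Theorem~\ref{Generalized Serre's Theorem}), but carried out with all functionals constrained to lie in the submodule $\E$. The hypothesis that $M_P$ contains a free $\E_P$-summand of rank $\geq d+1$ is exactly what is needed to feed the prime-avoidance step with that extra constraint.

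\emph{Reduction to one surjection in $\E$.} First I would observe that it suffices to produce a single $\varphi\in\E$ which, viewed as a map $M\to R$, is surjective. Indeed, pick $m\in M$ with $\varphi(m)=1$; then $M = Rm\oplus\ker\varphi$, the summand $Rm\cong R$ is free, and the projection $M\to Rm\cong R$ is precisely $\varphi\in\E$, so $Rm$ is a free $\E$-summand (of rank $1$), which is all the theorem asks for. Now such a $\varphi$ is surjective if and only if $\coker\varphi = 0$, i.e. if and only if for every maximal ideal $P$ the reduction $\varphi\otimes_R\kappa(P)\colon M\otimes_R\kappa(P)\to\kappa(P)$ is nonzero (since $R_P$ is local, $\im\varphi_P = R_P$ iff $\varphi_P(M_P)\not\subseteq PR_P$). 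So the theorem reduces to: find $\varphi\in\E$ whose reduction $\varphi\otimes_R\kappa(P)$ is nonzero for every $P\in\Spec(R)$ — a ``basic element'' of $\Hom_R(M,R)$ that happens to lie in $\E$.

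\emph{Translating the hypothesis, and the basic element step.} For $P\in\Spec(R)$ write $M(P) = M\otimes_R\kappa(P)$ and let $W_P\subseteq\Hom_{\kappa(P)}(M(P),\kappa(P))$ be the image of $\E$ under the $R$-linear reduction map $\psi\mapsto\psi\otimes_R\kappa(P)$; this is a $\kappa(P)$-subspace. If $M_P$ has a free $\E_P$-summand of rank $n$, choose $M_P = F\oplus K$ with $F\cong R_P^{\,n}$, standard basis $e_1,\dots,e_n$ of $F$, and projection $(\varphi_1,\dots,\varphi_n)$ with $\varphi_i\in\E_P$ and $\varphi_i(e_j)=\delta_{ij}$; writing $\varphi_i = s_i^{-1}\psi_i$ with $\psi_i\in\E$ and $s_i\notin P$, the reductions $\varphi_i\otimes\kappa(P)$ are nonzero $\kappa(P)$-multiples of $\psi_i\otimes\kappa(P)\in W_P$, so they lie in $W_P$, and the fact that the matrix $\big(\overline{\varphi_i(e_j)}\big)_{i,j}$ is the identity shows they are $\kappa(P)$-linearly independent. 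Hence $\dim_{\kappa(P)}W_P\geq n$, and taking $n\geq d+1$ gives $\dim_{\kappa(P)}W_P\geq d+1 > d\geq\dim(R/P)$ for every $P\in\Spec(R)$. I would then invoke the basic element theorem of Eisenbud--Evans in its relative form — the tool underlying Theorems~\ref{Forster-Swan}, \ref{Serre's Splitting Theorem} and \ref{Generalized Serre's Theorem}, cf. \cite{EisenbudEvans, Stafford, DSPY} — applied to the inclusion $\E\hookrightarrow\Hom_R(M,R)$: since for every $P$ the ``space of directions available in $\E$'', namely $W_P$, has $\kappa(P)$-dimension exceeding $\dim(R/P)$, there exists $\varphi\in\E$ with $\varphi\otimes_R\kappa(P)\neq 0$ for all $P\in\Spec(R)$. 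By the previous paragraph this $\varphi$ is a surjection $M\twoheadrightarrow R$ and therefore splits off a free $\E$-summand, completing the proof.

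\emph{Where the work is.} The delicate point is this last step: one cannot simply choose $\varphi$ prime by prime. Instead $\varphi$ is produced by a Noetherian induction, building it through successive corrections; at each stage one uses prime avoidance over the finitely many associated primes of the relevant cokernel together with the bound $\dim_{\kappa(P)}W_P > \dim(R/P)$ to guarantee that arranging $\varphi\otimes\kappa(P)\neq 0$ at a newly considered prime $P$ does not destroy the analogous condition at the primes already handled. Everything else above is routine bookkeeping; the only genuinely new feature compared with Serre's splitting theorem is that the corrections must be drawn from the submodule $\E$, and the hypothesis on free $\E_P$-summands of rank $\geq d+1$ is precisely designed to make enough such corrections available at every prime.
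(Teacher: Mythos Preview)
This theorem is not proved in the paper --- it is quoted from \cite[Theorem~C]{DSPY} and immediately applied to obtain Theorem~\ref{Local to global splitting numbers Cartier}. There is therefore no proof here to compare your proposal against.

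On the proposal itself: the reduction to producing a single surjective $\varphi\in\E$ is correct, and your verification that the hypothesis forces $\dim_{\kappa(P)}W_P\geq d+1$ is fine. The gap is the final step. You invoke ``the basic element theorem of Eisenbud--Evans in its relative form, cf.\ \cite{EisenbudEvans, Stafford, DSPY}'', but this is circular: that relative form --- running the prime-avoidance induction with all corrections constrained to lie in the submodule $\E\subseteq\Hom_R(M,R)$ --- is precisely the content of \cite[Theorem~C]{DSPY}, i.e., the very statement you are proving. The classical results you point to (Theorems~\ref{Serre's Splitting Theorem} and~\ref{Generalized Serre's Theorem} in this paper) handle only the case $\E=\Hom_R(M,R)$ and do not directly yield what you need. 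Note also a subtlety your sketch glosses over: the condition you want at each prime is that the image of $\varphi$ in $W_P$ be nonzero, and $W_P$ is a \emph{quotient} of $\E\otimes_R\kappa(P)$, not $\E\otimes_R\kappa(P)$ itself, so even a straight appeal to basic-element theory for the module $\E$ would not suffice. Your closing paragraph correctly identifies where the work lies, but a complete argument would have to actually execute the Noetherian induction and verify that the bound $\dim_{\kappa(P)}W_P>\dim(R/P)$ is enough to make the correction step go through inside $\E$.
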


Theorem~\ref{Generalized Serre 2} applies to Cartier algebras. 
The assumption that $\D$ is a Cartier algebra implies that $\D_e\subseteq \Hom_R(F^e_*R,R)$ is an $R$-submodule, and Theorem~\ref{Generalized Serre 2} yields the following.

\begin{Theorem}\label{Local to global splitting numbers Cartier} Let $R$ be an F-finite domain, $\D$ be a Cartier algebra, and $e \in \Gamma_\D$. Suppose that, for all $P\in\Spec(R)$, we have $a_e(R_P,\D_P)\geq \dim(R)+m$, where $m$ is a fixed positive integer. Then $a_e(R,\D)\geq m$.
\end{Theorem}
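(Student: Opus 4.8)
The plan is to deduce this statement directly from Theorem~\ref{Generalized Serre 2}, applied with $M = F^e_*R$ and $\E = \D_e$. First I would recall that, since $\D$ is a Cartier subalgebra, $\D_e$ is an $R$-submodule of $\Hom_R(F^e_*R,R)$, so the hypotheses of Theorem~\ref{Generalized Serre 2} make sense in this setting. The goal is then to peel off $m$ free $\D$-summands of $F^e_*R$ one at a time, using the theorem to produce one free $\D$-summand at each stage while keeping enough of a free $\D$-summand at every prime to continue.

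The key point to verify is the behavior of $a_e(R_P,\D_P)$ under splitting off a free $\D$-summand. Suppose $F^e_*R \cong R \oplus N$, where the projection onto the first factor lies in $\D_e$ (so this is a free $\D$-summand of rank $1$). I would argue that for every $P \in \Spec(R)$, if $a_e(R_P,\D_P) \geq \dim(R) + m$ then $a_e(R_P, \D_P) \leq 1 + a_e((R_P)\text{-module } N_P, (\D_e)_P)$ in the appropriate sense; more precisely, that localizing the decomposition $F^e_*R \cong R \oplus N$ at $P$ exhibits $N_P$ as having a free $(\D_P)_e$-summand of rank at least $\dim(R) + m - 1$. This follows because a free $\D_P$-summand of $F^e_*R_P$ of rank $\dim(R)+m$, together with the given free $\D_P$-summand $R_P$ of rank $1$, can be compared using the local theory: by \cite[Proposition 3.5]{BST2012} the splitting number $a_e(R_P,\D_P)$ is computed from a single decomposition, and removing one free $\D_P$-summand decreases it by exactly $1$. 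Hence after splitting off one global free $\D$-summand, the complementary module $N$ satisfies, for all $P$, that $N_P$ contains a free $(\D_e)_P$-summand of rank at least $\dim(R) + (m-1)$.

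With this reduction in hand, I would set up an induction on $m$. The base case $m=1$ is exactly Theorem~\ref{Generalized Serre 2} applied to $M = F^e_*R$ and $\E = \D_e$: the hypothesis $a_e(R_P,\D_P) \geq \dim(R)+1$ for all $P$ means $(F^e_*R)_P$ contains a free $(\D_e)_P$-summand of rank at least $\dim(R)+1$, so $F^e_*R$ contains a free $\D_e$-summand, i.e. $a_e(R,\D) \geq 1$. For the inductive step, assuming the result for $m-1$, I apply Theorem~\ref{Generalized Serre 2} once to split off a single free $\D$-summand $R$ from $F^e_*R$; by the paragraph above the complement $N$ satisfies the hypotheses of the statement with $m$ replaced by $m-1$, but now as an $\E$-summand statement for $(N, (\D_e)_P)$ rather than for $F^e_*R$ itself. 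Care is needed here because Theorem~\ref{Generalized Serre 2} is phrased for $\E \subseteq \Hom_R(M,R)$ with $M$ arbitrary, so I would track that the relevant submodule of $\Hom_R(N,R)$ is the image of $\D_e$ under restriction along $F^e_*R \twoheadrightarrow N$, and that a free summand of $N$ splitting via elements of this restricted submodule composes with $F^e_*R \twoheadrightarrow N$ to give a genuine free $\D$-summand of $F^e_*R$. Iterating $m$ times — or invoking the induction — yields $a_e(R,\D) \geq m$.

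The main obstacle I anticipate is purely bookkeeping: making precise the claim that splitting off one free $\D$-summand drops the local splitting number by exactly one, uniformly in $P$, and verifying that the ``$\E$-summand'' formalism of Theorem~\ref{Generalized Serre 2} is stable under this operation (i.e. that a free $(\D_e|_N)_P$-summand of $N_P$ really does come from a free $(\D_e)_P$-summand of $(F^e_*R)_P$ of one larger rank). Both facts are elementary consequences of the fact that, after choosing a decomposition, the elements of $\D_e$ projecting onto a free summand restrict to the corresponding submodule of $\Hom$ on the complement; but they should be stated carefully. Once this is done, the induction is immediate and there is no further analytic or asymptotic input needed — this is a finite, stage-by-stage splitting argument entirely driven by Theorem~\ref{Generalized Serre 2}.
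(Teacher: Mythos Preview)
Your proposal is correct and is exactly the argument the paper intends: the paper gives no proof beyond the sentence ``Theorem~\ref{Generalized Serre 2} yields the following,'' with $M = F^e_*R$ and $\E = \D_e$, leaving the iteration over $m$ implicit. Your inductive splitting-off of one free $\D$-summand at a time, together with the local check (via \cite[Proposition 3.5]{BST2012}) that the complement still has $\D_P$-splitting number at least $\dim(R)+(m-1)$, is precisely the bookkeeping needed to turn that one-line citation into a proof.
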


\subsection{Existence of limits} In this subsection, we reinstate that all rings being considered are of prime characteristic $p$. In \cite{PolstraTucker}, the second author and Tucker develop a unified approach to the local theory of Hilbert-Kunz multiplicity and F-signature. They use the following well-known lemma as a guide to establish the existence of limits in positive characteristic commutative algebra.

\begin{Lemma}[{\cite[Lemma 3.5]{PolstraTucker}\label{Sequence Lemma}}] Let $\{\lambda_e\}_{e \in \N}$ be a sequence of real numbers, $p$ a prime number, and $\gamma \in \N$. Suppose that $\{\frac{1}{p^{e\gamma}}\lambda_e\}$ is a bounded sequence of real numbers.
\begin{enumerate}
\item\label{Sequence Lemma 1} If there exists a positive constant $C \in \R$ such that $\frac{1}{p^{(e+1)\gamma}} \lambda_{e+1} \leq \frac{1}{p^{e\gamma}} \lambda_e + \frac{C}{p^e}$ for all $ e \in \N$, then the limit $\lambda = \lim_{e \to \infty} \frac{1}{p^{e\gamma}} \lambda_e$ exists and $\lambda - \frac{1}{p^{e\gamma}} \lambda_e \leq \frac{2C}{p^e}$ for all $e \in \N$.
\item \label{Sequence Lemma 2} If there exists a positive constant $C \in \R$ such that $\frac{1}{p^{e\gamma}} \lambda_e \leq  \frac{1}{p^{(e+1)\gamma}} \lambda_{e+1} + \frac{C}{p^e}$ for all $e \in \N$, then the limit $\lambda = \lim_{e \to \infty} \frac{1}{p^{e\gamma}} \lambda_e$ exists and $\frac{1}{p^{e\gamma}} \lambda_e - \lambda \leq \frac{2C}{p^e}$ for all $e \in \N$.
\item \label{Sequence Lemma 3} If there exists a positive constant $C \in \R$ such that $| \frac{1}{p^{(e+1)\gamma}} \lambda_{e+1} - \frac{1}{p^{ed}} \lambda_e    | \leq \frac{C}{p^e}$ for all $e \in \N$, then the limit $\lambda = \lim_{e \to \infty} \frac{1}{p^{e\gamma}} \lambda_e$ exists and $|\frac{1}{p^{e\gamma}} \lambda_e - \lambda | \leq \frac{2C}{p^e}$ for all $e \in \N$.  In particular, $\lambda_e=\lambda p^{e\gamma}+O(p^{e(\gamma-1)})$.
\end{enumerate}
\end{Lemma}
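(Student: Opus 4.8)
The plan is to prove part~\eqref{Sequence Lemma 1} directly and then deduce parts~\eqref{Sequence Lemma 2} and~\eqref{Sequence Lemma 3} from it by purely formal manipulations. Throughout, write $a_e = \frac{1}{p^{e\gamma}}\lambda_e$, so that $\{a_e\}$ is a bounded sequence of reals and the hypothesis of~\eqref{Sequence Lemma 1} becomes $a_{e+1} \le a_e + C p^{-e}$ for all $e \in \N$.

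For part~\eqref{Sequence Lemma 1}, I would first iterate the hypothesis: for every pair $m \ge e$,
\[
a_m \le a_e + C\sum_{k=e}^{m-1} p^{-k} \le a_e + C p^{-e}\cdot\frac{p}{p-1} \le a_e + \frac{2C}{p^e},
\]
where the second inequality is the geometric series bound and the third uses $p \ge 2$. Now set $s_e = \sup_{m \ge e} a_m$; this is finite because $\{a_e\}$ is bounded. The sequence $\{s_e\}$ is non-increasing (it is a supremum over a shrinking family) and bounded below (by any lower bound for $\{a_e\}$), hence it converges to some $\lambda$, with $\lambda \le s_e$ for every $e$. From the displayed estimate, $s_e \le a_e + 2Cp^{-e}$, and trivially $a_e \le s_e$; thus $s_e - 2Cp^{-e} \le a_e \le s_e$, so $a_e \to \lambda$ by squeezing. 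Finally $\lambda - a_e \le s_e - a_e \le \frac{2C}{p^e}$, which is the claimed bound.

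Part~\eqref{Sequence Lemma 2} follows by applying part~\eqref{Sequence Lemma 1} to the sequence $\{-\lambda_e\}$: its normalized terms are $-a_e$ and satisfy $-a_{e+1} \le -a_e + Cp^{-e}$, so the limit of $\{-a_e\}$ exists, equals $-\lambda$ for some $\lambda$, and $(-\lambda)-(-a_e) \le 2Cp^{-e}$; rewriting gives $\lim a_e = \lambda$ and $a_e - \lambda \le \frac{2C}{p^e}$. For part~\eqref{Sequence Lemma 3}, the hypothesis $|a_{e+1}-a_e| \le Cp^{-e}$ implies both $a_{e+1} \le a_e + Cp^{-e}$ and $a_e \le a_{e+1}+Cp^{-e}$, so parts~\eqref{Sequence Lemma 1} and~\eqref{Sequence Lemma 2} both apply and together give $\lambda - a_e \le 2Cp^{-e}$ and $a_e - \lambda \le 2Cp^{-e}$, i.e. $|a_e - \lambda| \le \frac{2C}{p^e}$. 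Multiplying through by $p^{e\gamma}$ yields $|\lambda_e - \lambda p^{e\gamma}| \le 2C p^{e(\gamma-1)}$, which is precisely the statement $\lambda_e = \lambda p^{e\gamma} + O(p^{e(\gamma-1)})$.

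There is no genuine obstacle here; the lemma is an elementary exercise in real analysis. The only point deserving attention is the explicit constant: the factor $2$ in the error term $\frac{2C}{p^e}$ is exactly what is produced by the bound $\sum_{k\ge e} p^{-k} = \frac{p}{p-1}p^{-e} \le 2p^{-e}$, which is where the hypothesis that $p$ is prime (hence $p \ge 2$) enters, with equality when $p = 2$.
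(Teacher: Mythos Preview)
Your argument is correct. Note, however, that the paper does not supply its own proof of this lemma: it is quoted verbatim from \cite[Lemma~3.5]{PolstraTucker} and used as a black box, so there is no in-paper proof to compare against. What you have written is precisely the standard telescoping/geometric-series argument one expects for such a statement, and it recovers the exact constant $2C$ via $\sum_{k\ge e}p^{-k}=\tfrac{p}{p-1}p^{-e}\le 2p^{-e}$.
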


Suppose $(R,\m,k)$ is a local F-finite domain. Suppose that $\lambda_e$ is either $\mu(F^e_*R)$ or $a_e(R)$. If $\lambda_e=\mu(F^e_*R)$, then $\e(R)=\lim_{e\rightarrow \infty}\lambda_e/p^{e\gamma(R)}$ and if $\lambda_e=a_e(R)$ then $\s(R)=\lim_{e\rightarrow \infty} \lambda_e/p^{e\gamma(R)}$. We outline how to verify the sequences $\{\mu(F^e_*R)\}$ and $\{a_e(R)\}$ satisfy the hypotheses of (\ref{Sequence Lemma 3}) of Lemma~\ref{Sequence Lemma} under the local hypothesis. We do this for the purpose of pointing out difficulties in the non-local case. We refer the reader to the proof of \cite[Theorem 3.2]{PolstraTucker} for details.

\begin{Lemma}\label{Local Observation 1} Let $(R,\m,k)$ be an F-finite local domain and $M$ a finitely generated $R$-module.  There is a constant $C\in\R$ such that for each $e\in\N$, $\mu(F^e_*M)\leq Cp^{e\gamma(M)}$. In particular, if $M$ is a torsion $R$-module, $\mu(F^e_*M)\leq Cp^{e(\gamma(R)-1)}$.
\end{Lemma}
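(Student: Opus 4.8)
The plan is to reduce the statement to the classical growth estimate $\lambda_R(M/\m^{[p^e]}M)=O(p^{e\dim M})$ recalled in Section~\ref{HK multiplicity}, by first converting $\mu(F^e_*M)$ into a colength and then checking that the exponent that comes out is exactly $\gamma(M)$. First I would note that $\mu(F^e_*M)=\lambda_R(F^e_*M/\m F^e_*M)$, and that $\m F^e_*M=F^e_*(\m^{[p^e]}M)$, so $\mu(F^e_*M)=\lambda_R\big(F^e_*(M/\m^{[p^e]}M)\big)$. Restriction of scalars along $F^e$ is exact, so a composition series of the finite length module $M/\m^{[p^e]}M$ pushes forward to a filtration of $F^e_*(M/\m^{[p^e]}M)$ all of whose successive quotients are copies of $F^e_*k$; hence $\lambda_R\big(F^e_*(M/\m^{[p^e]}M)\big)=\lambda_R(M/\m^{[p^e]}M)\cdot\lambda_R(F^e_*k)$. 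Since $\m$ annihilates $F^e_*k$, the module $F^e_*k$ is a $\kappa(\m)$-vector space, and it is precisely $F^e_*\kappa(\m)$, so $\lambda_R(F^e_*k)=[F^e_*\kappa(\m):\kappa(\m)]=p^{e\alpha(\m)}$. This gives the identity
\[
\mu(F^e_*M)=p^{e\alpha(\m)}\,\lambda_R(M/\m^{[p^e]}M).
\]

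Next I would bound the colength. Because $\m^{\mu(\m)p^e}\subseteq\m^{[p^e]}$, the module $M/\m^{[p^e]}M$ is a quotient of $M/\m^{\mu(\m)p^e}M$, whose length agrees for $e\gg 0$ with the Hilbert--Samuel polynomial of $M$ evaluated at $\mu(\m)p^e$, a polynomial of degree $\dim M$; enlarging a constant to absorb the small values of $e$, we get $\lambda_R(M/\m^{[p^e]}M)\leq C\,p^{e\dim M}$ for all $e$. Then I would identify the exponent: setting $I=\Ann_R(M)$, Kunz's Lemma (Lemma~\ref{Kunz's Lemma}) applied to the inclusion $Q\subseteq\m$ gives $\alpha_R(Q)=\alpha_R(\m)+\dim(R/Q)$ for every $Q\in\Min(R/I)$, and since the residue field of $R$ at $Q$ equals that of $R/I$ at $Q/I$, we obtain $\gamma(M)=\gamma(R/I)=\max_{Q\in\Min(R/I)}\alpha_R(Q)=\alpha(\m)+\dim(R/I)=\alpha(\m)+\dim M$. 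Combining with the identity above, $\mu(F^e_*M)\leq C\,p^{e(\alpha(\m)+\dim M)}=C\,p^{e\gamma(M)}$.

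For the ``in particular'' part, if $M$ is torsion then $I=\Ann_R(M)\neq 0$ (take a product of nonzero annihilators of a finite generating set, using that $R$ is a domain), so every $Q\in\Min(R/I)$ is a nonzero prime of $R$; prepending $0\subsetneq Q$ to a saturated chain from $Q$ to $\m$ shows $\dim(R/Q)\leq\dim R-1$, hence $\dim M=\dim(R/I)\leq\dim R-1$. The identity of the previous step with $I=0$ reads $\gamma(R)=\alpha(\m)+\dim R$, so $\gamma(M)=\alpha(\m)+\dim M\leq\gamma(R)-1$, and therefore $\mu(F^e_*M)\leq C\,p^{e(\gamma(R)-1)}$.

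I do not expect a genuine obstacle here, but the one delicate point is the first step: one must verify that pushing a residue-field composition factor forward along $F^e$ yields a $\kappa(\m)$-vector space of dimension exactly $p^{e\alpha(\m)}$, so that the exponent $\alpha(\m)+\dim M$ produced by the colength matches $\gamma(M)$ on the nose. This is precisely the local bookkeeping (a single $\alpha$, a single $\dim$) that fails over a non-local ring, and it is the reason the lemma is singled out before moving to the global setting.
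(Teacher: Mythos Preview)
Your proposal is correct and is precisely the reduction the paper has in mind: the paper simply states that the bound $\mu(F^e_*M)\leq Cp^{e\gamma(M)}$ ``is easily reduced to the observation that $\lambda(M/\m^{[p^e]}M)=O(p^{e\dim(M)})$,'' and your argument fleshes out that reduction via the identity $\mu(F^e_*M)=p^{e\alpha(\m)}\lambda(M/\m^{[p^e]}M)$ together with $\gamma(M)=\alpha(\m)+\dim M$ from Kunz's Lemma.
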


The observation that there exists $C\in\R$ such that $\mu(F^e_*M)\leq Cp^{e\gamma(M)}$ for each $e\in\N$ is easily reduced to the observation that $\lambda(M/\m^{[p^e]}M)=O(p^{e\dim(M)})$.  It is not clear in the non-local case that there must be a constant $C\in\R$ such that $\mu(F^e_*M)\leq Cp^{e\gamma(M)}$ for each $e\in\N$. We prove such a constant exists in Corollary~\ref{Upper bound on generators}.

The following elementary lemma allows one to verify $\{\mu(F^e_*R)\}$ and $\{a_e(R)\}$ satisfy the remaining conditions in (\ref{Sequence Lemma 3}) of Lemma~\ref{Sequence Lemma}.
\begin{Lemma}[{\cite[Lemma 2.1]{PolstraTucker}}]\label{Local Observation 2} Let $(R,\m,k)$ be a local ring, not necessarily of prime characteristic, and let $M'\rightarrow M\rightarrow M''\rightarrow 0$ be a right exact sequence of finitely generated $R$-modules. Then \begin{enumerate}
\item\label{Local Observation 2 part 1} $\mu(M'')\leq\mu(M)\leq \mu(M')+\mu(M'')$,
\item\label{Local Observation 2 part 2} $\frk(M'')\leq \frk(M)\leq \frk(M')+\mu(M'')$.
\end{enumerate}
\end{Lemma}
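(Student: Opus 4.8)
The plan is to handle the two parts independently and to push everything down to linear algebra over the residue field $k$ by applying the right-exact functor $-\otimes_R k$, together with the fact that free modules are projective and Nakayama's lemma.

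Part (\ref{Local Observation 2 part 1}) is immediate: applying $-\otimes_R k$ to the right-exact sequence $M'\to M\to M''\to 0$ produces an exact sequence of finite-dimensional $k$-vector spaces, and $\mu_R(-)=\dim_k(-\otimes_R k)$ by Nakayama. The chain $\mu(M'')\le\mu(M)\le\mu(M')+\mu(M'')$ is then nothing more than the elementary facts that a surjection of vector spaces cannot increase dimension and that $\dim W\le\dim W'+\dim W''$ for any exact sequence $W'\to W\to W''\to 0$.

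For the first inequality of part (\ref{Local Observation 2 part 2}), write $n=\frk(M'')$, so there is a surjection $M''\twoheadrightarrow R^{\oplus n}$; composing with $M\twoheadrightarrow M''$ gives a surjection $M\twoheadrightarrow R^{\oplus n}$, which splits because $R^{\oplus n}$ is projective, so $R^{\oplus n}$ is a direct summand of $M$ and $\frk(M)\ge n$. For the second inequality, set $a=\frk(M)$ and fix a decomposition $M=R^{\oplus a}\oplus N$ with projection $\pi\colon M\to R^{\oplus a}$; let $\phi\colon M'\to M\xrightarrow{\pi}R^{\oplus a}$ be the composite. First I would observe that $\coker\phi$ is a homomorphic image of $M''$: with $K=\im(M'\to M)$ one has $M''\cong M/K$, and $\pi$ induces a surjection $M/K\twoheadrightarrow R^{\oplus a}/\pi(K)=\coker\phi$, whence $c:=\mu(\coker\phi)\le\mu(M'')$. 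Next I would prove the general splitting statement that any $R$-linear $\phi\colon M'\to R^{\oplus a}$ with $\mu(\coker\phi)=c$ forces $\frk(M')\ge a-c$: tensoring with $k$ gives $\dim_k\im(\phi\otimes k)=a-c$, so there exist $x_1,\dots,x_{a-c}\in M'$ whose images $\phi(x_i)$ are linearly independent modulo $\m R^{\oplus a}$; the induced map $R^{\oplus(a-c)}\to M'$, followed by $\phi$, is then a map $R^{\oplus(a-c)}\to R^{\oplus a}$ that is injective modulo $\m$, hence a split injection, and therefore $R^{\oplus(a-c)}\to M'$ is itself a split injection, giving $\frk(M')\ge a-c$. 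Combining the two observations yields $a-\mu(M'')\le a-c\le\frk(M')$, i.e. $\frk(M)\le\frk(M')+\mu(M'')$.

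The only step requiring real care is the general splitting lemma just described, and inside it the assertion that a homomorphism $R^{\oplus t}\to R^{\oplus a}$ which is injective after reduction modulo $\m$ must be a split injection. This is handled by completing the images of the standard basis vectors to a basis of $R^{\oplus a}$ (legitimate by Nakayama), which reduces the map to the standard coordinate inclusion; one then also uses that if a composite $R^{\oplus t}\to M'\to R^{\oplus a}$ is a split injection, so is its first factor. With these in hand, the remainder is routine bookkeeping with $-\otimes_R k$ and the projectivity of free modules.
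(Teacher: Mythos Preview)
Your argument is correct in all details. Note, however, that the paper does not supply its own proof of this lemma: it is quoted verbatim from \cite[Lemma~2.1]{PolstraTucker} and used as a black box, so there is no in-paper proof to compare against.

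For what it is worth, a common alternative for the upper bound in~(\ref{Local Observation 2 part 2}) runs as follows: choose a surjection $R^{\mu(M'')}\to M''$, lift it along $M\twoheadrightarrow M''$ to a map $R^{\mu(M'')}\to M$, and observe that $M'\oplus R^{\mu(M'')}\to M$ is then surjective (its image contains $\ker(M\to M'')=\im(M'\to M)$ and maps onto $M''$). Composing with a split surjection $M\to R^{\frk(M)}$ gives a split surjection $M'\oplus R^{\mu(M'')}\to R^{\frk(M)}$, and one finishes by checking $\frk(M'\oplus R^n)=\frk(M')+n$ over a local ring. Your route via $\coker\phi$ and the ``injective mod $\m$ implies split injective'' lemma reaches the same conclusion with comparable effort; the main difference is that you work with a map \emph{out of} $M'$ into a free module, whereas the alternative builds a map \emph{into} $M$ from a module with an obvious free summand. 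Both are standard and neither is materially shorter.
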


Recall that we are trying to verify the sequences $\{\mu(F^e_*R)\}$ and $\{a_e(R)\}$ satisfy the hypotheses of \ref{Sequence Lemma 3} of Lemma~\ref{Sequence Lemma} under the assumption $(R,\m,k)$ is an F-finite local domain of prime characteristic $p$. As $R$ is an F-finite domain, there are short exact sequences $$0\rightarrow R^{\oplus p^{\gamma(R)}}\rightarrow F_*R\rightarrow T\rightarrow 0$$ and $$0\rightarrow F_*R\rightarrow R^{\oplus p^{\gamma(R)}}\rightarrow T'\rightarrow 0$$ so that $T'$ and $T''$ are torsion $R$-modules. As restricting scalars is exact, for each $e\in\N$ there are short exact sequences $$0\rightarrow F^e_*R^{\oplus p^{\gamma(R)}}\rightarrow F^{e+1}_*R\rightarrow F^e_*T\rightarrow 0$$ and $$0\rightarrow F^{e+1}_*R\rightarrow F^e_*R^{\oplus p^{\gamma(R)}}\rightarrow F^e_*T'\rightarrow 0.$$ As $R$ is local, $\mu(F^e_*R^{\oplus p^{\gamma(R)}})=p^{\gamma(R)}\mu(F^e_*R)$ and $\frk(F^e_*R^{\oplus p^{\gamma(R)}})=p^{\gamma(R)}\frk(F^e_*R)=p^{\gamma(R)}a_e(R)$. It readily follows by (\ref{Local Observation 2 part 1}) and (\ref{Local Observation 2 part 2}) of Lemma~\ref{Local Observation 2} that $$\left|\frac{\mu(F^{e+1}_*R)}{p^{(e+1)\gamma(R)}}-\frac{\mu(F^e_*R)}{p^{e\gamma(R)}}\right|\leq \max\left\{\frac{\mu(F^e_*T)}{p^{e\gamma(R)}}, \frac{\mu(F^e_*T')}{p^{e\gamma(R)}}\right\}$$
and $$\left|\frac{a_{e+1}(R)}{p^{(e+1)\gamma(R)}}-\frac{a_e(R)}{p^{e\gamma(R)}}\right|\leq \max\left\{\frac{\mu(F^e_*T)}{p^{e\gamma(R)}}, \frac{\mu(F^e_*T')}{p^{e\gamma(R)}}\right\}.$$ Lemma~\ref{Local Observation 1} provides the existence of a constant $C\in\R$ such that $\max\left\{\frac{\mu(F^e_*T)}{p^{e\gamma(R)}}, \frac{\mu(F^e_*T')}{p^{e\gamma(R)}}\right\}\leq \frac{C}{p^e}$ for each $e\in\N$. Thus the sequences $\{\mu(F^e_*R)\}$ and $\{a_e(R)\}$ satisfy (\ref{Sequence Lemma 3}) of Lemma~\ref{Sequence Lemma}.

Let $R$ be a Noetherian ring, not necessarily local, and $M'\rightarrow M\rightarrow M''\rightarrow 0$ a right exact sequence of finitely generated $R$-modules. Then  (\ref{Local Observation 2 part 1}) of Lemma~\ref{Local Observation 2} remains true without the local hypothesis. It is also clear that $\frk(M'')\leq \frk(M)$. It is not clear whether or not the inequality $\frk(M)\leq \frk(M')+\mu(M'')$ holds without the local hypothesis. However, if $R$ is of finite dimension, we show in Lemma~\ref{Free Rank Lemma} that $\frk(M)\leq \frk(M')+\mu(M'')+\dim(R)$.

\subsection{Uniform convergence and semi-continuity results}\label{Uniform Convergence} In this subsection we recall some results proved by Smirnov in \cite{Smirnov2015}, the second author in \cite{Polstra2015}, and the second author and Tucker in \cite{PolstraTucker} that will be of use in later sections. 

\begin{Proposition}[{\cite[Proposition 3.3]{Polstra2015}}]\label{Uniform Bound of HK} Let $R$ be either F-finite or essentially of finite type over an excellent local ring and $M$ a finitely generated $R$-module. There is a constant $C\in \R$ such that for each $P\in\Spec(R)$ and for each $e\in\N$, $\lambda(M_P/P^{[p^e]}M_P)\leq Cp^{e\dim(M_P)}.$
\end{Proposition}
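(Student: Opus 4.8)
The plan is to prove the bound by induction on $d=\dim(R)$, the inductive statement being that \emph{every} ring $R$ in the two classes with $\dim(R)\le d$, together with any finitely generated module $M$, admits such a constant. The first move is a reduction to the case that $R$ is a domain and $M=R$. Given $M$, choose a prime filtration $0=M_0\subseteq M_1\subseteq\cdots\subseteq M_n=M$ with $M_i/M_{i-1}\cong R/\p_i$. Localizing at $P$ and using additivity of length along filtrations, $\lambda(M_P/P^{[p^e]}M_P)\le\sum_i\lambda((R/\p_i)_P/P^{[p^e]}(R/\p_i)_P)$; the summands with $\p_i\not\subseteq P$ vanish, while if $\p_i\subseteq P$ then $\p_i$ contains a minimal prime of $\Ann(M)$, so $\height(P/\p_i)\le\dim(M_P)$. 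Since each $R/\p_i$ is again F-finite, respectively essentially of finite type over an excellent local ring, it suffices to prove the statement for these quotient domains with module $R/\p_i$. So from now on $R$ is a domain and $M=R$; such an $R$ is excellent and of finite Krull dimension, so its regular locus is open and (being a domain) dense.

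If $d=0$ then $R$ is a field and $\lambda(R_P/P^{[p^e]}R_P)=1=p^{e\dim(R_P)}$, so $C=1$ works. If $d\ge 1$, pick $0\ne c\in R$ with $R_c$ regular. For $P\in\Spec(R)$ with $c\notin P$, $R_P$ is regular, so Kunz's Theorem (Theorem~\ref{Kunz's Theorem}) gives $\lambda(R_P/P^{[p^e]}R_P)=p^{e\height(P)}$, i.e.\ the bound holds with constant $1$ off $V(c)$. For $P\supseteq(c)$, filter $R_P$ by powers of $c$: since $c\in P$ we have $c^{p^e}\in P^{[p^e]}$, so the chain $R_P\supseteq cR_P\supseteq\cdots\supseteq c^{p^e}R_P$ induces a filtration of $R_P/P^{[p^e]}R_P$ with $p^e$ successive quotients, and the surjection $R_P\twoheadrightarrow (c^jR_P+P^{[p^e]}R_P)/(c^{j+1}R_P+P^{[p^e]}R_P)$ given by $x\mapsto c^jx$ kills both $c$ and $P^{[p^e]}$; hence each quotient is a homomorphic image of $R_P/(cR_P+P^{[p^e]}R_P)$. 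Therefore
\[
\lambda(R_P/P^{[p^e]}R_P)\ \le\ p^e\cdot\lambda\big(R_P/(cR_P+P^{[p^e]}R_P)\big)\ =\ p^e\cdot\lambda\big(\overline R_{\overline P}/\overline P^{[p^e]}\overline R_{\overline P}\big),
\]
where $\overline R=R/cR$ and $\overline P=P/cR$. Since $R$ is a finite-dimensional domain and $c$ is a nonzero nonunit, $\dim(\overline R)\le d-1$ and $\height_{\overline R}(\overline P)\le\height_R(P)-1$. Applying the inductive hypothesis to $\overline R$ with module $\overline R$ yields a constant $C'$ with $\lambda(\overline R_{\overline P}/\overline P^{[p^e]}\overline R_{\overline P})\le C'p^{e\height_{\overline R}(\overline P)}\le C'p^{e(\height_R(P)-1)}$, so $\lambda(R_P/P^{[p^e]}R_P)\le C'p^{e\height_R(P)}$ for all $P\supseteq(c)$. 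Taking $C=\max\{1,C'\}$ completes the induction.

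The one genuinely delicate point is the $c$-adic filtration step. One must verify that each graded piece $\big(c^jR_P+P^{[p^e]}R_P\big)/\big(c^{j+1}R_P+P^{[p^e]}R_P\big)$ really is a quotient of $R_P/(cR_P+P^{[p^e]}R_P)$ — this uses that $R_P$ is a domain, so multiplication by $c^j$ is injective, and that $P^{[p^e]}R_P$ is an ideal stable under multiplication by $c^j$ — and that modulo $c$ this module is precisely the $e$-th Hilbert--Kunz-type module of $\overline R_{\overline P}$, so that the single extra factor of $p^e$ is exactly absorbed by the drop $\dim(R/cR)\le\dim(R)-1$ when the induction is invoked. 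The remaining ingredients (additivity of length, Kunz's Theorem on the regular locus, openness of the regular locus of an excellent domain, and stability of the two classes of rings under passing to quotients) are routine, so I expect essentially all the real work to lie in this filtration and the attendant bookkeeping of heights and dimensions.
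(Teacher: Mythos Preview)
Your proof is correct and takes a genuinely different route from the one the paper points to. The paper does not itself prove this proposition---it is cited from \cite{Polstra2015}---but the remark following Corollary~\ref{Upper bound on generators} records that the cited argument relies on Lemma~\ref{Dutta's Lemma}, which furnishes for an F-finite domain a single uniformly controlled prime filtration of $F^e_*R/R^{\oplus p^{e\gamma(R)}}$ valid for all $e$ at once. Your argument instead inducts on $\dim(R)$: on the regular locus of the excellent domain $R$ you appeal directly to Kunz's Theorem, and on $V(c)$ the $c$-adic filtration yields $\lambda(R_P/P^{[p^e]}R_P)\le p^e\cdot\lambda(\overline R_{\overline P}/\overline P^{[p^e]}\overline R_{\overline P})$, after which the height drop $\dim(R_P/cR_P)=\dim(R_P)-1$ (valid because $c$ is a nonzerodivisor in the local domain $R_P$) absorbs the extra factor of $p^e$ and lets you recurse. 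This is more elementary and entirely self-contained, avoiding Lemma~\ref{Dutta's Lemma}; it is close in spirit to Monsky's original techniques. What the Dutta-lemma approach buys is a single structural statement that simultaneously feeds several asymptotic estimates (cf.\ Corollary~\ref{Upper bound on generators} and the proofs throughout Section~\ref{Global HK}), so it integrates more naturally with the paper's broader machinery, whereas your argument is tailored to this one bound. One small aside: the injectivity of multiplication by $c^j$ is not actually needed to see that each graded piece $(c^jR_P+P^{[p^e]}R_P)/(c^{j+1}R_P+P^{[p^e]}R_P)$ is a quotient of $R_P/(cR_P+P^{[p^e]}R_P)$---the surjection $x\mapsto c^jx$ together with the containments $c^{j+1}R_P\subseteq c^{j+1}R_P+P^{[p^e]}R_P$ and $c^jP^{[p^e]}R_P\subseteq P^{[p^e]}R_P$ already suffice---so the domain hypothesis is really only used for the height drop.
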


\begin{Theorem}[{\cite[Theorem 5.1]{Polstra2015}}]\label{Uniform Convergence of HK} Let $R$ be either F-finite or essentially of finite type over an excellent local ring and $M$ a finitely generated $R$-module. The functions $\lambda_e:\Spec(R)\rightarrow \R$ sending $P\mapsto \lambda_{R_P}(M_P/P^{[p^e]}M_P)/p^{e\height(P)}$ converge uniformly to their limit, namely the function $\e:\Spec(R)\rightarrow \R$ sending $P\mapsto \e(M_P)$, the Hilbert-Kunz multiplicity of $M_P$.
\end{Theorem}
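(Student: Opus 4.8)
\emph{Proof strategy.} The plan is to reduce the statement, via part~(\ref{Sequence Lemma 3}) of Lemma~\ref{Sequence Lemma}, to a single \emph{uniform} Cauchy estimate. Set $\lambda_e(P)=\lambda_{R_P}(M_P/P^{[p^e]}M_P)/p^{e\height(P)}$. By Proposition~\ref{Uniform Bound of HK} there is a constant $C_0$ with $\lambda_{R_P}(M_P/P^{[p^e]}M_P)\le C_0 p^{e\dim(M_P)}\le C_0 p^{e\height(P)}$ for all $P$ and $e$, so every sequence $\{\lambda_e(P)\}_e$ is bounded by $C_0$, independently of $P$. It therefore suffices to find one constant $C$ with $|\lambda_{e+1}(P)-\lambda_e(P)|\le C/p^e$ for all $P\in\Spec(R)$ and all $e\in\N$: part~(\ref{Sequence Lemma 3}) of Lemma~\ref{Sequence Lemma}, applied with $\gamma=\height(P)$, then yields $|\lambda_e(P)-\e(M_P)|\le 2C/p^e$ for every $P$, which is exactly uniform convergence (the limit being $\e(M_P)$ by Theorem~\ref{HK exists local}). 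First I would reduce to the case in which $R$ is an F-finite domain. The passage to an F-finite ring can be made through the Hochster--Huneke gamma construction, a faithfully flat F-finite extension whose closed fiber is a residue field, so that by (the proof of) Theorem~\ref{HK Inequality} the Hilbert--Kunz lengths and Krull dimensions---and hence the entire statement---are preserved. The passage from a general F-finite ring to a domain is the standard reduction underlying additivity of Hilbert--Kunz multiplicity (prime filtrations, passage to $R/\sqrt 0$, decomposition into irreducible components); it has to be carried out \emph{uniformly in $P$}, which is possible because all the error terms that arise are controlled by $\lambda_{R_P}(N_P/P^{[p^e]}N_P)$ for the finitely many modules $N$ occurring, hence uniformly by Proposition~\ref{Uniform Bound of HK}.

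So assume $R$ is an F-finite domain and $M$ a finitely generated $R$-module. The engine of the argument is the identity
\[
\mu_{R_P}(F^e_*M_P)=\lambda_{R_P}(M_P/P^{[p^e]}M_P)\cdot p^{e\alpha(P)},
\]
obtained by computing $\kappa(P)$-dimensions on the two sides of $F^e_*M_P\otimes_{R_P}\kappa(P)\cong F^e_*\bigl(M_P/P^{[p^e]}M_P\bigr)$. Since $R$ is a connected, locally equidimensional domain, Lemma~\ref{Kunz's Lemma} (applied with the second prime taken to be $(0)$) gives $\alpha(P)+\height(P)=\gamma(R)$ for every $P$, so the identity rewrites as $\lambda_e(P)=\mu_{R_P}(F^e_*M_P)/p^{e\gamma(R)}$. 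After splitting off the torsion submodule of $M$---whose support is a proper closed subset of the irreducible space $\Spec(R)$ and therefore contributes $O(1/p^e)$ uniformly, as above---I may assume $M$ is torsion free, of rank $r$. Then $M^{\oplus p^{\gamma(R)}}$ and $F_*M$ both have rank $rp^{\gamma(R)}$ over $R$, hence become isomorphic after inverting the nonzero elements of $R$; clearing denominators produces short exact sequences $0\to M^{\oplus p^{\gamma(R)}}\to F_*M\to T\to 0$ and $0\to F_*M\to M^{\oplus p^{\gamma(R)}}\to T'\to 0$ with $T$ and $T'$ finitely generated and supported on proper closed subsets of $\Spec(R)$.

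Applying the exact functor $F^e_*$ and localizing at a prime $P$ (so that $(F^j_*N)_P=F^j_*(N_P)$), part~(\ref{Local Observation 2 part 1}) of Lemma~\ref{Local Observation 2}, together with the additivity of $\mu_{R_P}$ on finite direct sums over the local ring $R_P$, yields
\[
\bigl|\mu_{R_P}(F^{e+1}_*M_P)-p^{\gamma(R)}\mu_{R_P}(F^e_*M_P)\bigr|\le\max\bigl\{\mu_{R_P}(F^e_*T_P),\ \mu_{R_P}(F^e_*T'_P)\bigr\}.
\]
Dividing by $p^{(e+1)\gamma(R)}$ and applying the key identity on the left (for $M$) and on the right (for the \emph{fixed} modules $T$ and $T'$), the left-hand side becomes $|\lambda_{e+1}(P)-\lambda_e(P)|$, while each term on the right satisfies
\[
\frac{\lambda_{R_P}(T_P/P^{[p^e]}T_P)\,p^{e\alpha(P)}}{p^{(e+1)\gamma(R)}}\ \le\ \frac{C_1\,p^{e\dim(T_P)}\,p^{e(\gamma(R)-\height(P))}}{p^{(e+1)\gamma(R)}}\ \le\ \frac{C_1}{p^e},
\]
where $C_1$ is the uniform constant of Proposition~\ref{Uniform Bound of HK} for the fixed module $T$ (and likewise for $T'$), and the last inequality uses $\dim(T_P)\le\height(P)-1$. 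This last bound holds because $\Supp(T)$ is a proper closed subset of the irreducible space $\Spec(R)$, so that whenever $T_P\ne 0$ the nonzero ideal $\Ann_{R_P}(T_P)$ is proper in the domain $R_P$ and hence has height at least $1$. We thus obtain the required Cauchy estimate with $C=C_1$, and part~(\ref{Sequence Lemma 3}) of Lemma~\ref{Sequence Lemma} finishes the argument.

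The one point that genuinely requires care is the \emph{uniformity} of the constant: the mechanism producing it is that the error terms in the recursion are governed by the two \emph{fixed} finitely generated $R$-modules $T$ and $T'$, to which Proposition~\ref{Uniform Bound of HK} applies with a single constant, and the codimension bound $\dim(T_P)\le\height(P)-1$ then converts this into a rate $O(1/p^e)$ valid simultaneously over all of $\Spec(R)$. I expect the most tedious part to be the preliminary reductions---keeping track, uniformly in $P$, of the lower-order corrections introduced by the prime-filtration/additivity reduction to an F-finite domain and, off the F-finite locus, by the gamma construction---rather than the main estimate itself.
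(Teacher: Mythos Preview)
The paper does not actually prove this theorem; it is cited as \cite[Theorem~5.1]{Polstra2015} and stated in the background section without proof. So there is no proof in the paper to compare against.

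That said, your proposal is a faithful reconstruction of the argument that the cited paper carries out, and the surrounding context makes this clear: Proposition~\ref{Uniform Bound of HK} and Lemma~\ref{Dutta's Lemma} are both taken from \cite{Polstra2015}, and the paper remarks that the latter ``is used by the second author in \cite{Polstra2015} to establish the presence of strong uniform bounds found in all F-finite rings.'' Your scheme---rewrite $\lambda_e(P)=\mu_{R_P}(F^e_*M_P)/p^{e\gamma(R)}$ via Lemma~\ref{Kunz's Lemma}, compare $F_*M$ and $M^{\oplus p^{\gamma(R)}}$ by short exact sequences with torsion cokernels, and invoke Proposition~\ref{Uniform Bound of HK} on the fixed torsion modules to get a uniform $O(1/p^e)$ error---is exactly the mechanism behind that theorem. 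The main estimate is correct as written.

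One caution on the reduction step you flag at the end. The gamma construction does not, for a fixed $\Gamma$, guarantee that $PR^\Gamma$ is prime simultaneously for all $P\in\Spec(R)$, so you cannot literally conclude that $\lambda_e^R(P)=\lambda_e^{R^\Gamma}(P_\Gamma)$ for all $P$ from Theorem~\ref{HK Inequality}. You should either argue directly in the non-F-finite case (the short exact sequences, Proposition~\ref{Uniform Bound of HK}, and Lemma~\ref{Local Observation 2} are all available there), or observe that the one-sided inequality of Theorem~\ref{HK Inequality} together with the same inequality for the fixed torsion modules $T,T'$ already suffices to transport the Cauchy estimate downward. This is the ``tedious'' point you anticipated; the rest of your outline is sound.
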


Let $R$ and $M$ be as in Theorem~\ref{Uniform Convergence of HK} but assume that $R$ is locally equidimensional. Then the functions $\lambda_e$ are upper semi-continuous by Theorem~\ref{Shepherd-Barron's result for modules}. Thus Smirnov's theorem on the upper semi-continuity of Hilbert-Kunz multiplicity holds for finitely generated modules. See \cite[Main Theorem]{Smirnov2015}.

\begin{Corollary}\label{Upper Semi-Continuity of HK} Let $R$ be a locally equidimensional ring which is either F-finite or essentially of finite type over an excellent local ring and let $M$ be a finitely generated $R$-module. Then the Hilbert-Kunz multiplicity function $\e:\Spec(R)\rightarrow \R$ sending $P\mapsto \e(M_P)$ is upper semi-continuous.
\end{Corollary}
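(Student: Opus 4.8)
The proof is short: it assembles two results recalled above. The plan is to invoke uniform convergence of the normalized length functions together with the pointwise upper semi-continuity of each of them, and then appeal to the elementary fact that a uniform limit of upper semi-continuous functions is upper semi-continuous.

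First I would record that, for each $e \in \N$, the function $\lambda_e \colon \Spec(R) \to \R$ given by $P \mapsto \lambda_{R_P}(M_P/P^{[p^e]}M_P)/p^{e\height(P)}$ is upper semi-continuous; this is exactly Theorem~\ref{Shepherd-Barron's result for modules}, whose hypotheses are met because $R$ is locally equidimensional and excellent (F-finite rings are excellent by \cite[Theorem 2.5]{Kunz1976}, and rings essentially of finite type over an excellent local ring are excellent). Second, by Theorem~\ref{Uniform Convergence of HK}, the sequence $(\lambda_e)_e$ converges uniformly on $\Spec(R)$ to the function $\e \colon P \mapsto \e(M_P)$.

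It then remains to verify the general statement that a uniform limit of upper semi-continuous real-valued functions on a topological space is upper semi-continuous. Fix $P \in \Spec(R)$ and $\varepsilon > 0$. By uniform convergence, choose $e$ with $|\lambda_e(Q) - \e(M_Q)| < \varepsilon/3$ for every $Q \in \Spec(R)$. Since $\lambda_e$ is upper semi-continuous at $P$, there is an open neighborhood $U$ of $P$ with $\lambda_e(Q) < \lambda_e(P) + \varepsilon/3$ for all $Q \in U$. Then for $Q \in U$ we obtain
\[
\e(M_Q) < \lambda_e(Q) + \frac{\varepsilon}{3} < \lambda_e(P) + \frac{2\varepsilon}{3} < \e(M_P) + \varepsilon,
\]
so the Hilbert-Kunz multiplicity function is upper semi-continuous at $P$. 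As $P$ was arbitrary, the corollary follows.

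Since the entire argument is an assembly of previously established facts, there is no genuine obstacle here; the only point requiring (minor) care is checking that both invoked theorems actually apply, i.e. that $R$ is excellent and locally equidimensional, which is exactly what the hypotheses guarantee. One could alternatively remark that the statement is the module-level analogue of Smirnov's theorem \cite[Main Theorem]{Smirnov2015}, obtained by running his argument with $\e(M_P)$ in place of $\e(R_P)$.
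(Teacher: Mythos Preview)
Your proof is correct and follows essentially the same approach as the paper: combine the upper semi-continuity of each $\lambda_e$ (Theorem~\ref{Shepherd-Barron's result for modules}) with the uniform convergence of Theorem~\ref{Uniform Convergence of HK}, and use that a uniform limit of upper semi-continuous functions is upper semi-continuous. You have simply made explicit the $\varepsilon/3$ argument that the paper leaves implicit.
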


The following theorem is the analogue of Theorem~\ref{Uniform Convergence of HK} for the sequence of normalized Frobenius splitting number functions.

\begin{Theorem}[{\cite[Theorem 5.6]{Polstra2015}}]\label{Uniform Convergence of a_e} Let $R$ be either F-finite or essentially of finite type over an excellent local ring. The functions $\s_e:\Spec(R)\rightarrow \R$ sending $P\mapsto \s_e(R_P)$ converge uniformly to their limit, namely the function $\s:\Spec(R)\rightarrow \R$ sending $P\mapsto \s(R_P)$, the F-signature of $R_P$.
\end{Theorem}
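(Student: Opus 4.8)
The plan is to deduce the uniform convergence from a single uniform estimate on consecutive terms. Precisely, I will produce a real constant $C$, \emph{independent of $P\in\Spec(R)$}, such that $|\s_{e+1}(R_P)-\s_e(R_P)|\le C/p^e$ for all $e\in\N$ and all $P\in\Spec(R)$. Granting this, since $\{\s_e(R_P)\}_e\subseteq[0,1]$ is bounded, part~(\ref{Sequence Lemma 3}) of Lemma~\ref{Sequence Lemma} applies for each fixed $P$ and gives both $\lim_e\s_e(R_P)=\s(R_P)$ and $|\s_e(R_P)-\s(R_P)|\le 2C/p^e$; as $2C/p^e$ does not depend on $P$, this is the asserted uniform convergence. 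The estimate is clean when $R$ is an F-finite domain, so I would first reduce to that case (discussed at the end).

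Suppose then that $R$ is an F-finite domain. Since $F_*R$ is finitely generated, torsion free, of rank $p^{\gamma(R)}$, I fix short exact sequences
\[
0\to R^{\oplus p^{\gamma(R)}}\to F_*R\to T\to 0,\qquad 0\to F_*R\to R^{\oplus p^{\gamma(R)}}\to T'\to 0,
\]
in which $\Ann_R(T)\ne 0\ne\Ann_R(T')$ (the first map being a choice of fraction-field basis of $F_*R$, the second obtained by clearing denominators). Fix $P\in\Spec(R)$. Frobenius push-forward commutes with localization, $(F^j_*R)_P\cong F^j_*R_P$; since $R_P$ is a domain with the same fraction field as $R$, $\gamma(R_P)=\gamma(R)$, and Kunz's Lemma~\ref{Kunz's Lemma} gives $\height(P)+\alpha(P)=\gamma(R)$. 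Localizing the two sequences at $P$ and applying the exact functor $F^e_*$ yields the short exact sequences
\[
0\to F^e_*R_P^{\oplus p^{\gamma(R)}}\to F^{e+1}_*R_P\to F^e_*T_P\to 0,\quad 0\to F^{e+1}_*R_P\to F^e_*R_P^{\oplus p^{\gamma(R)}}\to F^e_*T'_P\to 0.
\]
Applying Lemma~\ref{Local Observation 2}(\ref{Local Observation 2 part 2}) over the local ring $R_P$, together with $\frk(M^{\oplus k})=k\,\frk(M)$, one gets
\[
p^{\gamma(R)}a_e(R_P)-\mu_{R_P}(F^e_*T'_P)\ \le\ a_{e+1}(R_P)\ \le\ p^{\gamma(R)}a_e(R_P)+\mu_{R_P}(F^e_*T_P),
\]
so dividing by $p^{(e+1)\gamma(R)}=p^{(e+1)\gamma(R_P)}$ gives $|\s_{e+1}(R_P)-\s_e(R_P)|\le\max\{\mu_{R_P}(F^e_*T_P),\ \mu_{R_P}(F^e_*T'_P)\}/p^{(e+1)\gamma(R)}$.

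It remains to bound $\mu_{R_P}(F^e_*T_P)$ and $\mu_{R_P}(F^e_*T'_P)$ by a constant times $p^{e(\gamma(R)-1)}$, \emph{with constant independent of $P$}. For any finitely generated $R$-module $N$, twisting a composition series of the finite-length module $N_P/P^{[p^e]}N_P$ by $F^e_*$ (each factor $\kappa(P)$ becomes $F^e_*\kappa(P)$, of $\kappa(P)$-dimension $p^{e\alpha(P)}$) gives the exact formula $\mu_{R_P}(F^e_*N_P)=\lambda_{R_P}(N_P/P^{[p^e]}N_P)\cdot p^{e\alpha(P)}$. Apply this with $N=T$: Proposition~\ref{Uniform Bound of HK} provides $C_0\in\R$, depending only on $R$ and $T$, with $\lambda_{R_P}(T_P/P^{[p^e]}T_P)\le C_0\,p^{e\dim(T_P)}$ for all $P$ and $e$; and since $T$ is torsion over the domain $R$, whenever $T_P\ne 0$ the ideal $\Ann_{R_P}(T_P)$ is nonzero, so $\dim(T_P)\le\height(P)-1$. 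Using $\height(P)+\alpha(P)=\gamma(R)$,
\[
\mu_{R_P}(F^e_*T_P)=\lambda_{R_P}(T_P/P^{[p^e]}T_P)\,p^{e\alpha(P)}\le C_0\,p^{e(\dim(T_P)+\alpha(P))}\le C_0\,p^{e(\gamma(R)-1)},
\]
and the same holds for $T'$ (the term being $0$ if the localization vanishes). Hence $|\s_{e+1}(R_P)-\s_e(R_P)|\le C_0/p^e$ uniformly in $P$, i.e.\ $C=C_0$ works. The point is that all constants are uniform because the only nontrivial input, the growth bound $\lambda_{R_P}(T_P/P^{[p^e]}T_P)=O(p^{e\dim T_P})$, is uniform by Proposition~\ref{Uniform Bound of HK}.

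Finally, the reduction to the domain case, which I expect to be the main obstacle: since $\s_e$ and $\s$ are not additive, one cannot reduce along minimal primes. Instead, if $N=\operatorname{nil}(R)$ with $N^k=0$, then for $P\in\Supp(N)$ and $p^e\ge k$ the module $F^e_*R_P$ has module structure factoring through $(R_P)_{\mathrm{red}}\ne R_P$, hence no free summand, so $\s_e(R_P)=0=\s(R_P)$ — the convergence is uniform on the closed set $\Supp(N)$, and on its quasi-compact open complement $R$ may be assumed reduced. For $R$ reduced one passes to connected components; where $R$ is locally equidimensional, Kunz's Lemma~\ref{Kunz's Lemma} makes $\alpha$, hence the rank of $F_*R$, constant on $\Spec(R)$, so the argument above applies verbatim (component by component). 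The rest — the non-locally-equidimensional locus and the pairwise intersections of irreducible components, all closed of strictly smaller dimension — is handled by Noetherian induction on the dimension of a closed subset $Z\subseteq\Spec(R)$ for the statement ``$P\mapsto\s_e(R_P)$ converges uniformly on $Z$'': one treats the generic points of the components of $Z$ that lie on a single component of $\Spec(R)$ by passing to an F-finite domain on a shrunk neighborhood, invokes the reduced locally equidimensional case for the remaining generic points, and pushes the lower-dimensional leftover into the inductive hypothesis. The essentially-finite-type-over-an-excellent-local-ring case is handled identically, Proposition~\ref{Uniform Bound of HK} being available there as well. The genuine difficulty is organizational: carrying out this dévissage so that every constant produced stays independent of $P$.
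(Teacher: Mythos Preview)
The paper does not supply a proof of this statement: it is recorded in the background section as a citation to \cite[Theorem~5.6]{Polstra2015}, with no argument given. So there is no ``paper's own proof'' to compare against.

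That said, your argument in the F-finite domain case is correct and is precisely the mechanism the paper rehearses after Lemma~\ref{Local Observation 2} for the \emph{local} ring, promoted to a statement uniform over $\Spec(R)$ by invoking Proposition~\ref{Uniform Bound of HK}. This is also the core of the argument in \cite{Polstra2015}. The computation $\mu_{R_P}(F^e_*T_P)=\lambda_{R_P}(T_P/P^{[p^e]}T_P)\,p^{e\alpha(P)}\le C_0\,p^{e(\gamma(R)-1)}$ is exactly what is needed, and the appeal to Lemma~\ref{Sequence Lemma}(\ref{Sequence Lemma 3}) with a $P$-independent constant is sound.

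There is, however, a genuine gap in your treatment of the case where $R$ is essentially of finite type over an excellent local ring but \emph{not} F-finite. You write that it is ``handled identically,'' but your entire argument runs through short exact sequences involving $F_*R$ as a finitely generated $R$-module; when $R$ is not F-finite, $F_*R$ is not finitely generated, the sequences $0\to R^{\oplus p^{\gamma(R)}}\to F_*R\to T\to 0$ do not exist as written, and the identity $\s_e(R_P)=a_e(R_P)/p^{e\gamma(R_P)}$ is not available. In that setting $\s_e(R_P)$ is defined via the ideals $I_e$ and lengths $\lambda(R_P/I_e)/p^{e\height(P)}$, and one must either work with that description directly or reduce to the F-finite case (e.g.\ via completion and a $\Gamma$-construction, as the paper does in Section~\ref{Global non-F-finite}). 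Your d\'evissage for the reduced non-domain F-finite case is also only a sketch; it is plausible, but the bookkeeping required to keep the constants $P$-independent across strata is exactly the work, and you have not done it.
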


\begin{Remark}\label{Uniform Convergence Remark} We remark that Theorem~\ref{Uniform Convergence of a_e} is easily generalized to all finitely generated $R$-modules. In fact, the proof provided in \cite{Polstra2015} shows this. One only needs to observe that the statements made about the $R$-module $R$ apply to all finitely generated $R$-modules $M$.
\end{Remark}

Let $R$ satisfy the hypotheses of Theorem~\ref{Uniform Convergence of a_e}, $M$ a finitely generated $R$-module, and suppose that $R$ is strongly F-regular. Enescu and the third author show in \cite{EnescuYao} under such hypotheses that the functions $\s_e$ in Theorem~\ref{Uniform Convergence of a_e} are lower semi-continuous.  See \cite[Corollary~2.5, Theorem~5.1, and Remark~5.5]{EnescuYao}\footnote{See Remark~\ref{Assume s>0 remark} for details on why the hypotheses of \cite[Theorem~5.1]{EnescuYao} are satisfied when $R$ is strongly F-regular.}. Hence the following Corollary.

\begin{Corollary}[{\cite[Theorem 5.7]{Polstra2015}}]\label{Lower semi-continuity of the F-signature} Let $R$ be either F-finite or essentially of finite type over an excellent local ring and $M$ a finitely generated $R$-module. The F-signature function $\s:\Spec(R)\rightarrow \R$ sending $P\mapsto \s(M_P)$ is lower semi-continuous.
\end{Corollary}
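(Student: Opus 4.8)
The plan is to realize the function $\s\colon P\mapsto\s(M_P)$ as a uniform limit of functions that are already known to be lower semi-continuous wherever it matters, exploiting that $\s$ vanishes off the strongly F-regular locus to reduce to that case.

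First I would dispose of a trivial range of parameters. Since $a_e(M_P)\ge 0$ for every $e$ and every $P$, each $\s_e(M_P)=a_e(M_P)/p^{e\gamma(M_P)}$ is non-negative, and hence so is $\s(M_P)=\lim_{e\to\infty}\s_e(M_P)$. Thus $\{P\in\Spec(R)\mid\s(M_P)>r\}=\Spec(R)$ whenever $r<0$, and it remains to fix a real number $r\ge 0$ and prove that $U_r:=\{P\in\Spec(R)\mid\s(M_P)>r\}$ is open.

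Next I would confine $U_r$ to the strongly F-regular locus. By Remark~\ref{F-signature of a Module}, positivity of the F-signature of a finitely generated module forces positivity of the F-signature of the ambient ring; applying this over $R_P$ and then Theorem~\ref{s>0 local}, it follows that $R_P$ is strongly F-regular for every $P\in U_r$ (here we use $r\ge 0$). Then I would argue locally: given $P\in U_r$, use that the strongly F-regular locus is open for rings of the type under consideration (F-finite, or essentially of finite type over an excellent local ring) to choose $f\in R\setminus P$ with $R_f$ strongly F-regular; note that $R_f$ is again of the same type and that $D(f)$ is an open neighborhood of $P$. Over the strongly F-regular ring $R_f$ the Enescu--Yao theorem \cite[Corollary~2.5, Theorem~5.1, Remark~5.5]{EnescuYao} gives that each function $\s_e\colon\Spec(R_f)\to\R$, $Q\mapsto\s_e(M_Q)$, is lower semi-continuous; by Theorem~\ref{Uniform Convergence of a_e} together with Remark~\ref{Uniform Convergence Remark}, applied to $R_f$ and $M_f$, these functions converge uniformly to $Q\mapsto\s(M_Q)$; and a uniform limit of lower semi-continuous functions is lower semi-continuous. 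Hence $\s$ is lower semi-continuous on $D(f)$, so $U_r\cap D(f)$ is open in $D(f)$, hence in $\Spec(R)$, and it is a neighborhood of $P$ contained in $U_r$. As $P\in U_r$ was arbitrary, $U_r$ is open, which proves the claim.

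The step I expect to be the main obstacle is the local argument above: the Enescu--Yao semi-continuity of the individual $\s_e$ genuinely requires a strong F-regularity hypothesis that is absent from the statement, and the device that circumvents this is precisely the preceding reduction — $\s$ is identically zero off the strongly F-regular locus, so for $r\ge 0$ the superlevel set $U_r$ is automatically contained in that (open) locus, where one may base change to a strongly F-regular localization $R_f$. The auxiliary points that must be checked are routine: that the standing hypothesis on $R$ and the uniform convergence in Theorem~\ref{Uniform Convergence of a_e} descend to the localizations $R_f$, and that the strongly F-regular locus is open in this generality.
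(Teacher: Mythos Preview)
Your proposal is correct and follows the same route as the paper: the paper derives the corollary from the Enescu--Yao lower semi-continuity of the functions $\s_e$ (valid under strong F-regularity) together with the uniform convergence in Theorem~\ref{Uniform Convergence of a_e} and Remark~\ref{Uniform Convergence Remark}. Your argument is in fact more explicit than the paper's one-line ``Hence the following Corollary,'' since you spell out the reduction to the strongly F-regular locus via $\s(M_P)>0\Rightarrow\s(R_P)>0\Rightarrow R_P$ strongly F-regular, which the paper leaves implicit (the relevant details are deferred to Remark~\ref{Assume s>0 remark} and the cited reference \cite{Polstra2015}).
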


Let $R$ be an F-finite domain and $\D$ a Cartier subalgebra. It is unknown if the functions $\s^\D_e:\Spec(R)\rightarrow \R$ sending $P\mapsto \s_e(R_P,\D_P)$ converge uniformly to their limit, namely the F-signature function $\s^\D:\Spec(R)\rightarrow \R$ sending $P\mapsto \s(R_P,\D_P)$,  as $e\in\Gamma_\D\rightarrow \infty$. However, there is a condition one could impose on a Cartier subalgebra which guarantees uniform convergence.

 \begin{Condition}\label{Condtion *} A Cartier subalgebra satisfies condition $(*)$ if we require that for each $\varphi \in \D_{e+1}$ that the natural map $i\circ \varphi\in \D_e$, where $i:F^e_*R\rightarrow F^{e+1}_*R$ is the $p$th power map. 
\end{Condition}

\begin{Theorem}[{\cite[Theorem 6.4]{Polstra2015}}]\label{Uniform Convergence of a_e Cartier} Let $R$ be an F-finite domain and $\D$ a Cartier subalgebra. Let $\s^\D_e:\Spec(R)\rightarrow \R$ be the function sending $P\mapsto \s_e(R_P,\D_P)$. Then there is a constant $C\in\R$ such that for all $P\in \Spec(R)$ for all $e,e'\in \N$, $$\s_e(R_P,\D_P)-\s_{e+e'}(R_P,\D_P)<\frac{C}{p^e}.$$ Moreover, if $\D$ satisfies condition $(*)$ then the constant $C$ can be chosen such that for each $P\in\Spec(R)$ and each $e,e'\in\Gamma_\D$,
$$|\s_e(R_P,\D_P)-\s_{e+e'}(R_P,\D_P)|<\frac{C}{p^e}.$$
Therefore if the Cartier algebra satisfies condition $(*)$, the functions $\s_e^\D$ sending $P\mapsto \s_e(R_P,\D_P)$ converge uniformly to their limit, namely the function $\s^\D$ sending $P\mapsto \s(R_P,\D_P)$.
\end{Theorem}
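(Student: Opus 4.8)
\emph{Proof plan.} The closing assertion --- uniform convergence of the family $\s_e^\D$ to $\s^\D$ along $\Gamma_\D$ --- is a formal consequence of the two-sided estimate: fix $P\in\Spec(R)$ and $e\in\Gamma_\D$ and let $e'\to\infty$ through $\Gamma_\D$ in $|\s_e(R_P,\D_P)-\s_{e+e'}(R_P,\D_P)|<C/p^e$ to get $|\s_e(R_P,\D_P)-\s(R_P,\D_P)|\leq C/p^e$ with $C$ independent of $P$. So the content is the two numerical estimates, and the plan is to run, at every prime $P$ simultaneously, the Cauchy-type argument that proves the existence of $\s(R_P,\D_P)$ in the local theory (Theorem~\ref{F-sig pairs}, cf.\ \cite{BST2012,PolstraTucker}), while bounding every error term \emph{uniformly in $P$} by means of the uniform Hilbert--Kunz bound, Proposition~\ref{Uniform Bound of HK} (equivalently Corollary~\ref{Upper bound on generators}).

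Fix $P$ and write $\gamma=\gamma(R)$. Since $R$ is a domain there are short exact sequences $0\to R^{\oplus p^{\gamma}}\to F_*R\to T\to 0$ and $0\to F_*R\to R^{\oplus p^{\gamma}}\to T'\to 0$ with $T,T'$ torsion. Applying $F^e_*(-)$ and localizing at $P$ gives
\[
0\to(F^e_*R_P)^{\oplus p^{\gamma}}\xrightarrow{\ j\ }F^{e+1}_*R_P\to F^e_*T_P\to 0
\quad\text{and}\quad
0\to F^{e+1}_*R_P\xrightarrow{\ j'\ }(F^e_*R_P)^{\oplus p^{\gamma}}\to F^e_*T'_P\to 0 .
\]
I would apply to each of these the natural Cartier-algebra refinement of Lemma~\ref{Local Observation 2}(\ref{Local Observation 2 part 2}), together with the fact that $\frk_{\D_e}$ of a direct sum of copies of $F^e_*R_P$ is $p^{\gamma}$ times $a_e(R_P,\D_P)$. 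The one non-formal point is the degree bookkeeping for $\D$-summands, and this is exactly where condition $(*)$ enters: if $\varphi\in\D_{e+1}$, then restricting $\varphi$ along the $k$-th component $j_k\colon F^e_*R_P\to F^{e+1}_*R_P$ of $j$ gives $\varphi\circ j_k=(\varphi\bullet c_k)\circ i$ for some $c_k\in R$, where $i\colon F^e_*R_P\to F^{e+1}_*R_P$ is the iterated Frobenius; since $\varphi\bullet c_k\in\D_{e+1}$, condition $(*)$ forces $(\varphi\bullet c_k)\circ i\in\D_e$, so a $\D_{e+1}$-projection of $F^{e+1}_*R_P$ restricts to a $\D_e$-projection of $(F^e_*R_P)^{\oplus p^{\gamma}}$. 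This yields $a_{e+1}(R_P,\D_P)\leq p^{\gamma}a_e(R_P,\D_P)+\mu(F^e_*T_P)$. For the reverse inequality one wants the map $F_*R\to R^{\oplus p^{\gamma}}$ in the second sequence to have all its components in $\D_1$; this is available when $\D$ is generically full at level one (which under condition $(*)$ is the typical situation; in general one argues as in \cite[\S 6]{Polstra2015}), for then the relevant composites have the form $\varphi\bullet\theta_k$ with $\varphi\in\D_e$ and $\theta_k\in\D_1$, hence lie in $\D_e\bullet\D_1\subseteq\D_{e+1}$, giving $a_{e+1}(R_P,\D_P)\geq p^{\gamma}a_e(R_P,\D_P)-\mu(F^e_*T'_P)$. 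Dividing by $p^{(e+1)\gamma}$,
\[
\bigl|\s_{e+1}(R_P,\D_P)-\s_e(R_P,\D_P)\bigr|\ \leq\ \frac{\max\{\mu(F^e_*T_P),\ \mu(F^e_*T'_P)\}}{p^{(e+1)\gamma}}.
\]

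Now the uniformity: $T$ and $T'$ are torsion, and $F^e_*T_P$, $F^e_*T'_P$ are direct summands of Frobenius pushforwards of $R_P$, so Proposition~\ref{Uniform Bound of HK} (or Corollary~\ref{Upper bound on generators}) supplies a constant $C_0$, independent of $P$ and of $e$, with $\mu(F^e_*T_P),\mu(F^e_*T'_P)\leq C_0 p^{e(\gamma-1)}$. Hence $|\s_{e+1}(R_P,\D_P)-\s_e(R_P,\D_P)|\leq C_0 p^{-\gamma}p^{-e}\leq C_0 p^{-e}$ uniformly in $P$, and summing the telescoping series over $e,e+1,\dots,e+e'-1$ gives $|\s_e(R_P,\D_P)-\s_{e+e'}(R_P,\D_P)|\leq\sum_{k\geq e}C_0 p^{-k}\leq\tfrac{p}{p-1}C_0 p^{-e}\leq Cp^{-e}$, with $C$ independent of $P,e,e'$; this is the ``moreover'' estimate, and the closing statement follows as above. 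The first, one-sided estimate (valid for an arbitrary Cartier subalgebra) is obtained by the same method: here only a one-sided comparison of $a_e(R_P,\D_P)$ with $a_{e+e'}(R_P,\D_P)$ is needed, and --- as in \cite[\S 6]{Polstra2015} --- it can be extracted from the super-multiplicativity $a_{e+e'}(R_P,\D_P)\geq a_e(R_P,\D_P)\,a_{e'}(R_P,\D_P)$ (coming from $\D_e\bullet\D_{e'}\subseteq\D_{e+e'}$) and the behaviour of the Frobenius $\D$-splitting ideals of $R_P$ under iterated Frobenius, with the errors again controlled uniformly by Proposition~\ref{Uniform Bound of HK}.

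The step I expect to require the most care is the $\D$-equivariance of the module inequalities. Over a local ring one splits and restricts free summands with complete freedom, but for \emph{Cartier} summands the grading must be respected; condition $(*)$ --- together with, for one of the two directions, genericity of $\D$ at level one --- is precisely the input that makes the restrictions along $j$ and $j'$ land back inside the appropriate graded pieces of $\D$. The second, essential, requirement is that the constants not depend on $P$: this is the whole reason the argument is routed through the uniform bound of Proposition~\ref{Uniform Bound of HK} rather than through its pointwise predecessors, since without it one would recover only convergence of $\s_e^\D$ at each individual $P$, not uniform convergence of the family.
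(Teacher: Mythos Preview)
The paper does not prove this theorem: it is quoted as a background result from \cite[Theorem~6.4]{Polstra2015} and used later without argument. So there is no ``paper's own proof'' to compare against, and your task was really to reconstruct Polstra's proof.

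Your outline follows the right shape --- run the local existence argument for $\s(R_P,\D_P)$ with all error terms bounded via Proposition~\ref{Uniform Bound of HK}/Corollary~\ref{Upper bound on generators}, then telescope --- and this is indeed how the cited proof proceeds. Two points, however, are not yet pinned down. First, your reverse inequality under condition~$(*)$ relies on the components of a map $F_*R\to R^{\oplus p^{\gamma}}$ lying in $\D_1$; ``generic fullness at level one'' is not part of the hypothesis, and condition~$(*)$ says nothing about $\D_1$. The actual argument in \cite{Polstra2015} avoids this by working with the $\D$-splitting ideals $I_e^{\D_P}\subseteq R_P$ and comparing $I_e^{\D_P}$ with $I_{e+1}^{\D_P}$ directly: condition~$(*)$ gives one containment, and the other direction uses that $\D_e\bullet\C^R_1\subseteq\D_{e+1}$ together with the fact that $\C^R_1$ (not $\D_1$) is generically free of the right rank. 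You should route the two-sided bound through these ideals rather than through a hypothetical $\D_1$-equivariant short exact sequence. Second, your sketch of the one-sided estimate for an arbitrary $\D$ is too vague: super-multiplicativity $a_{e+e'}\geq a_e a_{e'}$ alone does not yield $\s_e-\s_{e+e'}<C/p^e$; one again needs the splitting-ideal comparison $I_e^{\D_P}\cdot (I_{e'}^{\C^R_P})^{[p^e]}\subseteq I_{e+e'}^{\D_P}$ (coming from $\D_e\bullet\C^R_{e'}\subseteq\D_{e+e'}$) together with the uniform Hilbert--Kunz bound to control $\lambda(R_P/I_{e'}^{\C^R_P})$.
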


An arbitrary Cartier subalgebra, including those arising from geometric considerations, will not satisfy condition $(*)$. Recently, the second author and Tucker establish the uniform convergence of the functions $\s_e^\D$ when $\D=\C^{\a^t}$ for some ideal $\a$ and $t>0$ or if $R$ is a normal domain and $\D=\C^{(R,\Delta)}$ for some effective $\Q$-divisor $\Delta$.

\begin{Theorem}[{\cite[Theorem 4.12, Theorem 4.13]{PolstraTucker}}]\label{PTUniformConvergence}
Let $R$ be an F-finite domain. Suppose that either $\a \subseteq R$ a non-zero ideal, and $t \in \R_{\geq 0}$ or $R$ is normal and $\Delta$ is an effective $\Q$-divisor. Let $\D$ be either $\C^{\a^t}$ or $\C^{(R,\Delta)}$. Then there is a constant $C\in\R$ such that for all $P\in\Spec(R)$ and for all $e,e'\in\N$, $$\left|\s_e(R_P,\D_P)-\s_{e+e'}(R_P,\D_P)\right|\leq \frac{C}{p^e}.$$ In particular, the functions $\s_e^\D$ sending $P\mapsto \s_e(R_P,\D_P)$ converge uniformly to their limit, namely the function $\s^\D$ sending $P\mapsto \s(R_P,\D_P)$.
\end{Theorem}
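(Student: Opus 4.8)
The plan is to work prime by prime, reduce the two-sided estimate to a single one-sided bound, and then supply that bound from a Frobenius-stability property of splitting ideals together with Polstra's uniform Hilbert--Kunz estimates. One inequality comes essentially for free: for both $\D = \C^{\a^t}$ and $\D = \C^{(R,\Delta)}$ one has $\Gamma_\D = \N$ (using that $R$ is a domain, and for $\C^{(R,\Delta)}$ that restriction along $F^e_*R \hookrightarrow F^e_*R(\lceil(p^e-1)\Delta\rceil)$ is injective on $\Hom_R(\blank,R)$), so $\s_e(R_P,\D_P)$ is defined for every $e$ and $\lim_{e\to\infty}\s_e(R_P,\D_P) = \s(R_P,\D_P)$ by Theorem~\ref{F-sig pairs}; applying Theorem~\ref{Uniform Convergence of a_e Cartier} to the pair $(R_P,\D_P)$ and letting $e' \to \infty$ then yields a constant $C$, independent of $P$ and $e$, with $\s_e(R_P,\D_P) - \s(R_P,\D_P) \le C/p^e$. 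Since neither $\C^{\a^t}$ nor $\C^{(R,\Delta)}$ satisfies Condition~\ref{Condtion *} in general, the content that remains is the reverse bound: a constant $C'$, independent of $P$ and $e$, with $\s(R_P,\D_P) - \s_e(R_P,\D_P) \le C'/p^e$. Granting this, the triangle inequality gives $|\s_e(R_P,\D_P) - \s_{e+e'}(R_P,\D_P)| \le (C+C')/p^e$ for all $e,e'$, so $\s_e^\D$ converges uniformly to $\s^\D$.

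To obtain the reverse bound I would translate everything into splitting ideals. Let $I_e^\D(P) \subseteq R_P$ be the $e$-th splitting ideal of $(R_P,\D_P)$, so that $P^{[p^e]}R_P \subseteq I_e^\D(P)$ and $\s_e(R_P,\D_P) = \lambda_{R_P}(R_P/I_e^\D(P))/p^{e\height(P)}$; writing $I_e(P)$ for the splitting ideal of the trivial algebra $\C^{R_P}$, one has $I_e^{\C^{\a^t}}(P) = (I_e(P) :_{R_P} \a^{\lceil t(p^e-1)\rceil}R_P)$, and for $R$ normal $I_e^{\C^{(R,\Delta)}}(P)$ is the analogous colon against the divisorial module $R_P(\lceil(p^e-1)\Delta\rceil)$. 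The key structural input is a Frobenius-stability statement: there is a single nonzero $c \in R$ --- a uniform test element, which exists because $R$ is F-finite and hence excellent --- with $c\,(I_e^\D(P))^{[p^{e'}]} \subseteq I_{e+e'}^\D(P)$ for all $P \in \Spec(R)$ and all $e,e' \in \N$. For the trivial algebra no multiplier is needed: if $s \in I_e(P)$, then for any $\psi \in \Hom_{R_P}(F^{e+e'}_*R_P,R_P)$ and any $u \in R_P$ the map $F^e_*a \mapsto \psi(F^{e+e'}_*u a^{p^{e'}})$ is $R_P$-linear, hence carries $F^e_*s$ into $PR_P$, so $u s^{p^{e'}} \in I_{e+e'}(P)$ and $I_e(P)^{[p^{e'}]} \subseteq I_{e+e'}(P)$. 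For the two families one transports this through the colon, using the exact identities $p^{e'}(p^e-1)t + (p^{e'}-1)t = (p^{e+e'}-1)t$ and $p^{e'}(p^e-1)\Delta + (p^{e'}-1)\Delta = (p^{e+e'}-1)\Delta$, which control how the round-ups $\lceil t(p^e-1)\rceil$ and $\lceil(p^e-1)\Delta\rceil$ behave additively under $e \mapsto e+e'$, together with a uniform Brian\c{c}on--Skoda bound comparing ordinary and Frobenius powers of $\a$ (resp.\ its divisorial analogue) that uses the multiplier $c$ to absorb the bounded rounding discrepancy.

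Finally, from $c\,(I_e^\D(P))^{[p^{e'}]} \subseteq I_{e+e'}^\D(P)$ and $P^{[p^{e+e'}]}R_P \subseteq I_{e+e'}^\D(P)$ one gets $\lambda_{R_P}(R_P/I_{e+e'}^\D(P)) \le \lambda_{R_P}(R_P/(c\,(I_e^\D(P))^{[p^{e'}]} + P^{[p^{e+e'}]}R_P))$, a finite length since the ideal on the right is $PR_P$-primary. Expanding this against $P^{[p^{e+e'}]}R_P$, dividing by $p^{(e+e')\height(P)}$, and replacing each quotient $\lambda_{R_P}(R_P/P^{[p^n]}R_P)/p^{n\height(P)}$ by $\e(R_P)$ with error $O(1/p^n)$ uniformly in $P$ --- the uniformity being Proposition~\ref{Uniform Bound of HK} and Theorem~\ref{Uniform Convergence of HK} together with the uniform form over $\Spec(R)$ of Monsky's second-order estimate from Theorem~\ref{HK exists local} --- one checks that the colength defect created by the multiplier $c$ and by passing to a $p^{e'}$-th power contributes only $O(1/p^e)$ after renormalization; letting $e' \to \infty$ gives $\s(R_P,\D_P) \le \s_e(R_P,\D_P) + C'/p^e$, which combined with the reduction proves Theorem~\ref{PTUniformConvergence}. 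The main obstacle is the Frobenius-stability step: producing a single multiplier $c$ that works simultaneously at every prime $P$ and every pair $(e,e')$, and then checking that the colength it costs is genuinely of lower order \emph{uniformly in $P$}. This is exactly where one must combine the existence of uniform test elements for F-finite rings, a uniform Brian\c{c}on--Skoda/Artin--Rees estimate, and Polstra's uniform Hilbert--Kunz theory; it is also the step that forces the normality hypothesis on $R$ when $\D = \C^{(R,\Delta)}$.
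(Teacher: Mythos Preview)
This theorem is not proved in the present paper: it appears in Section~\ref{Background} as a background result, with the proof deferred entirely to \cite[Theorems~4.12 and~4.13]{PolstraTucker}. There is therefore no proof here to compare your proposal against.

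That said, your outline has a genuine gap in the final length estimate, independent of what the cited proof does. The inclusion $c\,(I_e^\D(P))^{[p^{e'}]} \subseteq I_{e+e'}^\D(P)$ yields
\[
\s_{e+e'}(R_P,\D_P) \;\le\; \frac{\lambda_{R_P}\!\bigl(R_P/c\,(I_e^\D(P))^{[p^{e'}]}\bigr)}{p^{(e+e')\height(P)}},
\]
and after absorbing the multiplier $c$ and letting $e'\to\infty$ the right side converges to $\e(I_e^\D(P))/p^{e\height(P)}$, not to $\s_e(R_P,\D_P)=\lambda_{R_P}(R_P/I_e^\D(P))/p^{e\height(P)}$. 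For a $PR_P$-primary ideal $J$ one only has $\e(J)\le\lambda(R_P/J)\cdot\e(R_P)$, so the bound you obtain is $\s(R_P,\D_P)\le \e(R_P)\cdot\s_e(R_P,\D_P)$, a multiplicative estimate rather than the additive $\s(R_P,\D_P)\le \s_e(R_P,\D_P)+C'/p^e$ you need. Closing this gap would require showing $\e(I_e^\D(P))-\lambda_{R_P}(R_P/I_e^\D(P))=O(p^{e(\height(P)-1)})$ uniformly in $P$, which is as hard as the original statement and does not follow from the uniform Hilbert--Kunz results you invoke.

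The argument in \cite{PolstraTucker} avoids Frobenius powers of the splitting ideals altogether. It works instead at the level of the modules $F^e_*R$: one compares $a_{e+1}(R_P,\D_P)$ with $a_e(R_P,\D_P)$ directly via the right-exact sequences $F^e_*R^{\oplus p^{\gamma}}\to F^{e+1}_*R\to F^e_*T\to 0$ and $F^{e+1}_*R\to F^e_*R^{\oplus p^{\gamma}}\to F^e_*T'\to 0$ (as in the proof sketch following Lemma~\ref{Local Observation 2}), tracking how $\D$-summands transform. The specific form of $\D_e$ for $\C^{\a^t}$ and $\C^{(R,\Delta)}$ is what makes this tractable: the exponents $\lceil t(p^{e+1}-1)\rceil$ and $p\lceil t(p^e-1)\rceil+\lceil t(p-1)\rceil$ differ by a bounded amount independent of $e$, so the $\D$-structure on $F^{e+1}_*R$ is compatible with that on $F^e_*(F_*R)$ up to a fixed power of $\a$ (resp.\ a fixed twist of $\Delta$), and the resulting error is $O(p^{e(\gamma-1)})$ uniformly by Corollary~\ref{Upper bound on generators}.
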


Following the methods of Enescu and the third author in \cite{EnescuYao}, the functions $\s_e^\D$ are easily verified to be lower semi-continuous. Thus a Corollary of Theorem~\ref{Uniform Convergence of a_e Cartier} is the lower semi-continuity of the F-signature of a pair $(R,\D)$.

\begin{Theorem}[{\cite[Theorem 6.4]{Polstra2015}}]\label{Lower semi-continuity of the F-signature Cartier} Let $R$ be an F-finite ring and $\D$ a Cartier subalgebra. The F-signature function $\s^\D:\Spec(R)\rightarrow \R $ sending $P\mapsto \s(R_P,\D_P)$ is lower semi-continuous. 
\end{Theorem}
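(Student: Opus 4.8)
The plan is to realize $\s^{\D}$ as a pointwise supremum of lower semi-continuous functions. Throughout, $R$ is a domain, as is needed for the F-signature of a pair to be defined; in particular $R_Q$ has the same fraction field as $R$ for every $Q\in\Spec(R)$, so $\gamma(R_Q)=\gamma(R)$ and hence $\s_e(R_Q,\D_Q)=a_e(R_Q,\D_Q)/p^{e\gamma(R)}$ has a denominator that does not depend on $Q$. This uniform denominator is what will let the spreading-out argument below apply directly.

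The first step is to show that, for each fixed $e$, the integer-valued function $Q\mapsto a_e(R_Q,\D_Q)$ on $\Spec(R)$ is lower semi-continuous, i.e.\ that $W_{e,n}=\{Q\in\Spec(R)\mid a_e(R_Q,\D_Q)\geq n\}$ is open for every $n$. If $P\in W_{e,n}$, then a free $\D_P$-summand of $(F^e_*R)_P$ of rank $n$ amounts to a surjection $\Phi_P=\varphi_1\oplus\cdots\oplus\varphi_n\colon(F^e_*R)_P\to R_P^{\oplus n}$ with each $\varphi_i\in(\D_e)_P$. Clearing denominators --- using that $\D_e$ is an $R$-submodule of $\Hom_R(F^e_*R,R)$ --- these data descend to a map $\Phi_s\colon(F^e_*R)_s\to R_s^{\oplus n}$ with components in $(\D_e)_s$ for some $s\notin P$; since $\coker\Phi_s$ is finitely generated and vanishes at $P$, after shrinking $s$ the map $\Phi_s$ is surjective on all of $D(s)$, and a surjection onto a free module splits, so $R_Q^{\oplus n}$ is a free $\D_Q$-summand of $(F^e_*R)_Q$ for every $Q\in D(s)$. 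Thus $W_{e,n}$ is open; dividing by the constant $p^{e\gamma(R)}$ shows each $\s_e^{\D}$ is lower semi-continuous. (This is the spreading-out argument of \cite{EnescuYao}.)

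The second step is to pass to the limit. By the first part of Theorem~\ref{Uniform Convergence of a_e Cartier}, which is valid for an arbitrary Cartier subalgebra, there is a constant $C\in\R$ with $\s_e(R_P,\D_P)-\s_{e+e'}(R_P,\D_P)<C/p^{e}$ for all $P\in\Spec(R)$ and all $e,e'\in\N$. Fixing $e\in\Gamma_{\D}$ and letting $e'\to\infty$ through $\Gamma_{\D}$ (so that $e+e'\in\Gamma_{\D}$), we obtain $\s_e(R_P,\D_P)-\s(R_P,\D_P)\leq C/p^{e}$, i.e.\ $\s^{\D}(P)\geq\s_e^{\D}(P)-C/p^{e}$ for every $e\in\Gamma_{\D}$ and every $P$. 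Since moreover $\s^{\D}(P)=\lim_{e\in\Gamma_{\D}}\s_e^{\D}(P)$, the two facts combine to give the identity of functions
\[
\s^{\D}=\sup_{e\in\Gamma_{\D}}\Bigl(\s_e^{\D}-\tfrac{C}{p^{e}}\Bigr).
\]
Each function on the right is lower semi-continuous by the first step, and a pointwise supremum of lower semi-continuous functions is lower semi-continuous (the superlevel set $\{\s^{\D}>r\}$ is the union over $e$ of the open sets $\{\s_e^{\D}-C/p^{e}>r\}$); hence $\s^{\D}$ is lower semi-continuous.

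The one real subtlety I anticipate is this last passage to the limit: for a general Cartier subalgebra the functions $\s_e^{\D}$ need not converge uniformly to $\s^{\D}$ --- uniform convergence is known only when $\D$ satisfies condition $(*)$ or is of the form $\C^{\a^t}$ or $\C^{(R,\Delta)}$ --- so one cannot merely push a uniform limit through lower semi-continuity, and must instead work with the one-sided estimate and the supremum presentation above. The remaining ingredients are routine spreading-out and elementary point-set topology.
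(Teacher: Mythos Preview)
Your proposal is correct and follows essentially the same approach as the paper: the paper merely states that, following \cite{EnescuYao}, the functions $\s_e^{\D}$ are lower semi-continuous and that the result is then a corollary of Theorem~\ref{Uniform Convergence of a_e Cartier}, and your argument is precisely a careful unpacking of that sentence. In particular, your observation that only the one-sided inequality of Theorem~\ref{Uniform Convergence of a_e Cartier} is available for an arbitrary Cartier subalgebra, and that this suffices via the supremum representation $\s^{\D}=\sup_{e\in\Gamma_{\D}}(\s_e^{\D}-C/p^{e})$, is exactly the content hidden in the paper's phrase ``a Corollary of Theorem~\ref{Uniform Convergence of a_e Cartier}.''
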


\section{Global Hilbert-Kunz multiplicity}\label{Global HK} Recall that if $R$ is an F-finite ring then we let $\gamma(R)=\max\{\alpha(Q)\mid Q\in\Min(R)\}$. If $R$ is F-finite and $M$ is a finitely generated $R$-module, define $$\e(M)=\displaystyle\lim_{e\rightarrow \infty}\frac{\mu(F^e_*M)}{p^{e\gamma(R)}}.$$ Theorem~\ref{HK exists} shows that the limit exists, and we call it the \emph{global Hilbert-Kunz multiplicity of $M$}. We observe that $\e(M)$ agrees with the usual Hilbert-Kunz multiplicity of $M$ if $R$ is local, see Remark~\ref{Remark Same as local}. 

The following Lemma is a global version of an observation made by Dutta in \cite{Dutta1983}. Lemma~\ref{Dutta's Lemma} has shown itself to be useful in positive characteristic commutative algebra. Huneke's survey paper \cite{Huneke2013} uses a local version of Lemma~\ref{Dutta's Lemma} to prove the existence of Hilbert-Kunz multiplicity and the F-signature. Lemma~\ref{Dutta's Lemma} is used by the second author in \cite{Polstra2015} to establish the presence of strong uniform bounds found in all F-finite rings.

\begin{Lemma}[{\cite[Lemma 2.2]{Polstra2015}}]\label{Dutta's Lemma}Let $R$ be an F-finite domain. Then there exists a finite set of nonzero primes $\mathcal{S}(R)$, and a constant $C$, such that for every $e\in\N$,
\begin{enumerate}
\item there is a containment of $R$-modules $R^{\oplus p^{e\gamma(R)}}\subseteq F^e_*R$,
\item which has a prime filtration with prime factors isomorphic to $R/Q$, where $Q\in\mathcal{S}(R)$,
\item and for each $Q\in \mathcal{S}(R)$, the prime factor $R/Q$ appears no more than $Cp^{e\gamma(R)}$ times in the chosen prime filtration of $R^{\oplus p^{e\gamma(R)}}\subseteq F^e_*R$.
\end{enumerate}
\end{Lemma}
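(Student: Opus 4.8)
The plan is to induct on $d=\dim(R)$, which is finite since F-finite rings have finite Krull dimension. If $d=0$ then $R$ is a field, so $F^e_*R\cong R^{\oplus p^{e\gamma(R)}}$ for every $e$ and one takes $\mathcal{S}(R)=\emptyset$. So assume $d\ge 1$, write $\gamma=\gamma(R)$, and suppose the lemma holds for all F-finite domains of dimension $<d$. Since $\rank(F_*R)=p^{\gamma}$, fix an embedding $\iota\colon R^{\oplus p^{\gamma}}\hookrightarrow F_*R$ with (finitely generated, torsion) cokernel $T$, and fix once and for all a prime filtration of $T$ with factors $R/Q_1,\dots,R/Q_{m_0}$; each $Q_i$ is nonzero because $T$ is torsion over the domain $R$.

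First I would establish (1) by iterating $\iota$: setting $\phi_1=\iota$ and $\phi_e=F_*(\phi_{e-1})\circ\iota^{\oplus p^{(e-1)\gamma}}$ produces injections $\phi_e\colon R^{\oplus p^{e\gamma}}\hookrightarrow F^e_*R$. Writing $C_e=\coker(\phi_e)$, this factorization of $\phi_e$ yields short exact sequences $0\to T^{\oplus p^{(e-1)\gamma}}\to C_e\to F_*C_{e-1}\to 0$ (with $C_0=0$), and unwinding the recursion shows that $C_e$ carries a filtration whose successive quotients are, up to order, the modules $F^{\,j}_*(T)^{\oplus p^{(e-1-j)\gamma}}$ for $j=0,1,\dots,e-1$. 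Consequently items (2) and (3) reduce to the following: produce a fixed finite set $\mathcal{S}(R)$ of nonzero primes, a fixed constant $C'$, and a value $\gamma'\le\gamma-1$, such that for every $j$ the module $F^{\,j}_*T$ admits a prime filtration with factors among $\mathcal{S}(R)$ in which each factor occurs at most $C'p^{\,j\gamma'}$ times. Granting this, the factor $R/Q$ appears in the induced prime filtration of $C_e$ at most $\sum_{j=0}^{e-1}p^{(e-1-j)\gamma}\cdot C'p^{\,j\gamma'}\le C'p^{(e-1)\gamma}\sum_{j\ge 0}p^{-j}\le 2C'p^{e\gamma}$ times, which is the bound required in (3).

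To control $F^{\,j}_*T$, refine through the fixed prime filtration of $T$, so that $F^{\,j}_*T$ is filtered by the modules $F^{\,j}_*(R/Q_i)$. Each $R/Q_i$ is an F-finite domain of dimension $<d$, so the inductive hypothesis gives a finite set of nonzero primes $\mathcal{S}(R/Q_i)$ and a constant $C_i$ such that the cokernel of $(R/Q_i)^{\oplus p^{\,j\gamma(R/Q_i)}}\hookrightarrow F^{\,j}_*(R/Q_i)$ has a prime filtration in which every factor occurs at most $C_i p^{\,j\gamma(R/Q_i)}$ times. Reading these as filtrations of $R$-modules through $R\twoheadrightarrow R/Q_i$ (legitimate since $F^{\,j}_*(R/Q_i)$ as an $R$-module is the restriction of scalars along $R\to R/Q_i$ of the corresponding $R/Q_i$-module), each $F^{\,j}_*(R/Q_i)$ acquires a prime filtration with at most $p^{\,j\gamma(R/Q_i)}$ factors equal to $R/Q_i$ and all remaining factors among the primes of $R$ lying over $\mathcal{S}(R/Q_i)$, occurring at most $C_i p^{\,j\gamma(R/Q_i)}$ times each. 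By Lemma~\ref{Kunz's Lemma} applied to $(0)\subseteq Q_i$ we have $\gamma(R/Q_i)=\alpha(Q_i)=\gamma-\height(Q_i)\le\gamma-1$. Thus
\[
\mathcal{S}(R)=\bigcup_{i=1}^{m_0}\bigl(\{Q_i\}\cup\mathcal{S}(R/Q_i)\bigr),\qquad \gamma'=\max_i\gamma(R/Q_i)\le\gamma-1,\qquad C'=m_0\bigl(1+\max_i C_i\bigr)
\]
meet the requirements of the previous paragraph, and $C=2C'$ finishes the argument.

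The conceptual crux — essentially Dutta's trick — is that the single recursive short exact sequence ties together the cokernels $C_e$ for all $e$ simultaneously and reduces the whole problem to understanding Frobenius pushforwards of the one fixed torsion module $T$; I expect this to be the step requiring the right idea rather than effort. The arithmetic heart is the inequality $\gamma(R/Q_i)\le\gamma(R)-1$ coming from Lemma~\ref{Kunz's Lemma}: it is exactly what keeps the geometric sum $\sum_j p^{(e-1-j)\gamma}p^{\,j\gamma'}$ of size $O(p^{e\gamma})$, so the lower-dimensional contributions to the filtration of $C_e$ do not accumulate. Everything else — the inductive bookkeeping, reducing $T$ to the domains $R/Q_i$, and checking that all primes produced are nonzero — is routine.
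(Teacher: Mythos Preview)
The paper does not actually give a proof of this lemma: it is quoted verbatim from \cite[Lemma~2.2]{Polstra2015} and used as a black box, so there is no in-paper argument to compare against.

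That said, your argument is correct and is precisely the standard proof (the ``Dutta trick'' the paper alludes to). The recursion $0\to T^{\oplus p^{(e-1)\gamma}}\to C_e\to F_*C_{e-1}\to 0$ is set up correctly, the unwinding into pieces $(F^{\,j}_*T)^{\oplus p^{(e-1-j)\gamma}}$ is right, and the key numerical input $\gamma(R/Q_i)=\alpha(Q_i)=\gamma(R)-\height(Q_i)\le\gamma(R)-1$ via Lemma~\ref{Kunz's Lemma} is exactly what makes the geometric series converge. The induction on $\dim R$, together with pulling back the prime filtrations along $R\twoheadrightarrow R/Q_i$, handles the bookkeeping cleanly; the bound $C'=m_0(1+\max_i C_i)$ is a slight overcount but certainly suffices. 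One cosmetic remark: ``primes of $R$ lying over $\mathcal{S}(R/Q_i)$'' would more properly be phrased as the preimages in $R$ of the primes in $\mathcal{S}(R/Q_i)$, but the intent is clear.
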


\begin{Corollary}\label{Upper bound on generators} Let $R$ be an F-finite ring and $M$ a finitely generated $R$-module. Then $\mu(F^e_*M)= O(p^{e\gamma(M)})$.
\end{Corollary}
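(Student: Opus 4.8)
The plan is to reduce to the case $M=R/P$ with $P$ a prime, and then apply Dutta's Lemma (Lemma~\ref{Dutta's Lemma}) essentially directly. First I would fix a prime filtration $0=M_0\subseteq M_1\subseteq\cdots\subseteq M_n=M$ with $M_i/M_{i-1}\cong R/P_i$. Since restriction of scalars is exact, $F^e_*$ turns this into a filtration of $F^e_*M$ with successive quotients $F^e_*(R/P_i)$; applying the inequality $\mu(N)\leq\mu(N')+\mu(N'')$ for right exact sequences — valid over any Noetherian ring by Lemma~\ref{Local Observation 2}(\ref{Local Observation 2 part 1}) — repeatedly yields $\mu_R(F^e_*M)\leq\sum_{i=1}^n\mu_R(F^e_*(R/P_i))$.

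Two reductions then finish the setup. First, the Frobenius of $R$ descends to the Frobenius of each (again F-finite) domain $R/P_i$, so $F^e_*(R/P_i)$ is naturally an $R/P_i$-module and $\mu_R(F^e_*(R/P_i))=\mu_{R/P_i}(F^e_*(R/P_i))$. Second, I claim $\gamma(R/P_i)\leq\gamma(M)$: each $P_i$ lies in $\Supp(M)=V(\Ann_R M)$ and hence contains a minimal prime $Q_i$ of $\Ann_R M$; by Lemma~\ref{Kunz's Lemma} $\alpha(P_i)\leq\alpha(Q_i)$, while $\gamma(R/P_i)=\alpha(P_i)$ since the generic point of $R/P_i$ has residue field $\kappa(P_i)$, and $\gamma(M)=\gamma(R/\Ann_R M)$ is the maximum of $\alpha(Q)$ over the minimal primes $Q$ of $\Ann_R M$. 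So it suffices to prove: for an F-finite domain $S$, one has $\mu_S(F^e_*S)=O(p^{e\gamma(S)})$.

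For this last point I would apply Dutta's Lemma to $S$, producing a finite set $\mathcal{S}(S)$ of nonzero primes and a constant $C$ so that for every $e$ there is a containment $S^{\oplus p^{e\gamma(S)}}\subseteq F^e_*S$ whose cokernel admits a prime filtration with all factors of the form $S/Q$, $Q\in\mathcal{S}(S)$, each occurring at most $Cp^{e\gamma(S)}$ times; in particular the cokernel is filtered by at most $C|\mathcal{S}(S)|p^{e\gamma(S)}$ cyclic modules. Since $\mu_S(S^{\oplus p^{e\gamma(S)}})=p^{e\gamma(S)}$ and $\mu_S(S/Q)=1$, applying $\mu$-subadditivity to the filtration $0\subseteq S^{\oplus p^{e\gamma(S)}}\subseteq F^e_*S$ together with the prime filtration of the cokernel gives $\mu_S(F^e_*S)\leq(1+C|\mathcal{S}(S)|)p^{e\gamma(S)}$, as desired.

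The only part I expect to need care is the bookkeeping with $\gamma$, namely checking $\gamma(R/P_i)=\alpha(P_i)\leq\gamma(M)$ along a prime filtration; everything else is a formal consequence of Dutta's Lemma and the elementary behaviour of $\mu$ under filtrations. Alternatively one could bound $\mu_{R_P}((F^e_*M)_P)=\mu_{R_P}(F^e_*(M_P))$ uniformly in $P$ using Proposition~\ref{Uniform Bound of HK} and then invoke the Forster-Swan theorem (Theorem~\ref{Forster-Swan}), but the same $\gamma$-bookkeeping reappears there when rewriting the local bounds in terms of $p^{e\gamma(R_P)}$, so there is no real saving.
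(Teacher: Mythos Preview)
Your proof is correct and follows essentially the same approach as the paper: reduce via a prime filtration and subadditivity of $\mu$ to the case of an F-finite domain, then apply Dutta's Lemma to bound $\mu(F^e_*S)$ by $(1+C|\mathcal{S}(S)|)p^{e\gamma(S)}$. The only difference is that you spell out the $\gamma$-bookkeeping step $\gamma(R/P_i)=\alpha(P_i)\leq\gamma(M)$ explicitly via Kunz's Lemma, whereas the paper leaves this reduction implicit; your alternative route through Proposition~\ref{Uniform Bound of HK} and Forster--Swan is also noted in the paper as a remark.
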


\begin{proof} Counting minimal number of generators is sub-additive on short exact sequences and restricting scalars is exact. Thus by considering a prime filtration of $M$, we are reduced to showing that if $M=R$ is an F-finite domain, then there is a constant $C$ such that for every $e>0$, $\mu(F^e_* R)\leq Cp^{e\gamma(R)}$.  

Suppose that $R$ is an F-finite domain and let $\mathcal{S}(R)$ and $C$ be as in Lemma~\ref{Dutta's Lemma}. If $\mathcal{S}(R)$ is empty, i.e., for each $e$ we can take the inclusions $R^{\oplus p^{e\gamma(R)}}\subseteq F^e_* R$ to be the surjective as well, then there is nothing to show. For each $e>0$ let $T_e=F^e_*R/R^{\oplus p^{e\gamma(R)}}$. Then we can find a prime filtration of $T_e$, whose prime factors are isomorphic to $R/Q$, where $Q\in\mathcal{S}(R)$, and such a prime factor appears no more than $Cp^{e\gamma(R)}$ times. In particular, $T_e$ has a prime filtration with no more than $C|\mathcal{S}(R)|p^{e\gamma(R)}$ prime factors. By considering the short exact sequence $0\rightarrow R^{\oplus p^{e\gamma(R)}}\rightarrow F^e_*R\rightarrow T_e\rightarrow 0$ and the prime filtration of $T_e$, we have that $\mu(F^e_* R)\leq \mu(R^{\oplus p^{e\gamma(R)}})+\mu(T_e)=p^{e\gamma(R)}+\mu(T_e)\leq (1+C|\mathcal{S}(R)|)p^{e\gamma(R)}$. \end{proof}

\begin{Remark} The reader should observe that Proposition~\ref{Uniform Bound of HK} and Theorem~\ref{Forster-Swan} provide an alternative proof to Corollary~\ref{Upper bound on generators} without directly using Lemma~\ref{Dutta's Lemma}. However, the proof of Proposition~\ref{Uniform Bound of HK} given in \cite{Polstra2015} relies on Lemma~\ref{Dutta's Lemma}.
\end{Remark}
 
 \begin{Lemma}\label{HK Lemma} Let $R$ be an F-finite ring and let $M,N$ be finitely generated $R$-modules which are isomorphic at minimal primes of $R$. Then $\mu(F^e_*M)=\mu(F^e_*N)+O(p^{e(\gamma(R)-1)}).$
 \end{Lemma}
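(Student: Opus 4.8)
The plan is to reduce to the case where $M$ and $N$ are both torsion-free, then exploit the hypothesis that $M_Q \cong N_Q$ for all $Q \in \Min(R)$ to build maps that are isomorphisms generically, whose kernels and cokernels are therefore supported away from all minimal primes. First I would observe that the invariant $\mu(F^e_*(-))$ is sub-additive on short exact sequences of finitely generated $R$-modules (the local statement of Lemma~\ref{Local Observation 2}\eqref{Local Observation 2 part 1} plus Forster--Swan, Theorem~\ref{Forster-Swan}, gives this in the global case up to an additive $\dim(R)$ term, which is absorbed into the error) and that restriction of scalars along $F^e$ is exact. Hence, via Corollary~\ref{Upper bound on generators}, if $T$ is a finitely generated $R$-module with $\gamma(T) \le \gamma(R) - 1$ — equivalently, $T_Q = 0$ for every $Q \in \Min(R)$ with $\alpha(Q) = \gamma(R)$, and more crudely if $T$ is supported off all of $\Min(R)$ — then $\mu(F^e_*T) = O(p^{e(\gamma(R)-1)})$, which is exactly the error term we are allowed.

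The main step is then to produce, from the isomorphisms $M_Q \cong N_Q$ at the (finitely many) minimal primes $Q$, an actual $R$-module homomorphism $\varphi\colon M \to N$ which is an isomorphism after localizing at each $Q \in \Min(R)$. Work over the total ring of fractions $K = \prod_{Q \in \Min(R)} R_Q$ (or, after first passing to a prime filtration / the reduction $R_{\mathrm{red}}$, over the product of the residue fields at the minimal primes). An isomorphism $M \otimes_R K \xrightarrow{\sim} N \otimes_R K$ is realized by a finite matrix of fractions; clearing denominators gives $\varphi\colon M \to N$ whose kernel $K_\varphi$ and cokernel $C_\varphi$ are finitely generated $R$-modules that vanish at every minimal prime, so $\gamma(K_\varphi), \gamma(C_\varphi) \le \gamma(R) - 1$. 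Splitting $\varphi$ into the short exact sequences $0 \to K_\varphi \to M \to \im\varphi \to 0$ and $0 \to \im\varphi \to N \to C_\varphi \to 0$, applying $F^e_*$ and sub-additivity of $\mu$ twice, and invoking Corollary~\ref{Upper bound on generators} on $K_\varphi$ and $C_\varphi$, yields
\[
\mu(F^e_*M) \le \mu(F^e_*(\im\varphi)) + O(p^{e(\gamma(R)-1)}) \le \mu(F^e_*N) + O(p^{e(\gamma(R)-1)}).
\]
Reversing the roles of $M$ and $N$ (using that $\varphi$ is generically an isomorphism, so one also has a generic inverse, or simply by symmetry of the hypothesis) gives the reverse inequality, hence $|\mu(F^e_*M) - \mu(F^e_*N)| = O(p^{e(\gamma(R)-1)})$, which is the assertion.

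I expect the main obstacle to be the bookkeeping around the definition of $\gamma$ and the non-reduced, non-domain case: one must be careful that "isomorphic at minimal primes" really does force the kernel and cokernel of the cleared-denominator map to have $\gamma$ strictly smaller than $\gamma(R)$ — this is fine since $\Supp(K_\varphi)$ and $\Supp(C_\varphi)$ miss $\Min(R)$ entirely, so by Lemma~\ref{Kunz's Lemma} every minimal prime of $R/\Ann(K_\varphi)$ properly contains some $Q \in \Min(R)$ and thus has strictly smaller $\alpha$-value — and that the Forster--Swan correction term $\dim(R)$ incurred in globalizing the sub-additivity of $\mu$ is a constant, hence harmless inside $O(p^{e(\gamma(R)-1)})$. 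A secondary technical point is constructing $\varphi$ itself when $R$ is not a domain; passing to a prime filtration of $R$ (or working one minimal prime at a time and patching) handles this, and the exactness of $F^e_*$ together with sub-additivity keeps all error terms of the required order.
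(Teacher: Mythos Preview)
Your approach is essentially the paper's: build maps between $M$ and $N$ that are isomorphisms at the minimal primes, so that the cokernels are supported off $\Min(R)$ and hence have $\gamma \le \gamma(R)-1$; then Corollary~\ref{Upper bound on generators} bounds $\mu(F^e_*(\text{cokernel}))$ by $O(p^{e(\gamma(R)-1)})$.

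Two simplifications the paper makes that you should adopt. First, the sub-additivity $\mu(M) \le \mu(M') + \mu(M'')$ for a right exact sequence $M' \to M \to M'' \to 0$ holds globally with no Forster--Swan correction (lift generators of $M''$ and adjoin the images of generators of $M'$), so invoking Theorem~\ref{Forster-Swan} here is unnecessary. Second---and this is an actual slip in your displayed chain---from $0 \to \im\varphi \to N \to C_\varphi \to 0$ sub-additivity yields $\mu(F^e_*N) \le \mu(F^e_*\im\varphi) + \mu(F^e_*C_\varphi)$, \emph{not} the inequality $\mu(F^e_*\im\varphi) \le \mu(F^e_*N) + O(p^{e(\gamma(R)-1)})$ that you wrote; the number of generators of a submodule is in general not controlled by that of the ambient module. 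The cure is to drop $\im\varphi$ and the kernel altogether: the right exact sequence $M \xrightarrow{\varphi} N \to C_\varphi \to 0$ gives directly $\mu(F^e_*N) \le \mu(F^e_*M) + \mu(F^e_*C_\varphi)$, which is one of the two inequalities you need, and the other comes from the symmetric map $N \to M$. Your argument survives because the symmetry step supplies both directions anyway, but the displayed chain as written proves the inequality opposite to the one you claim. This is precisely how the paper proceeds---two right exact sequences, cokernels only, no kernel and no intermediate image.
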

 
 \begin{proof} As $M$ and $N$ are assumed to be isomorphic at minimal primes, we can find right exact sequences $M\rightarrow N \rightarrow T_1\rightarrow 0$ and $N\rightarrow M \rightarrow T_2\rightarrow 0$ such that $T_1$ and $T_2$ are not supported at any minimal prime of $R$. Hence for each $e\in\N$ we have right exact sequences $F^e_*M\rightarrow F^e_*N \rightarrow F^e_*T_1\rightarrow 0$ and $F^e_*N\rightarrow F^e_*M \rightarrow F^e_*T_2\rightarrow 0$. It follows that $$|\mu(F^e_*M)-\mu(F^e_*N)|\leq \max\{\mu(F^e_*T_1), \mu(F^e_*T_2)\}.$$ Therefore by Corollary~\ref{Upper bound on generators}, $$|\mu(F^e_*M)-\mu(F^e_*N)|= O\left(p^{e(\max\{\gamma(T_1),\gamma(T_2)\})}\right).$$ For $i=1,2$, let $I_i=\Ann_R(T_i)$. Then there is a $P_i\in\Spec(R)$ such that $\gamma(T_i)=\alpha(P_i/I_i)+\height(P_i/I_i)$. Observe that $\alpha(P_i/I_i)+\height(P_i/I_i)=\alpha(P_i)+\height(P_i/I_i)<\alpha(P_i)+\height(P_i)\leq \gamma(R)$, which completes the proof of the Lemma.  \end{proof}
 
\begin{Remark}
\label{Remark about HK Lemma}
The method of Lemma~\ref{HK Lemma} shows something a bit stronger. If we set $\Assh(R)=\{P\in\Min(R)\mid \alpha(P)=\gamma(R)\}$ and assume that $M,N$ are finitely generated $R$-modules which are isomorphic at the minimal primes in $\Assh(R)$, then $\mu(F^e_*M)=\mu(F^e_*N)+O(p^{e(\gamma(R)-1)})$. Recall that in Section~\ref{Background} we used $\Assh(R)$ to denote the set of minimal primes $Q$ of a local ring $(R,\m,k)$ such that $\dim(R/Q)=\dim(R)$. The following Lemma justifies our use of $\Assh(R)$.
\end{Remark}

\begin{Lemma}\label{Assh Lemma} Let $(R,\m,k)$ be an F-finite local ring and let $P$ be a minimal prime of $R$. Then $\alpha(P)=\gamma(R)$ if and only if $\dim(R/P)=\dim(R)$.
\end{Lemma}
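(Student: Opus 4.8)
The plan is to reduce everything to the F-finiteness of $R$ together with Lemma~\ref{Kunz's Lemma}, since that is precisely the tool that relates $\alpha$ and $\height$. Fix a minimal prime $P$ of the F-finite local ring $(R,\m,k)$. The inclusion $P\subseteq\m$ lets us apply Lemma~\ref{Kunz's Lemma} to the pair $P\subseteq\m$, giving $\alpha(P)=\alpha(\m)+\dim(R_\m/PR_\m)=\alpha(\m)+\dim(R/P)$, where I have used that $R$ is local so $R_\m=R$ and $R_\m/PR_\m=R/P$. In particular, since $\alpha(\m)$ does not depend on $P$, the quantity $\alpha(P)$ is as large as possible (i.e. equal to $\gamma(R)=\max\{\alpha(Q)\mid Q\in\Min(R)\}$) exactly when $\dim(R/P)$ is as large as possible.

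So the key step is: among all minimal primes $Q$ of $R$, $\alpha(Q)=\alpha(\m)+\dim(R/Q)$ is maximized precisely by those $Q$ with $\dim(R/Q)$ maximal, and $\max_{Q\in\Min(R)}\dim(R/Q)=\dim(R)$ because $\dim(R)=\max\{\dim(R/Q)\mid Q\in\Min(R)\}$ for any Noetherian ring. Concretely: if $\dim(R/P)=\dim(R)$ then $\alpha(P)=\alpha(\m)+\dim(R)\ge \alpha(\m)+\dim(R/Q)=\alpha(Q)$ for every $Q\in\Min(R)$, so $\alpha(P)=\gamma(R)$. Conversely, if $\alpha(P)=\gamma(R)$, pick $Q_0\in\Min(R)$ with $\dim(R/Q_0)=\dim(R)$; then $\alpha(Q_0)=\alpha(\m)+\dim(R)\ge \alpha(\m)+\dim(R/P)=\alpha(P)=\gamma(R)\ge\alpha(Q_0)$, forcing $\dim(R/P)=\dim(R)$.

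I do not anticipate a genuine obstacle here; the only point requiring a moment's care is confirming that Lemma~\ref{Kunz's Lemma} applies with the full generality of an arbitrary F-finite local ring (no equidimensionality assumed) — and indeed the first, displayed, identity in Lemma~\ref{Kunz's Lemma}, $\alpha(P)=\alpha(Q)+\dim(R_Q/PR_Q)$ for $P\subseteq Q$, is stated with no such hypothesis, so applying it to $P\subseteq\m$ is legitimate. The rest is the elementary observation that an affine linear function $Q\mapsto \alpha(\m)+\dim(R/Q)$ on the finite set $\Min(R)$ attains its maximum at the argument maximizing $\dim(R/Q)$, and that this maximum value of $\dim(R/Q)$ is $\dim(R)$.
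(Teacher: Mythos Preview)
Your proposal is correct and follows essentially the same route as the paper: both arguments reduce to the identity $\alpha(P)=\alpha(\m)+\dim(R/P)$ obtained from Lemma~\ref{Kunz's Lemma}, after which the equivalence is immediate since $\alpha(\m)$ is independent of the minimal prime $P$. The paper passes to the domain $R/P$ before invoking the lemma while you apply it directly in $R$ to the pair $P\subseteq\m$, but this is a cosmetic difference.
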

 
\begin{proof} Observe that if $P\in \Min (R)$, then $\alpha(P)=\alpha(P/P)$ in the local domain $R/P$. By Lemma~\ref{Kunz's Lemma}, $\alpha(P)=\dim(R/P)+\alpha(\m/P)=\dim(R/P)+\alpha(\m)$. This shows a minimal prime $P\in\Min(R)$ in a local ring satisfies $\dim(R)=\dim(R/P)$ if and only if $\alpha(P)=\gamma(R)$. \end{proof}
 
 The following is a Corollary to Lemma~\ref{HK Lemma}.
 
\begin{Corollary}\label{Corollary to HK Lemma} Let $R$ be an F-finite ring and $0\rightarrow M'\rightarrow M\rightarrow M''\rightarrow 0$ a short exact sequence of finitely generated $R$-modules. Then $\mu(F^e_*M)=\mu(F^e_*M'\oplus F^e_*M'')+O(p^{e(\gamma(R)-1)}).$
\end{Corollary}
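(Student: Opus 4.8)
The plan is to bootstrap from Lemma~\ref{HK Lemma}. Set $N = M' \oplus M''$; since restriction of scalars commutes with direct sums, $F^e_*M' \oplus F^e_*M'' = F^e_*N$, so the assertion is precisely $\mu(F^e_*M) = \mu(F^e_*N) + O(p^{e(\gamma(R)-1)})$, which is the conclusion of Lemma~\ref{HK Lemma} for the pair $(M,N)$. One cannot apply that lemma directly, because $M$ and $N$ need not be isomorphic at the minimal primes of $R$ when $R$ is non-reduced (e.g. $0 \to (x) \to k[x] \to k[x]/(x) \to 0$). The key point I would prove is that $F^{e_0}_*M$ and $F^{e_0}_*N$ \emph{are} isomorphic at every minimal prime of $R$ once $e_0$ is large enough; applying Lemma~\ref{HK Lemma} to this pair and then reindexing finishes the proof.

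For the local claim, fix $Q \in \Min(R)$, so $R_Q$ is Artinian local with nilpotent maximal ideal $\m_Q$ and residue field $\kappa(Q)$. Since $F^{-e}(Q) = Q$, one has $(F^e_*X)_Q \cong F^e_*(X_Q)$ for every finitely generated $R$-module $X$. Localizing the given short exact sequence at $Q$ and using additivity of length over the Artinian ring $R_Q$ gives $\lambda_{R_Q}(M_Q) = \lambda_{R_Q}(M'_Q) + \lambda_{R_Q}(M''_Q) = \lambda_{R_Q}(N_Q)$. Running a composition series of $X_Q$ through the exact functor $F^e_*$ and using $\lambda_{R_Q}(F^e_*\kappa(Q)) = [\kappa(Q)^{1/p^e}:\kappa(Q)] = p^{e\alpha(Q)}$ shows $\lambda_{R_Q}(F^e_*(X_Q)) = \lambda_{R_Q}(X_Q)\,p^{e\alpha(Q)}$; in particular $\lambda_{R_Q}(F^e_*(M_Q)) = \lambda_{R_Q}(F^e_*(N_Q))$ for all $e$. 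Finally, once $p^e$ exceeds the nilpotency index of $\m_Q$ one has $\m_Q^{[p^e]} = 0$, hence $F^e_*(X_Q)$ is annihilated by $\m_Q$ and is therefore a finite direct sum of copies of $\kappa(Q)$; two such modules of equal length are isomorphic. As $R$ has only finitely many minimal primes, a single $e_0$ works at all of them, and then $F^{e_0}_*M \cong F^{e_0}_*N$ at every minimal prime of $R$.

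To conclude, Lemma~\ref{HK Lemma} applied to the finitely generated $R$-modules $F^{e_0}_*M$ and $F^{e_0}_*N$ produces a constant $C$ with $|\mu(F^{e'}_*F^{e_0}_*M) - \mu(F^{e'}_*F^{e_0}_*N)| \le C p^{e'(\gamma(R)-1)}$ for all $e' \in \N$. Since $F^{e'}_*F^{e_0}_* = F^{e'+e_0}_*$, writing $e = e' + e_0$ gives $|\mu(F^e_*M) - \mu(F^e_*N)| \le C p^{-e_0(\gamma(R)-1)} p^{e(\gamma(R)-1)}$ for all $e \ge e_0$, and the finitely many values $e < e_0$ are harmlessly absorbed into the $O$-constant. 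This is exactly the claimed estimate.

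The main obstacle is the non-splitting of the short exact sequence at minimal primes of a non-reduced $R$, which is what blocks a one-line deduction from Lemma~\ref{HK Lemma}. The device that overcomes it is the observation that a high enough Frobenius twist makes the localizations at the (Artinian) minimal-prime local rings semisimple, so that their isomorphism type is detected by length alone — where additivity of length gives equality for free — together with the fact that the error term $O(p^{e(\gamma(R)-1)})$ in Lemma~\ref{HK Lemma} is insensitive to a fixed shift in the exponent $e$.
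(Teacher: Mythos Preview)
Your proof is correct and follows essentially the same route as the paper: both reduce to Lemma~\ref{HK Lemma} after a Frobenius shift by some $e_0$ large enough to annihilate the nilradical. The paper packages this by passing to the reduced subring $S = F^{e_0}(R) \subseteq R$ and invoking the identity $\mu_R(F^{e+e_0}_*X) = \mu_S(F^e_*X)$ to transport the reduced-case conclusion back, whereas you stay over $R$ and argue directly that $(F^{e_0}_*M)_Q$ and $(F^{e_0}_*N)_Q$ become $\kappa(Q)$-vector spaces of equal length at each minimal prime $Q$; these are two phrasings of the same mechanism. One cosmetic slip: your parenthetical example is over the reduced ring $k[x]$ and does split at its minimal prime $(0)$; you presumably meant something like $0 \to (x) \to k[x]/(x^2) \to k \to 0$.
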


\begin{proof} First suppose that $R$ is a reduced ring. Then $M$ is isomorphic to $M'\oplus M''$ at minimal primes of $R$. Hence by Lemma~\ref{HK Lemma}, $\mu(F^e_*M)=\mu(F^e_*M'\oplus F^e_*M'')+ O(p^{e(\gamma(R)-1)})$. 
 
 Now suppose that $R$ is not necessarily reduced and choose $e_0$ large enough such that $\sqrt{0}^{[p^{e_0}]}=0$. Denote by $S$ the image of $R$ under the $e_0$th iterate of Frobenius, $F^{e_0}:R\rightarrow R$. Then $S$ is a reduced ring which $R$ is module finite over. Suppose that $N$ is a finitely generated $R$-module. Observe that the elements $n_1,\dots,n_\ell$ are a generating set for $N$ as an $S$-module if and only if $F^{e_0}_*n_1,\dots,F^{e_0}_*n_\ell$ are a generating set for $F^{e_0}_*N$ as an $R$-module. It follows that for each $e\in \N$ that $\mu_R(F^{e+e_0}_*N)=\mu_S(F^e_*N)$, which reduces the proof of the Corollary to the reduced case.  \end{proof}

 One should observe that $\mu(F^e_*M'\oplus F^e_*M'')\leq \mu(F^e_*M')+ \mu(F^e_*M'')$, but equality does not necessarily hold since $R$ is not assumed to be local. If fact, one can not even hope to prove $\mu(F^e_*M)=\mu(F^e_*M')+ \mu(F^e_*M'')+O(p^{e(\gamma(R)-1)})$. If such an inequality held, then one could establish that global Hilbert-Kunz multiplicity was additive on short exact sequences. 
 
\begin{Example}
\label{Global HK not additive}
Global Hilbert-Kunz multiplicity is not additive on direct summands, hence not additive on short exact sequences. Let $R=\F_p\times \F_p$. For each $e\in\N$, $F^e_*R\cong R$, hence $\mu(F^e_*R)=1$ for each $e\in\N$ and $\e(R)=1$. Let $M_1=\F_p\times 0$ and $M_2=0\times \F_p$, the two direct summands of $\F_p$ of $R$. Then for each $e\in\N$, $F^e_*M_1\cong M_1$ and $F^e_*M_2\cong M_2$. Hence $\e(M_1)=1$ and $\e(M_2)=1$, but $\e(M_1\oplus M_2)\not= 2$. Nevertheless, Corollary~\ref{associativity} below shows that global Hilbert-Kunz multiplicity is additive if $R$ is assumed to be a domain.
\end{Example}
 
We now prove the existence of global Hilbert-Kunz multiplicity.
 
 \begin{Theorem}\label{HK exists} Let $R$ be an F-finite ring and $M$ a finitely generated $R$-module. Then the limit $\displaystyle \e(M)=\lim_{e\rightarrow \infty}\mu(F^e_* M)/p^{e\gamma(R)}$ exists. Moreover, there is a constant $C\in \R$ such that for each $e\in \N$,  $\displaystyle \e(M)\leq \frac{\mu(F^e_*M)}{p^{e\gamma(R)}}+\frac{C}{p^e}.$
\end{Theorem}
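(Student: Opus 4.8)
The plan is to verify the hypotheses of Lemma~\ref{Sequence Lemma}\,(\ref{Sequence Lemma 1}) for the sequence $\lambda_e=\mu(F^e_*M)$ with $\gamma=\gamma(R)$. Boundedness of $\{\mu(F^e_*M)/p^{e\gamma(R)}\}$ is immediate from Corollary~\ref{Upper bound on generators} together with the fact that $\gamma(M)\leq\gamma(R)$: any prime minimal over $\Ann_R(M)$ contains some $Q\in\Min(R)$, and by Lemma~\ref{Kunz's Lemma} the function $\alpha$ can only increase upon passing to a smaller prime, so it is at most $\alpha(Q)\leq\gamma(R)$. It then remains to produce a constant $C$ with $\mu(F^{e+1}_*M)/p^{(e+1)\gamma(R)}\leq \mu(F^e_*M)/p^{e\gamma(R)}+C/p^e$ for all $e$; Lemma~\ref{Sequence Lemma}\,(\ref{Sequence Lemma 1}) will then yield both the existence of $\e(M)$ and the stated estimate $\e(M)-\mu(F^e_*M)/p^{e\gamma(R)}\leq 2C/p^e$.

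I would first reduce to the case in which $R$ is reduced. Choose $e_0$ with $\sqrt{0}^{[p^{e_0}]}=0$ and let $S\subseteq R$ be the image of the $e_0$-th Frobenius iterate. As in the proof of Corollary~\ref{Corollary to HK Lemma}, $S$ is a reduced F-finite ring over which $R$ is module-finite, and $\mu_R(F^{e+e_0}_*N)=\mu_S(F^e_*N)$ for every finitely generated $R$-module $N$; moreover $\gamma(S)=\gamma(R)$, since $S\cong R_{\mathrm{red}}$ has the same minimal primes and residue fields as $R$. Hence for $e\geq e_0$ the quantities $\mu_R(F^{e}_*M)/p^{e\gamma(R)}$ and $\mu_S(F^{e-e_0}_*M)/p^{(e-e_0)\gamma(S)}$ differ by the fixed scalar $p^{-e_0\gamma(R)}$, so existence of the limit and the desired bound (with a possibly larger constant) over $R$ follow from the corresponding statements for the reduced ring $S$.

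So assume $R$ is reduced, write $\Min(R)=\{Q_1,\dots,Q_n\}$ and $\Assh(R)=\{Q_i\mid \alpha(Q_i)=\gamma(R)\}$ as in Remark~\ref{Remark about HK Lemma}. The key point is that $F_*M$ and $M^{\oplus p^{\gamma(R)}}$ become isomorphic after localizing at each $Q\in\Assh(R)$: since $R$ is reduced, $R_Q=\kappa(Q)$ is a field, so $(F_*M)_Q\cong (F_*\kappa(Q))^{\oplus r}$ and $(M^{\oplus p^{\gamma(R)}})_Q\cong\kappa(Q)^{\oplus r p^{\gamma(R)}}$, where $r=\rank_{R/Q}(M/QM)$, and these $\kappa(Q)$-vector spaces have the same dimension because $\dim_{\kappa(Q)}F_*\kappa(Q)=p^{\alpha(Q)}=p^{\gamma(R)}$. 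By the refinement of Lemma~\ref{HK Lemma} recorded in Remark~\ref{Remark about HK Lemma}, this gives $\mu(F^e_*(F_*M))=\mu(F^e_*(M^{\oplus p^{\gamma(R)}}))+O(p^{e(\gamma(R)-1)})$, i.e.\ $\mu(F^{e+1}_*M)=\mu((F^e_*M)^{\oplus p^{\gamma(R)}})+O(p^{e(\gamma(R)-1)})$. Combining this with the trivial inequality $\mu((F^e_*M)^{\oplus p^{\gamma(R)}})\leq p^{\gamma(R)}\mu(F^e_*M)$ and dividing by $p^{(e+1)\gamma(R)}$ produces exactly the one-step inequality with error $O(1/p^e)$, and Lemma~\ref{Sequence Lemma}\,(\ref{Sequence Lemma 1}) finishes the proof.

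The main obstacle is precisely the slack that, without a local hypothesis, $\mu((F^e_*M)^{\oplus p^{\gamma(R)}})$ need not equal $p^{\gamma(R)}\mu(F^e_*M)$ (only $\leq$), and that $F_*M$ and $M^{\oplus p^{\gamma(R)}}$ need not agree at all minimal primes of $R$, but only at those in $\Assh(R)$; this is what rules out a naive appeal to additivity — which in fact fails globally, cf.\ Example~\ref{Global HK not additive} — and makes Remark~\ref{Remark about HK Lemma} the right tool. A secondary technical point is the legitimacy of the reduction to reduced rings: one must verify the compatibility of $F^e_*$ with the finite inclusion $S\hookrightarrow R$ underlying the identity $\mu_R(F^{e+e_0}_*N)=\mu_S(F^e_*N)$ and the equality $\gamma(S)=\gamma(R)$. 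This reduction genuinely cannot be skipped, since for non-reduced $R$ the isomorphism $F_*M\cong M^{\oplus p^{\gamma(R)}}$ can already fail at a point of $\Assh(R)$ (for instance $R=\F_p[x]/(x^2)$, $M=R$).
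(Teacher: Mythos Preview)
Your proof is correct and follows essentially the same strategy as the paper: verify the hypotheses of Lemma~\ref{Sequence Lemma}\,(\ref{Sequence Lemma 1}) by comparing $F_*M$ with $M^{\oplus p^{\gamma(R)}}$ after reducing to the reduced case. The only difference is organizational: the paper first reduces $M$ via a prime filtration and Corollary~\ref{Corollary to HK Lemma} to a direct sum $R/Q_1\oplus\cdots\oplus R/Q_\ell$ with each $Q_i\in\Assh(R)$ and then writes down an explicit right exact sequence $M^{\oplus p^{\gamma(R)}}\to F_*M\to T\to 0$ with $T_Q=0$ for $Q\in\Assh(R)$, whereas you bypass this reduction by applying Remark~\ref{Remark about HK Lemma} directly to the pair $F_*M$, $M^{\oplus p^{\gamma(R)}}$ for arbitrary $M$---a slightly more streamlined packaging of the same argument.
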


\begin{proof} Suppose that $0=M_0\subseteq M_1\subseteq \cdots \subseteq M_\ell=M$ is a prime filtration of $M$ with $M_i/M_{i-1}\cong R/Q_i$. Repeated use of Corollary~\ref{Corollary to HK Lemma} allows us to reduce proving the existence of global Hilbert-Kunz multiplicity to the scenario that $M\cong R/Q_1\oplus R/Q_2\oplus \cdots \oplus R/Q_\ell$, a direct sum of modules of the form $R/Q_i$ where $Q_i\in\Spec(R)$. 

Suppose that $\Assh(R)$ is as in Remark~\ref{Remark about HK Lemma}. By rearranging and relabeling as necessary, we may assume that $Q_1,\dots,Q_i\in \Assh(R)$ and $Q_{i+1},\dots,Q_{\ell}\not\in\Assh(R)$. Hence $M$ and $R/Q_1\oplus R/Q_2\oplus \cdots \oplus R/Q_i$ are isomorphic when localized at each $Q\in \Assh(R)$. Thus by Remark~\ref{Remark about HK Lemma}, we are further reduced to the scenario $M\cong R/Q_1\oplus R/Q_2\oplus \cdots \oplus R/Q_\ell$ where each $Q_i\in \Assh(R)$.

A prime $Q$ is an element of $\Assh(R)$ if and only if $F_*(R/Q)$ has rank $p^{\gamma(R)}$ as an $R/Q$-module. It follows that there is a right exact sequence $$M^{\oplus p^{\gamma(R)}}\rightarrow F_*M\rightarrow T\rightarrow 0$$ such that $T_Q=0$ for each $Q\in \Assh(R)$. As restricting scalars is exact, for each $e\in\N$ there is a right exact sequence $$F^e_*M^{\oplus p^{\gamma(R)}}\rightarrow F^{e+1}_*M\rightarrow F^e_*T\rightarrow 0.$$ For each $e\in \N$, $$\mu(F^{e+1}_*M)\leq \mu(F^e_*M^{\oplus p^{\gamma(R)}})+\mu(F^e_*T)\leq p^{\gamma(R)}\mu(F^e_*M) +\mu(F^e_*T).$$ As $T$ is not supported at any prime of $\Assh(R)$, by Corollary~\ref{Upper bound on generators} there is a constant $C\in\R$ such that for each $e\in\N$, after dividing by $p^{(e+1)\gamma(R)}$,$$\frac{\mu(F^{e+1}_*M)}{p^{(e+1)\gamma(R)}}\leq \frac{\mu(F^e_*M)}{p^{e\gamma(R)}} + \frac{C}{p^e}.$$ The theorem follows from (\ref{Sequence Lemma 1}) of Lemma~\ref{Sequence Lemma}.
\end{proof}

 \begin{Corollary}[Associativity Formula]\label{Associativity Formula} Let $R$ be an F-finite ring and $M$ a finitely generated $R$-module. Then $$\mu(F^e_*M)=\mu\left(\bigoplus_{Q\in\Assh(R)}\bigoplus_{i=1}^{\lambda_{R_Q}(M_Q)}F^e_*(R/Q)\right)+O(p^{e(\gamma(R)-1)}).$$ In particular, $$\e(M)=\e\left(\bigoplus_{Q\in\Assh(R)}\bigoplus_{i=1}^{\lambda(M_Q)}F^e_*(R/Q)\right).$$
 \end{Corollary}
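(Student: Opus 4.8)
The plan is to reduce, via a prime filtration, to the case where $M$ is a direct sum of cyclic modules $R/Q_i$, exactly as was done at the start of the proof of Theorem~\ref{HK exists}. Repeated application of Corollary~\ref{Corollary to HK Lemma} shows that for any finitely generated $M$ with prime filtration having factors $R/Q_1,\dots,R/Q_\ell$, we have $\mu(F^e_*M)=\mu\left(\bigoplus_{j=1}^\ell F^e_*(R/Q_j)\right)+O(p^{e(\gamma(R)-1)})$. Then I would invoke Remark~\ref{Remark about HK Lemma}: the modules $M$ and $\bigoplus_{j\,:\,Q_j\in\Assh(R)} F^e_*(R/Q_j)$ agree after localizing at every prime in $\Assh(R)$, so discarding the factors $R/Q_j$ with $Q_j\notin\Assh(R)$ changes $\mu(F^e_*-)$ only by a term of size $O(p^{e(\gamma(R)-1)})$. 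This reduces the claim to showing that, for each fixed $Q\in\Assh(R)$, the number of times $R/Q$ appears among the $\Assh(R)$-factors of the prime filtration equals $\lambda_{R_Q}(M_Q)$ up to the negligible ambiguity already absorbed.

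The key point is the standard fact that localizing a prime filtration $0=M_0\subseteq\cdots\subseteq M_\ell=M$ at a prime $Q\in\Min(R)$ kills every factor $R/Q_j$ with $Q_j\neq Q$ (since $(R/Q_j)_Q=0$ when $Q_j\not\subseteq Q$, and $Q_j\subseteq Q$ with $Q$ minimal forces $Q_j=Q$), and turns each factor $R/Q$ into $\kappa(Q)$, the residue field of the Artinian local ring $R_Q$. Hence the number of indices $j$ with $Q_j=Q$ equals $\lambda_{R_Q}(M_Q)$; in particular this count is independent of the chosen filtration. Since $Q\in\Assh(R)\subseteq\Min(R)$ by definition (Remark~\ref{Remark about HK Lemma}), this applies to exactly the factors we retained. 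Combining, $\bigoplus_{Q_j\in\Assh(R)}F^e_*(R/Q_j)\cong\bigoplus_{Q\in\Assh(R)}\bigoplus_{i=1}^{\lambda_{R_Q}(M_Q)}F^e_*(R/Q)$, which gives the first displayed formula. The ``in particular'' statement follows by dividing by $p^{e\gamma(R)}$ and letting $e\to\infty$, using the existence of all the relevant limits from Theorem~\ref{HK exists}.

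I do not expect a serious obstacle here; the only mild subtlety is bookkeeping the error terms. One must make sure that in applying Corollary~\ref{Corollary to HK Lemma} repeatedly (a bounded number of times, depending only on $\ell$ and hence on $M$) and then Remark~\ref{Remark about HK Lemma} once, each step contributes an error of order $O(p^{e(\gamma(R)-1)})$ with a constant depending only on $R$ and $M$, so that the accumulated error is still $O(p^{e(\gamma(R)-1)})$. Since there are only finitely many steps and each constant is independent of $e$, this causes no trouble. The passage to the limit for $\e(M)$ is then immediate from Theorem~\ref{HK exists}.
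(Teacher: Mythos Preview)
Your proposal is correct and follows essentially the same route as the paper's proof: reduce via a prime filtration and Corollary~\ref{Corollary to HK Lemma} (as in the proof of Theorem~\ref{HK exists}), discard the non-$\Assh(R)$ factors using Remark~\ref{Remark about HK Lemma}, and then identify the multiplicity of each $R/Q$ in the filtration as $\lambda_{R_Q}(M_Q)$ by localizing at the minimal prime $Q$. The paper states these steps more tersely, but the argument is the same; your added remarks on accumulating finitely many $O(p^{e(\gamma(R)-1)})$ errors are accurate and harmless.
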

 
 \begin{proof} In the proof of Theorem~\ref{HK exists} it was observed that $$\mu(F^e_*M)=\mu(F^e_*(R/Q_1\oplus R/Q_2\cdots \oplus R/Q_\ell))+O(p^{e(\gamma(R)-1)}),$$ where $R/Q_i$ are the various prime factors appearing in a prime filtration of $M$ with $Q_i\in\Assh(R)$. Given a prime $Q\in\Assh(R)$, the number of times $R/Q$ appears as a prime factor in a prime filtration of $M$ is precisely $\lambda_{R_Q}(M_Q)$. \end{proof}

If $(R,\m,k)$ is a local F-finite ring and $M$ a finitely generated $R$-module, then Monsky's original proof of the existence of Hilbert-Kunz multiplicity in \cite{Monsky1983} showed $\mu(F^e_*M)=\e(M)p^{e\gamma(R)}+O(p^{e(\gamma(R)-1)})$. Equivalently, there is a constant $C\in\R$ such that $|\mu(F^e_*M)-\e(M)p^{e\gamma(R)}|\leq C p^{e(\gamma(R)-1)}$. To extend Monsky's original result to the global case, we first record the following application of Theorem~\ref{Forster-Swan}.

\begin{Lemma}
\label{Applicaiton of Forster-Swan} Let $R$ be a Noetherian ring of finite Krull dimension. Suppose that $M$ is a finitely generated $R$-module. Then for each $n\in \N$, $$|\mu(M^{\oplus n})-n\mu(M)|\leq n\dim(R).$$
\end{Lemma}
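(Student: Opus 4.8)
The plan is to sandwich $\mu(M^{\oplus n})$ between $nb$ and $n\mu(M)$, where $b := \max\{\mu_{R_P}(M_P) \mid P \in \Supp(M)\}$ is the ``local'' generator count appearing in the Forster--Swan Theorem, and then to read off the claimed estimate by comparing these bounds with the inequality $\mu(M) \le b + \dim(R)$ from Theorem~\ref{Forster-Swan}.

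First I would record the trivial upper bound $\mu(M^{\oplus n}) \le n\mu(M)$: lifting a minimal generating set $x_1,\dots,x_m$ of $M$ (with $m = \mu(M)$) to the $nm$ elements of $M^{\oplus n}$ having a single $x_i$ in one coordinate and $0$ in the others produces a generating set. This already yields $\mu(M^{\oplus n}) - n\mu(M) \le 0 \le n\dim(R)$. For the reverse direction I would use that minimal numbers of generators can only drop under localization, i.e. $\mu_R(N) \ge \mu_{R_P}(N_P)$ for every module $N$ and every prime $P$, since localizing a generating set gives a generating set. Picking $P_0 \in \Supp(M)$ with $\mu_{R_{P_0}}(M_{P_0}) = b$ and noting $(M^{\oplus n})_{P_0} \cong (M_{P_0})^{\oplus n}$, Nakayama's lemma over the local ring $R_{P_0}$ gives $\mu_{R_{P_0}}((M^{\oplus n})_{P_0}) = n\mu_{R_{P_0}}(M_{P_0}) = nb$, because the residue-field fibre of a direct sum is the direct sum of the fibres. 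Hence $\mu(M^{\oplus n}) \ge nb$. Now Theorem~\ref{Forster-Swan} applied to $M$ gives $\mu(M) \le b + \dim(R)$, so $n\mu(M) \le nb + n\dim(R) \le \mu(M^{\oplus n}) + n\dim(R)$, that is, $n\mu(M) - \mu(M^{\oplus n}) \le n\dim(R)$. Combining the two inequalities gives $|\mu(M^{\oplus n}) - n\mu(M)| \le n\dim(R)$.

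I do not expect a genuine obstacle here: once the Forster--Swan bound is invoked, the argument is essentially a two-line sandwich. The only point requiring a moment's care is the equality $\mu_{R_{P_0}}((M_{P_0})^{\oplus n}) = n\mu_{R_{P_0}}(M_{P_0})$ over a local ring, which follows immediately from Nakayama's lemma by computing the residue-field fibre of the direct sum; note also that $\Supp(M^{\oplus n}) = \Supp(M)$, so no primes outside $\Supp(M)$ contribute.
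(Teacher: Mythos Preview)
Your argument is correct and is essentially the same as the paper's: both use the trivial bound $\mu(M^{\oplus n})\le n\mu(M)$, pick a prime $P$ realizing $b=\max_P \mu_{R_P}(M_P)$, invoke Forster--Swan to get $\mu(M)\le b+\dim(R)$, and then use $\mu_{R_P}((M_P)^{\oplus n})=nb\le \mu(M^{\oplus n})$ to obtain the other inequality. The paper compresses this into two lines, but the content is identical.
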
 

\begin{proof} It is easy to see that $\mu(M^{\oplus n})\leq n \mu(M)$. By Theorem~\ref{Forster-Swan} there is a $P\in\Spec(R)$ such that $\mu(M)\leq \mu(M_P)+\dim(R)$. Hence $n\mu_R(M)\leq n\mu_{R_P}(M_P)+n\dim(R)=\mu_{R_P}(M_P^{\oplus n})+n\dim(R)\leq \mu_R(M^{\oplus n})+n\dim(R)$.
\end{proof}

\begin{Theorem}
\label{Global HK Growth Rate}
Let $R$ be an F-finite ring and $M$ a finitely generated $R$-module. Then $\mu(F^e_*M)=\e(M)p^{e\gamma(R)}+O(p^{e(\gamma(R)-1)})$.
\end{Theorem}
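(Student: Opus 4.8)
The plan is to trap $\mu(F^e_*M)$ between $\e(M)p^{e\gamma(R)}\pm O(p^{e(\gamma(R)-1)})$; the lower bound is already the ``Moreover'' clause of Theorem~\ref{HK exists}, and the upper bound will come from a short exact sequence dual to the one used in that proof, fed into Lemma~\ref{Sequence Lemma}. First I would carry out the same reduction as in the proof of Theorem~\ref{HK exists}: applying Corollary~\ref{Corollary to HK Lemma} repeatedly along a prime filtration of $M$, and then Remark~\ref{Remark about HK Lemma}, produces a module $N=\bigoplus_{j=1}^{\ell}R/Q_j$ with every $Q_j\in\Assh(R)$ and $\mu(F^e_*M)=\mu(F^e_*N)+O(p^{e(\gamma(R)-1)})$ for all $e$; in particular $\e(M)=\e(N)$, so it suffices to prove $\mu(F^e_*N)=\e(N)p^{e\gamma(R)}+O(p^{e(\gamma(R)-1)})$. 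If $\gamma(R)=0$, then $\gamma(N)\leq\gamma(R)=0$, so $\mu(F^e_*N)=O(1)$ by Corollary~\ref{Upper bound on generators}; being a bounded sequence of integers converging to $\e(N)$ by Theorem~\ref{HK exists}, it is eventually constant, and the claim holds. So assume $\gamma(R)\geq1$ from now on.

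The lower estimate is immediate: Theorem~\ref{HK exists} applied to $N$ gives a constant with $\e(N)\leq\mu(F^e_*N)/p^{e\gamma(R)}+C/p^e$, i.e.\ $\mu(F^e_*N)\geq\e(N)p^{e\gamma(R)}-Cp^{e(\gamma(R)-1)}$. For the upper estimate, note that since each $Q_j\in\Assh(R)$ the $R/Q_j$-module $F_*(R/Q_j)$ is finitely generated, torsion-free, and of rank $p^{\alpha(Q_j)}=p^{\gamma(R)}$, hence embeds in $(R/Q_j)^{\oplus p^{\gamma(R)}}$ with torsion cokernel; summing over $j$ gives a short exact sequence $0\to F_*N\to N^{\oplus p^{\gamma(R)}}\to T'\to0$ with $T'$ supported at no minimal prime of $R$. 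Applying the exact functor $F^e_*$ to this sequence yields the right exact sequence $F^{e+1}_*N\to(F^e_*N)^{\oplus p^{\gamma(R)}}\to F^e_*T'\to0$, to which (\ref{Local Observation 2 part 1}) of Lemma~\ref{Local Observation 2} (which remains valid without the local hypothesis) applies to give $\mu((F^e_*N)^{\oplus p^{\gamma(R)}})\leq\mu(F^{e+1}_*N)+\mu(F^e_*T')$. On the other hand Lemma~\ref{Applicaiton of Forster-Swan} yields $\mu((F^e_*N)^{\oplus p^{\gamma(R)}})\geq p^{\gamma(R)}\mu(F^e_*N)-p^{\gamma(R)}\dim(R)$, while $\gamma(T')\leq\gamma(R)-1$ by the computation in the proof of Lemma~\ref{HK Lemma} (every prime of $\Supp(T')$ has height at least $1$ over $\Ann(T')$), so $\mu(F^e_*T')=O(p^{e(\gamma(R)-1)})$ by Corollary~\ref{Upper bound on generators}. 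Combining these inequalities, dividing by $p^{(e+1)\gamma(R)}$, and using $\gamma(R)\geq1$ to absorb both error terms into one constant, gives $\mu(F^e_*N)/p^{e\gamma(R)}\leq\mu(F^{e+1}_*N)/p^{(e+1)\gamma(R)}+C'/p^e$ for all $e$. Since the left-hand sequence converges (to $\e(N)$), (\ref{Sequence Lemma 2}) of Lemma~\ref{Sequence Lemma} gives $\mu(F^e_*N)\leq\e(N)p^{e\gamma(R)}+2C'p^{e(\gamma(R)-1)}$, which together with the lower estimate and the reduction above proves the theorem.

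Nearly all the machinery is already in place, so the only genuinely new ingredient relative to the local case is that $\mu(N^{\oplus p^{\gamma(R)}})$ need not equal $p^{\gamma(R)}\mu(N)$ when $R$ is not local; this forces the Forster--Swan-type correction of Lemma~\ref{Applicaiton of Forster-Swan} into the picture and is the reason the argument must be routed through the one-sided (\ref{Sequence Lemma 2}) of Lemma~\ref{Sequence Lemma} rather than producing the two-sided estimate in one stroke. I expect the only real care needed to be in keeping the error-term bookkeeping honest and in not forgetting the $\gamma(R)=0$ case.
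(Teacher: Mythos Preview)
Your argument is correct and follows essentially the same route as the paper: the same reduction to $N=\bigoplus R/Q_j$ with $Q_j\in\Assh(R)$, the same ``reverse'' right exact sequence $F_*N\to N^{\oplus p^{\gamma(R)}}\to T'\to 0$, the same Forster--Swan correction via Lemma~\ref{Applicaiton of Forster-Swan}, and the same appeal to Lemma~\ref{Sequence Lemma}. The only cosmetic difference is that you invoke the ``Moreover'' clause of Theorem~\ref{HK exists} together with part~(\ref{Sequence Lemma 2}) of Lemma~\ref{Sequence Lemma} to get the two bounds separately, whereas the paper combines the one-step inequality proved here with the one-step inequality already established in the proof of Theorem~\ref{HK exists} and feeds both into part~(\ref{Sequence Lemma 3}) of Lemma~\ref{Sequence Lemma}. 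So your closing remark that the argument ``must be routed through the one-sided (\ref{Sequence Lemma 2})'' is not quite right: the two-sided estimate \emph{can} be assembled in one stroke via (\ref{Sequence Lemma 3}), and that is exactly what the paper does. Your separate treatment of the case $\gamma(R)=0$ is harmless extra care.
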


\begin{proof} As in the proof of Theorem~\ref{HK exists}, one can reduce all considerations to the scenario $M\cong R/Q_1\oplus \cdots R/Q_\ell$ where each $Q_i\in\Assh(R)$. Hence there is a right exact sequence of the form $$F_*M\rightarrow M^{\oplus p^{\gamma(R)}}\rightarrow T\rightarrow 0$$ such that $T_Q=0$ for each $Q\in\Assh(R)$. For each $e\in\N$, $$\mu(F^e_*M^{\oplus p^{\gamma(R)}})\leq \mu(F^{e+1}_*M)+\mu(F^e_*T).$$ By Lemma~\ref{Applicaiton of Forster-Swan}, $$p^{\gamma(R)}\mu(F^e_*M)\leq \mu(F^{e+1}_*M)+\mu(F^e_*T)+ p^{\gamma(R)} \dim(R).$$ By Corollary~\ref{Upper bound on generators} there is a constant $C\in\R$ such that for each $e\in\N$, $\mu(F^e_*T)\leq Cp^{e(\gamma(R)-1)}$. Dividing by $p^{(e+1)\gamma(R)}$ and applying a crude estimate shows $$\frac{\mu(F^e_*M)}{p^{e\gamma(R)}}\leq \frac{\mu(F^{e+1}_*M)}{p^{(e+1)\gamma(R)}}+\frac{C+\dim(R)}{p^{e}}.$$ The theorem follows from Theorem~\ref{HK exists} and (\ref{Sequence Lemma 3}) of Lemma~\ref{Sequence Lemma}.
\end{proof}

\begin{Corollary}
\label{HK Lower bound}
 Let $R$ be an F-finite ring and $M$ a finitely generated $R$-module. Then the limit $\displaystyle \widetilde{\e}(M)=\lim_{e\rightarrow \infty} \mu(F^e_*M)/p^{\gamma(M)}$ exists and $\widetilde{\e}(M)\geq 1$. Moreover, $\mu(F^e_*M)=\widetilde{\e}(M)p^{e\gamma(M)}+O(p^{e(\gamma(M)-1)}).$
\end{Corollary}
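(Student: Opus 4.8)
The plan is to reduce immediately to the case $\Ann_R(M)=0$, in which $\widetilde{\e}(M)$ coincides with the global Hilbert-Kunz multiplicity $\e(M)$ of Section~\ref{Global HK}, and then to invoke Theorems~\ref{HK exists} and~\ref{Global HK Growth Rate}. First I would note that $\Ann_R(M)\subseteq\Ann_R(F^e_*M)$ for every $e\in\N$: if $r\in\Ann_R(M)$ then $r^{p^e}\in\Ann_R(M)$, so $r\cdot F^e_*m=F^e_*(r^{p^e}m)=0$ for every $m\in M$. Thus each $F^e_*M$ is canonically an $R'$-module for $R'=R/\Ann_R(M)$, and since $R\to R'$ is onto we get $\mu_R(F^e_*M)=\mu_{R'}(F^e_*M)$; moreover, under the identification along $R\to R'$, the $R'$-module $F^e_*M$ is exactly the $e$-th Frobenius pushforward of $M$ regarded as an $R'$-module. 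As $R'$ is again F-finite and $\gamma(M)=\gamma(R')$ by the definition of $\gamma$ for a module, I would replace $R$ by $R'$ and assume from now on that $\Ann_R(M)=0$, equivalently $\Supp_R(M)=\Spec(R)$, and hence $\gamma(M)=\gamma(R)$. With these normalizations $\widetilde{\e}(M)$ is literally the global Hilbert-Kunz multiplicity $\e(M)$: its existence is then Theorem~\ref{HK exists}, and the estimate $\mu(F^e_*M)=\widetilde{\e}(M)p^{e\gamma(M)}+O(p^{e(\gamma(M)-1)})$ is Theorem~\ref{Global HK Growth Rate}.

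It then remains to show $\widetilde{\e}(M)\geq 1$. For this I would choose $Q_0\in\Min(R)$ with $\alpha(Q_0)=\gamma(R)=\gamma(M)$, which is possible by the very definition of $\gamma(R)$. Since $\Supp_R(M)=\Spec(R)$, the module $M_{Q_0}$ is nonzero, so by Nakayama's lemma it surjects onto its residue field $\kappa(Q_0)$; because restriction of scalars is exact, $F^e_*M_{Q_0}$ then surjects onto $F^e_*\kappa(Q_0)$. The ideal $Q_0R_{Q_0}$ annihilates $F^e_*\kappa(Q_0)$, so $F^e_*\kappa(Q_0)$ is a $\kappa(Q_0)$-vector space of dimension $[F^e_*\kappa(Q_0):\kappa(Q_0)]=p^{e\alpha(Q_0)}=p^{e\gamma(M)}$. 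Using that the minimal number of generators does not increase under localization, and that $(F^e_*M)_{Q_0}\cong F^e_*(M_{Q_0})$, I would conclude
\[
\mu_R(F^e_*M)\geq \mu_{R_{Q_0}}\big((F^e_*M)_{Q_0}\big)=\mu_{R_{Q_0}}\big(F^e_*M_{Q_0}\big)\geq \mu_{R_{Q_0}}\big(F^e_*\kappa(Q_0)\big)=p^{e\gamma(M)}
\]
for every $e\in\N$. Dividing by $p^{e\gamma(M)}$ and passing to the limit gives $\widetilde{\e}(M)\geq 1$.

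I do not anticipate a genuine obstacle: the only step that needs some care is the reduction in the first paragraph, namely verifying that the $R'$-module $F^e_*M$ is indeed the Frobenius pushforward of the $R'$-module $M$ and that $\gamma(M)$ as defined equals $\gamma(R')$. Once this is done, the corollary follows at once from Theorems~\ref{HK exists} and~\ref{Global HK Growth Rate} together with the elementary bound above. The real content of the statement is just that $\widetilde{\e}$ is the version of Hilbert-Kunz multiplicity normalized by $p^{e\gamma(M)}$ rather than by $p^{e\gamma(R)}$, which is precisely what makes the lower bound $\geq 1$ possible.
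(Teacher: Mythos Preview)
Your proposal is correct and follows essentially the same route as the paper: reduce to $R/\Ann_R(M)$ so that $\gamma(M)=\gamma(R)$, invoke Theorems~\ref{HK exists} and~\ref{Global HK Growth Rate} for existence and the growth estimate, and then localize at a minimal prime $Q_0\in\Assh(R)$ to bound $\mu_R(F^e_*M)$ from below. The only cosmetic difference is that the paper computes $\mu_{R_{Q_0}}(F^e_*M_{Q_0})=\lambda_{R_{Q_0}}(M_{Q_0}/Q_0^{[p^e]}M_{Q_0})\,p^{e\alpha(Q_0)}$ directly, yielding the slightly sharper bound $\widetilde{\e}(M)\geq\lambda_{R_{Q_0}}(M_{Q_0})$, whereas you pass through a surjection $M_{Q_0}\twoheadrightarrow\kappa(Q_0)$ to obtain $\geq 1$; both suffice for the stated claim.
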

\begin{proof} For existence, apply Theorems~\ref{HK exists} and~\ref{Global HK Growth Rate} to the module $M$, but viewed as an $R/\Ann_R(M)$-module. To see that $\widetilde{\e}(M)\geq 1$, one may assume that $\gamma(M)=\gamma(R)$ and show $\e(M)\geq 1$. The assumption $\gamma(M)=\gamma(R)$ is equivalent to $M_Q\not=0$ for some $Q\in\Assh(R)$. Then $\mu_R(F^e_*M)\geq \mu_{R_Q}(F^e_*M_Q)=\lambda_{R_Q}(M_Q/Q^{[p^e]}M_Q)p^{e\alpha(Q)}\geq \lambda_{R_Q}(M_Q)p^{e\gamma(R)}$. Divide by $p^{e\gamma(R)}$ and let $e\rightarrow \infty$ to see $\e(M)\geq \lambda(M_Q)\geq 1$.
\end{proof}

\begin{Remark}\label{Remark Same as local} Let $(R,\m,k)$ be a local F-finite ring and let $M$ be a finitely generated $R$-module.  Then for each $e\in\N$, $\mu(F^e_*M)/p^{e\gamma(R)}=\lambda(M)/\m^{[p^e]M}/\p^{e\dim(R)}, $ see Lemma~\ref{Assh Lemma}.  In particular, the global Hilbert-Kunz multiplicity of $M$ is the same as the usual Hilbert-Kunz multiplicity of $M$, i.e., $$\lim_{e\rightarrow \infty} \frac{\mu(F^e_*M)}{p^{e\gamma(R)}}=\lim_{e\rightarrow \infty}\frac{\lambda(M/\m^{[p^e]}M)}{p^{e\dim(R)}}.$$
 \end{Remark}

Suppose that $R$ is an F-finite domain. Then for each $P\in\Spec(R)$, $\rank_R(F^e_*R)=\rank_{R_P}(F^e_*R_P)$. It follows that $\mu_R(F^e_*R)/\rank_R(F^e_*R)\geq \mu_{R_P}(F^e_*R_P)/\rank_{R_P}(F^e_*R_P)$ and therefore $\e(R)\geq \e(R_P)$. Theorem~\ref{What is HK} below shows that under such hypotheses, $\e(R)=\max\{\e(R_P)\mid P\in \Spec(R)\}$. It will not always be the case that global Hilbert-Kunz multiplicity is an upper bound of $\{\e(R_P)\mid P\in\Spec(R)\}$, see Example~\ref{Global HK and Z_R example} below. To better describe the scenario in all F-finite rings, let 
\[
Z_R=\{P\in\Spec(R)\mid \height(P)+\alpha(P)=\gamma(R)\}.
\] 
Observe that if $R$ is an F-finite domain, then $Z_R=\Spec(R)$. More generally, $P\in Z_R$ if and only if there is some $Q\in\Min(R)$ such that $\gamma(R_Q)=\gamma(R)$ and $Q\subseteq P$ if and only if there is some $Q\in\Assh(R)$ such that $Q\subseteq P$. Therefore $Z_R=\cup_{Q\in\Assh(R)}V(Q)$ is a closed set.

The following theorem is a generalization of Smirnov's theorem that Hilbert-Kunz multiplicity is upper semi-continuous on the spectrum of rings which are locally equidimensional, see Theorem~\ref{Upper Semi-Continuity of HK}.

\begin{Theorem}\label{Generalizing upper semi-continuity of HK} Let $R$ be an F-finite ring and $M$ a finitely generated $R$-modules. For each $e\in\N$ the function $\mu_e:Z_R\rightarrow \R$ sending $P\mapsto \mu_{R_P}(F^e_*M_P)/p^{e\gamma(R_P)}$ is upper semi-continuous. Moreover, the functions $\mu_e$ converge uniformly to their limit, namely $\e:Z_R\rightarrow \R$ sending $P\mapsto \e(M_P)$. In particular, the function $\e:Z_R\rightarrow \R$ is upper semi-continuous and $\sup\{\e(M_P)\mid P\in Z_R\}=\max\{\e(M_P)\mid P\in Z_R\}$.
\end{Theorem}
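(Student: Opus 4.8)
The plan is to recognize the functions $\mu_e$ as a repackaging of the Hilbert--Kunz length functions from Theorems~\ref{Shepherd-Barron's result for modules} and~\ref{Uniform Convergence of HK}, transported onto $Z_R$, and then to exploit the fact that $Z_R$ is canonically the spectrum of a \emph{locally equidimensional} F-finite ring. First I would record the pointwise identity: for every $P\in\Spec(R)$, restriction of scalars identifies $F^e_*M_P/PF^e_*M_P$ with $F^e_*\bigl(M_P/P^{[p^e]}M_P\bigr)$ as a $\kappa(P)$-vector space, and $\dim_{\kappa(P)}F^e_*V=p^{e\alpha(P)}\dim_{\kappa(P)}V$ for any $\kappa(P)$-vector space $V$; hence $\mu_{R_P}(F^e_*M_P)=p^{e\alpha(P)}\lambda_{R_P}(M_P/P^{[p^e]}M_P)$, exactly as in the proof of Lemma~\ref{HK Inequality Localization}. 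Applying Lemma~\ref{Kunz's Lemma} to the chains $QR_P\subseteq PR_P$ with $Q\in\Min(R_P)$ gives $\gamma(R_P)=\height(P)+\alpha(P)$, so altogether $\mu_e(P)=\lambda_{R_P}(M_P/P^{[p^e]}M_P)/p^{e\height(P)}$ for all $P$.

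With this identity in hand, uniform convergence is immediate: the right-hand side is exactly the function appearing in Theorem~\ref{Uniform Convergence of HK}, which, since $R$ is F-finite, converges uniformly on all of $\Spec(R)$ --- in particular on $Z_R$ --- to $P\mapsto\e(M_P)$. Thus $\mu_e\to\e$ uniformly on $Z_R$.

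The real work is the upper semi-continuity of each $\mu_e$ on $Z_R$. I would write $Z_R=V(J)$ with $J=\bigcap_{Q\in\Assh(R)}Q$, so $Z_R=\Spec(\bar R)$ for the F-finite ring $\bar R:=R/J$. The key observation is that $\bar R$ is locally equidimensional: for $P\in Z_R$ and $Q\in\Assh(R)$ with $Q\subseteq P$, Lemma~\ref{Kunz's Lemma} gives $\dim(R_P/QR_P)=\alpha(Q)-\alpha(P)=\gamma(R)-\alpha(P)=\height(P)$, the last equality precisely because $P\in Z_R$; since the minimal primes of $\bar R_{\bar P}$ are exactly the images of the minimal primes $Q\in\Assh(R)$ contained in $P$, all of them have coheight $\height(P)=\dim(\bar R_{\bar P})$, and moreover $\height_{\bar R}(\bar P)=\height_R(P)$. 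Next, because $J\subseteq P$ forces $J\cdot F^e_*M_P\subseteq PR_P\cdot F^e_*M_P$, the finitely generated $\bar R$-module $\bar N:=F^e_*M/JF^e_*M$ satisfies $\bar N_{\bar P}/\bar P\bar N_{\bar P}=F^e_*M_P/PF^e_*M_P$, so $\mu_{\bar R_{\bar P}}(\bar N_{\bar P})=\mu_{R_P}(F^e_*M_P)$ for all $P\in Z_R$. The function $\bar P\mapsto\mu_{\bar R_{\bar P}}(\bar N_{\bar P})$ is upper semi-continuous on $\Spec(\bar R)$ --- this is the classical upper semi-continuity of minimal numbers of generators via Fitting ideals (equivalently, the $e=0$ instance of Theorem~\ref{Shepherd-Barron's result for modules}). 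Finally, since $\bar R$ is F-finite and locally equidimensional, Lemma~\ref{Kunz's Lemma} shows $\gamma(R_P)=\height_{\bar R}(\bar P)+\alpha(\bar P)$ is constant on each connected component of $Z_R$; as $Z_R$ has only finitely many components and each is clopen, on each component $\mu_e$ is an upper semi-continuous function divided by a positive constant, whence $\mu_e$ is upper semi-continuous on $Z_R$.

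It then remains to assemble the conclusions. A uniform limit of upper semi-continuous functions is upper semi-continuous, so $\e$ is upper semi-continuous on $Z_R$. Since $Z_R$ is closed in the quasi-compact space $\Spec(R)$ it is quasi-compact, and a real-valued upper semi-continuous function on a quasi-compact space is bounded above (the sets $\{P\in Z_R\mid\e(M_P)<n\}$ form an open cover; boundedness also follows from Proposition~\ref{Uniform Bound of HK}) and attains its supremum, because the nested nonempty closed sets $\{P\in Z_R\mid\e(M_P)\geq\sup_{Q\in Z_R}\e(M_Q)-1/n\}$ have nonempty intersection by the finite intersection property. Hence $\sup\{\e(M_P)\mid P\in Z_R\}=\max\{\e(M_P)\mid P\in Z_R\}$. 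I expect the only genuinely delicate step to be the reduction to $\bar R=R/J$ in the third paragraph --- verifying that $\bar R$ is locally equidimensional (which is exactly what the defining condition $\height(P)+\alpha(P)=\gamma(R)$ of $Z_R$ delivers through Kunz's Lemma), that $\bar N$ truly recomputes $\mu_{R_P}(F^e_*M_P)$, and that the normalizing exponent $\gamma(R_P)$ is locally constant on $Z_R$ so that dividing by it preserves semi-continuity; everything else is formal.
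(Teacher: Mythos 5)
Your proof is correct, and the uniform-convergence half is exactly the paper's argument: the identity $\mu_{R_P}(F^e_*M_P)/p^{e\gamma(R_P)}=\lambda_{R_P}(M_P/P^{[p^e]}M_P)/p^{e\height(P)}$ followed by a citation of Theorem~\ref{Uniform Convergence of HK}. The one place you diverge is the semicontinuity step, and there you have made the argument harder than it needs to be. The step you flag as ``genuinely delicate'' --- passing to $\bar R=R/J$, verifying local equidimensionality, and arguing that the normalizing exponent is constant on connected components --- is entirely avoidable: the function $P\mapsto\mu_{R_P}(F^e_*M_P)$ is already upper semi-continuous on all of $\Spec(R)$ (Nakayama/Fitting ideals, no quotient ring needed), its restriction to the subspace $Z_R$ is therefore upper semi-continuous, and by Lemma~\ref{Kunz's Lemma} one has $\gamma(R_P)=\alpha(P)+\height(P)$ for every prime, so the \emph{definition} of $Z_R$ gives $\gamma(R_P)=\gamma(R)$, a single global constant on $Z_R$ rather than merely a locally constant function. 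That is the paper's entire proof of semicontinuity. Your route does establish the correct auxiliary facts (that $Z_R=V(J)$ is the spectrum of a locally equidimensional F-finite ring is true and is implicit elsewhere in the paper), so nothing is wrong; it just imports machinery the statement does not require. The closing compactness argument for $\sup=\max$ is fine and fills in a detail the paper leaves implicit.
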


\begin{proof} For each $e\in\N$, the function $\tilde{\mu}_e:\Spec(R)\rightarrow \N$ sending $P\mapsto \mu_{R_P}(F^e_*M_P)$ is easily seen to be upper semi-continuous on all of $\Spec(R)$. For each $P\in Z_R$, $\gamma(R_P)=\gamma(R)$. Hence $\mu_e$ is upper semi-continuous on $Z_R$. As $\mu_{R_P}(F^e_*M_P)/p^{e\gamma(R_P)}=\lambda_{R_P}(M_P/P^{[p^e]}M_P)/p^{e\heigh(P)}$, the uniform convergence of $\mu_e$ follows from Theorem~\ref{Uniform Convergence of HK}.
\end{proof}

Our next theorem relates the global Hilbert-Kunz multiplicity of an F-finite ring with the Hilbert-Kunz multiplicities of various localizations of $R$. We first need a lemma.

\begin{Lemma}
\label{Lemma for What is HK}
Let $R$ be an F-finite ring. Suppose $M$ is a finitely generated $R$-module such that $\gamma(M)=\gamma(R)$. There exists $e_0\in\N$ such that for all $e\geq e_0$, $\emptyset \neq \{P \in \Spec(R) \mid \mu(F^e_*M) \leq \mu(F^e_*M_P) + \dim(R)\} \subseteq Z_R$.  In particular, $\{P \mid \mu(F^e_*M_P) = \max\{\mu(F^e_*M_Q)\}\} \subseteq Z_R$ for all $e \ge e_0$.
\end{Lemma}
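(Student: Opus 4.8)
The plan is to pit a global lower bound $\mu(F^e_*M)\gtrsim p^{e\gamma(R)}$ against a uniform local upper bound for $\mu(F^e_*M_P)$ that, for $P\notin Z_R$, is smaller by a factor of roughly $p^e$. First, for the nonemptiness: since $\mu_{R_Q}(F^e_*M_Q)\le\mu_R(F^e_*M)$ for every $Q$, and $M\neq 0$ because $\gamma(M)=\gamma(R)$, the bounded $\N$-valued function $Q\mapsto\mu_{R_Q}(F^e_*M_Q)$ attains a maximum at some $P\in\Spec(R)$. Applying the Forster--Swan Theorem~\ref{Forster-Swan} to the finitely generated module $F^e_*M$, whose support equals $\Supp(M)$, gives $\mu_R(F^e_*M)\le\mu_{R_P}(F^e_*M_P)+\dim(R)$, so this $P$ belongs to the set in question. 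The same remark shows that any prime at which $\mu_{R_Q}(F^e_*M_Q)$ is maximal lies in the set, so the ``in particular'' clause will follow once we prove the set is contained in $Z_R$ for $e\gg 0$.

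For the local upper bound I would use the identity $\mu_{R_P}(F^e_*M_P)=\lambda_{R_P}(M_P/P^{[p^e]}M_P)\,p^{e\alpha(P)}$ (as in the proof of Lemma~\ref{HK Inequality Localization} and Remark~\ref{Remark Same as local}) together with the strong uniform bound of Proposition~\ref{Uniform Bound of HK}, which yields a constant $C$, independent of $P$ and $e$, with $\mu_{R_P}(F^e_*M_P)\le C\,p^{e(\dim(M_P)+\alpha(P))}\le C\,p^{e(\height(P)+\alpha(P))}$. Next I would record that $\height(P)+\alpha(P)\le\gamma(R)$ for every $P$, with equality exactly when $P\in Z_R$: by Kunz's Lemma~\ref{Kunz's Lemma}, $\alpha(Q)=\alpha(P)+\dim(R_P/QR_P)$ whenever $Q\in\Min(R)$ and $Q\subseteq P$, and since $\height(P)=\dim(R_P)=\max\{\dim(R_P/QR_P)\mid Q\in\Min(R),\ Q\subseteq P\}$ we get $\max\{\alpha(Q)\mid Q\in\Min(R),\ Q\subseteq P\}=\alpha(P)+\height(P)$, which is $\le\gamma(R)=\max\{\alpha(Q)\mid Q\in\Min(R)\}$, with equality iff some minimal prime in $\Assh(R)$ lies under $P$, i.e. iff $P\in Z_R$. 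Hence for $P\notin Z_R$ we obtain $\mu_{R_P}(F^e_*M_P)\le C\,p^{e(\gamma(R)-1)}$ for all $e$.

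To finish: if $\gamma(R)=0$ then $Z_R=\Spec(R)$ and there is nothing to prove, so assume $\gamma(R)\ge 1$. Since $\gamma(M)=\gamma(R)$, Corollary~\ref{HK Lower bound} gives $\e(M)\ge 1$, and Theorem~\ref{Global HK Growth Rate} provides a constant $D$ with $\mu(F^e_*M)\ge p^{e\gamma(R)}-D\,p^{e(\gamma(R)-1)}$ for every $e$. Combining this with the bound of the previous paragraph, for every $P\notin Z_R$ we have
$$\mu(F^e_*M)-\mu(F^e_*M_P)-\dim(R)\ \ge\ p^{e\gamma(R)}-(C+D)\,p^{e(\gamma(R)-1)}-\dim(R),$$
and the right-hand side is positive as soon as $p^e>C+D+\dim(R)+1$. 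Choosing $e_0$ with $p^{e_0}>C+D+\dim(R)+1$, for all $e\ge e_0$ the strict inequality $\mu(F^e_*M)>\mu(F^e_*M_P)+\dim(R)$ holds for every $P\notin Z_R$, i.e. $\{P\mid\mu(F^e_*M)\le\mu(F^e_*M_P)+\dim(R)\}\subseteq Z_R$; together with the first paragraph this proves the lemma.

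The only place genuine input is needed is the requirement that the constant $C$ in the local bound be \emph{uniform in $P$}: otherwise one could not pin down a single $e_0$ that works for all $P\notin Z_R$ simultaneously. This uniformity is exactly the strong uniform boundedness result Proposition~\ref{Uniform Bound of HK}, and everything else is bookkeeping with Kunz's Lemma~\ref{Kunz's Lemma} and the growth asymptotics already established.
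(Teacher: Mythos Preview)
Your proof is correct and uses the same ingredients as the paper's: Forster--Swan for nonemptiness, Proposition~\ref{Uniform Bound of HK} for the uniform local bound $\mu_{R_P}(F^e_*M_P)\le Cp^{e(\height(P)+\alpha(P))}$, and the fact that $\e(M)\ge 1$ from Corollary~\ref{HK Lower bound}. The only difference is cosmetic: you invoke Theorem~\ref{Global HK Growth Rate} to get an explicit lower bound $\mu(F^e_*M)\ge p^{e\gamma(R)}-Dp^{e(\gamma(R)-1)}$ and solve for $e_0$ directly, whereas the paper argues by contradiction (if the set meets $\Spec(R)\smallsetminus Z_R$ for infinitely many $e$, then along that subsequence $\mu(F^e_*M)/p^{e\gamma(R)}\to 0$, contradicting $\e(M)\ge 1$), which avoids the need for Theorem~\ref{Global HK Growth Rate}.
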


\begin{proof}  Suppose that $M$ is a finitely generated $R$-module such that $\gamma(M)=\gamma(R)$. Then $\e(M)\geq 1$ by Corollary~\ref{HK Lower bound}. By Theorem~\ref{Forster-Swan}, for each $e\in\N$ there exists $P_e\in\Spec(R)$ such that $\mu_R(F^e_*M)\leq \mu_{R_{P_e}}(F^e_*M_{P_e})+\dim(R)$. By Proposition~\ref{Uniform Bound of HK} there is a constant $C\in\R$ such that for each $P\in\Spec(R)$, $\lambda_{R_P}(M_P/P^{[p^e]}M_P)\leq Cp^{e\height(P)}$. Equivalently, there is a constant $C\in\R$ such that for each $P\in\Spec(R)$, $\mu_{R_P}(F^e_*M_P)\leq Cp^{e(\height(P)+\alpha(P))}$. Suppose there existed an infinite subset $\Gamma\subseteq\N$ such that for each $e\in\Gamma$ the prime $P_e$ could be chosen such that $P_e\not\in Z_R$. Then for each $e\in\Gamma$, $$\mu_R(F^e_*M)\leq \mu_{R_{P_e}}(F^e_*M_{P_e})+\dim(R)\leq Cp^{e(\height(P)+\alpha(P))}+\dim(R)\leq Cp^{e(\gamma(R)-1)}+\dim(R).$$ Dividing by $p^{e\gamma(R)}$ and letting $e\in\Gamma\rightarrow \infty$ shows $\e(M)=0$.
\end{proof}

\begin{Theorem}\label{What is HK} Let $R$ be an F-finite ring and let $M$ be a finitely generated $R$-module. Then the following limits exist.
\begin{enumerate}
\item $\displaystyle \e(M)= \lim_{e\rightarrow \infty}\mu_R(F^e_*M)/p^{e\gamma(R)}$,
\item $\displaystyle \lim_{e\rightarrow \infty}\lambda_{R_{Q_e}}(M_{Q_e}/Q_e^{[p^e]}M_{Q_e})/p^{e\height(Q_e)}$, where $Q_e\in \Spec(R)$ is chosen such that $$\mu_{R_{Q_e}}(F^e_*M_{Q_e})=\max\{\mu_{R_P}(F^e_*M_P)\mid P\in\Spec(R)\},$$
\item $\displaystyle \lim_{e\rightarrow \infty}\e(M_{Q_e})$, where $Q_e\in \Spec(R)$ is chosen such that $$\mu(F^e_*M_{Q_e})=\max\{\mu_{R_P}(F^e_*M_P)\mid P\in\Spec(R)\}.$$
\end{enumerate}
The above limits agree, with the common value being $\displaystyle \max\{\e(M_P)\mid P\in Z_R\}$. 
 \end{Theorem}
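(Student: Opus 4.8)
The plan is to pivot every statement through the maximizing primes $Q_e$: compare the global quantity $\mu_R(F^e_*M)$ with the local one $\mu_{R_{Q_e}}(F^e_*M_{Q_e})$ using the Forster--Swan bound, and then identify the common limit by invoking the uniform convergence of the normalized length functions on the closed set $Z_R$. I would first dispose of the degenerate possibility $\gamma(M)<\gamma(R)$: in that case $\e(M)=0$ by Corollary~\ref{Upper bound on generators}, and $\e(M_P)=0$ for every $P\in Z_R$ by the local associativity formula (Theorem~\ref{Associativity Formula Local}) together with Kunz's Lemma~\ref{Kunz's Lemma}, so all four quantities vanish; I then assume $\gamma(M)=\gamma(R)$, which in particular forces $M_Q\neq 0$ for some $Q\in\Assh(R)\subseteq Z_R$.

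\textbf{Reduction to $Z_R$ and bookkeeping.} The crucial input is Lemma~\ref{Lemma for What is HK}: there is an $e_0$ such that for all $e\ge e_0$ any $Q_e$ realizing $\mu_{R_{Q_e}}(F^e_*M_{Q_e})=\max\{\mu_{R_P}(F^e_*M_P)\mid P\in\Spec(R)\}$ lies in $Z_R$. So for $e\ge e_0$ I may take $Q_e\in Z_R$, where $\gamma(R_{Q_e})=\gamma(R)$ and $\alpha(Q_e)+\height(Q_e)=\gamma(R)$ by Kunz's Lemma. Writing $\mu_e$ and $\e$ for the functions on $Z_R$ from Theorem~\ref{Generalizing upper semi-continuity of HK}, and using the identity $\mu_{R_{Q_e}}(F^e_*M_{Q_e})/p^{e\gamma(R_{Q_e})}=\lambda_{R_{Q_e}}(M_{Q_e}/Q_e^{[p^e]}M_{Q_e})/p^{e\height(Q_e)}$, I would observe that for $e\ge e_0$ the approximant of (2) is precisely $\mu_e(Q_e)$, the term of (3) is $\e(Q_e)$, and the approximant of (1) differs from $\mu_e(Q_e)$ by at most $\dim(R)/p^{e\gamma(R)}$, since $0\le\mu_R(F^e_*M)-\mu_{R_{Q_e}}(F^e_*M_{Q_e})\le\dim(R)$ by Theorem~\ref{Forster-Swan} and the fact that localization never increases the number of generators. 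Finally, since $Q_e$ maximizes over $\Spec(R)\supseteq Z_R$ and lies in $Z_R$, one gets $\mu_e(Q_e)=\max_{P\in Z_R}\mu_e(P)$.

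\textbf{Passing to the limit.} Then I would apply Theorem~\ref{Generalizing upper semi-continuity of HK}: $\mu_e\to\e$ uniformly on $Z_R$, say $\sup_{Z_R}|\mu_e-\e|=\varepsilon_e\to 0$, and $\sup_{Z_R}\e=\max_{Z_R}\e$ is attained. Uniform convergence yields $|\max_{Z_R}\mu_e-\max_{Z_R}\e|\le\varepsilon_e$, so $\mu_e(Q_e)=\max_{Z_R}\mu_e\to\max_{P\in Z_R}\e(M_P)$; by the previous paragraph this is simultaneously the limit in (1) and in (2). For (3), the inclusion $Q_e\in Z_R$ gives $\e(Q_e)\le\max_{Z_R}\e$, while $\e(Q_e)\ge\mu_e(Q_e)-\varepsilon_e=\max_{Z_R}\mu_e-\varepsilon_e\ge\max_{Z_R}\e-2\varepsilon_e$, hence $\e(M_{Q_e})\to\max_{P\in Z_R}\e(M_P)$ as well. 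This gives existence of all three limits with the asserted common value.

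\textbf{Where the difficulty lies.} The substantive step is Lemma~\ref{Lemma for What is HK}, i.e.\ that the maximizing primes eventually land inside $Z_R$: this is what makes a globally defined invariant detectable on a closed subset, and it rests on the uniform bound of Proposition~\ref{Uniform Bound of HK} to rule out an infinite family of maximizers off $Z_R$. Once that is in hand the rest is soft: uniform convergence plus the elementary Forster--Swan comparison. A minor but easy-to-mishandle point is reconciling the three normalizations ($p^{e\gamma(R)}$ globally, $p^{e\gamma(R_P)}$ pointwise, $p^{e\height(P)}$ in the length form), all of which coincide on $Z_R$ by Kunz's Lemma.
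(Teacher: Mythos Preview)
Your approach in the non-degenerate case $\gamma(M)=\gamma(R)$ is essentially the paper's: both pivot through the maximizing primes $Q_e$, invoke Lemma~\ref{Lemma for What is HK} to force $Q_e\in Z_R$ for large $e$, use the Forster--Swan bound to compare $\mu_R(F^e_*M)$ with $\mu_{R_{Q_e}}(F^e_*M_{Q_e})$, and finish with the uniform convergence of Theorem~\ref{Uniform Convergence of HK} (which you cite through Theorem~\ref{Generalizing upper semi-continuity of HK}). The paper writes an explicit $\epsilon/3$ chain $\e(M_{Q_e})\le\e(M)<\cdots<\e(M_{Q_e})+\epsilon$, whereas you phrase the same thing as ``$\max_{Z_R}\mu_e\to\max_{Z_R}\e$ under uniform convergence''; these are the same argument.

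There is, however, a genuine slip in your treatment of the degenerate case $\gamma(M)<\gamma(R)$. You correctly argue that $\e(M)=0$ and that $\e(M_P)=0$ for every $P\in Z_R$, but the conclusion ``so all four quantities vanish'' does not follow for (2) and (3): nothing prevents the maximizer $Q_e$ from lying outside $Z_R$, where the normalizations no longer match. Concretely, take $R=k\times k(t)$ with $k$ perfect and $M=R/(0\times k(t))$. Then $\gamma(R)=1$, $\gamma(M)=0$, $Z_R=\{k\times 0\}$, and $M_{k\times 0}=0$ while $M_{0\times k(t)}\cong k$; the unique maximizer is $Q_e=0\times k(t)\notin Z_R$ for every $e$, and the sequences in (2) and (3) are identically $1$, whereas (1) and $\max_{P\in Z_R}\e(M_P)$ are $0$. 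The paper's own proof has the same lacuna---it invokes Lemma~\ref{Lemma for What is HK} without verifying its hypothesis $\gamma(M)=\gamma(R)$---so the honest reading is that the theorem is meant under that standing assumption (equivalently, that $M$ is supported at some prime of $\Assh(R)$); you should state this reduction rather than claim the degenerate limits vanish.
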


\begin{proof}  It is clear that $\mu_R(F^e_*M)\geq \mu_{R_P}(F^e_*M_P)$ for all $P\in \Spec(R)$. So for every $P\in Z_R$, $$\frac{\mu(F^e_*M)}{p^{e\gamma(R)}}\geq \frac{\mu_{R_P}(F^e_*M_P)}{p^{e\gamma(R)}}=\frac{\lambda(M_P/P^{[p^e]}M_P)}{p^{e(\gamma(R)-\alpha(P))}}= \frac{\lambda(M_P/P^{[p^e]}M_P)} {p^{e\height(P)}}.$$  Letting $e\rightarrow \infty$ we see that $\e(M)\geq \e(M_P)$ for every $P\in Z_R$. This shows that $\e(M)\geq \max\{\e(M_P)\mid P\in Z_R\}$.

By Theorem~\ref{Uniform Convergence of HK} and Theorem~\ref{HK exists}, if $\epsilon>0$ then for $e\gg 0$, 
 \begin{enumerate}
\item  $\displaystyle \left|\frac{\lambda_{R_P}(M_P/P^{[p^e]}M_P)}{p^{e\height(P)}}-\e(M_P)\right|<\epsilon/3$ for all $P\in\Spec(R)$,
\item $\displaystyle \left|\frac{\mu_R(F^e_* M)}{p^{e\gamma(R)}}-\e(M)\right|<\epsilon/3$, and
\item $\displaystyle\frac{\dim(R)}{p^{e\gamma(R)}}<\epsilon/3$.
\end{enumerate}

For each $e>0$ let $Q_e\in\Spec(R)$ be such that $\max\{\mu_{R_P}(F^e_*M_P)\mid P\in\Spec(R)\}=\mu_{R_{Q_e}}(F^e_*M_{Q_e})$.   By Theorem~\ref{Forster-Swan} $\mu_R(F^e_* M)\leq \mu_{R_{Q_e}}(F^e_*M_{Q_e})+\dim(R)$ and by Lemma~\ref{Lemma for What is HK}, the prime $Q_e\in Z_R$ for all $e\gg 0$.

Therefore if  $e$ is suitably large,
\begin{align*}
\e(M_{Q_e})\leq\e(M)&<\frac{\mu_R(F^e_*M)}{p^{e\gamma(R)}}+\epsilon/3\leq \frac{\mu_{R_{Q_e}}(F^e_*M_{Q_e})}{p^{e\gamma(R)}}+\frac{\dim(R)}{p^{e\gamma(R)}}+\epsilon/3\\
&<\frac{\mu_{R_{Q_e}}(F^e_*M_{Q_e})}{p^{e\gamma(R)}}+\epsilon/3+\epsilon/3=\frac{\lambda_{R_{Q_e}}(M_{Q_e}/Q_e^{[p^e]}M_{Q_e})}{p^{e\height(P)}}+\epsilon/3+\epsilon/3\\
&<\e(M_{Q_e})+\epsilon/3+\epsilon/3+\epsilon/3=\e(M_{Q_e})+\epsilon.
\end{align*}
Thus $\e(M)\leq \max\{\e(M_P)\mid P\in Z_R\}$ and we must have equality. Furthermore, the above chain of inequalities shows that the limits in (2) and (3) exist and both converge to $\e(M)$. \end{proof}
 
\begin{Corollary}\label{associativity} Let $R$ be an F-finite ring such that $Z_R=V(Q)$ for some prime ideal $Q$,  e.g., $R$ is a domain. Then global Hilbert-Kunz multiplicity is additive on short exact sequences. Furthermore, if $M$ is any finitely generated $R$-module, then $\e(M)=\lambda_{R_Q}(M_Q)\e(R/Q)$.
\end{Corollary}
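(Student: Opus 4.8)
The plan is to reduce the statement to a length computation over the Artinian local ring $R_Q$. The first step is to observe that the hypothesis forces $\Assh(R)=\{Q\}$. Indeed, $Z_R=\bigcup_{Q'\in\Assh(R)}V(Q')$, and the primes in $\Assh(R)$ are minimal primes of $R$, hence pairwise incomparable; they are therefore precisely the generic points of the irreducible components of $Z_R$. Since $V(Q)$ is irreducible with unique generic point $Q$, we conclude $\Assh(R)=\{Q\}$. In particular $Q\in\Min(R)$, so $R_Q$ is Artinian local and every finitely generated $R_Q$-module has finite length.

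Next I would apply the associativity formula (Corollary~\ref{Associativity Formula}). With $\Assh(R)=\{Q\}$, and writing $n=\lambda_{R_Q}(M_Q)$, it reads $\mu(F^e_*M)=\mu\bigl((F^e_*(R/Q))^{\oplus n}\bigr)+O(p^{e(\gamma(R)-1)})$, where we used $(F^e_*(R/Q))^{\oplus n}\cong F^e_*((R/Q)^{\oplus n})$. Since $n$ is a fixed integer, Lemma~\ref{Applicaiton of Forster-Swan} applied to the finitely generated $R$-module $F^e_*(R/Q)$ gives $|\mu((F^e_*(R/Q))^{\oplus n})-n\,\mu(F^e_*(R/Q))|\le n\dim(R)$, a constant independent of $e$. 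Absorbing this constant into the error term, dividing by $p^{e\gamma(R)}$, and letting $e\to\infty$ yields $\e(M)=\lambda_{R_Q}(M_Q)\,\e(R/Q)$, which is the ``furthermore'' assertion. (When $\gamma(R)=0$ one has $\dim(R)=0$, so the constant $n\dim(R)$ vanishes and the same argument applies.)

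Additivity then follows formally: given a short exact sequence $0\to M'\to M\to M''\to 0$ of finitely generated $R$-modules, localization at $Q$ is exact, and since $R_Q$ is Artinian the modules $M'_Q$, $M_Q$, $M''_Q$ all have finite length over $R_Q$, so $\lambda_{R_Q}(M_Q)=\lambda_{R_Q}(M'_Q)+\lambda_{R_Q}(M''_Q)$; multiplying by $\e(R/Q)$ and invoking the formula just proved gives $\e(M)=\e(M')+\e(M'')$. There is no substantial obstacle here: the corollary is essentially a repackaging of the associativity formula together with the near-additivity of the minimal number of generators on finite direct sums, both of which are already available. The only point deserving care is the identification $Z_R=V(Q)\iff\Assh(R)=\{Q\}$ and the ensuing fact that $R_Q$ is Artinian, which is what makes lengths behave additively.
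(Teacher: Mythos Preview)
Your proof is correct and takes a somewhat different route from the paper's. Both arguments start from the Associativity Formula (Corollary~\ref{Associativity Formula}), reducing to the computation of $\e\bigl((R/Q)^{\oplus n}\bigr)$ with $n=\lambda_{R_Q}(M_Q)$. At that point the paper invokes Theorem~\ref{What is HK} twice: first to write $\e\bigl((R/Q)^{\oplus n}\bigr)=\max_{P\in Z_R}\e\bigl((R/Q)^{\oplus n}_P\bigr)$, then to use additivity of \emph{local} Hilbert--Kunz multiplicity to pull out the factor $n$, and finally to rewrite $\max_{P\in Z_R}\e(R_P/QR_P)$ as $\e(R/Q)$. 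You instead bypass Theorem~\ref{What is HK} entirely and work at the level of minimal generators, using Lemma~\ref{Applicaiton of Forster-Swan} to compare $\mu\bigl((F^e_*(R/Q))^{\oplus n}\bigr)$ with $n\,\mu(F^e_*(R/Q))$ up to a bounded error. This is a more self-contained argument: it relies only on the Forster--Swan bound rather than on the identification of global Hilbert--Kunz multiplicity with a maximum over $Z_R$. Your handling of the boundary case $\gamma(R)=0$ is also correct, since $\gamma(R)=0$ forces every minimal prime to be maximal and hence $\dim(R)=0$. The deduction of additivity from the formula is identical in both approaches.
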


\begin{proof} Let  $\ell=\lambda_{R_Q}(M_Q)$. It is enough to show that  $\e(M)=\ell\e(R/Q)$. Corollary~\ref{Associativity Formula} shows that $\e(M)=\e(\bigoplus^\ell R/Q)$.  We can now use Theorem~\ref{What is HK} to conclude that $$\e(M)=\max\{\e(M_P)\mid P\in Z_R\}=\ell \max\{\e(R_P/QR_P)\mid P\in Z_R\}=\ell \e(R/Q).$$ The proof is complete. \end{proof}

\begin{Example}\label{Global HK and Z_R example} If $Z_R\neq \Spec(R)$, then global Hilbert-Kunz multiplicity is not an upper bound of $\{\e(R_P)\mid P\in\Spec(R)\}$. Let $K$ be an F-finite field and $(T,\m)$ a local F-finite domain such that $\gamma(K)>\gamma(T)$ and $\e(T)>1$ and let $R=K\times T$. Then $Z_R$ consists of the single prime $0\times T$, hence by Theorem~\ref{What is HK} $\e(R)=1< \e(R_{K\times \m})=\e(T)$.
\end{Example}

We now provide global analogues of Theorem~\ref{Watanabe Yoshida Hilbert-Kunz local}, Theorem~\ref{HK small means good local}, and Theorem~\ref{HK really small means regular local}. We remark that F-finite domains satisfy the hypotheses Lemma~\ref{Excellent Rings Lemma}, Theorem~\ref{Watanabe-Yoshida Hilbert-Kunz}, Theorem~\ref{HK small means good}, and Theorem~\ref{HK really small means regular}.

\begin{Lemma}\label{Excellent Rings Lemma} Let $R$ be an F-finite ring such that $Z_R=\Spec(R)$ and such that every associated prime of $R$ is minimal. Then for each $P\in\Spec(R)$, $R_P$ is formally unmixed. 
\end{Lemma}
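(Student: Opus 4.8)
The plan is to fix $P\in\Spec(R)$, show that $R_P$ is unmixed --- equidimensional with no embedded primes --- and that $R_P$ is excellent, and then deduce that $R_P$ is formally unmixed by a standard completion argument. First I would note that $R_P$ is again F-finite: localizing the module-finite inclusion $R\subseteq F^e_*R$ at the multiplicative set $S=R\setminus P$ gives a module-finite inclusion over $S^{-1}R=R_P$, using $(S^{p^e})^{-1}R=S^{-1}R$ to identify $(F^e_*R)_P$ with $F^e_*(R_P)$. Hence $R_P$ is excellent by \cite[Theorem 2.5]{Kunz1976}.

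Next I would establish that $R_P$ is equidimensional. Since $Z_R=\Spec(R)$, every minimal prime $Q$ of $R$ lies in $Z_R$, so $\height(Q)+\alpha(Q)=\gamma(R)$ and therefore $\alpha(Q)=\gamma(R)$; likewise $P\in Z_R$ gives $\alpha(P)=\gamma(R)-\height(P)$. For any minimal prime $Q\subseteq P$, Lemma~\ref{Kunz's Lemma} (applied to $Q\subseteq P$) gives $\alpha(Q)=\alpha(P)+\dim(R_P/QR_P)$, whence $\dim(R_P/QR_P)=\height(P)=\dim(R_P)$. As the minimal primes of $R_P$ are exactly the $QR_P$ with $Q\in\Min(R)$ and $Q\subseteq P$, this shows $R_P$ is equidimensional. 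For the absence of embedded primes, I would use $\Ass_{R_P}(R_P)=\{QR_P\mid Q\in\Ass_R(R),\ Q\subseteq P\}$; by hypothesis each such $Q$ is minimal in $R$, so $QR_P$ is minimal in $R_P$, and thus $\Ass(R_P)=\Min(R_P)$. Combined with equidimensionality, $R_P$ is unmixed.

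Finally I would pass to the completion. Put $d=\dim(R_P)=\dim(\widehat{R_P})$. By flatness $\Ass(\widehat{R_P})=\bigcup_{\mathfrak{q}\in\Ass(R_P)}\Ass_{\widehat{R_P}}(\widehat{R_P}/\mathfrak{q}\widehat{R_P})$, so it suffices to take $\mathfrak{q}\in\Min(R_P)$. Then $R_P/\mathfrak{q}$ is an excellent local domain with $\dim(R_P/\mathfrak{q})=d$, so its completion $\widehat{R_P}/\mathfrak{q}\widehat{R_P}=\widehat{R_P/\mathfrak{q}}$ is reduced --- excellence gives geometrically reduced formal fibers --- and is equidimensional of dimension $d$, since an excellent (hence universally catenary) equidimensional local ring has equidimensional completion by Ratliff's theorem. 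A reduced ring has no embedded primes, so every associated prime of $\widehat{R_P}/\mathfrak{q}\widehat{R_P}$ is minimal and hence of dimension $d$; therefore every $Q'\in\Ass(\widehat{R_P})$ satisfies $\dim(\widehat{R_P}/Q')=d=\dim(R_P)$, which is exactly the statement that $R_P$ is formally unmixed.

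The routine parts are the two local computations above, and the only real content there is the observation that $Z_R=\Spec(R)$ forces $R$ to be locally equidimensional. The part requiring care is the last paragraph --- transporting ``unmixed'' across completion --- which depends on the two standard consequences of excellence (reducedness and equidimensionality are stable under completion); alternatively, one could simply invoke the known fact that an excellent unmixed local ring is formally unmixed and skip the explicit argument.
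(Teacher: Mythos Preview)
Your proof is correct and follows essentially the same route as the paper's. The paper is simply more terse: it observes $Z_{R_P}=\Spec(R_P)$ and invokes Lemma~\ref{Assh Lemma} to conclude local equidimensionality (which is exactly your direct computation with Lemma~\ref{Kunz's Lemma}), then cites Ratliff for equidimensionality of the completion and the regularity of the formal fibers of an excellent ring to conclude that $\widehat{R_P}$ has no embedded primes---precisely the two ingredients you spell out in your final paragraph.
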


\begin{proof} The assumption that $Z_R=\Spec(R)$ implies $Z_{R_P}=\Spec(R_P)$ for each $P\in\Spec(R)$. Hence $R$ is locally equidimensional by Lemma~\ref{Assh Lemma}. By Ratliff, the completion of an excellent equidimensional local ring is equidimensional, see \cite[Corollary B.4.3 and Theorem B.5.1]{HunekeSwanson}. As $R$ is excellent, $R_P\rightarrow \widehat{R_P}$ has regular fibers by \cite[Section~33, Lemma~4]{Matsumura1980}. In particular, all associated primes of $\widehat{R_P}$ are minimal, completing the proof of the Lemma.
\end{proof}

\begin{Theorem}\label{Watanabe-Yoshida Hilbert-Kunz} Let $R$ be an F-finite ring such that $Z_R=\Spec(R)$ and such that every associated prime of $R$ is minimal. Then $R$ is regular if and only if $\e(R)=1$.
\end{Theorem}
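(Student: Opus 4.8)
The plan is to reduce the global statement to the local one (Theorem~\ref{Watanabe Yoshida Hilbert-Kunz local}) via the local-to-global description of $\e(R)$ provided by Theorem~\ref{What is HK}. Since $Z_R = \Spec(R)$, that theorem gives $\e(R) = \max\{\e(R_P) \mid P \in \Spec(R)\}$. If $R$ is regular, then each $R_P$ is a regular local ring, so $\e(R_P) = 1$ by Theorem~\ref{Kunz's Theorem}, and hence $\e(R) = 1$. This is the easy direction.

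For the converse, suppose $\e(R) = 1$. By Theorem~\ref{What is HK} we then have $\e(R_P) = 1$ for every $P \in \Spec(R)$ (each term in the max is at least $1$ by Corollary~\ref{HK Lower bound}, so they must all equal $1$). The goal is to invoke Theorem~\ref{Watanabe Yoshida Hilbert-Kunz local} at each $P$, which requires $R_P$ to be formally unmixed. This is exactly what Lemma~\ref{Excellent Rings Lemma} supplies: under the hypotheses that $Z_R = \Spec(R)$ and that every associated prime of $R$ is minimal, each localization $R_P$ is formally unmixed. Therefore Theorem~\ref{Watanabe Yoshida Hilbert-Kunz local} applies and gives that $R_P$ is regular for every $P \in \Spec(R)$.

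Finally, a ring all of whose localizations at primes are regular local rings is regular by definition (equivalently, has finite global dimension locally everywhere), so $R$ is regular. This completes the argument.

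I expect no serious obstacle here: the substantive work has already been done in assembling Theorem~\ref{What is HK} (the local-global comparison for $\e$) and Lemma~\ref{Excellent Rings Lemma} (formal unmixedness of localizations). The only point requiring care is confirming that $\e(R) = 1$ forces $\e(R_P) = 1$ for \emph{all} $P$ and not merely for the prime achieving the maximum — but this is immediate from the inequality $\e(R_P) \geq 1$ of Corollary~\ref{HK Lower bound} combined with $\e(R) = \max_P \e(R_P) = 1$.
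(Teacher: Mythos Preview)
Your proposal is correct and follows essentially the same route as the paper's own proof: use Theorem~\ref{What is HK} to write $\e(R)=\max\{\e(R_P)\mid P\in\Spec(R)\}$, invoke Lemma~\ref{Excellent Rings Lemma} for formal unmixedness of each $R_P$, and then apply Theorem~\ref{Watanabe Yoshida Hilbert-Kunz local} locally. Your write-up is in fact slightly more explicit than the paper's, spelling out why $\e(R)=1$ forces $\e(R_P)=1$ for every $P$ and citing Kunz's Theorem for the easy direction.
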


\begin{proof} By Theorem~\ref{What is HK}, $\e(R)=\max\{\e(R_P)\mid P\in \Spec(R)\}$. Hence $\e(R)=1$ if and only $\e(R_P)=1$ for each $P\in\Spec(R)$ if and only if $R_P$ is a regular local ring for each $P\in\Spec(R)$ by Lemma~\ref{Excellent Rings Lemma} and Theorem~\ref{Watanabe Yoshida Hilbert-Kunz local} if and only if $R$ is regular. \end{proof}

\begin{Theorem}\label{HK small means good} Suppose that $R$ is an F-finite ring such that $Z_R=\Spec(R)$ and such that every associated prime of $R$ is minimal. Let $e=\max\{e(R_P)\mid P\in \Spec(R)\}$, where $e(R_P)$ is the Hilbert-Samuel multiplicity of the local ring $R_P$. If $\e(R)\leq 1+\max\left\{1/\dim(R)!, 1/e\right\}$, then $R$ is strongly F-regular and Gorenstein.
\end{Theorem}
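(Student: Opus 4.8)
The plan is to follow exactly the template of the proof of Theorem~\ref{Watanabe-Yoshida Hilbert-Kunz}: reduce the global statement to the local Theorem~\ref{HK small means good local} by means of the structural results already in place. First, applying Theorem~\ref{What is HK} with $M = R$ gives $\e(R) = \max\{\e(R_P) \mid P \in \Spec(R)\}$, so in particular $\e(R_P) \leq \e(R)$ for every $P \in \Spec(R)$. Second, the hypotheses that $Z_R = \Spec(R)$ and that every associated prime of $R$ is minimal are precisely what is needed to invoke Lemma~\ref{Excellent Rings Lemma}, which tells us that $R_P$ is formally unmixed for every $P \in \Spec(R)$.

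Now fix $P \in \Spec(R)$. Since $\dim(R_P) = \height(P) \leq \dim(R)$ we have $1/\dim(R_P)! \geq 1/\dim(R)!$, and since $e(R_P) \leq e = \max\{e(R_Q) \mid Q \in \Spec(R)\}$ we have $1/e(R_P) \geq 1/e$. Hence
\[
\e(R_P) \;\leq\; \e(R) \;\leq\; 1 + \max\left\{\tfrac{1}{\dim(R)!},\, \tfrac{1}{e}\right\} \;\leq\; 1 + \max\left\{\tfrac{1}{\dim(R_P)!},\, \tfrac{1}{e(R_P)}\right\}.
\]
Thus $R_P$ is a formally unmixed local ring whose Hilbert-Kunz multiplicity meets the bound required by Theorem~\ref{HK small means good local}, so $R_P$ is strongly F-regular and Gorenstein. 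As $P \in \Spec(R)$ was arbitrary (in particular this holds at every maximal ideal), $R$ is strongly F-regular and Gorenstein, by the standard definition of these properties for non-local rings as being checked at every localization.

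I do not expect a genuine obstacle here: the mathematical content is entirely supplied by Theorem~\ref{What is HK}, Lemma~\ref{Excellent Rings Lemma}, and the local Theorem~\ref{HK small means good local}, and the only thing to verify by hand is the elementary monotonicity $\max\{1/\dim(R)!,\,1/e\} \leq \max\{1/\dim(R_P)!,\,1/e(R_P)\}$ used in the display above, together with the observation that $e(R_P)\geq 1$ so that $1/e$ is well defined. The single point deserving an explicit word is the local-to-global passage in the conclusion, but this is immediate once one recalls that ``strongly F-regular'' and ``Gorenstein'' are local conditions on $\Spec(R)$.
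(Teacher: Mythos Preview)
Your proof is correct and follows essentially the same route as the paper's: invoke Theorem~\ref{What is HK} to get $\e(R_P)\le\e(R)$ for all $P$, invoke Lemma~\ref{Excellent Rings Lemma} for formal unmixedness, and then apply the local Theorem~\ref{HK small means good local} at each prime. The only difference is that you spell out explicitly the monotonicity $\max\{1/\dim(R)!,\,1/e\}\le\max\{1/\dim(R_P)!,\,1/e(R_P)\}$, which the paper leaves implicit.
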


\begin{proof} By Theorem~\ref{What is HK}, $\e(R)=\max\{\e(R_P)\mid P\in\Spec(R)\}$.   One can now apply Lemma~\ref{Excellent Rings Lemma} and Theorem~\ref{HK small means good local} to know that for each $P\in\Spec(R)$ that $R_P$ is strongly F-regular and Gorenstein, hence $R$ is strongly F-regular and Gorenstein.  \end{proof}

\begin{Theorem}\label{HK really small means regular} Fix $d\in\N$. There is a number $\delta>0$ such that if $R$ is of dimension $d$, of any prime characteristic, F-finite, such that $Z_R=\Spec(R)$, such that all associated primes of $R$ are minimal, and $\e(R)\leq 1+\delta$, then $R$ is regular.
\end{Theorem}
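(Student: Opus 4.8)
The plan is to reduce to the local case via Theorem~\ref{What is HK}, exactly as in the proofs of Theorem~\ref{Watanabe-Yoshida Hilbert-Kunz} and Theorem~\ref{HK small means good}; the only additional ingredient is a uniformity argument over the possible heights of primes of $R$.

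First I would set up the constant. For each integer $0 \le j \le d$, let $\delta_j > 0$ be a constant witnessing Theorem~\ref{HK really small means regular local} in dimension $j$: if $(A,\n)$ is a formally unmixed local ring of dimension $j$, of any prime characteristic, with $\e(A) \le 1 + \delta_j$, then $A$ is regular. Since there are only finitely many such $j$, I may then set $\delta = \min\{\delta_0, \delta_1, \dots, \delta_d\} > 0$, a positive real number depending only on $d$.

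Now let $R$ be F-finite of dimension $d$ with $Z_R = \Spec(R)$, all associated primes of $R$ minimal, and $\e(R) \le 1 + \delta$. Fix $P \in \Spec(R)$. Since $Z_R = \Spec(R)$, Theorem~\ref{What is HK} gives $\e(R) = \max\{\e(R_Q) \mid Q \in \Spec(R)\}$, so in particular $\e(R_P) \le \e(R) \le 1 + \delta \le 1 + \delta_{\dim(R_P)}$, using that $\dim(R_P) = \height(P) \le \dim(R) = d$. By Lemma~\ref{Excellent Rings Lemma}, the local ring $R_P$ is formally unmixed. Hence Theorem~\ref{HK really small means regular local}, applied in dimension $\dim(R_P)$, shows that $R_P$ is regular. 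As this holds for every $P \in \Spec(R)$, the ring $R$ is regular.

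There is essentially no serious obstacle here, since the real content has already been isolated in Lemma~\ref{Excellent Rings Lemma} and Theorem~\ref{What is HK}. The only point requiring care is that the threshold in the local statement a priori depends on the dimension, so the localizations $R_P$ --- whose dimensions $\height(P)$ range over $0, \dots, d$ --- could each demand a different bound; this is handled at no cost by passing to the minimum over the finitely many relevant dimensions, exactly as in the proof of Theorem~\ref{F-signature big means regular}.
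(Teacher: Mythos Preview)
Your proof is correct and follows the same approach as the paper's own proof: reduce to the local case via Theorem~\ref{What is HK} and Lemma~\ref{Excellent Rings Lemma}, and handle the varying dimensions of the localizations by taking $\delta$ to be the minimum of the local thresholds $\delta_j$ over $0 \le j \le d$. The paper states this more tersely (deferring to the proof of Theorem~\ref{HK small means good} and the choice $\delta = \min\{\delta(i) \mid i \le d\}$), but the content is identical.
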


\begin{proof} The proof is parallel to that of Theorem~\ref{HK small means good}. One only needs to reference Theorem~\ref{HK really small means regular local} instead of Theorem~\ref{HK small means good local}. Let $\delta(i)$ be a number as in Theorem~\ref{HK really small means regular local}, that works for rings of dimension $i$, and let $\delta = \min\{\delta(i) \mid i \leq d\}$.\end{proof}

\section{Global F-signature}\label{Global F-signature section}
 Let $R$ be an F-finite ring and $M$ a finitely generated $R$-module. Consider the following sequences of numbers.

\begin{enumerate}

\item Let $a_e(M)=\frk(F^e_*M)$ be the largest rank of a free summand appearing in the various direct sum decompositions of $F^e_*M$.  Then $a_e(M) \leq \rank(F^e_*M)\leq \rank(M)p^{e\gamma(R)}=O(p^{e\gamma(R)})$. 

\item Let $\tilde{a}_e(M)$ be the largest $\minrank$ of a projective summand appearing in various direct sum decompositions of $F^e_*M$. Then $a_e(M)\leq \tilde{a}_e(M)\leq \rank(F^e_*M)\leq \rank(M)p^{e\gamma(R)}=O(p^{e\gamma(R)})$. 
\end{enumerate}

\begin{Remark}\label{Remark due to Serre's Theorem} If $R$ is local, then $a_e(M)=\tilde{a}_e(M)$ and $\s(M)=\lim_{e\rightarrow \infty}a_e(M)/p^{e\gamma(R)}$. If $R$ is non-local, then Serre's Splitting Theorem, Theorem~\ref{Serre's Splitting Theorem}, shows that for each $e>0$ we have $a_e(M)\leq \tilde{a}_e(M)\leq a_e(M)+d$. Hence the limit $\lim_{e\rightarrow \infty}a_e(M)/p^{e\gamma(R)}$ exists if and only if the limit $\lim_{e\rightarrow \infty }\tilde{a}_e(M)/p^{e\gamma(R)}$ exists. Moreover, if the limits do exist then their limits are equal.
\end{Remark}

If $R$ is F-finite, not necessarily local, and $M$ is a finitely generated $R$-module, we define
\[
\ds \s(M) = \lim_{e \to \infty} \frac{a_e(M)}{p^{e \gamma(R)}}.
\]
We show in Theorem~\ref{Global F-signature exists} that the limit $\s(M)$ exists, and we call it the {\it global F-signature of $M$}. Note that, when $R$ is local, this is the usual definition of F-signature of a module $M$. 

\begin{Remark}\label{Reduce to the Reduced Case} Suppose that $R$ is an F-finite ring. The existence of a finitely generated $R$-module $M$ and $e>0$ such that $a_e(M)>0$ implies $a_e(R)>0$. In particular, $R$ is reduced. Recall the notation $Z_R = \{P \in \Spec(R) \mid \alpha(P) + \height(P) = \gamma(R)\}$ from Section~\ref{Global HK}. Observe that, if $Z_R \ne \Spec(R)$, then for any finitely generated $R$-module $M$ and any $P \notin Z_R$ we have $a_e(M)\leq a_e(M_P)\leq O(p^{e(\gamma(R)-1)})$. It follows that, in this case, $\s(M)=0$ for any finitely generated $R$-module $M$. These observations allow us to reduce our considerations to the scenario that $R$ is reduced and $Z_R = \Spec(R)$. In particular, $\Assh(R)=\Min(R)$. 

Suppose that $R$ is an F-finite reduced ring, and let $M$ be a finitely generated $R$-module. For each $P\in\Spec(R)$, we see $a_e(M)\leq a_e(M_P)$. Now further assume that $Z_R = \Spec(R)$. Then if there exists $P\in\Spec(R)$ such that $\s(M_P)=0$ then $\s(M)$ exists and is equal to $0$. Otherwise, $R$ will be strongly F-regular and hence a direct product of integral domains by Theorem~\ref{s>0 local}. We therefore reduce many considerations in this section to the case that $R$ is a domain.
\end{Remark}

\begin{Lemma}\label{Free Rank Lemma} Let $R$ be a Noetherian ring of finite Krull dimension. Let $M'\rightarrow M\rightarrow M''\rightarrow 0$ be a right exact sequence of finitely generated $R$-modules. Then $\frk_R(M)\leq \frk_R(M')+\mu_R(M'')+\dim(R)$.
\end{Lemma}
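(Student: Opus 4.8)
The plan is to localize, apply the local version of the inequality ((\ref{Local Observation 2 part 2}) of Lemma~\ref{Local Observation 2}), and then return to the global setting using Stafford's theorem (Theorem~\ref{Generalized Serre's Theorem}). The engine is the following local-to-global statement, which I would prove first: \emph{if $N$ is a finitely generated module over a Noetherian ring $R$ with $\dim(R)=d$, and $\frk_{R_P}(N_P)\ge c$ for every $P\in\Spec(R)$, then $\frk_R(N)\ge c-d$.} One proves this by induction on $c$. If $c\le d$ there is nothing to show, since $\frk_R(N)\ge 0$. If $c\ge d+1$, then $N_P$ contains a free $R_P$-summand of rank at least $d+1$ for every $P$, so Theorem~\ref{Generalized Serre's Theorem} gives a decomposition $N\cong R\oplus N_1$. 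Localizing, $N_P\cong R_P\oplus (N_1)_P$; since $R_P^{\oplus c}$ is a direct summand of $N_P$ and $R_P$ is local, cancellation of the free module $R_P$ forces $R_P^{\oplus(c-1)}$ to be a direct summand of $(N_1)_P$, so $\frk_{R_P}((N_1)_P)\ge c-1$ for all $P$. By the inductive hypothesis $\frk_R(N_1)\ge (c-1)-d$, whence $\frk_R(N)\ge 1+(c-1-d)=c-d$.

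Next I would set up the local comparison. For each $P\in\Spec(R)$, localization is exact, so $M'_P\to M_P\to M''_P\to 0$ is right exact, and (\ref{Local Observation 2 part 2}) of Lemma~\ref{Local Observation 2} over the local ring $R_P$ gives
\[
\frk_{R_P}(M_P)\le \frk_{R_P}(M'_P)+\mu_{R_P}(M''_P)\le \frk_{R_P}(M'_P)+\mu_R(M''),
\]
where the last step uses that generators of $M''$ localize to generators of $M''_P$. On the other hand, any copy of $R^{\oplus r}$ that is a direct summand of $M$ localizes to a copy of $R_P^{\oplus r}$ that is a direct summand of $M_P$, so $\frk_{R_P}(M_P)\ge \frk_R(M)$. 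Combining the two displays, $\frk_{R_P}(M'_P)\ge \frk_R(M)-\mu_R(M'')$ for every $P\in\Spec(R)$.

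Finally I would combine the two ingredients. If $\frk_R(M)-\mu_R(M'')\le \dim(R)$, the asserted inequality $\frk_R(M)\le \frk_R(M')+\mu_R(M'')+\dim(R)$ is immediate (as $\frk_R(M')\ge 0$). Otherwise set $c:=\frk_R(M)-\mu_R(M'')>\dim(R)=:d$; the previous paragraph shows $\frk_{R_P}(M'_P)\ge c$ for all $P$, so the local-to-global statement applied to $N=M'$ yields $\frk_R(M')\ge c-d=\frk_R(M)-\mu_R(M'')-\dim(R)$, which rearranges to the desired bound.

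The routine part is the localization comparison in the second paragraph, which just feeds on the already-proved local inequality. I expect the only real subtlety to be the inductive local-to-global lemma in the first paragraph: one must isolate exactly that statement, apply Theorem~\ref{Generalized Serre's Theorem} to peel off a single free summand at a time, and invoke cancellation of free modules over local rings to pass the hypothesis down to $N_1$. That step is where the $\dim(R)$ loss enters and is the heart of the argument.
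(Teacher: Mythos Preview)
Your proof is correct and follows essentially the same route as the paper: localize, apply the local inequality $\frk_{R_P}(M_P)\le \frk_{R_P}(M'_P)+\mu_{R_P}(M''_P)$ from Lemma~\ref{Local Observation 2}, and then use Stafford's theorem (Theorem~\ref{Generalized Serre's Theorem}) to pass back to the global free rank of $M'$. The only difference is packaging: you isolate and prove by induction the statement ``$\frk_{R_P}(N_P)\ge c$ for all $P$ implies $\frk_R(N)\ge c-d$,'' whereas the paper uses the equivalent contrapositive in one line---writing $M'\cong R^{\oplus\frk_R(M')}\oplus N'$ with $N'$ having no free summand and applying Stafford to $N'$ to find a prime $P$ with $\frk_{R_P}(M'_P)\le \frk_R(M')+d$---but the content is identical.
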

\begin{proof} For each $P\in\Spec(R)$, $\frk_{R_P}(M_P)\leq \frk_{R_P}(M'_P)+\mu_{R_P}(M''_P)$, see (\ref{Local Observation 2 part 2}) of Lemma~\ref{Local Observation 2}. In particular, $\frk_R(M)\leq \frk_{R_P}(M'_P)+\mu_R(M'')$ for each $P\in\Spec(R)$. By Theorem~\ref{Generalized Serre's Theorem} there is a prime $P\in\Spec(R)$ such that $\frk_{R_P}(M_P)\leq \frk_R(M)+\dim(R)$. Therefore $\frk_{R}(M)\leq \frk_R(M')+\mu_{R}(M'')+\dim(R)$.
\end{proof}

\begin{Lemma}\label{F-sig Lemma} Let $R$ be an F-finite ring and let $M,N$ be two finitely generated $R$-modules isomorphic at each prime $P\in\Assh(R)$. Then $a_e(M)=a_e(N)+O(p^{e(\gamma(R)-1)}).$
\end{Lemma}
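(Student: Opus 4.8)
The plan is to mirror the proof of Lemma~\ref{HK Lemma}, replacing the generator-count $\mu$ by the free-rank $\frk$ and using Lemma~\ref{Free Rank Lemma} in place of the naive subadditivity that held for $\mu$. First I would record that, since $M$ and $N$ are isomorphic at every $P\in\Assh(R)$, there exist right exact sequences $M\to N\to T_1\to 0$ and $N\to M\to T_2\to 0$ with $T_1,T_2$ not supported at any prime of $\Assh(R)$; this is the same elementary construction used in Lemma~\ref{HK Lemma} (localize the isomorphism, clear denominators). Applying the exact functor $F^e_*\blank$ gives, for every $e\in\N$, right exact sequences $F^e_*M\to F^e_*N\to F^e_*T_1\to 0$ and $F^e_*N\to F^e_*M\to F^e_*T_2\to 0$.

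Next, I would apply Lemma~\ref{Free Rank Lemma} to each of these two right exact sequences. From the first one we get $a_e(N)=\frk(F^e_*N)\leq \frk(F^e_*M)+\mu(F^e_*T_1)+\dim(R)=a_e(M)+\mu(F^e_*T_1)+\dim(R)$, and symmetrically $a_e(M)\leq a_e(N)+\mu(F^e_*T_2)+\dim(R)$. Hence
\[
|a_e(M)-a_e(N)|\leq \max\{\mu(F^e_*T_1),\mu(F^e_*T_2)\}+\dim(R).
\]
Now $\dim(R)$ is a constant, so it contributes only $O(1)=O(p^{e(\gamma(R)-1)})$ provided $\gamma(R)\geq 1$; when $\gamma(R)=0$ the ring $R$ is a finite product of fields, every module is projective, and $a_e(M)=\tilde a_e(M)$ with $a_e$ visibly constant in $e$, so the statement is trivial in that case and we may assume $\gamma(R)\geq 1$. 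It remains to bound $\mu(F^e_*T_i)$. By Corollary~\ref{Upper bound on generators}, $\mu(F^e_*T_i)=O(p^{e\gamma(T_i)})$, and exactly as in the last paragraph of the proof of Lemma~\ref{HK Lemma} one checks $\gamma(T_i)<\gamma(R)$: writing $I_i=\Ann_R(T_i)$, pick $P_i\in\Spec(R)$ with $\gamma(T_i)=\alpha(P_i/I_i)+\height(P_i/I_i)=\alpha(P_i)+\height(P_i/I_i)$, and since $T_i$ is not supported at any minimal prime of $R$ we have $\height(P_i/I_i)<\height(P_i)$ (any minimal prime of $I_i$ properly contains a minimal prime of $R$), giving $\gamma(T_i)<\alpha(P_i)+\height(P_i)\leq\gamma(R)$. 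Therefore $\gamma(T_i)\leq\gamma(R)-1$, so $\mu(F^e_*T_i)=O(p^{e(\gamma(R)-1)})$, and combining with the displayed inequality yields $a_e(M)=a_e(N)+O(p^{e(\gamma(R)-1)})$.

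I do not expect a serious obstacle here: the only genuinely new ingredient compared to Lemma~\ref{HK Lemma} is that $\frk$ is \emph{not} subadditive on right exact sequences in the non-local setting, which is precisely what Lemma~\ref{Free Rank Lemma} (built on Stafford's Theorem~\ref{Generalized Serre's Theorem}) is designed to repair, at the cost of an additive $\dim(R)$ term that is harmless asymptotically. The one point to be careful about is the degenerate case $\gamma(R)=0$, where the $O(p^{e(\gamma(R)-1)})$ notation would be vacuous; handling it separately as above, or simply invoking Remark~\ref{Reduce to the Reduced Case} to assume $Z_R=\Spec(R)$ and $R$ reduced (so that the interesting content lives where $\gamma(R)\geq 1$), disposes of it cleanly.
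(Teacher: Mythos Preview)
Your proof is correct and follows essentially the same route as the paper's: build the two right exact sequences with cokernels $T_i$ unsupported on $\Assh(R)$, apply Lemma~\ref{Free Rank Lemma} to bound $|a_e(M)-a_e(N)|$ by $\max\{\mu(F^e_*T_1),\mu(F^e_*T_2)\}+\dim(R)$, and finish with Corollary~\ref{Upper bound on generators}. One small slip to fix: midway you assert that $T_i$ is not supported at \emph{any} minimal prime of $R$, but you only arranged vanishing on $\Assh(R)$; the conclusion $\gamma(T_i)<\gamma(R)$ still holds (cf.\ Remark~\ref{Remark about HK Lemma}), since if the chosen $P_i$ lies in $\Min(R)\setminus\Assh(R)$ then $\alpha(P_i)<\gamma(R)$ directly.
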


\begin{proof} There are two right exact sequences $M\rightarrow N\rightarrow T_1\rightarrow 0$ and $N\rightarrow M\rightarrow T_2\rightarrow 0$ such that $(T_1)_P=(T_2)_P=0$ for each $P\in \Assh(R)$. By Lemma~\ref{Free Rank Lemma} $$|a_e(M)-a_e(N)|\leq \max\{\mu(F^e_*T_1)+\dim(R), \mu(F^e_*T_2)+\dim(R)\}.$$ The result follows from Lemma~\ref{Upper bound on generators}.
\end{proof}

\begin{Corollary}\label{Corollary to F-sig Lemma} Let $R$ be an F-finite ring and $0\rightarrow M'\rightarrow M\rightarrow M''\rightarrow 0$ a short exact sequence of finitely generated $R$-modules. Then $a_e(M)=a_e(M'\oplus M'')+O(p^{e(\gamma(R)-1)}).$
\end{Corollary}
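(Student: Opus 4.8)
The plan is to mimic the proof of Corollary~\ref{Corollary to HK Lemma}, but the argument will be shorter because free summands are insensitive to nilpotents. The goal is to realize $M$ and $M'\oplus M''$ as finitely generated modules that are isomorphic at every prime in $\Assh(R)$, and then to invoke Lemma~\ref{F-sig Lemma} directly.

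First I would dispose of the non-reduced case. By Remark~\ref{Reduce to the Reduced Case}, the existence of a finitely generated $R$-module $N$ and an $e>0$ with $a_e(N)>0$ forces $R$ to be reduced; contrapositively, if $R$ is not reduced then $a_e(N)=0$ for every finitely generated $R$-module $N$ and every $e\geq 1$, so in particular $a_e(M)=0=a_e(M'\oplus M'')$ for all $e\geq 1$ and the asserted equality is trivial. Hence I may assume $R$ is reduced. Then for each minimal prime $P$ of $R$ the localization $R_P$ is a field, so localizing the given short exact sequence at $P$ yields a short exact sequence $0\to M'_P\to M_P\to M''_P\to 0$ of $R_P$-vector spaces, which splits. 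Thus $M_P\cong M'_P\oplus M''_P=(M'\oplus M'')_P$ for every $P\in\Min(R)$, and since $\Assh(R)\subseteq\Min(R)$ by definition, $M$ and $M'\oplus M''$ are isomorphic at every prime of $\Assh(R)$. Applying Lemma~\ref{F-sig Lemma} to the pair $M$ and $M'\oplus M''$ then gives $a_e(M)=a_e(M'\oplus M'')+O(p^{e(\gamma(R)-1)})$, which is exactly the claim.

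There is essentially no real obstacle: the substantive work has already been packaged into Lemma~\ref{F-sig Lemma} (which in turn rests on Lemma~\ref{Free Rank Lemma} and Corollary~\ref{Upper bound on generators}). The only point deserving a word of care is that, unlike minimal numbers of generators, the free ranks $a_e(\blank)$ vanish identically over a non-reduced ring, so the reduction to the reduced case is immediate and one does not need the Frobenius-image trick used in Corollary~\ref{Corollary to HK Lemma}. I would also remark, as the authors do in the Hilbert--Kunz setting, that this corollary does \emph{not} yield additivity of the global F-signature on short exact sequences: when $R$ is not local, $a_e(M'\oplus M'')$ need not equal $a_e(M')+a_e(M'')$, which is precisely why the statement is phrased in terms of $M'\oplus M''$ rather than in terms of the sum of the two splitting numbers.
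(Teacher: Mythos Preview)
Your proof is correct and follows essentially the same approach as the paper's own proof: reduce to the case $R$ is reduced (the paper does this with a terse ``without loss of generality'' pointing implicitly at Remark~\ref{Reduce to the Reduced Case}, while you spell out that $a_e$ vanishes identically over a non-reduced ring), observe that $M$ and $M'\oplus M''$ are then isomorphic at all minimal primes, and invoke Lemma~\ref{F-sig Lemma}. Your version simply adds more detail to each of these steps.
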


\begin{proof} Without loss of generality, one may assume that $R$ is reduced. In particular, $M$ is isomorphic to $M'\oplus M''$ at all $P\in \Assh(R)$, and the result follows by Lemma~\ref{F-sig Lemma}.
\end{proof}

\begin{Example} As with global Hilbert-Kunz, one cannot expect $\s(M_1\oplus M_2)=\s(M_1)+\s(M_2)$. Let $R=\F_p\times \F_p$, $M_1=\F_p\times 0$, and $M_2=0\times \F_p$. Observe that $\gamma(R)=0$. Hence $\s(M_1)=\s(M_2)=0$ whereas $\s(R)=1$.
\end{Example}

\begin{Theorem}\label{Global F-signature exists} Let $R$ be an F-finite ring and $M$ a finitely generated $R$-module. Then the limit $\s(M)=\lim_{e\rightarrow \infty}a_e(M)/p^{e\gamma(R)}$ exists. Moreover, there exists a constant $C\in\R$ such that for each $e\in\N$, $a_e(M)\leq \s(M)p^{e\gamma(R)}+Cp^{e(\gamma(R)-1)}$.
\end{Theorem}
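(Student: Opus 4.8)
The plan is to follow the proof of Theorem~\ref{HK exists} almost verbatim, with Corollary~\ref{Corollary to F-sig Lemma} and Lemma~\ref{F-sig Lemma} playing the roles of Corollary~\ref{Corollary to HK Lemma} and Remark~\ref{Remark about HK Lemma}, and with Lemma~\ref{Free Rank Lemma} replacing the naive subadditivity of $\mu$. First I would reduce to the case that $R$ is reduced and $Z_R=\Spec(R)$, so that $\Assh(R)=\Min(R)$ and $\alpha(Q)=\gamma(R)$ for every $Q\in\Min(R)$; write $\gamma=\gamma(R)$. Indeed, if $a_e(M)=0$ for all $e$ the statement is trivial, and otherwise $R$ is reduced by Remark~\ref{Reduce to the Reduced Case}; if moreover $Z_R\neq\Spec(R)$, then $a_e(M)=O(p^{e(\gamma-1)})$ and $\s(M)=0$, again by Remark~\ref{Reduce to the Reduced Case}. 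Applying Corollary~\ref{Corollary to F-sig Lemma} and Lemma~\ref{F-sig Lemma} to a prime filtration of $M$, exactly as in the proof of Theorem~\ref{HK exists}, I may further assume $M=\bigoplus_{i=1}^{\ell}R/Q_i$ with each $Q_i\in\Min(R)$; passing back to a general $M$ at the end only changes $a_e(M)$ by $O(p^{e(\gamma-1)})$ by Lemma~\ref{F-sig Lemma}, which affects neither the existence of the limit nor the asserted estimate.

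For such an $M$, each $F_*(R/Q_i)$ is a finitely generated torsion-free module over the domain $R/Q_i$ of rank $p^{\alpha(Q_i)}=p^{\gamma}$, hence embeds into $(R/Q_i)^{\oplus p^{\gamma}}$ with torsion cokernel; summing over $i$ produces an exact sequence $0\to F_*M\to M^{\oplus p^{\gamma}}\to T'\to 0$ with $T'_Q=0$ for every $Q\in\Min(R)$. Since $Z_R=\Spec(R)$, every minimal prime of $\Supp(T')$ has positive height, so $\gamma(T')\leq\gamma-1$, whence $\mu(F^e_*T')=O(p^{e(\gamma-1)})$ by Corollary~\ref{Upper bound on generators}. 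Applying the exact functor $F^e_*$ and then Lemma~\ref{Free Rank Lemma} to the right exact sequence $F^{e+1}_*M\to(F^e_*M)^{\oplus p^{\gamma}}\to F^e_*T'\to 0$ gives
\[
p^{\gamma}a_e(M)\;\leq\;\frk\bigl((F^e_*M)^{\oplus p^{\gamma}}\bigr)\;\leq\;a_{e+1}(M)+\mu(F^e_*T')+\dim(R),
\]
where the first inequality is the trivial bound $\frk(N^{\oplus n})\geq n\,\frk(N)$. Dividing by $p^{(e+1)\gamma}$ and absorbing $\mu(F^e_*T')=O(p^{e(\gamma-1)})$ and $\dim(R)$ into a single constant $C$, I obtain $a_e(M)/p^{e\gamma}\leq a_{e+1}(M)/p^{(e+1)\gamma}+C/p^e$ for all $e\in\N$. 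Since $a_e(M)\leq\rank(F^e_*M)=\rank(M)\,p^{e\gamma}$, the sequence $\{a_e(M)/p^{e\gamma}\}$ is bounded, so part~(\ref{Sequence Lemma 2}) of Lemma~\ref{Sequence Lemma} yields simultaneously the existence of $\s(M)=\lim_{e}a_e(M)/p^{e\gamma}$ and the bound $a_e(M)/p^{e\gamma}-\s(M)\leq 2C/p^e$, which is exactly the claimed estimate; unwinding the reductions completes the proof.

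The one step where the argument is not a mechanical transcription of the local proof in \cite{PolstraTucker} is the inequality $p^{\gamma}a_e(M)\leq a_{e+1}(M)+\mu(F^e_*T')+\dim(R)$: passing from $\frk$ of a direct sum of $F^e_*M$ back to a statement about $a_{e+1}(M)$ uses Lemma~\ref{Free Rank Lemma}, and hence Stafford's theorem (Theorem~\ref{Generalized Serre's Theorem}), which is precisely where the lower-order error term $\dim(R)$ is forced upon us. I expect this to be the main point requiring care. I would also note that, unlike in the proof of Theorem~\ref{Global HK Growth Rate}, a single short exact sequence $F_*M\hookrightarrow M^{\oplus p^{\gamma}}$ suffices here, because part~(\ref{Sequence Lemma 2}) of Lemma~\ref{Sequence Lemma} already delivers existence together with exactly the one-sided bound we are after.
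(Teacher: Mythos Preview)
Your proposal is correct and follows essentially the same approach as the paper's proof: the same reductions via Remark~\ref{Reduce to the Reduced Case} and Corollary~\ref{Corollary to F-sig Lemma} to a direct sum of $R/Q_i$ with $Q_i\in\Min(R)$, the same short exact sequence $0\to F_*M\to M^{\oplus p^{\gamma}}\to T\to 0$, the same application of Lemma~\ref{Free Rank Lemma}, and the same appeal to part~(\ref{Sequence Lemma 2}) of Lemma~\ref{Sequence Lemma}. Your exposition is in fact slightly more detailed than the paper's (you spell out the construction of the short exact sequence and explicitly verify boundedness), and your closing remarks correctly identify where Stafford's theorem enters; one cosmetic point is that the paper writes $\rank(F^e_*M)\leq\rank(M)p^{e\gamma}$ rather than equality, but this does not affect the argument.
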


\begin{proof} Without loss of generality, one may assume that $R$ is reduced and $\alpha(P)+\height(P)=\gamma(R)$ for each $P\in\Spec(R)$. By considering a prime filtration of $M$, repeated use of Corollary~\ref{Corollary to F-sig Lemma} allows one to reduce all considerations to the scenario that $M\cong R/Q_1\oplus \cdots \oplus R/Q_\ell$ where $Q_i\in\Min(R)$. As $R$ is a reduced and $\alpha(P)+\height(P)=\gamma(R)$ for each $P\in\Spec(R)$, there is a short exact sequence $$0\rightarrow  F_*M\rightarrow M^{\oplus p^\gamma(R)}\rightarrow  T\rightarrow 0$$ so that $T_Q=0$ for each $Q\in\Min(R)$. For each $e\in \N$ $$a_e(M^{\oplus p^{\gamma(R)}})\leq a_{e+1}(M)+\mu(F^e_*T)+\dim(R)$$ by Lemma~\ref{Free Rank Lemma}. By Corollary~\ref{Upper bound on generators} there is a constant $C\in\R$ such that $\mu(F^e_*T)\leq Cp^{e(\gamma(R)-1)}$. Note $p^{\gamma(R)}a_e(M)\leq a_e(M^{\oplus p^{\gamma(R)}})$, dividing the above inequality by $p^{(e+1)\gamma(R)}$ and applying crude estimates, $$\frac{a_e(M)}{p^{e\gamma(R)}}\leq \frac{a_{e+1}(M)}{p^{(e+1)\gamma(R)}}+\frac{C+\dim(R)}{p^e}.$$ The theorem follows from (\ref{Sequence Lemma 2}) of Lemma~\ref{Sequence Lemma}. \end{proof}

\begin{Lemma}\label{Application of Stafford's Theorem} Let $R$ be a Noetherian ring of finite Krull dimension, of any characteristic. Suppose that $M$ is a finitely generated $R$-module. Then for each $n\in\N$, $$|\frk(M^{\oplus n})-n\frk(M)|\leq n\dim(R).$$
\end{Lemma}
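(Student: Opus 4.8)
The plan is to prove the two inequalities $n\frk_R(M)\leq \frk_R(M^{\oplus n})$ and $\frk_R(M^{\oplus n})\leq n\frk_R(M)+n\dim(R)$ separately; this is the $\frk$-analogue of Lemma~\ref{Applicaiton of Forster-Swan}, with Stafford's Theorem~\ref{Generalized Serre's Theorem} playing the role that the Forster-Swan Theorem~\ref{Forster-Swan} played there. The first inequality is immediate: if $M\cong R^{\oplus \frk_R(M)}\oplus Y$ realizes a free summand of maximal rank, then $M^{\oplus n}\cong R^{\oplus n\frk_R(M)}\oplus Y^{\oplus n}$, so $\frk_R(M^{\oplus n})\geq n\frk_R(M)$; in particular $\frk_R(M^{\oplus n})-n\frk_R(M)\geq 0\geq -n\dim(R)$, which handles one side of the absolute value.

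For the reverse inequality I would first record the (characteristic-free) fact that $\frk$ is additive on finite direct sums of finitely generated modules over a local ring $(A,\m,k)$. One way to see this is via the formula $\frk_A(X)=\dim_k\big(X/U(X)\big)$, where $U(X)=\{x\in X\mid \varphi(x)\in\m\text{ for all }\varphi\in\Hom_A(X,A)\}$ (the same kind of submodule that appears in the definition of the ideals $I_e$ in Section~\ref{F-signature}); since a homomorphism out of $X\oplus X'$ is a pair of homomorphisms, $U(X\oplus X')=U(X)\oplus U(X')$, and additivity follows. Alternatively one can invoke cancellation of free modules over local rings.

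The key step is then to use Stafford's theorem to produce one good prime. Fix a decomposition $M\cong R^{\oplus a}\oplus Y$ with $a=\frk_R(M)$; by maximality of $a$, the module $Y$ has no free direct summand. Applying the contrapositive of Theorem~\ref{Generalized Serre's Theorem} to $Y$, the absence of a free summand forces the hypothesis of that theorem to fail, so there exists $P\in\Spec(R)$ with $\frk_{R_P}(Y_P)\leq \dim(R)$. Localizing at such a $P$ can only increase the rank of a free summand, and $\frk$ is additive over the local ring $R_P$, so
\[
\frk_R(M^{\oplus n})\leq \frk_{R_P}\big((M_P)^{\oplus n}\big)=n\,\frk_{R_P}(M_P)=n\big(a+\frk_{R_P}(Y_P)\big)\leq n\,\frk_R(M)+n\dim(R).
\]
Combining the two inequalities gives $|\frk_R(M^{\oplus n})-n\frk_R(M)|\leq n\dim(R)$, as claimed (the case $n=0$ being trivial).

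I expect the only real obstacle to be the upper bound, and specifically the right way to invoke Stafford's theorem. Unlike in the Forster-Swan case, over a non-local ring one cannot simply cancel free summands to compare $\frk_R(M^{\oplus n})$ with $n\frk_R(M)$ directly; the trick is to split off a maximal free summand globally, exploit that the complementary module $Y$ has no free summand to pin down --- via Theorem~\ref{Generalized Serre's Theorem} --- a single prime $P$ at which $\frk_{R_P}(Y_P)\leq\dim(R)$, and carry out the estimate after localizing there, where $\frk$ is well behaved. The auxiliary additivity of $\frk$ over local rings, while elementary, also needs to be stated, as it is not among the facts recalled in Lemma~\ref{Local Observation 2}.
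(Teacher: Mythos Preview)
Your proof is correct and follows essentially the same approach as the paper: the paper also observes $n\frk(M)\leq\frk(M^{\oplus n})$ trivially and then invokes Theorem~\ref{Generalized Serre's Theorem} to find a prime $P$ with $\frk_{R_P}(M_P)\leq\frk_R(M)+\dim(R)$, concluding via $\frk_R(M^{\oplus n})\leq\frk_{R_P}(M_P^{\oplus n})=n\frk_{R_P}(M_P)$. Your version simply unpacks the Stafford step (splitting off $Y$ explicitly) and the local additivity of $\frk$ more carefully, but the argument is the same.
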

\begin{proof} It is clear that $n\frk(M)\leq \frk(M^{\oplus n})$. By Theorem~\ref{Generalized Serre's Theorem} there exists a $P\in\Spec(R)$ such that $\frk(M_P)\leq \frk(M)+\dim(R)$. Hence $\frk(M^{\oplus n})\leq \frk_{R_P}(M^{\oplus n}_{P})=n\frk(M_P)\leq n\frk(M)+n\dim(R).$
\end{proof}

\begin{Theorem}\label{F-signature Growth Rate} Let $R$ be an F-finite ring and $M$ a finitely generated $R$-module. Then $a_e(M)=\s(M)p^{e\gamma(R)}+O(p^{e(\gamma(R)-1)}).$
\end{Theorem}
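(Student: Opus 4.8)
The inequality $a_e(M) \le \s(M)p^{e\gamma(R)} + O(p^{e(\gamma(R)-1)})$ is already contained in Theorem~\ref{Global F-signature exists}, so the real content is the reverse estimate, and the plan is the one that mirrors the proof of Theorem~\ref{Global HK Growth Rate}: verify that the bounded sequence $\lambda_e = a_e(M)$ (bounded after dividing by $p^{e\gamma(R)}$ since $a_e(M)\le \rank(M)p^{e\gamma(R)}$) satisfies the two-sided hypothesis of part~(\ref{Sequence Lemma 3}) of Lemma~\ref{Sequence Lemma}. One of the two one-step inequalities, namely $\frac{a_e(M)}{p^{e\gamma(R)}}\le \frac{a_{e+1}(M)}{p^{(e+1)\gamma(R)}}+\frac{C}{p^e}$, was proved inside Theorem~\ref{Global F-signature exists}; so the work is to establish the complementary inequality $\frac{a_{e+1}(M)}{p^{(e+1)\gamma(R)}}\le \frac{a_e(M)}{p^{e\gamma(R)}}+\frac{C'}{p^e}$.

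First I would reduce exactly as in the proof of Theorem~\ref{Global F-signature exists}. If $a_e(M)=0$ for all $e$, or if $Z_R\ne\Spec(R)$ (in which case $\s(M)=0$ and $a_e(M)=O(p^{e(\gamma(R)-1)})$ by Remark~\ref{Reduce to the Reduced Case}), there is nothing to prove, so one may assume $R$ is reduced and $Z_R=\Spec(R)$, whence $\Assh(R)=\Min(R)$. Applying Corollary~\ref{Corollary to F-sig Lemma} repeatedly along a prime filtration, and then Lemma~\ref{F-sig Lemma} to discard the prime factors whose primes are not minimal, reduces everything to $M\cong \bigoplus_i R/Q_i$ with each $Q_i\in\Min(R)$ and $\gamma(R/Q_i)=\gamma(R)$. (The case $\gamma(R)=0$ forces $\dim R=0$, so $R$ is a finite product of fields and $a_e(M)$ is eventually constant; assume $\gamma(R)\ge 1$.)

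For such an $M$, Dutta's observation (Lemma~\ref{Dutta's Lemma}, applied to each domain $R/Q_i$) gives an embedding $M^{\oplus p^{\gamma(R)}}\hookrightarrow F_*M$ whose cokernel $T$ is not supported at any minimal prime of $R$; restricting scalars yields, for every $e$, a right exact sequence $(F^e_*M)^{\oplus p^{\gamma(R)}}\to F^{e+1}_*M\to F^e_*T\to 0$. Then I would feed this into the non-local bookkeeping lemmas: Lemma~\ref{Free Rank Lemma} gives $a_{e+1}(M)\le \frk((F^e_*M)^{\oplus p^{\gamma(R)}})+\mu(F^e_*T)+\dim(R)$, and Lemma~\ref{Application of Stafford's Theorem} gives $\frk((F^e_*M)^{\oplus p^{\gamma(R)}})\le p^{\gamma(R)}a_e(M)+p^{\gamma(R)}\dim(R)$. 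Since $T$ misses $\Min(R)=\Assh(R)$, the estimate from Lemma~\ref{HK Lemma} shows $\gamma(T)\le \gamma(R)-1$, so Corollary~\ref{Upper bound on generators} gives $\mu(F^e_*T)\le Cp^{e(\gamma(R)-1)}$. Combining these and dividing by $p^{(e+1)\gamma(R)}$, using the crude estimates $p^{\gamma(R)}\ge 1$ and $p^{(e+1)\gamma(R)}\ge p^e$, produces $\frac{a_{e+1}(M)}{p^{(e+1)\gamma(R)}}\le \frac{a_e(M)}{p^{e\gamma(R)}}+\frac{C+2\dim(R)}{p^e}$.

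Combining this with the inequality recorded in the proof of Theorem~\ref{Global F-signature exists} gives $\left|\frac{a_{e+1}(M)}{p^{(e+1)\gamma(R)}}-\frac{a_e(M)}{p^{e\gamma(R)}}\right|\le \frac{C''}{p^e}$ for all $e$, and part~(\ref{Sequence Lemma 3}) of Lemma~\ref{Sequence Lemma} then yields the sharp rate $a_e(M)=\s(M)p^{e\gamma(R)}+O(p^{e(\gamma(R)-1)})$. The only mildly delicate point, and the only place where the non-local setting genuinely intrudes, is that $\frk$ is neither exactly additive on direct sums nor exactly subadditive along right exact sequences, so each use of Lemmas~\ref{Free Rank Lemma} and~\ref{Application of Stafford's Theorem} contributes an additive $\dim(R)$ error; these are harmless after dividing by $p^{(e+1)\gamma(R)}$ (they become $O(1/p^e)$), but one must keep $\gamma(R)\ge 1$ in view, the excluded $\gamma(R)=0$ case being trivial, for that last estimate to be valid.
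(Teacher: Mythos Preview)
Your proposal is correct and follows essentially the same argument as the paper's proof: the same reductions to $R$ reduced with $Z_R=\Spec(R)$ and $M$ a direct sum of $R/Q_i$ with $Q_i\in\Min(R)$, the same short exact sequence $0\to M^{\oplus p^{\gamma(R)}}\to F_*M\to T\to 0$, and the same chain of inequalities via Lemma~\ref{Free Rank Lemma}, Lemma~\ref{Application of Stafford's Theorem}, and Corollary~\ref{Upper bound on generators}, culminating in $\frac{a_{e+1}(M)}{p^{(e+1)\gamma(R)}}\le \frac{a_e(M)}{p^{e\gamma(R)}}+\frac{C+2\dim(R)}{p^e}$ and an appeal to part~(\ref{Sequence Lemma 3}) of Lemma~\ref{Sequence Lemma}. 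Your additional remarks on the trivial $\gamma(R)=0$ case and on where the $\dim(R)$ error terms enter are accurate elaborations not spelled out in the paper; the only small imprecision is that the estimate $\gamma(T)\le\gamma(R)-1$ is contained in the \emph{proof} of Lemma~\ref{HK Lemma} rather than in its statement.
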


\begin{proof} Without loss of generality, we may assume that $R$ is reduced and $\alpha(P)+\height(P)=\gamma(R)$ for each $P\in\Spec(R)$. As in the proof of Theorem~\ref{Global F-signature exists}, we may assume $M\cong R/Q_1\oplus \cdots \oplus R/Q_\ell$ where $Q_i\in\Min(R)$. In this case, there is a short exact sequence $$0\rightarrow M^{\oplus p^\gamma(R)}\rightarrow F_*M\rightarrow T\rightarrow 0$$ so that $T_P=0$ for each $P\in\Min(R)$. For each $e\in\N$ $$a_{e+1}(M)\leq a_e(M^{\oplus p^{\gamma(R)}})+\mu(F^e_*T)+\dim(R)$$ by Lemma~\ref{Free Rank Lemma}. By Corollary~\ref{Upper bound on generators} there is a constant $C\in\R$ such that for each $e\in\N$ $\mu(F^e_*T)\leq Cp^{e(\gamma(R)-1)}$. Hence by Lemma~\ref{Application of Stafford's Theorem}, $$a_{e+1}(M)\leq p^{\gamma(R)}a_e(M)+Cp^{e(\gamma(R)-1)}+p^{\gamma(R)}\dim(R)+\dim(R).$$ Dividing by $p^{(e+1)\gamma(R)}$ and applying a crude estimate shows $$\frac{a_{e+1}(M)}{p^{(e+1)\gamma(R)}}\leq \frac{a_e(M)}{p^{e\gamma(R)}}+\frac{C+2\dim(R)}{p^e}.$$ The theorem follows from Theorem~\ref{Global F-signature exists} and (\ref{Sequence Lemma 3}) of Lemma~\ref{Sequence Lemma}.
\end{proof}

\begin{Lemma}\label{Splitting Lemma 1} Let $R$ be an F-finite ring of dimension $d$ and $M$ a finitely generated $R$-module.  For each $e\in\N$ choose a decomposition $F^e_*M\cong R^{\oplus n_e}\oplus M_e$ such that $M_e$ does not have a free summand. There exists $Q\in\Spec(R)$ such that $\frk(F^e_*M_Q)\leq n_e+\dim(R)$.
\end{Lemma}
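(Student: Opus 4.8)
The plan is to recast the lemma as a statement about a descending chain of closed subsets of $\Spec(R)$ and then feed Stafford's splitting theorem into it. Set $Z_e=\{P\in\Spec(R)\mid \frk((M_e)_P)\le\dim(R)\}$; I claim the lemma is equivalent to $\bigcap_{e\in\N}Z_e\ne\emptyset$. Localizing the chosen decomposition at a prime $Q$ gives $F^e_*M_Q\cong R_Q^{\oplus n_e}\oplus (M_e)_Q$, and over the local ring $R_Q$ the free rank is additive on finite direct sums: a split surjection $R_Q\to A'\oplus B'$, post-composed with one of the two coordinate projections, must itself be a split surjection because $R_Q$ is local, so together with cancellation of free summands over $R_Q$ one gets $\frk(R_Q^{\oplus m}\oplus L)=m+\frk(L)$ for every finitely generated $L$. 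Hence $\frk(F^e_*M_Q)=n_e+\frk((M_e)_Q)$, and ``$\frk(F^e_*M_Q)\le n_e+\dim(R)$ for all $e$'' is exactly ``$Q\in\bigcap_e Z_e$''. (If convenient one may first invoke Remark~\ref{Reduce to the Reduced Case} to assume $R$ is reduced with $Z_R=\Spec(R)$, though the argument does not need this.)

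Next I would check that each $Z_e$ is a nonempty closed subset of $\Spec(R)$. For closedness it is enough that, for any finitely generated $R$-module $N$ and any integer $r$, the locus $\{P\mid \frk(N_P)\ge r\}$ is open: by the standard basic-element argument, $N_P$ has a free $R_P$-summand of rank $r$ precisely when there are $\varphi_1,\dots,\varphi_r\in\Hom_R(N,R)$ and $m_1,\dots,m_r\in N$ with $\det(\varphi_i(m_j))\notin P$ — invertibility of the matrix $(\varphi_i(m_j))$ over $R_P$ lets one change coordinates so that $m_1,\dots,m_r$ span a free $R_P$-direct summand, and the converse is immediate — so the locus is the union of the basic opens $D(\det(\varphi_i(m_j)))$. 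Applying this with $N=M_e$ and $r=\dim(R)+1$ shows $Z_e$ is closed. If $Z_e$ were empty then $\frk((M_e)_P)\ge\dim(R)+1$ for every $P$, so Stafford's theorem (Theorem~\ref{Generalized Serre's Theorem}) would force $M_e$ to have a free summand, contrary to the choice of decomposition; thus $Z_e\ne\emptyset$. Since $\Spec(R)$ is a Noetherian topological space, the descending chain $Z_0\supseteq Z_0\cap Z_1\supseteq Z_0\cap Z_1\cap Z_2\supseteq\cdots$ of closed sets stabilizes, so $\bigcap_{e\in\N}Z_e$ coincides with a finite sub-intersection, and it suffices to prove that every finite intersection $Z_{e_1}\cap\cdots\cap Z_{e_k}$ is nonempty.

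This last step is the one I expect to be the main obstacle. The natural attack is to apply Stafford's theorem to $N:=M_{e_1}\oplus\cdots\oplus M_{e_k}$: a prime $P$ with $\frk(N_P)\le\dim(R)$ lies in all of $Z_{e_1},\dots,Z_{e_k}$, because $\frk(N_P)=\sum_i\frk((M_{e_i})_P)$ over the local ring $R_P$ by the additivity recorded above. The difficulty is that $N$ can acquire a free summand even though none of the $M_{e_i}$ does, so that writing $N\cong R^{\oplus t}\oplus N'$ with $N'$ free-summand-free, Stafford only delivers a prime $P$ with $\sum_i\frk((M_{e_i})_P)\le t+\dim(R)$, and $t$ need not be $0$. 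To absorb this surplus I would argue by Noetherian induction on $k$: once $Y:=Z_{e_1}\cap\cdots\cap Z_{e_j}$ is known to be nonempty and closed, pass to an irreducible component $V(\mathfrak p)$ of $Y$, use that a free $R_P$-summand of $(M_{e_{j+1}})_P$ surjects onto a free $(R/\mathfrak p)_{P/\mathfrak p}$-summand of the same rank — so that it is enough to produce $P\in V(\mathfrak p)$ at which $M_{e_{j+1}}/\mathfrak p M_{e_{j+1}}$ has free rank at most $\dim(R/\mathfrak p)$ over $R/\mathfrak p$ — and apply Stafford over the (lower-dimensional) ring $R/\mathfrak p$. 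The delicate point, which I expect has to be handled by a basic-element argument in the spirit of \cite{DSPY}, is precisely to control the free $R/\mathfrak p$-summands of $M_{e_{j+1}}/\mathfrak p M_{e_{j+1}}$, since these are not automatically inherited from $M_{e_{j+1}}$; making that bookkeeping come out with the bound $\dim(R)$ rather than something larger is the technical heart of the proof.
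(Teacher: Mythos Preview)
You have misread the quantifiers in the lemma. The statement is to be parsed as ``for each fixed $e\in\N$, there exists $Q\in\Spec(R)$ (allowed to depend on $e$) with $\frk(F^e_*M_Q)\le n_e+\dim(R)$'', not as the existence of a single prime $Q$ working simultaneously for every $e$. This is confirmed both by the paper's one-line proof and by how the lemma is used in Theorem~\ref{What is Global F-signature}, where for each $e$ one passes to the prime $Q_e$ minimizing $a_e(M_P)$.

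With the correct reading, your proposal already contains a complete proof, and in fact it is exactly the paper's argument: fix $e$; since $M_e$ has no free summand, Stafford's theorem (Theorem~\ref{Generalized Serre's Theorem}) forces the existence of some $Q$ with $\frk((M_e)_Q)\le \dim(R)$, and then your additivity observation over the local ring $R_Q$ gives $\frk(F^e_*M_Q)=n_e+\frk((M_e)_Q)\le n_e+\dim(R)$. In your notation this is precisely the statement $Z_e\ne\emptyset$, which you prove correctly in your second paragraph. Everything from ``$\bigcap_{e\in\N} Z_e\ne\emptyset$'' onward --- the closedness of $Z_e$, the descending chain argument, the Noetherian induction over finite intersections, and the delicate bookkeeping over $R/\mathfrak p$ --- is aimed at the stronger (and unasked-for) uniform statement, and the difficulties you identify there are genuine obstacles to \emph{that} statement, not to the lemma as written.
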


\begin{proof} By Theorem~\ref{Generalized Serre's Theorem} there is a $Q\in\Spec(R)$ such that $\frk((M_e)_Q)\leq d$. The desired result now follows since $\frk(F^e_*M_Q)=n_e+\frk((M_e)_Q)$. \end{proof}

\begin{Lemma}\label{Splitting Lemma 2} Let $R$ be an F-finite ring of dimension $d$ and let $M$ be a finitely generated $R$-module. For each $e\in\N$ choose a decomposition $F^e_*M\cong \Omega_e\oplus M_e$ such that $\Omega_e$ is projective of min-rank $m_e$ and $M_e$ does not have a projective summand. Then there exists $Q\in\Spec(R)$ such that $\frk(F^e_*M_Q)\leq m_e+\dim(R)$.
\end{Lemma}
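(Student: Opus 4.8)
The plan is to reduce to the situation in which $\Omega_e$ has constant rank by restricting to a suitable direct factor of $R$, and then to run the same argument as in Lemma~\ref{Splitting Lemma 1}, applying Stafford's theorem (Theorem~\ref{Generalized Serre's Theorem}) to the non-projective summand $M_e$. The new feature compared with Lemma~\ref{Splitting Lemma 1} is that the projective summand $\Omega_e$ need not be free, so its rank can vary between connected components of $\Spec(R)$; the aim is to localize at a prime living in a component where that rank attains the minimal value $m_e$.

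First I would use that a finitely generated projective module has locally constant rank on $\Spec(R)$. Hence $Y=\{P\in\Spec(R)\mid \rank_{R_P}((\Omega_e)_P)=m_e\}$ is a clopen subset, and it is nonempty because it contains any minimal prime of $R$ at which the minimum defining $\minrank_R(\Omega_e)$ is attained (at such a minimal prime $P$ the rank of the free $R_P$-module $(\Omega_e)_P$ coincides with $\rank_{R/P}(\Omega_e/P\Omega_e)$). Writing $R\cong R'\times R''$ with $\Spec(R')=Y$, the Frobenius sends the factor $R'$ into itself, so restriction of scalars along $F^e$ is compatible with this product decomposition; the given decomposition therefore restricts to $F^e_*M\cong (F^e_*M)'\oplus(F^e_*M)''$, $\Omega_e\cong\Omega_e'\oplus\Omega_e''$, $M_e\cong M_e'\oplus M_e''$ over the factors, and $\Omega_e'$ is projective of constant rank $m_e$ over $R'$.

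Next I would check that $M_e'$ has no projective $R'$-summand. Since $R'$ is a direct factor of $R$, it is projective as an $R$-module, so any projective $R'$-summand of $M_e'$ would be projective over $R$ as well and would be a direct summand of $M_e$, contradicting the hypothesis on $M_e$. In particular $M_e'$ has no free $R'$-summand, so Theorem~\ref{Generalized Serre's Theorem} applied to the Noetherian ring $R'$, whose dimension is at most $\dim(R)=d$, produces a prime $Q\in\Spec(R')$ with $\frk_{R'_Q}((M_e')_Q)\leq\dim(R')\leq d$, exactly as in the proof of Lemma~\ref{Splitting Lemma 1}.

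Finally, regarding $Q$ as a prime of $R$, we have $R_Q=R'_Q$ and $F^e_*M_Q\cong(\Omega_e')_Q\oplus(M_e')_Q$, with $(\Omega_e')_Q$ free of rank $m_e$ over the local ring $R_Q$; since free rank is additive over direct sums of modules over a local ring, we obtain
\[
\frk(F^e_*M_Q)=\rank_{R_Q}((\Omega_e')_Q)+\frk((M_e')_Q)=m_e+\frk((M_e')_Q)\leq m_e+d.
\]
The only step requiring genuine care is the bookkeeping around the clopen decomposition, namely verifying that passing to the factor $R'$ both pins the (now constant) rank of $\Omega_e'$ to the value $m_e$ and preserves the absence of a projective summand in $M_e$; once that is in place the conclusion is immediate from Stafford's theorem. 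I note that no characteristic-$p$ input beyond the finite generation of $F^e_*M$ is actually used.
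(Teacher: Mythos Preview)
Your argument is correct and follows the same core idea as the paper's proof: apply Stafford's theorem (Theorem~\ref{Generalized Serre's Theorem}) to the module $M_e$, which has no free summand, to obtain a prime $Q$ with $\frk((M_e)_Q)\leq d$, and then add the contribution from $(\Omega_e)_Q$. The paper's proof is a two-line reference back to Lemma~\ref{Splitting Lemma 1}.

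The difference is that you carry out an additional reduction the paper leaves implicit. The paper writes ``the desired result now follows as in Lemma~\ref{Splitting Lemma 1}'', which in that lemma means $\frk(F^e_*M_Q)=n_e+\frk((M_e)_Q)$; but in the present setting the analogous equality reads $\frk(F^e_*M_Q)=\rank_{R_Q}((\Omega_e)_Q)+\frk((M_e)_Q)$, and $\rank_{R_Q}((\Omega_e)_Q)$ can exceed $m_e$ if $Q$ happens to lie on a connected component of $\Spec(R)$ where the projective module $\Omega_e$ has rank strictly larger than its min-rank. Your passage to the clopen piece $Y=\{P:\rank_{R_P}((\Omega_e)_P)=m_e\}$ and the corresponding direct factor $R'$ guarantees that Stafford's theorem is applied over a ring where the rank of $\Omega_e$ is pinned to $m_e$, so the final inequality genuinely gives $m_e+d$ rather than $\rank_{R_Q}((\Omega_e)_Q)+d$. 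Your verification that $M_e'$ inherits the absence of a projective summand is the right bookkeeping to make this work. In short, the approaches coincide at the level of strategy, but your version supplies the component-reduction step that makes the bound with $m_e$ (rather than the local rank) rigorous.
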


\begin{proof} By Theorem~\ref{Generalized Serre's Theorem} there is a $Q\in\Spec(R)$ such that $\frk((M_e)_Q)\leq d$, else $M_e$ has a free, and hence projective, summand. The desired result now follows as in Lemma~\ref{Splitting Lemma 1}. \end{proof}

\begin{Lemma}\label{Convergence Lemma} Let $R$ be an F-finite ring such that $Z_R = \Spec(R)$ and $M$ a finitely generated $R$-module. For each $e>0$, let $Q_e\in\Spec(R)$ be such that $a_e(M_{Q_e})=\min\{a_e(M_P)\mid P\in\Spec(R)\}$. Then both  $\frac{a_e(M_{Q_e})}{p^{e\gamma(R)}}$ and $\s(M_{Q_e})$ converge to $\min\{\s(M_P)\mid P\in\Spec(R)\}$ as $e\rightarrow \infty$.
\end{Lemma}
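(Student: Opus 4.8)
The plan is to mimic the proof of Theorem~\ref{What is HK}: sandwich $a_e(M_{Q_e})$ tightly between $a_e(M)$ and $a_e(M)+\dim(R)$, and then feed the uniform convergence of the normalized splitting-number functions into the resulting estimates. A few preliminaries first. Since $Z_R=\Spec(R)$, the ring $R$ is locally equidimensional and $\gamma(R_P)=\gamma(R)$ for every $P\in\Spec(R)$ (cf. Lemma~\ref{Assh Lemma} and the proof of Lemma~\ref{Excellent Rings Lemma}); hence $\s_e(M_P)=a_e(M_P)/p^{e\gamma(R)}$ for all $P$ and $e$, so every quantity in sight is normalized by the same power of $p$. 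Moreover, for each fixed $e$ the set $\{a_e(M_P)\mid P\in\Spec(R)\}$ consists of nonnegative integers bounded above by $\rank(M)p^{e\gamma(R)}$, so a minimizing prime $Q_e$ genuinely exists; and $\dim(R)/p^{e\gamma(R)}\to 0$, which is clear if $\gamma(R)\geq 1$ and holds trivially when $\gamma(R)=0$, since then $\height(P)=\gamma(R)-\alpha(P)=0$ for all $P$ forces $\dim(R)=0$.

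Next I would establish the sandwich. For the lower bound: from a decomposition $F^e_*M\cong R^{\oplus a_e(M)}\oplus N_e$, localizing at an arbitrary prime $P$ gives $a_e(M)\leq a_e(M_P)$; taking $P=Q_e$ yields $a_e(M)\leq a_e(M_{Q_e})$. For the upper bound: I would take the decomposition above with $N_e$ chosen to have no free summand (possible, since otherwise one more free summand could be split off, contradicting the maximality defining $a_e(M)$), and apply Lemma~\ref{Splitting Lemma 1}—which rests on Stafford's theorem, Theorem~\ref{Generalized Serre's Theorem}—to produce a prime $Q$ with $a_e(M_Q)=\frk(F^e_*M_Q)\leq a_e(M)+\dim(R)$; since $Q_e$ minimizes, $a_e(M_{Q_e})\leq a_e(M_Q)\leq a_e(M)+\dim(R)$. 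Dividing the chain $a_e(M)\leq a_e(M_{Q_e})\leq a_e(M)+\dim(R)$ by $p^{e\gamma(R)}$ and letting $e\to\infty$, the two outer terms converge to $\s(M)$ by Theorem~\ref{Global F-signature exists} together with the preliminary remark, so $a_e(M_{Q_e})/p^{e\gamma(R)}\to\s(M)$.

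Then I would handle $\s(M_{Q_e})$ and the identification of the limit. By Theorem~\ref{Uniform Convergence of a_e} (together with Remark~\ref{Uniform Convergence Remark} for modules) the functions $P\mapsto\s_e(M_P)$ converge uniformly on $\Spec(R)$ to $P\mapsto\s(M_P)$; since $\s_e(M_{Q_e})=a_e(M_{Q_e})/p^{e\gamma(R)}\to\s(M)$, writing $|\s(M_{Q_e})-\s(M)|\leq|\s(M_{Q_e})-\s_e(M_{Q_e})|+|\s_e(M_{Q_e})-\s(M)|$ and bounding the first term by $\sup_P|\s(M_P)-\s_e(M_P)|$ shows $\s(M_{Q_e})\to\s(M)$. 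Finally, dividing $a_e(M)\leq a_e(M_P)$ by $p^{e\gamma(R)}$ and taking limits gives $\s(M)\leq\s(M_P)$ for every $P$, so $\s(M)\leq\inf_P\s(M_P)$; and since $\s(M_{Q_e})\geq\inf_P\s(M_P)$ for all $e$ while $\s(M_{Q_e})\to\s(M)$, we also get $\s(M)\geq\inf_P\s(M_P)$, whence $\s(M)=\inf_P\s(M_P)$. As $\Spec(R)$ is quasi-compact and $P\mapsto\s(M_P)$ is lower semi-continuous by Corollary~\ref{Lower semi-continuity of the F-signature}, this infimum is attained, so $\s(M)=\min\{\s(M_P)\mid P\in\Spec(R)\}$; both stated sequences therefore converge to this common value.

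The step I expect to be the main obstacle is the upper bound $a_e(M_{Q_e})\leq a_e(M)+\dim(R)$: in the local case one simply has $a_e(M_\m)=a_e(M)$, but globally the free rank can jump upon localization, and bounding the jump by $\dim(R)$ requires Stafford's splitting theorem via Lemma~\ref{Splitting Lemma 1}. A secondary subtlety is upgrading ``$\inf$'' to ``$\min$'' in the last assertion, which is exactly where lower semi-continuity of the F-signature function and quasi-compactness of $\Spec(R)$ are needed.
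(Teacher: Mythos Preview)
Your proof is correct, but it takes a different route from the paper's. The paper's argument is a pure $\epsilon/2$ argument using only the lower semi-continuity of $P\mapsto\s(M_P)$ (to pick $Q$ with $\s(M_Q)=\min_P\s(M_P)$) and the uniform convergence of $\s_e$ to $\s$: for $e\gg0$ one has
\[
\s(M_Q)\leq \s(M_{Q_e})<\s_e(M_{Q_e})+\epsilon/2\leq \s_e(M_Q)+\epsilon/2<\s(M_Q)+\epsilon,
\]
and both conclusions follow at once. In particular, the paper never invokes the global invariant $\s(M)$, nor Stafford's theorem via Lemma~\ref{Splitting Lemma 1}, nor Theorem~\ref{Global F-signature exists}; the lemma is proved as a statement about uniform limits of semi-continuous functions on a quasi-compact space, and the identification $\s(M)=\min_P\s(M_P)$ is deferred to Theorem~\ref{What is Global F-signature}.

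Your approach instead sandwiches $a_e(M_{Q_e})$ between $a_e(M)$ and $a_e(M)+\dim(R)$ and feeds in the existence of $\s(M)$, so you obtain $\s(M)=\min_P\s(M_P)$ as a byproduct. This is perfectly valid---all the ingredients you cite precede the lemma---and in effect merges the paper's Lemma~\ref{Convergence Lemma} with the relevant step of Theorem~\ref{What is Global F-signature}. The trade-off is modularity: the paper's proof isolates a purely function-theoretic fact that is reusable (and indeed is reused in the non-F-finite setting of Theorem~\ref{Global F-signature non-F-finite}), whereas your argument ties the lemma to the global Frobenius-splitting machinery.
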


\begin{proof} Since $Z_R = \Spec(R)$, the F-signature function $\Spec(R)\rightarrow \R$ sending $P\mapsto \s(R_P)$ is lower semi-continuous by Theorem~\ref{Lower semi-continuity of the F-signature}. 
Therefore there exists $Q\in\Spec(R)$ such that $\s(M_Q)=\inf\{\s(M_P)\mid P\in\Spec(R)\}$. By Theorem~\ref{Uniform Convergence of a_e} and Remark~\ref{Uniform Convergence Remark} the functions $\s_e:\Spec(R)\rightarrow \R$ sending a prime $P\mapsto \s_e(M_P)= a_e(M_P)/p^{e\gamma(R)}$ converge uniformly to their limit, namely $\s:\Spec(R)\rightarrow \R$ sending a prime $P\mapsto \s(M_P)$, the F-signature of $M_P$. Let $\epsilon>0$ and $e_0\gg 0$ such that for $e\geq e_0$, $\left|\s_e(M_P)-\s(M_P)\right|<\epsilon/2$ for every $P\in\Spec(R)$. Then for $e\geq e_0$
\begin{align*}
\s(M_Q)\leq \s(M_{Q_e})< \s_e(M_{Q_e})+\epsilon/2\leq \s_e(M_Q)+\epsilon/2<\s(M_Q)+\epsilon/2+\epsilon/2=\s(M_Q)+\epsilon.
\end{align*}
The lemma now follows. \end{proof}

\begin{Theorem}\label{What is Global F-signature} Let $R$ be an F-finite ring such that $Z_R = \Spec(R)$, and $M$ a finitely generated $R$-module. Then the following limits exist:

\begin{enumerate}
\item $\displaystyle \s(M)= \lim_{e\rightarrow \infty} \frac{a_e(M)}{p^{e\gamma(R)}}$,
\item $\displaystyle \s(M)= \lim_{e\rightarrow \infty} \frac{\tilde{a}_e(M)}{p^{e\gamma(R)}}$,
\item $\displaystyle \lim_{e\rightarrow \infty}\frac{n_e}{p^{e\gamma(R)}}$, where $n_e$ is the rank of a free direct summand of $F^e_*M$ appearing in a choice of decomposition $F^e_*M\cong R^{\oplus n_e}\oplus M_e$, where $M_e$ has no free summand,
\item $\displaystyle \lim_{e\rightarrow \infty}\frac{m_e}{\rank(F^e_*R)}$, where $m_e$ is the min-rank of a project summand $\Omega_e$ of $F^e_*M$ appearing in a choice of decomposition $F^e_*M\cong \Omega_e\oplus M_e$, where $M_e$ has no projective summand,
\item $\displaystyle \lim_{e\rightarrow \infty }\frac{a_e(M_{Q_e})}{p^{e\gamma(R)}}$, where $Q_e\in\Spec (R)$ is chosen such that $$a_e(M_{Q_e})=\min\{a_e(M_P)\mid P\in\Spec(R)\},$$
\item $\displaystyle \lim_{e\rightarrow \infty} \s(M_{Q_e})$, where $Q_e\in\Spec (R)$ is chosen such that $$a_e(M_{Q_e})=\min\{a_e(M_P)\mid P\in\Spec(R)\}.$$
\end{enumerate}
Moreover, all of the above limits agree, with common value being $\displaystyle\min\{\s(M_P)\mid P\in\Spec(R)\}$. 
\end{Theorem}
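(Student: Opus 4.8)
The plan is to carry out the usual reductions, then pin down each of the six quantities by comparison with the global splitting number $a_e(M)$ and with the minimal local splitting number $a_e(M_{Q_e})$, and finally to invoke the uniform convergence packaged in Lemma~\ref{Convergence Lemma}.

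First I would use Remark~\ref{Reduce to the Reduced Case} to reduce to the case where $R$ is reduced and $Z_R=\Spec(R)$; the latter is already hypothesized, and now $\Assh(R)=\Min(R)$, $\gamma(R_Q)=\gamma(R)$ for all $Q\in\Min(R)$, and $\rank(F^e_*R)=p^{e\gamma(R)}$, so the normalizing denominators in (1)--(6) all coincide. Existence of the limit in (1) is Theorem~\ref{Global F-signature exists}. For (2), Remark~\ref{Remark due to Serre's Theorem} gives $a_e(M)\le\tilde a_e(M)\le a_e(M)+d$, where $d=\dim(R)$, so dividing by $p^{e\gamma(R)}$ and letting $e\to\infty$ shows the limit in (2) exists and equals $\s(M)$.

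Next I would treat (3) and (4) by the same mechanism: any free (resp.\ projective) summand of $F^e_*M$ stays a free summand after localization, so $a_e(M)\le a_e(M_P)$ for all $P$. If $F^e_*M\cong R^{\oplus n_e}\oplus M_e$ with $M_e$ having no free summand, then $a_e(M_P)=\frk(F^e_*M_P)=n_e+\frk((M_e)_P)$, and Lemma~\ref{Splitting Lemma 1} produces a prime $Q$ with $\frk((M_e)_Q)\le d$; hence $a_e(M)\le a_e(M_Q)\le n_e+d$, and since trivially $n_e\le a_e(M)$ we get $|n_e-a_e(M)|\le d$, so the limit in (3) exists and equals $\s(M)$. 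For (4), if $F^e_*M\cong\Omega_e\oplus M_e$ with $\Omega_e$ projective of min-rank $m_e$ and $M_e$ having no projective summand, then $m_e\le\tilde a_e(M)\le a_e(M)+d$, while Lemma~\ref{Splitting Lemma 2} gives a prime $Q$ with $a_e(M)\le a_e(M_Q)=\frk(F^e_*M_Q)\le m_e+d$; thus $|m_e-a_e(M)|\le d$, and dividing by $\rank(F^e_*R)=p^{e\gamma(R)}$ gives the limit in (4), again equal to $\s(M)$.

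Finally I would identify the common value and deduce (5) and (6). Dividing $a_e(M)\le a_e(M_P)$ by $p^{e\gamma(R)}$ and letting $e\to\infty$ gives $\s(M)\le\s(M_P)$ for all $P$, hence $\s(M)\le\min\{\s(M_P)\mid P\in\Spec(R)\}$, the minimum being attained by lower semicontinuity (Theorem~\ref{Lower semi-continuity of the F-signature}). For the reverse inequality I would use Stafford's splitting theorem (Theorem~\ref{Generalized Serre's Theorem}): peeling off free summands of $F^e_*M$ one at a time, one shows that if $a_e(M_P)\ge d+m$ for every $P\in\Spec(R)$ then $a_e(M)\ge m$; applying this with $m=a_e(M_{Q_e})-d$, where $Q_e$ is chosen so that $a_e(M_{Q_e})=\min\{a_e(M_P)\mid P\in\Spec(R)\}$, yields $a_e(M_{Q_e})-d\le a_e(M)\le a_e(M_{Q_e})$. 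Dividing by $p^{e\gamma(R)}$ and applying Lemma~\ref{Convergence Lemma}, which asserts that both $a_e(M_{Q_e})/p^{e\gamma(R)}$ and $\s(M_{Q_e})$ converge to $\min\{\s(M_P)\mid P\in\Spec(R)\}$, shows that $\s(M)=\min\{\s(M_P)\mid P\in\Spec(R)\}$ and that the limits in (5) and (6) exist with this value. The step I expect to be the crux is this reverse inequality: it is precisely the assertion that, up to the additive error $\dim(R)$, one can split globally as many copies of $R$ out of $F^e_*M$ as at the worst prime, which is the Forster--Swan/Stafford local-to-global principle in disguise; a minor point to watch is that $R$ need not be a domain, so one should apply Theorem~\ref{Generalized Serre's Theorem} (valid for arbitrary finitely generated modules) rather than its domain-only variant Theorem~\ref{Local to global splitting numbers Cartier}.
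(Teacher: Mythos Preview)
Your proposal is correct and follows essentially the same route as the paper: items (1) and (2) via Theorem~\ref{Global F-signature exists} and Remark~\ref{Remark due to Serre's Theorem}, items (5) and (6) via Lemma~\ref{Convergence Lemma}, and items (3) and (4) by sandwiching with the Splitting Lemmas~\ref{Splitting Lemma 1} and~\ref{Splitting Lemma 2}. The only cosmetic difference is that you sandwich $n_e,m_e$ against $a_e(M)$ first and then separately compare $a_e(M)$ with $a_e(M_{Q_e})$, whereas the paper sandwiches $n_e,m_e$ directly against $a_e(M_{Q_e})$ (noting that the prime produced by the Splitting Lemmas dominates $Q_e$ anyway); both arrangements unwind to the same inequalities coming from Stafford's theorem.
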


\begin{proof}  The existence and agreements of the limits in (1) and (2) is the content of Theorem~\ref{Global F-signature exists} and Remark~\ref{Remark due to Serre's Theorem}. The existence and agreements of the limits in (5) and (6) is the content of Lemma~\ref{Convergence Lemma}. The proof of the theorem is easily reduced to showing the convergence of the sequences in (3) and (4) to $\min\{\s(M_P)\mid P\in\Spec(R)\}$.  Let $Q_e\in\Spec(R)$ be as in Lemma~\ref{Convergence Lemma}. Then by Lemmas~\ref{Splitting Lemma 1} and~\ref{Splitting Lemma 2}, $a_e(M_{Q_e})\leq n_e+d$ and $a_e(M_{Q_e})\leq m_e+d$. Observe that $m_e,n_e\leq a_e(M_{Q_e})$. Therefore $\frac{a_e(M_{Q_e})-d}{p^{e\gamma(R)}}\leq \frac{n_e}{p^{e\gamma(R)}}\leq \frac{a_e(M_{Q_e})}{p^{e\gamma(R)}}$ and $\frac{a_e(M_{Q_e})-d}{p^{e\gamma(R)}}\leq \frac{m_e}{p^{e\gamma(R)}}\leq \frac{a_e(M_{Q_e})}{p^{e\gamma(R)}}$. By Lemma~\ref{Convergence Lemma}, $\frac{n_e}{p^{e\gamma(R)}}$ and $\frac{m_e}{p^{e\gamma(R)}}$ must converge to $\min\{\s(M_P)\mid P\in\Spec(R)\}$. \end{proof}

\begin{Corollary}\label{What is F-sig Corollary} Let $R$ be an F-finite ring such that $Z_R = \Spec(R)$, and let $M$ be a finitely generated $R$-module. Then $\s(M) = \min\{\rank_{R_P}(M_P)\s(R_P) \mid P \in \Spec(R)\}$. In addition, if $R$ is a domain, then $\s(M)=\rank_R(M)\s(R)$.
\end{Corollary}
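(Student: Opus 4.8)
The plan is to read this off from Theorem~\ref{What is Global F-signature} together with the classical local identity for the F-signature of a module. Theorem~\ref{What is Global F-signature} gives $\s(M) = \min\{\s(M_P) \mid P \in \Spec(R)\}$, so the task reduces to two things: (a) verifying that $\s(M_P) = \rank_{R_P}(M_P)\s(R_P)$ at every prime $P$, and (b) in the case where $R$ is a domain, pulling the (now constant) rank out of the minimum.

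For (a), fix $P \in \Spec(R)$. Since a localization of an F-finite ring is F-finite, $R_P$ is an F-finite local ring and $M_P$ a finitely generated $R_P$-module; also $\gamma(R_P) = \alpha(P) + \height(P) = \gamma(R)$ because $P \in Z_R = \Spec(R)$. I would split into cases according to the value of $\s(R_P)$. If $\s(R_P) > 0$, then $R_P$ is strongly F-regular by Theorem~\ref{s>0 local}, hence a domain, and $\s(M_P) = \rank_{R_P}(M_P)\s(R_P)$ is precisely the statement recorded in Remark~\ref{F-signature of a Module}, i.e., \cite[Theorem 4.11]{Tucker2012}. If $\s(R_P) = 0$, the right-hand side is $0$; on the other hand, the inequality $a_e(M_P) \leq a_e(R_P^{\oplus \mu_{R_P}(M_P)}) = \mu_{R_P}(M_P)\,a_e(R_P)$ from Remark~\ref{F-signature of a Module}, after dividing by $p^{e\gamma(R)}$ and letting $e \to \infty$, forces $0 \leq \s(M_P) \leq \mu_{R_P}(M_P)\s(R_P) = 0$. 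Thus the identity holds at every $P$, and substituting it into Theorem~\ref{What is Global F-signature} yields $\s(M) = \min\{\rank_{R_P}(M_P)\s(R_P) \mid P \in \Spec(R)\}$.

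For (b), assume $R$ is a domain with fraction field $K$. Then each $R_P$ has the same fraction field $K$, and $M_P \otimes_{R_P} K \cong M \otimes_R K$, so $\rank_{R_P}(M_P) = \dim_K(M \otimes_R K) = \rank_R(M)$ is independent of $P$. Since this common value is a nonnegative real number, it factors out of the minimum; applying Theorem~\ref{What is Global F-signature} once more to the module $R$ (so that $\s(R) = \min\{\s(R_P) \mid P \in \Spec(R)\}$), we obtain $\s(M) = \rank_R(M)\min\{\s(R_P) \mid P \in \Spec(R)\} = \rank_R(M)\s(R)$.

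I do not expect a genuine obstacle: once Theorem~\ref{What is Global F-signature} is in hand, the argument is essentially bookkeeping. The one point that needs a little care is the non-domain local case in (a)---there Tucker's module formula is stated only for domains, so instead one notes that $\s(R_P) = 0$ collapses both sides of the identity to $0$.
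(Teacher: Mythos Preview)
Your proof is correct and follows essentially the same route as the paper: invoke Theorem~\ref{What is Global F-signature} to write $\s(M)=\min\{\s(M_P)\mid P\in\Spec(R)\}$, use the local identity $\s(M_P)=\rank_{R_P}(M_P)\s(R_P)$ from Remark~\ref{F-signature of a Module}, and in the domain case pull out the constant rank and reapply Theorem~\ref{What is Global F-signature} with $M=R$. The paper simply asserts the local identity without comment, whereas you take the extra step of splitting into the cases $\s(R_P)>0$ (hence $R_P$ a domain, so Tucker's formula applies) and $\s(R_P)=0$ (both sides vanish); this is a reasonable bit of caution, but not a different argument.
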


\begin{proof} By Theorem~\ref{What is Global F-signature}, $\s(M)=\min\{\s(M_P)\mid P\in \Spec(R)\}$. For each $P\in\Spec(R)$, $\s(M_P)=\rank_{R_P}(M_P)\s(R_P)$, and the first claim follows.  If $R$ is a domain, we have that $\rank_{R_P}(M_P) = \rank_R(M)$ for any $P \in \Spec(R)$. Thus, in this case, we have $\s(M)=\rank_R(M)\min\{\s(R_P)\mid P\in\Spec(R)\}$, which is $\rank(M)\s(R)$ by a repeated application of Theorem~\ref{What is Global F-signature}.
\end{proof}

\begin{Theorem}\label{s=1}  Let $R$ be an F-finite ring such that $Z_R = \Spec(R)$. Then $\s(R)=1$ if and only if $R$ is regular.
\end{Theorem}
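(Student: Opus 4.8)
The plan is to reduce the statement to the local case through the formula $\s(R) = \min\{\s(R_P)\mid P\in\Spec(R)\}$ of Theorem~\ref{What is Global F-signature}, which applies precisely because $Z_R=\Spec(R)$ is assumed. The remaining input is the local characterization $\s(R_P)=1\iff R_P$ regular (Theorem~\ref{s=1 local}), together with the elementary fact that $R$ is regular if and only if $R_P$ is regular for every $P\in\Spec(R)$.

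First I would treat the direction ``$R$ regular $\Rightarrow\s(R)=1$''. If $R$ is regular, then $R_P$ is a regular local ring for every $P\in\Spec(R)$, so $\s(R_P)=1$ by Theorem~\ref{s=1 local}. Applying Theorem~\ref{What is Global F-signature} gives $\s(R)=\min\{\s(R_P)\mid P\in\Spec(R)\}=1$.

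For the converse, assume $\s(R)=1$. By Theorem~\ref{What is Global F-signature} this means $\min\{\s(R_P)\mid P\in\Spec(R)\}=1$, so $\s(R_P)\geq 1$ for every prime $P$. On the other hand, the inequalities recorded at the beginning of Section~\ref{Global F-signature section} give $a_e(R_P)\leq\rank_{R_P}(F^e_*R_P)\leq p^{e\gamma(R_P)}$ for all $e$, hence $\s(R_P)\leq 1$. Therefore $\s(R_P)=1$ for every $P\in\Spec(R)$, and Theorem~\ref{s=1 local} forces each $R_P$ to be regular; thus $R$ is regular.

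I do not anticipate a genuine obstacle here: the substantive work has already been absorbed into Theorem~\ref{What is Global F-signature} (which rests on the uniform convergence statements recalled in Section~\ref{Background}) and into the local input Theorem~\ref{s=1 local}. The only point worth flagging is that the hypothesis $Z_R=\Spec(R)$ is essential rather than cosmetic: by Remark~\ref{Reduce to the Reduced Case}, without it a regular ring, for instance a product $K\times L$ of F-finite fields with $\gamma(K)\neq\gamma(L)$, can have global F-signature $0$, so the stated equivalence would fail.
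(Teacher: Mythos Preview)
Your proof is correct and follows essentially the same approach as the paper: reduce to the local case via Theorem~\ref{What is Global F-signature} to get $\s(R)=\min\{\s(R_P)\mid P\in\Spec(R)\}$, then invoke Theorem~\ref{s=1 local}. Your version is slightly more explicit in justifying $\s(R_P)\leq 1$ and in noting why the hypothesis $Z_R=\Spec(R)$ cannot be dropped, but the argument is the same.
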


\begin{proof} The ring $R$ is regular if and only if for each $Q\in \Spec(R)$ the local ring $R_Q$ is a regular local ring. The local ring $R_Q$ is regular if and only if $\s(R_Q)=1$ by Theorem~\ref{s=1 local}. By Theorem~\ref{What is Global F-signature} this will happen if and only if $\s(R)=1$. \end{proof}

\begin{Theorem}\label{s>0} Let $R$ be an F-finite ring such that $Z_R = \Spec(R)$. Then $\s(R)>0$ if and only if $R$ is strongly F-regular.
\end{Theorem}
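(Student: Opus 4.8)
The plan is to deduce the statement from its local counterpart, Theorem~\ref{s>0 local}, using the local description of the global F-signature furnished by Theorem~\ref{What is Global F-signature}. First I would invoke Theorem~\ref{What is Global F-signature}, which under the standing hypothesis $Z_R=\Spec(R)$ asserts that
\[
\s(R)=\min\{\s(R_P)\mid P\in\Spec(R)\}.
\]
Since the right-hand side is an honest minimum (attained at some prime), $\s(R)>0$ if and only if $\s(R_P)>0$ for \emph{every} $P\in\Spec(R)$.

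Next I would apply Theorem~\ref{s>0 local} to each localization $R_P$, which is a local F-finite ring: one has $\s(R_P)>0$ if and only if $R_P$ is strongly F-regular. Combining this with the previous step, $\s(R)>0$ if and only if $R_P$ is strongly F-regular for all $P\in\Spec(R)$.

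Finally I would use that strong F-regularity is a local property, namely that $R$ is strongly F-regular precisely when $R_P$ is strongly F-regular for every prime (equivalently, every maximal) ideal of $R$; this is the natural formulation of strong F-regularity in the non-local setting and is standard for F-finite rings. Stringing the three equivalences together yields the assertion.

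The main point to be careful about is bookkeeping rather than genuine mathematics: one must ensure that the notion of strong F-regularity attached to the non-local ring $R$ is exactly the one that localizes correctly, and that the hypothesis $Z_R=\Spec(R)$ is what makes Theorem~\ref{What is Global F-signature} applicable (so that the infimum over $\Spec(R)$ is realized as a minimum and the uniform-convergence input is in force). If strong F-regularity were instead taken in a direct form (e.g.\ via splittings of $F^e_*R$ along nonzerodivisors), one would further note that $\s(R)>0$ forces $R$ to be a finite product of normal domains, as recorded in Remark~\ref{Reduce to the Reduced Case}, and then argue component by component — but this is subsumed by the local characterization above.
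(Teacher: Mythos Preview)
Your proof is correct and matches the paper's own argument essentially verbatim: both combine the identification $\s(R)=\min\{\s(R_P)\mid P\in\Spec(R)\}$ from Theorem~\ref{What is Global F-signature} with the local equivalence from Theorem~\ref{s>0 local} and the fact that strong F-regularity is a local property. The only difference is the order in which the three equivalences are chained.
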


\begin{proof} An F-finite ring is strongly F-regular if and only if each localization of $R$ at a prime ideal is strongly F-regular. This is equivalent to $\s(R_Q)>0$ for each $Q\in\Spec(R)$ by Theorem~\ref{s>0 local}. This is equivalent to $\s(R)=\min\{\s(R_P)\mid P\in\Spec(R)\}>0$.  \end{proof}

\begin{Example} If $Z_R \ne \Spec(R)$, i.e., if there exists $P\in\Spec(R)$ such that $\alpha(P)+\height(P)\not=\gamma(R)$, then $\s(R)=1$ is not equivalent to $R$ being regular and $\s(R)>0$ is not equivalent to $R$ being strongly F-regular. Let $R=\F_p\times \F_p(t)$. Then $R$ is regular, hence strongly F-regular. But $\alpha(P)+\height(P)$ varies at the two different prime ideals of $R$, hence $\s(R)=0$ by Remark~\ref{Reduce to the Reduced Case}.
\end{Example}

\begin{Theorem}\label{F-signature big means regular global} Fix $d\in \N$. There is a number $\delta>0$ such that, if $R$ is an F-finite ring of dimension $\dim(R) \leq d$, of any prime characteristic, and such that $\s(R)\geq 1-\delta$, then $R$ is regular.
\end{Theorem}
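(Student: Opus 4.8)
The plan is to bootstrap from the local statement Theorem~\ref{F-signature big means regular} by means of the local--global principle $\s(R)=\min\{\s(R_P)\mid P\in\Spec(R)\}$ supplied by Theorem~\ref{What is Global F-signature}. First I would fix the constant: for each integer $i$ with $0\le i\le d$, let $\delta(i)>0$ be a constant as in Theorem~\ref{F-signature big means regular} that works for F-finite local rings of dimension $i$ in any prime characteristic, and set $\delta=\min\big(\{\delta(i)\mid 0\le i\le d\}\cup\{1/2\}\big)$; in particular $0<\delta<1$. This choice of $\delta$ depends only on $d$, as required.

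Next I would dispose of the possibly degenerate case $Z_R\ne\Spec(R)$: by Remark~\ref{Reduce to the Reduced Case} this forces $\s(R)=0$, contradicting the hypothesis $\s(R)\ge 1-\delta>0$. Hence $Z_R=\Spec(R)$, so Theorem~\ref{What is Global F-signature} applies and yields $\s(R_P)\ge\s(R)\ge 1-\delta$ for every $P\in\Spec(R)$. Now for a fixed prime $P$, the ring $R_P$ is F-finite (localizations of F-finite rings are F-finite) and local of dimension $\height(P)\le\dim(R)\le d$; writing $i=\dim(R_P)$, we have $\s(R_P)\ge 1-\delta\ge 1-\delta(i)$, so Theorem~\ref{F-signature big means regular} shows that $R_P$ is regular. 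Since this holds for all $P\in\Spec(R)$, the ring $R$ is regular, which completes the argument.

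I do not expect a genuine obstacle here: the argument is essentially formal once the local result and the min-formula are in hand. The only points that need care are (i) choosing $\delta<1$ so that the hypothesis $\s(R)\ge 1-\delta$ by itself rules out $Z_R\ne\Spec(R)$ (equivalently, rules out $\s(R)=0$), and (ii) invoking the local constants $\delta(i)$ uniformly across all prime characteristics, which is precisely what Theorem~\ref{F-signature big means regular} provides (ultimately via Theorem~\ref{HK really small means regular local}).
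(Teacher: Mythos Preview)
Your proposal is correct and follows essentially the same route as the paper: take the minimum of the local constants $\delta(i)$ for $i\le d$, use $\s(R)>0$ to force $Z_R=\Spec(R)$, apply Theorem~\ref{What is Global F-signature} to get $\s(R_P)\ge 1-\delta$ for all $P$, and conclude by the local result. Your extra care in capping $\delta$ by $1/2$ to guarantee $\delta<1$ (so that $\s(R)\ge 1-\delta$ genuinely excludes $\s(R)=0$) is a harmless refinement that the paper leaves implicit.
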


\begin{proof} Let $\delta(i)$ be a number as in Theorem~\ref{F-signature big means regular}, that works for rings of dimension $i$, and let $\delta = \min\{\delta(i) \mid i \leq d\}$. Without loss of generality, we may assume $\s(R)>0$, thus we may assume that $Z_R = \Spec(R)$. If $\s(R)\geq 1-\delta$, then $\s(R_P)\geq 1-\delta$ for each $P\in\Spec(R)$. It follows that $R_P$ is regular for each $P\in\Spec(R)$, that is, $R$ is regular.
\end{proof} 

\subsection{Global F-signature of a Cartier subalgebra} In what follows, $R$ is an F-finite ring and $\D$ is a Cartier subalgebra. Given a choice of direct summand $M$ of $F^e_*R$, with splitting $M\subseteq F^e_*R\rightarrow M$, we say that a summand $N$ of $M$ is a $\D$-summand if $N\cong R^{\oplus n}$ is free and the natural projection map $F^e_*R\rightarrow M\rightarrow N$ is a direct sum of elements of $\D_e$. The choice of isomorphism $N\cong R^{\oplus n }$ does not change whether or not $N$ is a $\D$-summand. We denote by $a(M,\D)$ the largest rank of a $\D$-summand appearing in various direct sum decompositions of $M$. Recall that $a(F^e_*R, \D)=a^\D_e(R)$ is the usual $e$th Frobenius splitting number of the pair $(R,\D)$, see Section~\ref{Background}.

\begin{Lemma}\label{Cartier Lemma 1} Let $R$ be an F-finite ring and $M$ be a direct summand of $F^e_*R$. Suppose that $x\in M$ and that $(Rx)_Q\subseteq M_Q$ is a $\D_Q$-summand for each $Q\in\Spec(R)$. Then $Rx\subseteq M$ is a $\D$-summand.
\end{Lemma}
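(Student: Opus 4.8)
The plan is to translate the conclusion into the existence of a single $R$-linear map $\psi\colon M\to R$ with $\psi(x)=1$ whose composite with the fixed splitting $\rho\colon F^e_*R\to M$ lies in $\D_e$, and then to obtain $\psi$ by a local-to-global argument: the obstruction lives in the cokernel of an evaluation map, and this cokernel vanishes because it does so after localizing at every prime.

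First I would record the reductions. Reading the hypothesis in the natural way, $(Rx)_Q$ is free of rank one over $R_Q$ for each $Q\in\Spec(R)$, with $x/1$ a free basis; hence $\Ann_R(x)=0$, so $\iota\colon R\to Rx$, $1\mapsto x$, is an isomorphism, and $\iota_Q$ is the corresponding isomorphism at each $Q$. Next, suppose such a $\psi$ exists. Then $\psi\circ\iota=\id_R$, so $M=Rx\oplus\ker\psi$ exhibits $Rx\cong R$ as a free direct summand of $M$, and the composite $F^e_*R\xrightarrow{\ \rho\ }M\twoheadrightarrow Rx\xrightarrow{\ \iota^{-1}\ }R$ is exactly $\psi\circ\rho\in\D_e$; so $Rx$ is a $\D$-summand of $M$. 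Thus it is enough to produce $\psi$.

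To that end I would set $\E=\{\psi\in\Hom_R(M,R)\mid\psi\circ\rho\in\D_e\}$. Since $\D$ is a Cartier subalgebra, $\D_e$ is an $R$-submodule of $\Hom_R(F^e_*R,R)$, so $\E=\ker\big(\Hom_R(M,R)\xrightarrow{\,\psi\mapsto[\psi\circ\rho]\,}\Hom_R(F^e_*R,R)/\D_e\big)$ is an $R$-submodule of $\Hom_R(M,R)$. Consider the $R$-linear map $\operatorname{ev}_x\colon\E\to R$ given by $\psi\mapsto\psi(x)$; its image is an ideal of $R$, so to get $1\in\operatorname{im}(\operatorname{ev}_x)$ it suffices to prove $\coker(\operatorname{ev}_x)=0$, i.e. that $(\operatorname{ev}_x)_Q$ is surjective for every $Q\in\Spec(R)$. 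Here F-finiteness enters: $F^e_*R$ and its summand $M$ are finitely presented over the Noetherian ring $R$, so $\Hom$ commutes with localization, giving $\E_Q=\{\psi\in\Hom_{R_Q}(M_Q,R_Q)\mid\psi\circ\rho_Q\in(\D_e)_Q=(\D_Q)_e\}$ and identifying $(\operatorname{ev}_x)_Q$ with evaluation at $x/1$. The assumption that $(Rx)_Q\subseteq M_Q$ is a $\D_Q$-summand — with respect to the localized splitting $\rho_Q$ — provides a retraction $\pi_Q\colon M_Q\to(Rx)_Q$ of the inclusion with $\pi_Q\circ\rho_Q\in(\D_Q)_e$ after identifying $(Rx)_Q\cong R_Q$ via $\iota_Q$; then $\psi_Q:=\iota_Q^{-1}\circ\pi_Q\in\E_Q$ satisfies $\psi_Q(x/1)=1$, so $(\operatorname{ev}_x)_Q$ is onto. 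As $Q$ was arbitrary, $\coker(\operatorname{ev}_x)=0$ and the required $\psi$ exists.

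The argument is mostly bookkeeping, and I do not expect a serious obstacle. The two points needing care are: (i) keeping the fixed global splitting $\rho$ aligned with the intrinsic notion of a $\D_Q$-summand, so that the splitting used at $Q$ is precisely $\rho_Q$; and (ii) the interplay of $\Hom$, the graded piece $\D_e$, and localization — that $(\D_e)_Q=(\D_Q)_e$ and that $\E$ localizes correctly — which is exactly where finite presentation of $F^e_*R$ (hence F-finiteness of $R$) is used.
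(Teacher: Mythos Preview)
Your argument is correct and is essentially the same local-to-global idea as the paper's proof: both show that the image of the evaluation-at-$x$ map from $\D$-compatible splittings to $R$ is the unit ideal. The paper carries this out by an explicit partition-of-unity (choosing $s_1,\dots,s_n$ generating $R$ with $\varphi_i\in\D_e$ satisfying $\varphi_i(x)=s_i$ after clearing denominators, then taking $\varphi=\sum r_i\varphi_i$), whereas you package the same content as vanishing of $\coker(\operatorname{ev}_x)$ checked prime-by-prime; your version is more formal and makes the role of F-finiteness (localization of $\Hom$) more explicit, while the paper's is shorter and more direct.
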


\begin{proof} Our assumptions allow us to find $s_1,\dots,s_n\in R$ such that $(s_1,\dots,s_n)=R$ and such that $(Rx)_{s_i}\subseteq M_{s_i}$ is a $\D_{s_i}$-summand. After replacing $s_i$ by powers of themselves, we can find $\varphi_1,\dots,\varphi_n\in \D_e$ such that $\varphi_i(x)=s_i$. There are elements $r_1,\dots,r_n\in R$ such that $r_1s_1+\cdots +r_ns_n=1$. Let $\varphi= r_1\varphi_1+\cdots +r_n\varphi _n\in \D_e$, then $\varphi(x)=1$. \end{proof}

\begin{Condition}\label{Condition dagger} We will say that $(R,\D)$ satisfies condition $(\dagger)$ if at least one of the following conditions is satisfied:
\begin{itemize}
\item Condition $(*)$ from Condition~\ref{Condtion *},
\item $\D=\C^{\a^t}$ for some ideal $\a\subseteq R$ and some $t>0$, 
\item $R$ is normal and $\D=\C^{(R,\Delta)}$ for some effective $\Q$-divisor $\Delta$.
\end{itemize}
\end{Condition}

\begin{Lemma}\label{Convergence Lemma for Cartier algebras} Let $R$ be an F-finite domain and $\D$ a Cartier subalgebra. For each $e\in \Gamma_\D$, let $Q_e\in\Spec(R)$ be such that $a_e(R_{Q_e},\D_{Q_e})=\min\{a_e(R_P,\D_P)\mid P\in\Spec(R)\}$. Then the sequence  $\s_e(R_{Q_e},\D_{Q_e})$ converges to a limit as $e\in\Gamma_\D\rightarrow \infty$. Moreover, if $(R,\D)$ satisfies condition $(\dagger)$, then the limit converges to $\min\{\s(R_P,\D_P)\mid P\in\Spec(R)\}$.
\end{Lemma}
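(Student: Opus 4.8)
The plan is to set $b_e:=\s_e(R_{Q_e},\D_{Q_e})$ for $e\in\Gamma_\D$ (we may assume $\Gamma_\D$ contains a positive integer, as otherwise the statement is vacuous) and to proceed in two stages. Since $R$ is a domain we have $\gamma(R_P)=\gamma(R)$ for every $P\in\Spec(R)$, so $\s_e(R_P,\D_P)=a_e(R_P,\D_P)/p^{e\gamma(R)}$ and hence $b_e=\min\{\s_e(R_P,\D_P)\mid P\in\Spec(R)\}\in[0,1]$. The key input for the existence of the limit is the one-sided uniform estimate of Theorem~\ref{Uniform Convergence of a_e Cartier}, valid for an arbitrary Cartier subalgebra: there is $C\in\R$ with $\s_e(R_P,\D_P)-\s_{e+e'}(R_P,\D_P)<C/p^e$ for all $P\in\Spec(R)$ and all $e,e'\in\N$. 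Evaluating this at $P=Q_{e+e'}$, and using that $e+e'\in\Gamma_\D$ whenever $e,e'\in\Gamma_\D$, gives the estimate
\[
b_e\;\le\;\s_e(R_{Q_{e+e'}},\D_{Q_{e+e'}})\;<\;b_{e+e'}+\frac{C}{p^{e}}\qquad(\star)
\]
valid for all $e,e'\in\Gamma_\D$.

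To prove that $\lim_{e\in\Gamma_\D}b_e$ exists, I would compare $\ell:=\liminf_{e\in\Gamma_\D}b_e$ with $L:=\limsup_{e\in\Gamma_\D}b_e$, both finite since $0\le b_e\le 1$. Fix $\varepsilon>0$ and $e_0\in\Gamma_\D$ with $C/p^{e_0}<\varepsilon$, and let $e\in\Gamma_\D$ with $e\ge e_0$. Recall that the subsemigroup $\Gamma_\D$ of $\N$ contains every sufficiently large multiple of $d:=\gcd(\Gamma_\D)$; since $f-e$ is automatically a multiple of $d$ for $f\in\Gamma_\D$, it follows that $f-e\in\Gamma_\D$ for all sufficiently large $f\in\Gamma_\D$. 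As $\ell=\liminf$, there are arbitrarily large $f\in\Gamma_\D$ with $b_f<\ell+\varepsilon$, so we may pick $f\in\Gamma_\D$ with both $f-e\in\Gamma_\D$ and $b_f<\ell+\varepsilon$. Then $(\star)$ with $e'=f-e$ yields $b_e<b_f+C/p^{e}<\ell+2\varepsilon$. Since $e\ge e_0$ was arbitrary, $L\le\ell+2\varepsilon$; letting $\varepsilon\to 0$ gives $L=\ell$, so the limit exists. This proves the first assertion.

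For the second assertion I would assume that $(R,\D)$ satisfies $(\dagger)$. In each of the three cases of $(\dagger)$ the functions $\s^\D_e\colon P\mapsto\s_e(R_P,\D_P)$ converge uniformly on $\Spec(R)$, as $e\in\Gamma_\D\to\infty$, to $\s^\D\colon P\mapsto\s(R_P,\D_P)$: this is Theorem~\ref{Uniform Convergence of a_e Cartier} when $\D$ satisfies condition $(*)$, and Theorem~\ref{PTUniformConvergence} when $\D=\C^{\a^t}$ or $\D=\C^{(R,\Delta)}$. The function $\s^\D$ is lower semi-continuous by Theorem~\ref{Lower semi-continuity of the F-signature Cartier}, and hence attains its infimum on the Noetherian space $\Spec(R)$; fix $Q\in\Spec(R)$ with $\s(R_Q,\D_Q)=m:=\min\{\s(R_P,\D_P)\mid P\in\Spec(R)\}$. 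Given $\varepsilon>0$, choose $e_1$ with $|\s_e(R_P,\D_P)-\s^\D(P)|<\varepsilon/2$ for all $P\in\Spec(R)$ and all $e\in\Gamma_\D$ with $e\ge e_1$. Then, exactly as in the proof of Lemma~\ref{Convergence Lemma}, for such $e$ one gets
\[
m\;\le\;\s(R_{Q_e},\D_{Q_e})\;<\;b_e+\tfrac{\varepsilon}{2}\qquad\text{and}\qquad b_e\;\le\;\s_e(R_Q,\D_Q)\;<\;m+\tfrac{\varepsilon}{2},
\]
so $|b_e-m|<\varepsilon$. Therefore $\lim_{e\in\Gamma_\D}b_e=m$, as claimed.

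The step I expect to be the main obstacle is the first assertion. For a general Cartier subalgebra only the one-sided estimate $(\star)$ is available, so the sequence $(b_e)_{e\in\Gamma_\D}$ is merely \emph{almost nondecreasing}, and one cannot reduce to consecutive indices $e,e+1$ since $1$ need not belong to $\Gamma_\D$. The remedy is to exploit that $\Gamma_\D$ eventually contains the whole arithmetic progression of multiples of $\gcd(\Gamma_\D)$, which is precisely what makes the comparison of $\limsup$ with $\liminf$ go through. Once convergence is in hand, the identification of the limit under $(\dagger)$ is the same uniform-convergence-plus-attained-infimum argument already used for the ordinary F-signature, so it should present no difficulty.
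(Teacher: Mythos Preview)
Your proof is correct and follows essentially the same route as the paper: derive the one-sided estimate $b_e\le b_{e+e'}+C/p^e$ from Theorem~\ref{Uniform Convergence of a_e Cartier} to get convergence, and under $(\dagger)$ invoke uniform convergence and proceed exactly as in Lemma~\ref{Convergence Lemma}. The only difference is that for the first assertion the paper simply cites part~(\ref{Sequence Lemma 2}) of Lemma~\ref{Sequence Lemma}, whereas you give a direct $\liminf/\limsup$ argument using the numerical-semigroup structure of $\Gamma_\D$; your version is arguably more careful, since Lemma~\ref{Sequence Lemma} is stated for consecutive integer indices and $\Gamma_\D$ need not be all of $\N$.
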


\begin{proof} By Theorem~\ref{Uniform Convergence of a_e Cartier}, there is a constant $C\in\R$ such that for each $e,e'\in\Gamma_\D$ and each $P\in\Spec(R)$, $$\s_e(R_P,\D_P)-\s_{e+e'}(R_P,\D_P)<\frac{C}{p^e}.$$ It follows that, for each $e,e'\in\Gamma_\D$, we have
$$ \s_e(R_{Q_e},\D_{Q_e})\leq \s_e(R_{Q_{e+e'}},\D_{Q_{e+e'}})\leq \s_{e+e'}(R_{Q_{e+e'}},\D_{Q_{e+e'}})+\frac{C}{p^e},$$ and we conclude that the limit $\lim_{e\rightarrow \infty}\s_e(R_{Q_e},\D_{Q_e})$ exists by (\ref{Sequence Lemma 2}) of Lemma~\ref{Sequence Lemma}.

Now assume that $(R,\D)$ satisfies $(\dagger)$. Then, Theorem~\ref{Uniform Convergence of a_e Cartier} and Theorem~\ref{PTUniformConvergence} imply that the functions $\s_e:\Spec(R)\rightarrow \R$, defined as $Q\mapsto \s_e(R_Q,\D_Q)$, converge uniformly to their limit, namely $\s:\Spec(R)\rightarrow \R$ sending a prime $Q$ to the the F-signature $\s(R_Q,\D_Q)$ of the pair $(R_Q,\D_Q)$. This allows one to proceed as in the proof of  Lemma~\ref{Convergence Lemma}.
\end{proof}

We say that a projective summand $\Omega$ of $F^e_*R$ is a $\D$-summand if $a(\Omega_Q,\D_Q)=\rank(\Omega_Q)$ for each $Q\in\Spec(R)$. We call a projective summand $\Omega$ of $F^e_*R$ a free $\D$-summand if $\Omega$ is free and a $\D$-summand. Let $a_e(R,\D)$ be the largest rank of a free $\D$-summand appearing in various direct sum decompositions of $F^e_*R$, and denote by $\tilde{a}_e(R,\D)$ the largest min-rank of a projective $\D$-summand appearing in various direct sum decompositions of $F^e_*R$. We define the {\it global F-signature of the pair $(R,\D)$} as
\[
\ds \s(R,\D) = \lim_{e \in \Gamma_\D \to \infty} \frac{a_e(R,\D)}{p^{e\gamma(R)}}.
\]
We show the existence of this limit in the following theorem, and we relate it with other limits as in Theorem~\ref{What is Global F-signature}.

\begin{Theorem}\label{Global F-signature for Cartier algebras} Let $R$ be an F-finite domain of dimension $d$ and let $\D$ be a Cartier subalgebra. Then the following limits exist:

\begin{enumerate}
\item $\s(R,\D) = \displaystyle \lim_{e\in\Gamma_\D\rightarrow \infty} \frac{a_e(R,\D)}{p^{e\gamma(R)}}$,
\item $\displaystyle \lim_{e\in\Gamma_\D\rightarrow \infty} \frac{\tilde{a}_e(R,\D)}{p^{e\gamma(R)}}$,
\item $\displaystyle \lim_{e\in\Gamma_\D\rightarrow \infty}\frac{n_e}{p^{e\gamma(R)}}$, where $n_e$ is the rank of a free  $\D$-summand of $F^e_*R$ appearing in a choice of decomposition $F^e_*R\cong R^{n_e}\oplus M_e$ where $M_e$ has no free $\D$-summand,
\item $\displaystyle \lim_{e\in\Gamma_\D\rightarrow \infty}\frac{m_e}{p^{e\gamma(R)}}$, where $m_e$ is the min-rank of a project $\D$-summand $\Omega_e$ of $F^e_*R$ appearing in a choice of decomposition $F^e_*R\cong \Omega_e\oplus M_e$ where $M_e$ has no projective $\D$-summand,
\item $\displaystyle \lim_{e\in\Gamma_\D\rightarrow \infty }\frac{a_e(R_{Q_e},\D_{Q_e})}{p^{e\gamma(R)}}$, where $Q_e\in\Spec (R)$ is chosen such that $$a_e(R_{Q_e},\D_{Q_e})=\min\{a_e(R_P,\D_P)\mid P\in\Spec(R)\}.$$
\end{enumerate}
Moreover, all of the above limits agree. 
If $(R,\D)$ satisfies condition $(\dagger)$, then all the above limits equal $\min\{\s(R_P,\D_P)\mid P\in \Spec(R)\}$.
\end{Theorem}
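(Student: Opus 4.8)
The plan is to route everything through the sequence in item (5), whose convergence as $e\in\Gamma_\D\to\infty$ is already supplied by Lemma~\ref{Convergence Lemma for Cartier algebras}, and then to sandwich each of the sequences in (1)--(4) between $a_e(R_{Q_e},\D_{Q_e})$ and $a_e(R_{Q_e},\D_{Q_e})\pm d$, so that after dividing by $p^{e\gamma(R)}$ all five limits are forced to coincide. Since $R$ is a domain we may assume $\gamma(R)\geq 1$ (if $\gamma(R)=0$ then $R$ is a perfect field and every sequence is eventually equal to $1$); moreover $\gamma(R_P)=\gamma(R)$ and $\minrank=\rank$ for all finitely generated modules and all $P\in\Spec(R)$. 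Fix, for each $e\in\Gamma_\D$, a prime $Q_e$ realizing $a_e(R_{Q_e},\D_{Q_e})=\min\{a_e(R_P,\D_P)\mid P\in\Spec(R)\}$.

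The first step compares (1) with (5). A free $\D$-summand of $F^e_*R$ localizes to a free $\D_P$-summand, so $a_e(R,\D)\leq a_e(R_P,\D_P)$ for every $P$, hence $a_e(R,\D)\leq a_e(R_{Q_e},\D_{Q_e})$. Conversely, applying Theorem~\ref{Local to global splitting numbers Cartier} with $m=a_e(R_{Q_e},\D_{Q_e})-d$ (the conclusion being vacuous when this is $\leq 0$) gives $a_e(R,\D)\geq a_e(R_{Q_e},\D_{Q_e})-d$. Thus $|a_e(R,\D)-a_e(R_{Q_e},\D_{Q_e})|\leq d$, and dividing by $p^{e\gamma(R)}$ shows the limit (1) exists and equals the limit (5).

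The second step handles (2), (3), (4) by imitating Lemmas~\ref{Splitting Lemma 1} and~\ref{Splitting Lemma 2} in the Cartier setting. For (3), trivially $n_e\leq a_e(R,\D)$; for the reverse bound, because $M_e$ has no free $\D$-summand, Theorem~\ref{Generalized Serre 2}, applied to $M_e$ with $\E$ the image of $\D_e$ in $\Hom_R(M_e,R)$ induced by the splitting $F^e_*R\to M_e$, yields a prime $Q$ with $a((M_e)_Q,\D_Q)\leq d$; combined with the fact that a single local decomposition computes the splitting numbers of a pair (see \cite[Proposition 3.5]{BST2012} and the discussion in Section~\ref{Cartier subalgebras}), which gives $a_e(R_Q,\D_Q)=n_e+a((M_e)_Q,\D_Q)$, we obtain $a_e(R_{Q_e},\D_{Q_e})\leq a_e(R_Q,\D_Q)\leq n_e+d$, hence $a_e(R_{Q_e},\D_{Q_e})-d\leq n_e\leq a_e(R,\D)\leq a_e(R_{Q_e},\D_{Q_e})$. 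For (2), a projective $\D$-summand of rank $\geq d+1$ contains a free $\D$-summand (Theorem~\ref{Generalized Serre 2}, or Theorem~\ref{Serre's Splitting Theorem}), so peeling off free $\D$-summands gives $\tilde a_e(R,\D)-d\leq a_e(R,\D)\leq\tilde a_e(R,\D)$, exactly as in Remark~\ref{Remark due to Serre's Theorem}. For (4), the same Stafford-type argument applied to $M_e$ (which now has no projective $\D$-summand, equivalently no free $\D_Q$-summand of rank $\geq d+1$ at every prime) produces $Q$ with $a_e(R_Q,\D_Q)=m_e+a((M_e)_Q,\D_Q)\leq m_e+d$, while $m_e\leq\tilde a_e(R,\D)\leq a_e(R,\D)+d\leq a_e(R_{Q_e},\D_{Q_e})+d$; so $a_e(R_{Q_e},\D_{Q_e})-d\leq m_e\leq a_e(R_{Q_e},\D_{Q_e})+d$. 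In each case dividing by $p^{e\gamma(R)}$ and using Lemma~\ref{Sequence Lemma} shows the limit exists and agrees with (5). Finally, the identification of the common value with $\min\{\s(R_P,\D_P)\mid P\in\Spec(R)\}$ under condition $(\dagger)$ is precisely the second half of Lemma~\ref{Convergence Lemma for Cartier algebras}, which via Theorems~\ref{Uniform Convergence of a_e Cartier} and~\ref{PTUniformConvergence} upgrades the bare convergence of (5) to convergence to that minimum, exactly as in Lemma~\ref{Convergence Lemma}.

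The step I expect to be the main obstacle is the second one: setting up the $\D$-analogues of Lemmas~\ref{Splitting Lemma 1} and~\ref{Splitting Lemma 2} correctly. One must (i) pin down the right $R$-submodule $\E\subseteq\Hom_R(M_e,R)$ so that Theorem~\ref{Generalized Serre 2} applies and a free $\E$-summand of $M_e$ really is a free (resp. projective) $\D$-summand of $F^e_*R$, and (ii) use the local single-decomposition property for Cartier algebras to pass from a global decomposition $F^e_*R\cong R^{n_e}\oplus M_e$ (resp. $\Omega_e\oplus M_e$) to the exact local identities $a_e(R_Q,\D_Q)=n_e+a((M_e)_Q,\D_Q)$ (resp. $m_e+a((M_e)_Q,\D_Q)$). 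Once these bookkeeping points are settled, everything else is a routine $\varepsilon/p^e$ estimate.
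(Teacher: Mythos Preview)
Your approach is essentially the same as the paper's: both route everything through the sequence in (5) via Lemma~\ref{Convergence Lemma for Cartier algebras}, and then sandwich each of the quantities in (1)--(4) within $d$ of $a_e(R_{Q_e},\D_{Q_e})$. The paper is terser, simply asserting that Theorem~\ref{Local to global splitting numbers Cartier} ``easily implies'' the inequalities $n_e\leq a_e(R_{Q_e},\D_{Q_e})\leq n_e+d$ and $m_e\leq a_e(R_{Q_e},\D_{Q_e})\leq m_e+d$, whereas you unpack this by going back to Theorem~\ref{Generalized Serre 2} and the local single-decomposition property \cite[Proposition~3.5]{BST2012}. Your bookkeeping worries in the last paragraph are not genuine obstacles: the relevant submodule $\E\subseteq\Hom_R(M_e,R)$ is just the image of $\D_e$ under composition with the chosen splitting $M_e\hookrightarrow F^e_*R$, and the local identity $a_e(R_Q,\D_Q)=n_e+a((M_e)_Q,\D_Q)$ is exactly what the local single-decomposition property gives. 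One small simplification: for (4), since $R$ is a domain the projective $\D$-summand $\Omega_e$ has constant rank $m_e$, so by definition $a_e(R_P,\D_P)\geq m_e$ for every $P$, giving $m_e\leq a_e(R_{Q_e},\D_{Q_e})$ directly rather than the looser bound you wrote.
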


\begin{proof} The convergence of the limit in (5) is the content of Lemma~\ref{Convergence Lemma for Cartier algebras}. Suppose that $n_e$ and $m_e$ are as in $(3)$ and $(4)$. Then Theorem~\ref{Local to global splitting numbers Cartier} easily implies that $m_e\leq a_e(R_{Q_e},\D_{Q_e})\leq m_e+d$ and $n_e \leq a_e(R_{Q_e},\D_{Q_e})\leq n_e+d$. It follows that the limits in $(1)-(4)$ all exist and are equal to the limit in $(5)$. If we assume that $(R,\D)$ satisfies $(\dagger)$, then Lemma~\ref{Convergence Lemma for Cartier algebras} implies that the common limit value is indeed $\min\{\s(R_P,\D_P)\mid P\in \Spec(R)\}$. \end{proof}

\begin{Corollary}\label{s>0 Cartier}  Let $R$ be an F-finite domain and let $\D$ be a Cartier algebra satisfying condition $(\dagger)$. Then $\s(R,\D)>0$ if and only if $(R,\D)$ is strongly F-regular.
\end{Corollary}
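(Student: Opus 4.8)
The plan is to deduce Corollary~\ref{s>0 Cartier} from its local counterpart, Theorem~\ref{s>0 Cartier local}, using Theorem~\ref{Global F-signature for Cartier algebras} to pass between the global F-signature of $(R,\D)$ and the local F-signatures of the pairs $(R_P,\D_P)$. Since $(R,\D)$ satisfies condition $(\dagger)$ from Condition~\ref{Condition dagger}, Theorem~\ref{Global F-signature for Cartier algebras} gives $\s(R,\D)=\min\{\s(R_P,\D_P)\mid P\in\Spec(R)\}$, and this minimum is genuinely attained. Hence $\s(R,\D)>0$ if and only if $\s(R_P,\D_P)>0$ for every $P\in\Spec(R)$, and by Theorem~\ref{s>0 Cartier local} applied to the F-finite local domain $R_P$ with Cartier subalgebra $\D_P$, this is equivalent to $(R_P,\D_P)$ being strongly F-regular for every $P$. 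Thus the corollary is reduced to the statement that $(R,\D)$ is strongly F-regular if and only if $(R_P,\D_P)$ is strongly F-regular for all $P\in\Spec(R)$; that is, to showing that strong F-regularity of a pair is a local condition.

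One direction of this equivalence is routine: if $(R,\D)$ is strongly F-regular and $P\in\Spec(R)$, then given a fraction in $R_P$ one lifts a suitable numerator to $r\in R$, applies the definition to obtain $e\in\Gamma_\D$ and $\varphi\in\D_e$ with $\varphi(F^e_*r)=1$, and localizes $\varphi$, using that $\Hom_R(F^e_*R,R)$ commutes with localization because $F^e_*R$ is finitely presented over the F-finite Noetherian ring $R$; so $(R_P,\D_P)$ is strongly F-regular.

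The reverse direction is where the real work lies, and I expect it to be the main obstacle. Fix $0\ne c\in R$; since $R$ is a domain, showing that $(R,\D)$ is strongly F-regular amounts to producing a single $e\in\Gamma_\D$ and $\varphi\in\D_e$ with $\varphi(F^e_*c)=1$, equivalently to making $R\cdot F^e_*c$ a free rank-one $\D$-summand of $F^e_*R$. By Lemma~\ref{Cartier Lemma 1}, applied with $M=F^e_*R$ and $x=F^e_*c$, it suffices to find one degree $e$ for which $(Rx)_Q\subseteq(F^e_*R)_Q$ is a $\D_Q$-summand at every $Q\in\Spec(R)$. For each $Q$, strong F-regularity of $(R_Q,\D_Q)$ supplies such a local splitting in \emph{some} degree $e_Q\in\Gamma_\D$, so the difficulty is purely one of uniformity: I would argue that for fixed $e$ the locus of primes at which $F^e_*c$ splits off locally in degree $e$ is open (spreading out a local splitting, using that each $\D_e$ is a finitely generated $R$-module), and that such a local splitting can be promoted to higher degree $e+f$ for $f\in\Gamma_\D$ by composing under the $\bullet$-product of $\C^R$ with elements that locally split $1$ (which again exist in suitable degrees by strong F-regularity at each point and its closure under $\bullet$-powers). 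Keeping track of the subsemigroup $\Gamma_\D\subseteq\N$ — all of whose nonzero elements are divisible by $d=\gcd(\Gamma_\D\setminus\{0\})$, and which is cofinite in $d\N$ — then makes these open loci cofinal, so by quasi-compactness of $\Spec(R)$ a single degree $e$ works, and Lemma~\ref{Cartier Lemma 1} yields the desired global $\varphi$. The delicate point is precisely this degree bookkeeping in the promotion step; the rest of the argument is formal localization and patching.
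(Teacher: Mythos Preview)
Your approach is essentially identical to the paper's: reduce to the local statement via Theorem~\ref{Global F-signature for Cartier algebras} (using condition $(\dagger)$ to get $\s(R,\D)=\min\{\s(R_P,\D_P)\}$) together with Theorem~\ref{s>0 Cartier local}, invoking that strong F-regularity of a pair is a local property. The paper simply asserts this last fact in one line, whereas you supply a careful justification via Lemma~\ref{Cartier Lemma 1} and a spreading-out/compactness argument; your extra work there is correct but goes beyond what the paper deems necessary to spell out.
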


\begin{proof} A pair $(R,\D)$ is strongly F-regular if and only if for each $P\in\Spec(R)$ the pair $(R_P,\D_P)$ is strongly F-regular. Positivity of $\s(R_P,\D_P)$ is equivalent to strong F-regularity of $(R_P,\D_P)$ by Theorem~\ref{s>0 Cartier local}. By Theorem~\ref{Lower semi-continuity of the F-signature Cartier} and Theorem~\ref{Global F-signature for Cartier algebras} there is a $Q\in\Spec(R)$ such that $\s(R,\D)=\s(R_Q,\D_Q)$.  \end{proof}

Corollary~\ref{s>0 Cartier} brings up the following natural question.

\begin{Question}\label{Positivity Question}Let $R$ be an F-finite domain and $\D$ a Cartier subalgebra. Is positivity of $\s(R,\D)$ equivalent to strong F-regularity of $\D$?
\end{Question}
Suppose that $R$ is an F-finite domain and $\D$ a Cartier subalgebra. Suppose that one could show that the functions $\s_e:\Spec(R)\rightarrow \R$ sending $P\mapsto \s_e(R_P,\D_P)$ converge uniformly to their limit function, namely $\s:\Spec(R)\rightarrow \R$ which sends $P\mapsto \s(R_P,\D_P)$. Then one can follow the methods of Theorem~\ref{What is Global F-signature} to establish $\s(R,\D)=\min\{\s(R_P,\D_P)\mid P\in\Spec(R)\}$. Such a result would establish a positive answer to Question~\ref{Positivity Question}. We therefore ask the following more specific question.

\begin{Question}Suppose that $R$ is an F-finite domain and $\D$ a Cartier subalgebra. Do the functions $\s_e:\Spec(R)\rightarrow \R$ sending $P\mapsto \s_e(R_P,\D_P)$ converge uniformly to their limit as $e\in\Gamma_\D\rightarrow \infty$?
\end{Question}

\section{Global F-invariants under faithfully flat extensions}\label{Global non-F-finite}

\subsection{Global F-signature} We now study the behavior of global F-signature under faithfully flat extensions. Recall that if $R$ is an F-finite ring, then we let $Z_R=\{P\in\Spec(R)\mid \alpha(P)+\height(P)=\gamma(R)\}$. Let $M$ be a finitely generated $R$-module. Remark~\ref{Reduce to the Reduced Case} and Theorem~\ref{What is Global F-signature} combined state that $\s(M)=0$ if $Z_R\not=\Spec(R)$, and that $\s(M)=\min\{\s(M_P)\mid P\in\Spec(R)\}$ if $Z_R=\Spec(R)$.

\begin{Theorem}\label{Global F-signature flat} Let $R\rightarrow T$ be a faithfully flat map of F-finite rings such that $Z_R=\Spec(R)$ and $Z_T=\Spec(T)$, and $M$ a finitely generated $R$-module. Then $\s(M)\geq \s(M \otimes_R T)$. If moreover the closed fibers of $R\rightarrow T$ are regular, then $\s(M)=\s(M \otimes_R T)$.
\end{Theorem}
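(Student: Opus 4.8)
The plan is to reduce the global statement to the local comparison theorem for the $F$-signature of modules, Theorem~\ref{F-signature comparison}, via the identification of the global $F$-signature with local $F$-signatures. Since $Z_R=\Spec(R)$ and $Z_T=\Spec(T)$, Theorem~\ref{What is Global F-signature} applies to the $R$-module $M$ and to the $T$-module $M\otimes_R T$, giving $\s(M)=\min\{\s(M_P)\mid P\in\Spec(R)\}$ and $\s(M\otimes_R T)=\min\{\s((M\otimes_R T)_Q)\mid Q\in\Spec(T)\}$, both minima being attained. For any $Q\in\Spec(T)$ with $P=Q\cap R$, the induced map $R_P\to T_Q$ is a flat local homomorphism of $F$-finite local rings of prime characteristic $p$ — hence faithfully flat — and $(M\otimes_R T)_Q\cong M_P\otimes_{R_P}T_Q$.

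For the inequality $\s(M)\ge\s(M\otimes_R T)$: faithful flatness of $R\to T$ makes $\Spec(T)\to\Spec(R)$ surjective, so every $P\in\Spec(R)$ equals $Q\cap R$ for some $Q\in\Spec(T)$. Applying the first assertion of Theorem~\ref{F-signature comparison} to $R_P\to T_Q$ gives $\s(M_P)\ge\s(M_P\otimes_{R_P}T_Q)=\s((M\otimes_R T)_Q)\ge\s(M\otimes_R T)$. Since this holds for every $P$, taking the minimum over $\Spec(R)$ yields $\s(M)\ge\s(M\otimes_R T)$.

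For the equality when the closed fibers of $R\to T$ are regular, I would first arrange that the minimum defining $\s(M\otimes_R T)$ is realized at a maximal ideal of $T$. Choose $Q_0\in\Spec(T)$ with $\s((M\otimes_R T)_{Q_0})=\s(M\otimes_R T)$ and a maximal ideal $\n\supseteq Q_0$. The function $Q\mapsto\s((M\otimes_R T)_Q)$ on $\Spec(T)$ is lower semi-continuous by Corollary~\ref{Lower semi-continuity of the F-signature} (applicable since $T$ is $F$-finite and $M\otimes_R T$ is a finitely generated $T$-module), and a lower semi-continuous function can only decrease under specialization, so $\s((M\otimes_R T)_{\n})\le\s((M\otimes_R T)_{Q_0})$; hence $\n$ also realizes the minimum. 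Put $\m=\n\cap R$. The closed fiber $T_{\n}/\m T_{\n}$ of the flat local homomorphism $R_{\m}\to T_{\n}$ is regular by hypothesis (it is the closed fiber of $R\to T$ at the closed point $\n$, up to the localization already built in). The ``moreover'' part of Theorem~\ref{F-signature comparison} then gives $\s(M_{\m})=\s(M_{\m}\otimes_{R_{\m}}T_{\n})=\s((M\otimes_R T)_{\n})=\s(M\otimes_R T)$, so $\s(M)=\min_{P}\s(M_P)\le\s(M_{\m})=\s(M\otimes_R T)$. Combined with the inequality already proved, this gives $\s(M)=\s(M\otimes_R T)$.

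The main obstacle, and essentially the only delicate point, is the interaction with the ``closed fibers'' hypothesis: the equality case of Theorem~\ref{F-signature comparison} needs the closed fiber of the relevant \emph{local} homomorphism to be regular, so one must force the prime of $T$ at which the global $F$-signature is computed to be a closed point — which is exactly what the lower semi-continuity of the $F$-signature buys us — and one must be careful to identify $T_{\n}/(\n\cap R)T_{\n}$ with (a localization of) a closed fiber of $R\to T$ so that the hypothesis applies. Everything else is a routine transfer of the local statements through Theorems~\ref{What is Global F-signature} and~\ref{F-signature comparison} and Corollary~\ref{Lower semi-continuity of the F-signature}.
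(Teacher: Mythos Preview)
Your proposal is correct and follows essentially the same route as the paper: reduce to the identification $\s(M)=\min_P\s(M_P)$ and $\s(M\otimes_RT)=\min_Q\s((M\otimes_RT)_Q)$ via Theorem~\ref{What is Global F-signature}, then apply the local comparison Theorem~\ref{F-signature comparison}. The only cosmetic difference is that for the equality case the paper passes to a maximal ideal of $T$ by directly using that $\s$ can only drop under specialization, while you cite lower semi-continuity (Corollary~\ref{Lower semi-continuity of the F-signature}); and the paper additionally asserts that the contraction $P=R\cap Q$ is then maximal in $R$, whereas you phrase the same step as identifying $T_\n/\m T_\n$ with a localization of a closed fiber---the content is the same.
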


\begin{proof} By Theorem~\ref{What is Global F-signature}, $\s(M)=\min\{\s(M_P)\mid P\in\Spec(R)\}$ and $\s(M\otimes_R T)=\min\{\s(M\otimes _R T_Q)\mid Q\in\Spec(T)\}$. Let $P\in\Spec(R)$ be such that $\s(M)=\s(M_P)$ and let $Q\in\Spec(T)$ be such that $Q\cap R =P$. By Theorem~\ref{F-signature comparison}, $\s(M_P)\geq \s(M\otimes_R T_Q)$, hence $\s(M)\geq \s(M\otimes_R T)$. 

Suppose that $R\rightarrow T$ has regular closed fibers and let $Q\in\Spec(T)$ be such that $\s(M\otimes_R T)=\s(M\otimes_R T_Q)$. If $\m$ is a maximal ideal of $T$ containing $Q$, then $\s(M\otimes_R T_\m)\leq \s(M\otimes_R T_Q)$. Thus without loss of generality we may assume that $Q$ is maximal, thus $P=R\cap Q$ is maximal in $R$. By Theorem~\ref{F-signature comparison}, $\s(M_P)=\s(M\otimes_R T_Q)$, it follows that $\s(M)=\s(M\otimes_R T)$.
\end{proof}

Suppose that $R\rightarrow T$ is a faithfully flat extension of F-finite rings satisfying the hypotheses of Theorem~\ref{Global F-signature flat}. Example~\ref{Faithfully flat example} below shows that it need not be the case that $a_e(R)/p^{e\gamma(R)}\geq a_e(T)/p^{e\gamma(T)}$, even though the inequality holds after taking limits. One should compare this to the local situation in Theorem~\ref{F-signature comparison}. Before providing such an example, we first discuss the existence of an F-finite regular ring $R$ such that $F^e_*R$ is not free. The class of examples we discuss were already known to exist by experts.\footnote{The class of examples we discuss in Example~\ref{Flat but not free} were communicated to us by Florian Enescu. Florian Enescu learned of such examples from Mohan Kumar.}

\begin{Example}\label{Flat but not free} If $R$ is a regular F-finite domain, then $F^e_*R$ need not be free as an $R$-module. Let $k$ be an algebraic closed field of characteristic $p$, $X$ an elliptic curve over $k$, as in \cite[Chapter 4.4]{Hartshorne}, $x_0\in X$ be a chosen point for the group law on $X$, and let $K(X)$ be the function field of $X$. Assume $X$ is ordinary, that is the Frobenius morphism $F:X\rightarrow X$ induces an injective map of $1$-dimensional vector spaces $H^1(X,\O_X)\rightarrow H^1(X,\O_X)$. The assumption that $X$ is ordinary guarantees that the map of structure sheaves $\O_X\rightarrow F^e_*\O_X$ is split. Denote by $\mathcal{E}$ the cokernel of $\O_X\rightarrow F^e_*\O_X$. Then $\mathcal{E}\cong \O_X(x_1-x_0)\oplus \cdots \oplus \O_X(x_{p^e-1}-x_0)$ where $x_0,x_1,\dots,x_{p^e-1}$ are the $p^e$ distinct $p^e$ torsion points of $X$, see \cite[Example 2.18, Exercise 2.19]{PST} for further details.

If $\char k\not=2$ or if $e>0$ let $U=X-\{x_1,\dots,x_{p^e-2}\}$. If $\char k=2$ and $e=1$ let $U=X-\{x_2\}$ for some point $x_2$ which is not a $2$-torsion point of $X$.  As $X$ is a non-singular projective curve, $U$ is an open affine set. Let $R=\Gamma(U, \O_X)$ and $M=\Gamma(U, \O_X(x_{p^e-1}-x_0))$, then $F^e_*R\cong R^{\oplus p^e-1}\oplus M$ is projective of rank $p^e$. By examining the $p^e$th exterior product of $R^{\oplus p^e-1}\oplus M$, one sees that $F^e_*R$ is a free $R$-module of rank $p^e$ if and only if $M$ is a free module of rank $1$. We claim that $M$ is not free. Else, $M$ is identified with $R\cdot f$ for some $f\in K(X)$. Equivalently, the divisor $x_{p^e-1}-x_0$ is linearly equivalent to $0$ on $U$. As $x_0,x_{p^e-1}\not\in U$, this will imply $x_{p^e-1}-x_0$ is linearly equivalent to $0$ on $X$, contradicting that $x_0, x_{p^e-1}$ are distinct points.
\end{Example}

\begin{Example}\label{Faithfully flat example} Suppose that $R\rightarrow T$ is a faithfully flat map of F-finite domains. Then it does not necessarily follow that $a_e(R)/p^{e\gamma(R)}\geq a_e(T)/p^{e\gamma(T)}$ for each $e\in\N$, even though the inequality holds after taking limits. Let $R$ be a Dedekind domain affine over the algebraically closed field $k$ of characteristic $p$. Then $F^e_*R$ is projective of rank $p^e$. By Theorem~\ref{Serre's Splitting Theorem}, $p^e-1\leq a_e(R)\leq p^e$ with $a_e(R)=p^e$ if and only if $F^e_*R$ is free. Let $R$ be as in Example~\ref{Flat but not free}, so that $F^e_*R$ is not free. Consider the faithfully flat extension $R\rightarrow R[t]\rightarrow T=R[t]_W$ where $W$ is the multiplicative set $R[t]-\cup_{\m\in\Max(R)}\m R[t]$. Observe that $T$ is a Dedekind domain and $F^e_*T$ is projective of rank $p^{2e}$. By Theorem~\ref{Serre's Splitting Theorem}, $a_e(T)$ is either $p^{2e}-1$ or $p^{2e}$. Then $a_e(R)/p^{e\gamma(R)}=\frac{p^e-1}{p^e}<\frac{p^{2e}-1}{p^{2e}}\leq a_e(T)/p^{e\gamma(T)}$.
\end{Example}

We now discuss the behavior of global F-signature of F-finite faithfully flat extensions of rings which are either F-finite or essentially of finite type over an excellent local ring. Recall that, by \cite{Yao2006}, given any $d$-dimensional local ring $(R,\m,k)$ of prime characteristic and a finitely generated $R$-module $M$, we can define a sequence $\# (F^e_*M)/p^{e d}$ that agrees with $a_e(M)/p^{e\gamma(R)}$ when $R$ is F-finite. We still denote an element of this sequence by $\s_e(M)$, even when $R$ is not F-finite. Let $R$ be either F-finite or essentially of finite type over an excellent local ring and let $M$ a finitely generated $R$-module. We define the {\it local-minimal F-signature of $M$} as 
\[
\ds \min\{\s(M_P) \mid P \in \Spec(R)\} = \min\{\rank_{R_P}(M_P)\s(R_P) \mid P \in \Spec(R)\},
\]
and we denote it by $\ts(M)$. We note that such a minimum exists, since in our assumptions the F-signature function $\s: \Spec(R) \to \R$, sending $P \mapsto \s(R_P)$, is lower semi-continuous by Corollary~\ref{Lower semi-continuity of the F-signature}. In particular, $R$ is strongly F-regular if and only if $\ts(R)>0$ by Theorem~\ref{s>0 local}. Observe that, when $R$ is F-finite and $Z_R=\Spec(R)$, $\ts(M)$ coincides with the global F-signature $\s(M)$ defined in Section~\ref{Global F-signature section}. See Theorem~\ref{What is Global F-signature}. 

In Theorem~\ref{Global F-signature non-F-finite}, we show equality between $\ts(M)$, $$\sup\{\ts(T\otimes_R M)\mid R\rightarrow T \mbox{ is faithfully flat and $T$ is F-finite}\},$$ and $$\sup\{\s(T\otimes_R M)\mid R\rightarrow T \mbox{ is faithfully flat and $T$ is F-finite}\}.$$ We begin  with a lemma.

\begin{Lemma}
\label{Locally Equidimensional Lemma} Let $R$ be an F-finite locally equidimensional ring. Then there is a faithfully flat extension $R\rightarrow T$ with regular fibers such that $T$ is F-finite, $\gamma(T)=\gamma(R)$, and $Z_T=\Spec(T)$.
\end{Lemma}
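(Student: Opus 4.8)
The plan is to reduce to the connected components of $\Spec R$ and adjoin the right number of indeterminates over each one. Since $R$ is Noetherian, write $R=\prod_{i=1}^{m}R_i$ with each $\Spec R_i$ connected. As $R$ is locally equidimensional and F-finite, Lemma~\ref{Kunz's Lemma} tells us that $\alpha(P)+\height(P)$ is a constant $c_i$ for all $P\in\Spec R_i$. Evaluating at a minimal prime $Q\in\Min(R_i)$, which has height $0$, gives $\alpha(Q)=c_i$, so that $\gamma(R)=\max_i c_i$. Set $\gamma=\gamma(R)$ and $n_i=\gamma-c_i\geq 0$.

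Next, define $T_i=R_i[x_{i,1},\dots,x_{i,n_i}]$ and $T=\prod_{i=1}^{m}T_i$, with $R\to T$ the product of the polynomial inclusions; I claim $T$ works. Faithful flatness is clear, since a polynomial extension is faithfully flat and this is preserved by finite products. The fiber of $R\to T$ over a prime $P\in\Spec R_i$ is $\kappa(P)[x_{i,1},\dots,x_{i,n_i}]$, a polynomial ring over a field, hence regular. Finally $T$ is F-finite: a polynomial ring $A[\underline x]$ over an F-finite ring $A$ is F-finite (a finite generating set of $F_*(A[\underline x])$ over $A[\underline x]$ is given by the $F_*(e\, x_1^{a_1}\cdots x_n^{a_n})$ with $e$ running over a finite generating set of $F_*A$ and $0\leq a_j\leq p-1$), and a finite product of F-finite rings is F-finite.

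It remains to compute $\gamma(T)$ and to verify $Z_T=\Spec T$. A minimal prime of $T_i$ is of the form $Q[x_{i,\bullet}]$ for $Q\in\Min(R_i)$, with residue field $\kappa(Q)(x_{i,1},\dots,x_{i,n_i})$; iterating the elementary identity $[\kappa(x):\kappa(x)^p]=p\,[\kappa:\kappa^p]$ over one variable gives $\alpha(Q[x_{i,\bullet}])=\alpha(Q)+n_i=c_i+n_i=\gamma$. Hence $\gamma(T_i)=\gamma$ for every $i$, so $\gamma(T)=\gamma=\gamma(R)$. For $Z_T=\Spec T$, first observe that each $T_i$ is locally equidimensional: given $P'\in\Spec T_i$ lying over $P\in\Spec R_i$ and a minimal prime $Q\in\Min(R_i)$ with $Q\subseteq P$, the flat local maps $(R_i)_P\to (T_i)_{P'}$ and $(R_i)_P/Q(R_i)_P\to (T_i)_{P'}/Q[x_{i,\bullet}](T_i)_{P'}$ have the same fiber (of dimension $0$ or $1$), so the dimension formula for flat local homomorphisms gives $\dim (T_i)_{P'}-\dim (T_i)_{P'}/Q[x_{i,\bullet}](T_i)_{P'}=\dim (R_i)_P-\dim (R_i)_P/Q(R_i)_P=0$, the last equality by equidimensionality of $(R_i)_P$; since the minimal primes of $(T_i)_{P'}$ are exactly the $Q[x_{i,\bullet}](T_i)_{P'}$ with $Q\in\Min(R_i)$, $Q\subseteq P$, this shows $(T_i)_{P'}$ is equidimensional. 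As $\Spec T_i$ is also connected (the idempotents of a polynomial ring over $R_i$ coincide with those of $R_i$), Lemma~\ref{Kunz's Lemma} shows $\alpha(P')+\height(P')$ is constant on $\Spec T_i$, equal to its value $\gamma$ at a minimal prime; since $\Spec T=\bigsqcup_i\Spec T_i$, this gives $Z_T=\Spec T$.

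The step I expect to demand the most care is showing that each $T_i$ is locally equidimensional, i.e., that adjoining finitely many indeterminates preserves local equidimensionality; this uses the dimension formula for flat local homomorphisms together with the description of the minimal primes of a polynomial extension as the extensions of the minimal primes of the base. Everything else is bookkeeping: the behavior of $\alpha$ and $\gamma$ under purely transcendental residue-field extensions, and the stability of F-finiteness and faithful flatness under polynomial extensions and finite products.
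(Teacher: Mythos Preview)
Your proof is correct and follows essentially the same approach as the paper's: decompose $R$ into connected components (on each of which $\alpha+\height$ is constant by Kunz's lemma, so $Z_{R_i}=\Spec R_i$), then adjoin $\gamma(R)-\gamma(R_i)$ variables to each factor and take the product. Your proposal is more detailed than the paper's, which simply asserts $Z_{E_i}=\Spec(E_i)$ without justification; one small quibble is the parenthetical ``of dimension $0$ or $1$'' for the fiber, which is not correct when $n_i>1$ (the fiber of $(R_i)_P\to (T_i)_{P'}$ is a localization of $\kappa(P)[x_{i,1},\dots,x_{i,n_i}]$ and can have any dimension up to $n_i$), but this is harmless since your argument only uses that the two fibers coincide, so their dimensions cancel in the subtraction.
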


\begin{proof}
By Lemma~\ref{Kunz's Lemma}, $R\cong T_1\times \cdots \times T_n$ is a direct product of F-finite rings such that $Z_{T_i}=\Spec(T_i)$. For each $1\leq i\leq n$ let $E_i=T_i[x_1,\cdots ,x_{\gamma(R)-\gamma(T_i)}]$. Observe that $T_i\rightarrow E_i$ is a faithfully flat map of F-finite rings such that $Z_{E_i}=\Spec(E_i)$, with regular fibers, and $\gamma(E_i)=\gamma(R)$. Let $T=E_1\times \cdots \times E_n$ and $R\rightarrow T$ be the natural map. It is easily verified that $Z_T=\Spec(T)$.
\end{proof}

We will use Hochster's and Huneke's gamma constructions to prove Theorem~\ref{Global F-signature non-F-finite} below. We briefly recall some basic properties of gamma constructions, all of which can be found in \cite[Section 6]{HHTAMS}. Suppose that $R$ is essentially of finite type over a complete local ring $(A,\m, k)$. Let $\Lambda$ be a $p$-base for $k$. For each cofinite subset of $\Gamma\subseteq \Lambda$, there is an associated F-finite ring $R^{\Gamma}$ and faithfully flat purely inseparable ring homomorphism $R\rightarrow R^{\Gamma}$. It follows that $\Spec(R^{\Gamma})\rightarrow \Spec(R)$ is a homeomorphism with inverse map $P\mapsto P_\Gamma=\sqrt{PR^{\Gamma}}.$

For every given $P \in \Spec(T)$ there exists a cofinite subset $\Gamma_0 \subseteq \Lambda$ such that $PR^{\Gamma} = P_{\Gamma}$ for all cofinite subsets $\Gamma \subseteq \Gamma_0$. Therefore, for every given $P$ and cofinite $\Gamma_1 \subseteq \Lambda$, there exists a cofinite $\Gamma_2 \subseteq \Gamma_1$ such that $PR^{\Gamma} = P_{\Gamma}$ for all cofinite subsets $\Gamma \subseteq \Gamma_2$.

Suppose that $R$ is essentially of finite type over a complete local ring $(A,\m,k)$. Let $\Lambda$ be a $p$-base for $k$ and let $\Gamma\subseteq \Lambda$ be a cofinite subset. Then for each $P\in\Spec(R)$ we have flat map of local rings $R_P\rightarrow (R^\Gamma)_{P_\Gamma} =: R_{P_\Gamma}^{\Gamma}$. Then $\s(M_P)\geq \s(M \otimes_R R_{P_\Gamma}^{\Gamma})$, with equality if $PR_{P_\Gamma}^{\Gamma}$ is prime, see Theorem~\ref{F-signature comparison}. We remark that it is not necessarily the case that there exists $\Gamma\subseteq \Lambda$ cofinite such that $PR_{\Gamma}$ is prime for every $P\in\Spec(R)$. Hence one cannot necessarily expect to find $\Gamma\subseteq \Lambda$ such that $\s(M_P)=\s(M\otimes_R R^{\Gamma}_{P_\Gamma})$ for all $P\in\Spec(R)$. However, we show in Theorem~\ref{Global F-signature non-F-finite} below that one can find $\Gamma\subseteq \Lambda$ such that $\s(M_P)$ and $\s(M \otimes_S R^{\Gamma}_{P_{\Gamma}})$ are arbitrarily close for all $P\in\Spec{R}$.

\begin{Remark}\label{Assume s>0 remark} Let $R$ be either F-finite or essentially of finite type over an excellent local ring and let $M$ be a finitely generated $R$-module. Assume that $R\rightarrow T$ is faithfully flat and $T$ is F-finite. If $\ts(M)=0$ then it easily follows by Theorem~\ref{F-signature comparison} that $\ts(M\otimes_R T)=0$ and therefore $\s(M\otimes_R T)=0$. If $\ts(M)>0$ then $\ts(R)>0$ and $R$ is strongly F-regular by Theorem~\ref{s>0 local}. In particular, $R$ is locally equidimensional and if $R$ is F-finite, the functions $\s_e:\Spec(R)\rightarrow \R$ sending $P\mapsto \s_e(M_P)$ are lower semi-continuous by \cite[Corollary~2.5 and Remark~5.5]{EnescuYao}. If $R$ is essentially of finite type over an excellent local ring $(A,\m,k)$, then $R\rightarrow \hat{A}\otimes_A R$ is faithfully flat with regular fibers, \cite[Section~33, Lemma~4]{Matsumura1980}. It follows by Theorem~\ref{F-signature comparison} that $\ts(R)=\ts(\hat{A}\otimes_A R)$. In particular, $\hat{A}\otimes_A R$ remains strongly F-regular and therefore locally equidimensional. Hence $\s_e:\Spec(R)\rightarrow \R $ sending $P\mapsto \s_e(M_P)$ is lower semi-continuous by \cite[Theorem~5.1 and Remark~5.5]{EnescuYao}. It follows that if $R$ is strongly F-regular, then for each $e\in\N$ there exists $Q_e\in\Spec(R)$ such that $\s_e(M_{Q_e})=\min\{\s_e(M_P)\mid P\in\Spec(R)\}$.
\end{Remark}

\begin{Theorem}\label{Global F-signature non-F-finite} Let $R$ be either F-finite or essentially of finite type over an excellent local ring and $M$ a finitely generated $R$-module. If $R$ is not strongly F-regular then $\ts(M)=\ts(M\otimes_R T)=\s(M\otimes_R T)=0$ for every faithfully flat F-finite extension $R\rightarrow T$. If $R$ is strongly F-regular then the following limits exist:
\begin{enumerate}
\item $\displaystyle \lim_{e\rightarrow \infty}s_e(M_{Q_e})$, where $Q_e\in \Spec(R)$ is chosen such that $$\s_e(M_{Q_e})=\min\{\s_e(M_P)\mid P\in\Spec(R)\},$$
\item $\displaystyle \lim_{e\rightarrow \infty}\s(M_{Q_e})$, where $Q_e\in \Spec(R)$ is chosen such that $$\s_e(M_{Q_e})=\min\{\s_e(M_P)\mid P\in\Spec(R)\},$$
\end{enumerate}
and they agree with the local-minimal F-signature $\ts(M)$. 
Moreover, 
\begin{align*}
\ts(M)&=\sup\{\ts(M \otimes_R T)\mid R\rightarrow T\mbox{  is faithfully flat and $T$ is F-finite} \}\\
&=\sup\{\s(M \otimes_R T)\mid R\rightarrow T\mbox{  is faithfully flat and $T$ is F-finite} \}.
\end{align*}
Under the assumption that $R$ is F-finite,
\begin{align*}
\ts(M)&=\max\{\ts(M \otimes_R T)\mid R\rightarrow T\mbox{  is faithfully flat and $T$ is F-finite} \}\\
&=\max\{\s(M \otimes_R T)\mid R\rightarrow T\mbox{  is faithfully flat and $T$ is F-finite} \}.
\end{align*}
\end{Theorem}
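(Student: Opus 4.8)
The plan is to split into the cases where $R$ is or is not strongly F-regular, establish the limit statements (1)--(2) and the inequalities $\s(M\otimes_R T)\le\ts(M\otimes_R T)\le\ts(M)$ for every faithfully flat F-finite $R\to T$, and then produce extensions realizing the bound. If $R$ is not strongly F-regular, pick $P\in\Spec(R)$ with $R_P$ not strongly F-regular; by Theorem~\ref{s>0 local}, $\s(R_P)=0$, so $\ts(M)\le\rank_{R_P}(M_P)\s(R_P)=0$. For any faithfully flat F-finite $R\to T$, choose $Q\in\Spec(T)$ over $P$ and apply Theorem~\ref{F-signature comparison} to the flat local map $R_P\to T_Q$ to get $0=\s(M_P)\ge\s((M\otimes_R T)_Q)$, whence $\ts(M\otimes_R T)=0$ and, by Remark~\ref{Reduce to the Reduced Case} and Theorem~\ref{What is Global F-signature}, also $\s(M\otimes_R T)=0$; this is recorded in Remark~\ref{Assume s>0 remark}. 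Assume henceforth that $R$ is strongly F-regular. By Remark~\ref{Assume s>0 remark} each function $\s_e\colon\Spec(R)\to\R$, $P\mapsto\s_e(M_P)$, is lower semi-continuous, so attains its minimum at some $Q_e$; by Theorem~\ref{Uniform Convergence of a_e} and Remark~\ref{Uniform Convergence Remark} these converge uniformly to $\s\colon P\mapsto\s(M_P)$, which is then lower semi-continuous (Corollary~\ref{Lower semi-continuity of the F-signature}) and attains $\ts(M)$ at some $Q$. The estimate from the proof of Lemma~\ref{Convergence Lemma}, $\s(M_Q)\le\s(M_{Q_e})<\s_e(M_{Q_e})+\epsilon/2\le\s_e(M_Q)+\epsilon/2<\s(M_Q)+\epsilon$, then shows $\s_e(M_{Q_e})\to\ts(M)$ and $\s(M_{Q_e})\to\ts(M)$, which is (1) and (2).

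Next, $\s(M\otimes_R T)\le\ts(M\otimes_R T)$ always: this is an equality when $Z_T=\Spec(T)$ by Theorem~\ref{What is Global F-signature}, and otherwise $\s(M\otimes_R T)=0$. For the second inequality, pick $P_0$ with $\ts(M)=\s(M_{P_0})$ and $Q_0\in\Spec(T)$ over $P_0$; Theorem~\ref{F-signature comparison} applied to $R_{P_0}\to T_{Q_0}$ gives $\ts(M)=\s(M_{P_0})\ge\s((M\otimes_R T)_{Q_0})\ge\ts(M\otimes_R T)$. Thus the supremum, over faithfully flat F-finite $T$, of either $\s(M\otimes_R T)$ or $\ts(M\otimes_R T)$ is at most $\ts(M)$; since $\s\le\ts$ on each $T$, it remains to approximate $\ts(M)$ from below by values $\s(M\otimes_R T)$, and, when $R$ is F-finite, to attain it. If $R$ is F-finite and not strongly F-regular, then $\ts(M)=\s(M)=0$ and $T=R$ works. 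If $R$ is F-finite and strongly F-regular, then $R$ is locally equidimensional (it is a finite product of normal domains), so Lemma~\ref{Locally Equidimensional Lemma} produces a faithfully flat F-finite $R\to T$ with regular fibers, $\gamma(T)=\gamma(R)$, and $Z_T=\Spec(T)$; for $Q\in\Spec(T)$ over $P$ the map $R_P\to T_Q$ is flat local with regular closed fiber, so by the equality clause of Theorem~\ref{F-signature comparison}, $\s((M\otimes_R T)_Q)=\s(M_P)$, and since $\Spec(T)\to\Spec(R)$ is surjective the sets of values $\{\s((M\otimes_R T)_Q)\}$ and $\{\s(M_P)\}$ coincide. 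Hence $\ts(M\otimes_R T)=\ts(M)$ and, since $Z_T=\Spec(T)$, $\s(M\otimes_R T)=\ts(M\otimes_R T)=\ts(M)$; with the upper bound, both maxima equal $\ts(M)$.

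It remains to prove $\sup_T\s(M\otimes_R T)\ge\ts(M)$ when $R$ is strongly F-regular and only essentially of finite type over an excellent local ring $(A,\m,k)$. The map $R\to S:=\hat{A}\otimes_A R$ is faithfully flat with regular fibers, so arguing as in the previous paragraph $\ts(M\otimes_R S)=\ts(M)$, while $S$ is strongly F-regular and essentially of finite type over the complete local ring $\hat{A}$; after a further finite polynomial extension of the connected components of $S$, as in the proof of Lemma~\ref{Locally Equidimensional Lemma}, we may also arrange that the rings $S^\Gamma$ below satisfy $Z_{S^\Gamma}=\Spec(S^\Gamma)$, still keeping $\ts(M\otimes_R S)=\ts(M)$. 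Let $\Lambda$ be a $p$-base for $k$; for cofinite $\Gamma\subseteq\Lambda$ the Hochster--Huneke $\Gamma$-construction $S^\Gamma$ is F-finite, $R\to S\to S^\Gamma$ is faithfully flat, and $\Spec(S^\Gamma)\to\Spec(S)$ is the height-preserving homeomorphism $P\mapsto P_\Gamma$. With $N=M\otimes_R S$, Theorem~\ref{F-signature comparison} gives $\s(N_P)\ge\s(N^\Gamma_{P_\Gamma})$ for every $P\in\Spec(S)$, with equality when $PS^\Gamma_{P_\Gamma}$ is prime. If, given $\epsilon>0$, one can choose $\Gamma$ so that $\s(N_P)-\s(N^\Gamma_{P_\Gamma})<\epsilon$ for every $P$ at once, then $\s(M\otimes_R S^\Gamma)=\ts(M\otimes_R S^\Gamma)=\min_P\s(N^\Gamma_{P_\Gamma})>\min_P\s(N_P)-\epsilon=\ts(M)-\epsilon$, which proves the supremum statement; the other supremum then follows from $\s\le\ts$.

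The main obstacle is precisely this uniform-in-$P$ choice of $\Gamma$. I would obtain it by combining the uniform convergence over $S$ (Theorem~\ref{Uniform Convergence of a_e}, Remark~\ref{Uniform Convergence Remark}), which produces $e_0$ with $|\s_{e_0}(N_P)-\s(N_P)|<\epsilon/3$ for all $P$, with two properties of the $\Gamma$-construction from \cite[\S6]{HHTAMS}: that the convergence rate $|\s_e(N^\Gamma_{P_\Gamma})-\s(N^\Gamma_{P_\Gamma})|\le C'/p^e$ of Theorem~\ref{Uniform Convergence of a_e} for $S^\Gamma$ can be taken with $C'$ independent of the cofinite $\Gamma$ (only the finitely many $p$th roots for $\Lambda\setminus\Gamma$ are adjoined, so the module-finiteness data controlling $C'$ is uniformly bounded), and that for the fixed level $e_0$ there is a cofinite $\Gamma$ with $\s_{e_0}(N_P)=\s_{e_0}(N^\Gamma_{P_\Gamma})$ for all $P$ --- this last from constructibility of $P\mapsto\s_{e_0}(N_P)$ (a consequence of Theorem~\ref{Shepherd-Barron's result for modules}, valid since $S$ is locally equidimensional) together with the fact that $PS^\Gamma_{P_\Gamma}$ is prime at the finitely many generic points of the relevant stratification once $\Gamma$ is small. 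Enlarging $e_0$ so that also $C'/p^{e_0}<\epsilon/3$, one gets $\s(N_P)-\s(N^\Gamma_{P_\Gamma})\le(\s_{e_0}(N_P)-\s_{e_0}(N^\Gamma_{P_\Gamma}))+\epsilon/3+\epsilon/3<\epsilon$ for all $P$. That $S^\Gamma$ is strongly F-regular with $Z_{S^\Gamma}=\Spec(S^\Gamma)$ for $\Gamma$ small, and that none of the reductions above alters $\ts(M)$, follow from standard properties of $\Gamma$-constructions together with Theorem~\ref{F-signature comparison}, completing the plan.
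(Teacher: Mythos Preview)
Your treatment of the non--strongly-F-regular case, the limits (1)--(2), the inequality chain $\s(M\otimes_R T)\le\ts(M\otimes_R T)\le\ts(M)$, and the F-finite case matches the paper's proof essentially verbatim.

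The divergence is in the essentially-of-finite-type case, and there your approach has two genuine gaps. First, you need the uniform convergence constant $C'$ for $S^\Gamma$ to be independent of the cofinite $\Gamma$, but your justification (``only finitely many $p$th roots are adjoined'') describes a single $\Gamma$, not a bound uniform over all cofinite $\Gamma$; as $\Gamma$ shrinks, more roots are adjoined and the module-finiteness data grows. Without this, your choice of $e_0$ (which must precede the choice of $\Gamma$) and your bound $C'/p^{e_0}<\epsilon/3$ (which requires $\Gamma$ already fixed) are circular. Second, your constructibility argument for arranging $\s_{e_0}(N_P)=\s_{e_0}(N^\Gamma_{P_\Gamma})$ at \emph{all} $P$ does not follow from equality at the generic points of the strata: you know $\s_{e_0}(N^\Gamma_{P_\Gamma})\le\s_{e_0}(N_P)=c$ on a stratum with generic point $\eta$, and you can arrange equality at $\eta$, but lower semi-continuity of $\s_{e_0}$ on $S^\Gamma$ only gives $\s_{e_0}(N^\Gamma_{P_\Gamma})\le\s_{e_0}(N^\Gamma_{\eta_\Gamma})=c$ for specializations $P_\Gamma$ of $\eta_\Gamma$, which is the inequality you already had, not the reverse.

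The paper avoids both problems with a short topological argument that you should adopt: after passing to $\hat A\otimes_A R$, define for each cofinite $\Gamma\subseteq\Lambda$ the open set $U_\Gamma=\{P\in\Spec(R)\mid \s(M\otimes_R R^\Gamma_{P_\Gamma})>\ts(M)-\epsilon\}$ (open by lower semi-continuity, Corollary~\ref{Lower semi-continuity of the F-signature}). Since $\Gamma'\subseteq\Gamma$ implies $U_{\Gamma'}\supseteq U_\Gamma$ (Theorem~\ref{F-signature comparison}), the Noetherian property gives a maximal $U_\Gamma$; if some $P\notin U_\Gamma$, shrink to $\Gamma'\subseteq\Gamma$ with $PR^{\Gamma'}$ prime, so $\s(M\otimes_R R^{\Gamma'}_{P_{\Gamma'}})=\s(M_P)\ge\ts(M)$, forcing $P\in U_{\Gamma'}=U_\Gamma$, a contradiction. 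Thus $U_\Gamma=\Spec(R)$ and $\ts(M\otimes_R R^\Gamma)>\ts(M)-\epsilon$; then apply Lemma~\ref{Locally Equidimensional Lemma} to $R^\Gamma$ (which is F-finite and strongly F-regular, hence locally equidimensional) to produce $T$ with $Z_T=\Spec(T)$. This needs no uniform bound over $\Gamma$ and no constructibility stratification.
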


\begin{proof} By Remark~\ref{Assume s>0 remark} we may assume that $R$ is strongly F-regular. For each $e\in \N$ let $\s_e:\Spec(R)\rightarrow \R$ be the function sending $P\mapsto \s_e(M_P)$ and let $\s:\Spec(R)\rightarrow \R$ be the function mapping $P\mapsto \s(M_P)$. The functions $\s_e$ converge uniformly to $\s$ by Theorem~\ref{Uniform Convergence of a_e} and Remark~\ref{Uniform Convergence Remark}. It follows that the limits in (1) and (2) exist and are equal to $\ts(M)$. See Lemma~\ref{Convergence Lemma} for a similar argument.

Let $R\rightarrow T$ be faithfully flat, with $T$ an F-finite ring. Let $P\in\Spec(R)$ be chosen such that $\ts(M)=\s(M_P)$. As $R\rightarrow T$ is faithfully flat there exists $Q\in T$ such that $Q\cap R=P$. By Theorem~\ref{F-signature comparison} $\s(M_P)\geq \s(M\otimes_R T_Q)$, hence $\ts(M)\geq \ts(M\otimes_R T)$. If $Z_T \neq \Spec(T)$, then $\s(M\otimes_R T) = 0$, see Remark~\ref{Reduce to the Reduced Case}. Else $Z_T=\Spec(T)$ and $\s(M\otimes_R T)=\ts(M\otimes_R T)$ by Theorem~\ref{What is Global F-signature}. This shows
\begin{align*}
\ts(M)&\geq\sup\{\ts(M \otimes_R T)\mid R\rightarrow T\mbox{  is faithfully flat and $T$ is F-finite} \}\\
&\geq\sup\{\s(M \otimes_R T)\mid R\rightarrow T\mbox{  is faithfully flat and $T$ is F-finite} \}.
\end{align*}

Suppose that $R$ is F-finite. We show the existence of a faithfully flat F-finite extension $R\rightarrow T$ such that $\ts(M)=\s(M\otimes_R T)$. Since $R$ is strongly F-regular, we have $R\cong D_1\times \cdots\times  D_n$ is a product of F-finite domains $D_i$  by Theorem~\ref{s>0 local}. By Lemma~\ref{Locally Equidimensional Lemma} there exists a faithfully flat extension $R\rightarrow T$ with regular fibers, $T$ is F-finite, and $Z_T=\Spec(T)$. In particular, $\ts(M)=\ts(M\otimes_R T)$ by Theorem~\ref{F-signature comparison}. As $Z_T=\Spec(T)$ we see that $\s(M\otimes_R T)=\ts(M\otimes_R T)$ by Theorem~\ref{What is Global F-signature}. 

Now suppose that $R$ is essentially of finite type over an excellent local ring $(A,\m,k)$. Let $\epsilon>0$. We are going to show the existence of a faithfully flat extension $R\rightarrow T$ such that $T$ is F-finite and $\s(M\otimes_R T)> \ts(M)-\epsilon$, which will complete the proof of the theorem.  Denote by $\hat{A}$ the completion of $A$ with respect to its maximal ideal. Then $R\rightarrow \hat{A}\otimes_A R$ is faithfully flat with regular fibers, \cite[Section 33, Lemma 4]{Matsumura1980} and, by Theorem~\ref{F-signature comparison}, we have that $\ts(M)=\ts(\hat{A}\otimes_A M)$. Thus we may replace $R$ with $\hat{A}\otimes_A R$ and assume that $R$ is essentially of finite type over a complete local ring. 
 
 Abusing notation, we let $(A,\m,k)$ be a complete local ring which $R$ is essentially of finite type over. Without loss of generality, assume that $\epsilon<\ts(M)$.  Let $\Lambda$ be a $p$-base for a coefficient field $k\subseteq A$. For each cofinite subset $\Gamma\subseteq \Lambda$ let $$U_\Gamma=\{P\in \Spec(R)\mid \s(M\otimes _RR^{\Gamma}_{P_{\Gamma}})>\ts(M)-\epsilon\}.$$ For each $\Gamma$, the induced map of spectra $\Spec(R^\Gamma)\rightarrow \Spec(R)$ is a homeomorphism, hence by Theorem~\ref{Lower semi-continuity of the F-signature} the sets $U_\Gamma$ are open. Moreover, if $\Gamma'\subseteq \Gamma$, then Theorem~\ref{F-signature comparison} shows that $U_{\Gamma'}\supseteq U_{\Gamma}$. As $\Spec(R)$ is Noetherian, there exists some cofinite subset $\Gamma\subseteq \Lambda$ such that $U_{\Gamma}$ is maximal. We claim that $U_{\Gamma}=\Spec(R)$. Else, there exists $P\in\Spec(R)-U_{\Gamma}$. There exists some cofinite subset $\Gamma'\subseteq \Gamma$ such that $PR^{\Gamma'}=P_{\Gamma'}$, i.e., $PR^{\Gamma'}$ is prime. In which case, $R_P\rightarrow R^{\Gamma'}_{P_{\Gamma'}}$ is a faithfully flat local homomorphism whose closed fiber is a field. By Theorem~\ref{F-signature comparison}, $\s(M_P)=\s(M\otimes_R R^{\Gamma'}_{P_{\Gamma'}})$. Therefore $P\in U_{\Gamma'}$, and then $P \in U_{\Gamma}$ by maximality. This contradicts the choice of $P$.  Thus we have $\s(M\otimes_R R^\Gamma_{P_{\Gamma}})> \ts(M)-\epsilon>0$ for all $P_{\Gamma}\in\Spec(R^\Gamma)$, which implies $\ts(M\otimes_R R^\Gamma)>\ts(M)-\epsilon$. In particular, $R^\Gamma$ is strongly F-regular and is a direct product of F-finite domains. By Lemma~\ref{Locally Equidimensional Lemma} there exists faithfully flat F-finite extension $R^\Gamma\rightarrow T$ with regular fibers and such that $Z_T=\Spec(T)$. Hence $\ts(M\otimes_R R^\Gamma)=\ts(M\otimes_R T)$ by Theorem~\ref{F-signature comparison} and $\ts(M\otimes_R T)=\s(M\otimes_R T)$ by Theorem~\ref{What is Global F-signature}. Therefore $\s(M\otimes_R T)>\ts(M)-\epsilon$, which completes the proof.
 \end{proof}
 
\subsection{Global Hilbert-Kunz multiplicity} We now discuss the behavior of global Hilbert-Kunz multiplicity under faithfully flat extensions. Recall that if $R$ is F-finite and $M$ a finitely generated $R$-module then $\e(R)=\max\{\e(R_P)\mid P\in Z_R\}$ by Theorem~\ref{What is HK}.

\begin{Theorem}\label{HK Flat Extension Global} Let $R\rightarrow T$ be a faithfully flat extension of F-finite rings and let $M$ be a finitely generated $R$-module. If each $P\in Z_R$ is a contraction of a prime $Q\in Z_T$, then $\e(M)\leq \e(M\otimes_R T)$. In particular, if $R$ and $T$ are domains, or more generally if $R$ and $T$ are such that $Z_R=\Spec(R)$ and $Z_T=\Spec(T)$, then $\e(M)\leq \e(M\otimes_R T)$ with equality if the closed fibers of $R\rightarrow T$ are regular.
\end{Theorem}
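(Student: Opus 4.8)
The plan is to deduce the statement from the local flat-base-change comparison of Theorem~\ref{HK Inequality}, using the description $\e(N)=\max\{\e(N_P)\mid P\in Z_S\}$ of the global Hilbert-Kunz multiplicity of a finitely generated module $N$ over an F-finite ring $S$ provided by Theorem~\ref{What is HK}.

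First I would prove the inequality $\e(M)\le\e(M\otimes_R T)$. By Theorem~\ref{What is HK} applied to $R$, choose $P\in Z_R$ with $\e(M)=\e(M_P)$. By hypothesis there is $Q\in Z_T$ with $Q\cap R=P$; since $R\to T$ is faithfully flat and $Q$ lies over $P$, the induced map $R_P\to T_Q$ is a flat local homomorphism of local rings of characteristic $p$, and $(M\otimes_R T)_Q\cong M_P\otimes_{R_P}T_Q$. Applying Theorem~\ref{HK Inequality} to $R_P\to T_Q$ and the $R_P$-module $M_P$ gives $\e(M_P)\le\e((M\otimes_R T)_Q)$, and since $Q\in Z_T$, Theorem~\ref{What is HK} applied to $T$ gives $\e((M\otimes_R T)_Q)\le\e(M\otimes_R T)$. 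Hence $\e(M)=\e(M_P)\le\e(M\otimes_R T)$. For the ``in particular'' clause, if $R$ and $T$ are domains, or more generally if $Z_R=\Spec(R)$ and $Z_T=\Spec(T)$, the contraction hypothesis is automatic: faithful flatness makes $\Spec(T)\to\Spec(R)$ surjective, so every $P\in\Spec(R)=Z_R$ is the contraction of some $Q\in\Spec(T)=Z_T$.

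For the reverse inequality $\e(M\otimes_R T)\le\e(M)$ under the additional hypothesis that the closed fibers of $R\to T$ are regular (with $Z_R=\Spec(R)$, $Z_T=\Spec(T)$), I would choose, via Theorem~\ref{What is HK} applied to $T$, a prime $Q\in\Spec(T)$ with $\e(M\otimes_R T)=\e((M\otimes_R T)_Q)$, and set $P=Q\cap R\in\Spec(R)=Z_R$. The point is that $T_Q/PT_Q=(T/PT)_Q$ is a localization of the fiber of $R\to T$ over $P$, hence regular; to place this under the stated hypothesis I would first reduce to $Q$, and thus $P$, being maximal, as in the proof of Theorem~\ref{Global F-signature flat}: if $\m\supseteq Q$ is a maximal ideal of $T$, then using $Z_T=\Spec(T)$ together with Lemma~\ref{Kunz's Lemma} one checks $\height(QT_\m)+\dim(T_\m/QT_\m)=\dim(T_\m)$, so Lemma~\ref{HK Inequality Localization} gives $\e((M\otimes_R T)_Q)\le\e((M\otimes_R T)_\m)$, and since $Q$ was chosen to maximize over $Z_T$ equality holds and we may replace $Q$ by $\m$. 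Now Theorem~\ref{HK Inequality} applied to the flat local map $R_P\to T_Q$ with regular closed fiber gives $\e(M_P)=\e((M\otimes_R T)_Q)$, while Theorem~\ref{What is HK} gives $\e(M_P)\le\e(M)$; combining, $\e(M\otimes_R T)=\e(M_P)\le\e(M)$, which with the first part yields the desired equality.

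The step I expect to be the main obstacle is the equality statement, and specifically making the regularity of $T_Q/PT_Q$ available: one must reduce to $Q$ (hence $P=Q\cap R$) maximal and, for that reduction, extract from $Z_R=\Spec(R)$ and $Z_T=\Spec(T)$ via Lemma~\ref{Kunz's Lemma} the height identity needed to invoke Lemma~\ref{HK Inequality Localization} — this is where care about equidimensionality enters. One subtlety to watch is that a maximal ideal of $T$ need not contract to a maximal ideal of $R$ in general, so one should make sure ``regular closed fibers'' is being used in the form that genuinely delivers regularity of $T_Q/PT_Q$ after the reduction; if instead one knows that all fibers of $R\to T$ are regular, the reduction becomes unnecessary, since $T_Q/PT_Q$ is then outright a localization of a regular fiber ring. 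By contrast, the inequality $\e(M)\le\e(M\otimes_R T)$ is an immediate consequence of Theorems~\ref{What is HK} and~\ref{HK Inequality}.
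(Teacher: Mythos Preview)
Your proposal is correct and follows essentially the same approach as the paper's proof: both use Theorem~\ref{What is HK} to express global Hilbert--Kunz multiplicity as a maximum over $Z_R$ (resp.\ $Z_T$) and then invoke the local comparison Theorem~\ref{HK Inequality}. For the equality statement the paper is slightly terser---it directly selects a maximal ideal $\n\in\Max(T)$ realizing the maximum (legitimate since the upper semi-continuous function $\e$ attains its maximum on a closed set, hence at a closed point), rather than enlarging via Lemma~\ref{HK Inequality Localization} as you do; the subtlety you flag about whether $\n\cap R$ is maximal is glossed over in the paper just as in your proposal.
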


\begin{proof} By Theorem~\ref{What is HK}, $\e(M)=\max\{\e(M_P)\mid P\in Z_R\}$ and $\e(M\otimes_R T)=\max\{\e(M\otimes_R T_Q)\mid Q\in Z_T\}$. Let $P \in Z_R$ be such that $\e(M) = \e(M_P)$. By assumption, there exists $Q \in Z_T$ such that $Q \cap R = P$. By Theorem~\ref{HK Inequality} we obtain that $\e(M) = \e(M_P) \leq \e(M \otimes_R T_Q) \leq \e(M \otimes_R T)$. Now suppose $Z_R=\Spec(R)$, $Z_T=\Spec(T)$, and the closed fibers of $R \to T$ are regular. Then there exists $\n \in \Max(T)$ such that $\e(M \otimes_R T) = \e(M \otimes_R T_\n)$.
Let $\m$ be the contraction of $\n$ in $R$, then $R_\m\rightarrow T_\n$ is flat with regular fiber. By Theorem~\ref{HK Inequality}, $\e(M_\m)=\e(M \otimes_R T_\n)=\e(M \otimes_R T)$. The theorem follows since $\e(M)\geq \e(M_\m)$. 
\end{proof}

\begin{Example}\label{global HK can decrease after fflat extension} For an arbitrary faithfully flat extension $R\rightarrow T$ of F-finite rings, it need not be the case that $\e(R)\leq \e(T)$. Suppose that $R$ is an F-finite domain such that $\e(R)>1$. Let $S=K[x]$ where $K$ is the fraction field of $R$. Take $T$ to be the direct product of rings $R\times S$. Then the natural map $R\rightarrow T$ is faithfully flat and $\e(T)=1<\e(R)$.
\end{Example}

\begin{Example}\label{Faithfully flat example revisted} If $R\rightarrow T$ is a faithfully flat map of F-finite domains, then it need not be the case that $\mu(F^e_*R)/p^{e\gamma(R)}\leq \mu(F^e_*T)/p^{e\gamma(T)}$, even though the inequality holds after taking limits. One should compare this to the local situation in Theorem~\ref{HK Flat Extension}. In fact, the same example used in Example~\ref{Faithfully flat example} demonstrates such phenomena. Suppose that $R$ is a Dedekind domain affine over an algebraically closed field $k$ of characteristic $p$. Then $F^e_*R$ is projective of rank $p^e$. Hence by Theorem~\ref{Forster-Swan}, $\mu(F_*R)$ is either $p^e$ or $p^{e}+1$. The case that $\mu(F_*R)=p^e$ corresponds to the case that $F^e_*R$ is free and $\mu(F^e_*R)=p^{e}+1$ corresponds to the case that $F^e_*R$ is not free. Suppose that $R$ is as in Example~\ref{Flat but not free}, that is $F^e_*R$ is not free. Consider the faithfully flat extension $R\rightarrow R[t]\rightarrow T=R[t]_W$ where $W$ is the multiplicative set $R[t]-\cup_{\m\in\Max(R)}\m R[t]$. Then $T$ is a Dedekind domain and $F^e_*T$ is a projective $T$-module of rank $p^{2e}$. By Theorem~\ref{Forster-Swan}, $\mu(F^e_*T)$ is either $p^{2e}$ or $p^{2e}+1$. But $\mu(F^e_*R)/p^{e\gamma(R)}=\frac{p^e+1}{p^e}>\frac{p^{2e}+1}{p^{2e}}\geq \mu(F^e_*T)/p^{e\gamma(T)}$.
\end{Example} 

Suppose that $R$ is either F-finite or essentially of finite type over an excellent local ring, and $M$ is a finitely generated $R$-module. We defined $\ts(M)$ and showed in Theorem~\ref{Global F-signature non-F-finite} that if $R\rightarrow T$ is faithfully flat and $T$ is F-finite, then $\ts(M)\geq \ts(M\otimes_R T)\geq \s(M\otimes_R T)$. Moreover, for $\epsilon >0$, there exists $R\rightarrow T$ faithfully flat and F-finite such that $\ts(M)<\s(M\otimes_R T)+\epsilon$. We now develop an analogous theory for Hilbert-Kunz multiplicity.

Define the \emph{local-maximal Hilbert-Kunz multiplicity of $M$} to be \[\te(M)=\sup\{\e(M_P)\mid P\in\Spec(R)\}.\] As the Hilbert-Kunz multiplicity function is not upper semi-continuous without the locally equidimensional hypothesis, there may not be a prime $P\in\Spec(R)$ such that $\te(M)=\e(M_P)$. Suppose that $R\rightarrow T$ is faithfully flat and $T$ is F-finite. It easily follows by Theorem~\ref{HK Inequality} that $\te(M)\leq \te(M\otimes_R T)$. However, it may be the case that $\te(M\otimes_R T)>\e(M\otimes_R T)$ or it may be the case that there is faithfully flat $T\rightarrow T'$ such that $T'$ is F-finite and $\e(M\otimes_R T)>\e(M\otimes_R T')$, see Example~\ref{global HK can decrease after fflat extension}. Nevertheless, we can still develop an analogue of Theorem~\ref{Global F-signature non-F-finite} for Hilbert-Kunz multiplicity, but under appropriate hypotheses.

\begin{Theorem}\label{Global Hilbert-Kunz non-F-finite} Let $R$ be a locally equidimensional ring which is either F-finite or essentially of finite type over an excellent local ring $(A,\m,k)$. Let $M$ be a finitely generated $R$-module.  Then the following limits exist:
\begin{enumerate}
\item $\displaystyle \lim_{e\rightarrow \infty}\lambda(M_{Q_e}/Q_e^{[p^e]}M_{Q_e})/p^{e\height(Q_e)}$, where $Q_e\in \Spec(R)$ is chosen such that $$\lambda(M_{Q_e}/Q_e^{[p^e]}M_{Q_e})/p^{e\height(Q_e)}=\max\{\lambda(M_{P}/P^{[p^e]}M_{P})/p^{e\height(P)}\mid P\in\Spec(R)\},$$
\item $\displaystyle \lim_{e\rightarrow \infty}\e(M_{Q_e})$, where $Q_e\in \Spec(R)$ is chosen such that $$\lambda(M_{Q_e}/Q_e^{[p^e]}M_{Q_e})/p^{e\height(Q_e)}=\max\{\lambda(M_{P}/P^{[p^e]}M_{P})/p^{e\height(P)}\mid P\in\Spec(R)\}.$$
\end{enumerate}
All of the above limits agree, with common value being the local-maximal Hilbert-Kunz multiplicity $\te(M)$. Under the assumption that $R$ is F-finite,
\begin{align*}
\te(M)&=\min\{\te(M \otimes_R T)\mid R\rightarrow T\mbox{  is faithfully flat and $T$ is F-finite} \}\\
&=\min\{\e(M \otimes_R T)\mid R\rightarrow T\mbox{  is faithfully flat, $T$ is F-finite, and }Z_T=\Spec(T) \}.
\end{align*}
In the case that $R$ is essentially of finite type over an excellent local ring $(A,\m,k)$ such that $\hat{A}\otimes_A R$ is locally equidimensional,
\begin{align*}
\te(M)&=\inf\{\te(M \otimes_R T)\mid R\rightarrow T\mbox{  is faithfully flat and $T$ is F-finite} \}\\
&=\inf\{\e(M \otimes_R T)\mid R\rightarrow T\mbox{  is faithfully flat, $T$ is F-finite, and }Z_T=\Spec(T) \}.
\end{align*}
\end{Theorem}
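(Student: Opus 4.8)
The plan is to follow the architecture of the proof of Theorem~\ref{Global F-signature non-F-finite}, replacing F-signature by Hilbert--Kunz multiplicity, minima by maxima, lower semi-continuity by upper semi-continuity, and Theorem~\ref{F-signature comparison} by Theorem~\ref{HK Inequality} (so that the invariant now increases, rather than decreases, along flat extensions). Since $R$ is locally equidimensional and either F-finite or essentially of finite type over an excellent local ring --- in either case excellent --- the functions $\lambda_e\colon\Spec(R)\to\R$, $P\mapsto\lambda_{R_P}(M_P/P^{[p^e]}M_P)/p^{e\height(P)}$, are upper semi-continuous by Theorem~\ref{Shepherd-Barron's result for modules}, converge uniformly to $\e\colon P\mapsto\e(M_P)$ by Theorem~\ref{Uniform Convergence of HK}, and the limit $\e$ is upper semi-continuous by Corollary~\ref{Upper Semi-Continuity of HK}. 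As $\Spec(R)$ is quasi-compact, $\max_P\lambda_e(P)$ is attained at some $Q_e$ and $\sup_P\e(M_P)=\te(M)$ is attained. A squeeze via uniform convergence --- for $e\gg0$ one has $\te(M)-\epsilon<\lambda_e(Q_e)\le\e(M_{Q_e})+\epsilon/2\le\te(M)+\epsilon$ --- then shows that $\lambda_e(Q_e)\to\te(M)$ and $\e(M_{Q_e})\to\te(M)$, giving (1), (2) and their common value.

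\textbf{The F-finite case.} For any faithfully flat $R\to T$ with $T$ F-finite, each $P\in\Spec(R)$ is contracted from some $Q\in\Spec(T)$, and Theorem~\ref{HK Inequality} applied to $R_P\to T_Q$ gives $\e(M_P)\le\e((M\otimes_RT)_Q)\le\te(M\otimes_RT)$; hence $\te(M)\le\te(M\otimes_RT)$, and when $Z_T=\Spec(T)$ the ring $T$ is locally equidimensional, so $\te(M\otimes_RT)=\e(M\otimes_RT)$ by Corollary~\ref{Upper Semi-Continuity of HK} and Theorem~\ref{What is HK}. Thus $\te(M)$ is a lower bound for both families. To attain it, apply Lemma~\ref{Locally Equidimensional Lemma} to the F-finite locally equidimensional ring $R$, producing faithfully flat F-finite $R\to T$ with regular fibers and $Z_T=\Spec(T)$; if $Q\in\Spec(T)$ attains $\te(M\otimes_RT)$, then the closed fiber of $R_{Q\cap R}\to T_Q$ is a localization of a regular fiber, so Theorem~\ref{HK Inequality} yields $\te(M\otimes_RT)=\e(M_{Q\cap R})\le\te(M)$, and equality holds throughout. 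This gives the two $\min$ formulas.

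\textbf{The essentially-of-finite-type case.} Using that $R\to\hat A\otimes_AR$ is faithfully flat with regular fibers and Theorem~\ref{HK Inequality}, one checks $\te(M)=\te(\hat A\otimes_AM)$; since any faithfully flat F-finite extension of $\hat A\otimes_AR$ is also one of $R$ with the same Hilbert--Kunz data, it suffices to treat $A$ complete, after which $R$ itself is locally equidimensional. Fix $\epsilon>0$, let $\Lambda$ be a $p$-base of the coefficient field of $A$, and for cofinite $\Gamma\subseteq\Lambda$ set $V_\Gamma=\{P\in\Spec(R)\mid\e(M\otimes_RR^\Gamma_{P_\Gamma})<\te(M)+\epsilon\}$. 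By standard properties of gamma constructions (\cite[Section 6]{HHTAMS}), $R^\Gamma$ remains locally equidimensional, so via the homeomorphism $\Spec(R^\Gamma)\cong\Spec(R)$ and Corollary~\ref{Upper Semi-Continuity of HK} each $V_\Gamma$ is open; and for $\Gamma'\subseteq\Gamma$ the inclusion $R^{\Gamma'}\subseteq R^\Gamma$ localizes to a faithfully flat local map, so Theorem~\ref{HK Inequality} gives $\e(M\otimes_RR^{\Gamma'}_{P_{\Gamma'}})\le\e(M\otimes_RR^\Gamma_{P_\Gamma})$ and hence $V_{\Gamma'}\supseteq V_\Gamma$. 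As $\Spec(R)$ is Noetherian, choose $\Gamma_0$ with $V_{\Gamma_0}$ maximal. If some $P\notin V_{\Gamma_0}$, refine $\Gamma_0$ to a cofinite $\Gamma'$ with $PR^{\Gamma'}=P_{\Gamma'}$ prime; then $R_P\to R^{\Gamma'}_{P_{\Gamma'}}$ has a field as closed fiber, so $\e(M\otimes_RR^{\Gamma'}_{P_{\Gamma'}})=\e(M_P)\le\te(M)$ and $P\in V_{\Gamma'}=V_{\Gamma_0}$, a contradiction; hence $V_{\Gamma_0}=\Spec(R)$ and $\te(M\otimes_RR^{\Gamma_0})\le\te(M)+\epsilon$. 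Applying Lemma~\ref{Locally Equidimensional Lemma} to $R^{\Gamma_0}$ gives faithfully flat F-finite $R^{\Gamma_0}\to T$ with regular fibers and $Z_T=\Spec(T)$, so $\te(M)\le\e(M\otimes_RT)=\te(M\otimes_RT)\le\te(M)+\epsilon$; letting $\epsilon\to0$ yields the two $\inf$ formulas.

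\textbf{Main obstacle.} The only substantive difficulties are in the last paragraph: getting the direction of monotonicity of $\{V_\Gamma\}$ right (it rests on the fact that refining $\Gamma$ shrinks $R^\Gamma$, so that the comparison goes through Theorem~\ref{HK Inequality} and not its F-signature analogue), and justifying that the gamma construction preserves local equidimensionality, without which the $V_\Gamma$ need not be open and Lemma~\ref{Locally Equidimensional Lemma} need not apply to $R^{\Gamma_0}$. The reduction to complete $A$ and the accompanying bookkeeping on infima over extensions are routine.
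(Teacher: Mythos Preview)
Your proposal is correct and follows essentially the same approach as the paper's proof: the uniform convergence plus upper semi-continuity squeeze for (1)--(2), Lemma~\ref{Locally Equidimensional Lemma} to realize the minimum in the F-finite case, and the gamma-construction argument with the ascending chain of open sets $V_\Gamma$ in the essentially-of-finite-type case. Your handling of the monotonicity direction (via $R^{\Gamma'}\subseteq R^{\Gamma}$ for $\Gamma'\subseteq\Gamma$ and Theorem~\ref{HK Inequality}) and of local equidimensionality of $R^\Gamma$ (via the homeomorphism $\Spec(R^\Gamma)\cong\Spec(R)$) matches the paper's reasoning exactly.
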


\begin{proof}Similar to the proof of Theorem~\ref{Global F-signature non-F-finite}, the existence of the limits in (1) and (2) and their convergence to $\te(M)$ is a statement about the uniform limit of semi-continuous functions defined on a quasi-compact topological space. See Theorem~\ref{Uniform Convergence of HK} and Theorem~\ref{Upper Semi-Continuity of HK} for the necessary details.

Suppose that $R\rightarrow T$ is faithfully flat and $T$ is F-finite. It easily follows by Theorem~\ref{HK Inequality} that $\te(M)\leq \te(M\otimes_R T)$. Moreover, if $Z_T=\Spec(T)$ then $\te(M\otimes_R T)=\e(M\otimes_R T)$ by Theorem~\ref{What is HK}. Therefore
\begin{align*}
\te(M)&\leq\inf\{\te(M \otimes_R T)\mid R\rightarrow T\mbox{  is faithfully flat and $T$ is F-finite} \}\\
&\leq\inf \{\e(M \otimes_R T)\mid R\rightarrow T\mbox{  is faithfully flat, $T$ is F-finite, and }Z_T=\Spec(T) \}.
\end{align*}

Suppose that $R$ is F-finite, we show the existence of a faithfully flat extension $R\rightarrow T$ such that $\te(M)=\e(M\otimes_R T)$. We are assuming that $R$ is locally equidimensional. Let $T$ be as in Lemma~\ref{Locally Equidimensional Lemma}, that is $R\rightarrow T$ is faithfully flat, with regular fibers, $T$ is F-finite, and $Z_T=\Spec(T)$. By Theorem~\ref{HK Inequality} and Theorem~\ref{What is HK}, $\te(M)=\te(M\otimes_R T)=\e(M\otimes_R T)$.

Now suppose that $R$ is essentially of finite type over an excellent local ring $(A,\m,k)$ and $\hat{A}\otimes_A R$ is locally equidimensional. Let $\epsilon>0$. We are going to show the existence of a faithfully flat extension $R\rightarrow T$ such that $T$ is F-finite, $Z_T=\Spec(T)$, and such that $\te(M)+\epsilon>\te(M\otimes_R T)$. The extension $R\rightarrow \hat{A}\otimes_A R$ is faithfully flat with regular fibers, \cite[Section 33, Lemma 4]{Matsumura1980}. By Theorem~\ref{HK Inequality}, $\te(M)=\te(\hat{A}\otimes_A M)$, hence we may replace $R$ by $\hat{A}\otimes_A R$ and $M$ by $\hat{A}\otimes_A M$ and assume $R$ is equidimensional and essentially of finite type over a complete local ring $(A,\m,k)$.

 Let $\Lambda$ be a $p$-base for $k$. For each cofinite subset $\Gamma\subseteq \Lambda$, $\Spec(R)$ is homeomorphic to $\Spec(R^\Gamma)$, and therefore the F-finite ring $R^\Gamma$ is locally equidimensional. For each cofinite subset $\Gamma\subseteq \Lambda$, let $$U_\Gamma=\{P\in \Spec(R)\mid \e(M\otimes_R R^{\Gamma}_{P_\Gamma})<\te(M)+\epsilon\}.$$ Then $U_\Gamma$ is open by Theorem~\ref{Upper Semi-Continuity of HK}. Moreover, if $\Gamma'\subseteq \Gamma$ then $U_{\Gamma}'\supseteq U_{\Gamma}$ by Theorem~\ref{HK Inequality}. By Noetherian property, there exists a cofinite set $\Gamma\subseteq \Lambda$ such that $U_\Gamma$ is maximal. If there exists $P\in\Spec(R)-U_\Gamma$, then choose $\Gamma'\subseteq \Gamma$ such that $PR^{\Gamma'}=P_{\Gamma'}$. Theorem~\ref{HK Inequality} implies $P\in U_{\Gamma'}$ contradicting maximality of $U_\Gamma$. It readily follows that $\te(M)+\epsilon> \te(M\otimes_R R^\Gamma)$. As $R^\Gamma$ is locally equidimensional, there exists faithfully flat extension $R^\Gamma\rightarrow T$ with regular fibers such that $T$ is F-finite and $Z_T=\Spec(T)$, by Lemma~\ref{Locally Equidimensional Lemma}. We have $\te(M\otimes_R R^\Gamma)=\te(M\otimes_R T)$ by Theorem~\ref{HK Inequality} and $\te(M\otimes_R T)=\e(M\otimes_R T)$ by Theorem~\ref{What is HK}. Therefore $\te(M)+\epsilon> \e(M\otimes_R T)$, which completes the proof of the theorem. \end{proof} 

\section*{Acknowledgments}

The authors of this paper are grateful for numerous fruitful conversations we have had with several mathematicians concerning this project. In particular, we would like to thank Ian Aberbach, Craig Huneke, Luis N\'{u}\~{n}ez-Betancourt, Karl Schwede, and Kevin Tucker.

\bibliographystyle{alpha}
\bibliography{References}

\end{document}